 \numberwithin{equation}{section}
 \theoremstyle{plain}
  \newtheorem{thm}{Theorem}
 \newtheorem{prop}{Proposition}[section]
 \newtheorem{lem}[prop]{Lemma}
 \theoremstyle{definition}
 \newtheorem{definition}[prop]{Definition}
 \theoremstyle{remark}
 \newtheorem{remark}[prop]{Remark}
 \let\pa=\partial
 \let\al=\alpha
 \let\b=\beta
 \let\d=\delta
 \let\g=\gamma
 \let\e=\varepsilon
 \let \kp = \kappa
 \let\lam=\lambda
 \let\f=\frac
 \let \les = \lesssim
 \let\om=\omega
 \let \th = \theta
 \let \vp = \varphi
\let\B = \Big
 \let\Om=\Omega
 \let\td = \tilde
 \let\teq \triangleq
 \let\pa=\partial
 \def\la{\langle}
 \def\ra{\rangle}
\def\lt{\left}
\def\rt{\right}
 \newcommand{\beq}{\begin{equation}}
 \newcommand{\eeq}{\end{equation}}
  \newcommand{\bal}{\begin{aligned} }
  \newcommand{\eal}{\end{aligned}}
 \newcommand{\ben}{\begin{eqnarray}}
 \newcommand{\een}{\end{eqnarray}}
 \newcommand{\beno}{\begin{eqnarray*}}
 \newcommand{\eeno}{\end{eqnarray*}}
 \newcommand{\RR}{\mathbf{R}}
 \newcommand{\R}{\mathbb{R}}
 \newcommand{\N}{\mathbb{N}}
\newcommand{\sgn}{\mathrm{sgn}}
 \newcommand{\supp}{\mathrm{supp}}
\newcommand{\domega}{\boldsymbol{\omega}}
 \author{Jiajie Chen, Thomas Y. Hou, and De Huang}
 \address{Applied and Computational Mathematics, California Institute of Technology, Pasadena, CA 91125, USA}
 \date{Feb 1, 2021} 
\title[Finite time blowup of the De Gregorio Model]{On the Finite Time Blowup of the De Gregorio Model for the 3D Euler Equations}
\begin{document}

 \begin{abstract}

We present a novel method of analysis and prove finite time asymptotically self-similar blowup of the De Gregorio model \cite{DG90,DG96} for some smooth initial data on the real line with compact support. We also prove self-similar blowup results for the generalized De Gregorio model \cite{OSW08} for the entire range of parameter on $\R$ or $S^1$ for H\"older continuous initial data with compact support. Our strategy is to reformulate the problem of proving finite time asymptotically self-similar singularity into the problem of establishing the nonlinear stability of an approximate self-similar profile with a small residual error using the dynamic rescaling equation. We use the energy method with appropriate singular weight functions to extract the damping effect from the linearized operator around the approximate self-similar profile and take into account cancellation among various nonlocal terms to establish stability analysis. We remark that our analysis does not rule out the possibility that the original De Gregorio model is well posed for smooth initial data on a circle.
The method of analysis presented in this paper provides a promising new framework to analyze finite time singularity of nonlinear nonlocal systems of partial differential equations. 
\end{abstract}
 \maketitle

 \section{Introduction}

In the absence of external forcing, the three-dimensional Navier-Stokes equations for incompressible fluid read:
\begin{equation}
	\label{otarg}
	\mathbf{u}_t+\mathbf{u}\cdot\nabla \mathbf{u}=-\nabla p+\nu\Delta \mathbf{u},\quad \nabla\cdot \mathbf{u}=0.
\end{equation}
Here $\mathbf{u}(x,t): \R^3 \times [0, T)\to \R^3$ is the 3D velocity vector of the fluid, and $p(x,t): \R^3 \times [0, T)\to \R$ describes the scalar pressure. The viscous term $\nu \Delta \mathbf{u}$ models the viscous forcing in the fluid. In the case of $\nu=0$, equations \eqref{otarg} are referred to as the Euler equations. The divergence-free condition $\nabla \cdot \mathbf{u}=0$ enforces the incompressibility of the fluid. The  Navier-Stokes equations are among the most fundamental nonlinear partial differential equations. The fundamental question regarding the global regularity of the 3D Euler and Navier-Stokes equations for general smooth initial data with finite energy remains open, and it is generally viewed as one of the most important open questions in mathematical fluid mechanics, see the surveys \cite{fefferman2006existence,majda2002vorticity, constantin2007euler, gibbon2008three, hou2009blow}. 

Define vorticity ${\domega}=\nabla\times {\bf u}$, then ${\domega}$ is
governed by
  \begin{eqnarray}
\label{eqn-nse-w}
    {\domega}_t+({\bf u}\cdot\nabla) {\domega} = \nabla
    {\bf u} \cdot {\domega} + \nu \Delta {\domega}.
  \end{eqnarray}
The term $\nabla {\bf u} \cdot {\domega}$ on the right hand side is referred to as the vortex stretching term, which is absent in the two dimensional case. Note that $\nabla {\bf u}$ is formally of the same order as ${\domega}$. In fact, if ${\bf u}$ decays sufficiently fast in the far field, one can show that
 $ c_p \|{\domega}\|_{L^p} \leq   \|\nabla {\bf u}\|_{L^p} \leq C_p \|{\domega}\|_{L^p} $ for $1 < p < \infty$ with constants $c_p, C_p>0 $ depending on $p$. Thus the vortex stretching term scales quadratically as a function of vorticity, i.e.  $\nabla {\bf u}\cdot {\domega} \approx {\domega}^2$. The vortex stretching term in the 3D Navier-Stokes or Euler equations is the main source of difficulty in obtaining global regularity. 

\subsection{The De Gregorio model and its variant}
 In this paper, we study the finite time singularity of the 1D De Gregorio model \cite{DG90,DG96} and its generalization. The De Gregorio model is a simplified model to study the effect of advection and vortex stretching in the 3D incompressible Euler equations.
Specifically, the inviscid De Gregorio model is given below
\beq\label{eq:DG}
\bal
\om_t + a u \om_x = u_x \om\;, \quad
        u_x =  H \om \; ,
\eal
\eeq
where $H$ is the Hilbert transform and $a \in \R $ is a parameter. In this 1D model, 
$\om$ models the vorticity ${\domega}$ in the 3D Euler equations \eqref{eqn-nse-w} with $\nu=0$. The nonlinear terms $ u \om_x$ and $u_x \om$ model the advection term $({\bf u}\cdot\nabla) {\domega} $ and the vortex stretching term $ \nabla{\bf u} \cdot {\domega} $, respectively. 
The Biot-Savart law is modeled by $u_x =  H \om$, which preserves the same scaling as that of the original Biot-Savart law.
The case of $a=0$ is reduced to the well-known Constantin-Lax-Majda model \cite{CLM85}, in which the authors proved the finite time singularity formation for a class of smooth initial data. The case of $a = 1$ was proposed by De Gregorio in \cite{DG90} and its generalization to $a \in \R$ was proposed by Okamoto et. al. in \cite{OSW08}. Throughout this paper, we call equation \eqref{eq:DG} the De Gregorio (DG) model. There are various 1D models proposed in the literature. We refer to \cite{Elg17,kiselev2018} for excellent surveys of other 1D models for the 3D Euler equations and the surface quasi-geostrophic equation.

One important feature of the De Gregorio model is that it captures the competition between the advection term and the vortex stretching term. It is not hard to see that
when $a<0$, the advection effect would work together with the vortex stretching effect to produce a singularity. Indeed, Castro and C\'ordoba \cite{Cor10}  proved the finite time blow-up for $a < 0$ based on a Lyapunov functional argument. For $a > 0$, there are competing nonlocal stabilizing effect due to the advection and the destabilizing effect due to vortex stretching, which are of the same order in terms of scaling. Even for arbitrarily small $a>0$, in which case we expect that the advection effect is much weaker than the vortex stretching, using the same Lyapunov functional argument in \cite{Cor10} would fail to prove a finite time singularity since the control of the solution through the Lyapunov functional is not strong enough. We remark that the stabilizing effect of advection has also been studied by Hou-Li in \cite{hou2008dynamic} for an exact 1D model of the 3D axisymmetric Navier-Stokes equations along the symmetry axis and by Hou-Lei for a 3D model of the axisymmetric Navier-Stokes equations in \cite{lei2009stabilizing}.

The question of whether the De Gregorio model would develop a finite time singularity for $a>0$ has remained unsolved for some time, especially in the case of $a=1$. 
In a recent paper by Elgindi and Jeong \cite{Elg17}, they constructed a smooth self-similar profile for small $|a|$ and a $C^{\alpha}$ self-similar profile for all $a \in \R$ using a power series expansion and an iterative construction. We note that the self-similar profiles constructed in \cite{Elg17} decay slowly in the far field and the corresponding velocity $u$ does not have finite energy. In \cite{martineznonlinear}, Castro performed some preliminary study on \eqref{eq:DG} with $a=1$ both analytically and numerically and obtained finite time blowup from $C_c^{\infty}$ initial data under some convexity and  monotonicity assumptions on the solution.

\subsection{Main results}
Let $\Om, c_l, c_{\om}$ be the solution of the self-similar equation of \eqref{eq:DG} given below
\beq\label{eq:self_similar_eqn}
( c_l x + a U ) \Om_x = ( c_{\om} + U_x) \Om, \quad U_x = H \Om,
\eeq
with $c_{\om} < 0$ and a self-similar profile $\Om \neq 0$ in some weighted $H^1$ space. Then for some given $T>0$, 
\beq\label{eq:self_similar_solu}
\om(x, t) = \f{1}{ (T-t) |c_{\om}|}  \Om\lt( \f{x}{(T-t)^{ \g }}\rt),  \quad \g = - \f{c_l}{ c_{\om}},
\eeq
is a self-similar singular solution of \eqref{eq:DG}.

We define some notions about the self-similar singularities to be used in this paper.

\begin{definition}[Two types of asymptotically self-similar singularities]
We say that a singular solution $\om$ of \eqref{eq:DG} is asymptotically self-similar if there exists a solution of \eqref{eq:self_similar_eqn} $(\Om, c_l, c_{\om})$ with $\Om \neq 0$ in some weighted $H^1$ space and $c_{\om} <0$ such that the following statement holds true. By rescaling $\om$ dynamically, i.e. $C_{\om}(t) \om(C_l(t) x, t)$ for some time dependent scaling factors $C_{\om}(t), C_l(t) >0$, it converges to $\Om$ as $t \to T^-$ in some weighted $L^2$ norm, where $T > 0$ is the blowup time. In addition, we say that the asymptotically self-similar singularity is of the \textit{expanding} type if the self-similar solution \eqref{eq:self_similar_solu} associated to $(\Om, c_l, c_{\om})$ satisfies $\g <0$ and of the \textit{focusing} type if $ \g > 0$. We call $\g$ the scaling exponent.
\end{definition}



\begin{remark}
We will specify in later Sections the weighted $L^2$ norm in which the dynamically rescaled function of $\om$ converges to the self-similar profile $\Om$ in the following Theorems. We will also specify in later Sections the stronger weighted $H^1$ norm that the self-similar profile $\Om$ belongs to, so that the Hilbert transform $U_x = H \Om$ is well defined and $(\Om, c_l, c_{\om})$ is a solution of \eqref{eq:self_similar_eqn}. 
In the case of small $|a|$, we refer to Propositions \ref{prop:stab_a0}, \ref{prop:cvg_a0} and Section \ref{sec:conv_a0} for more precise statements. Similar statements also apply to other cases.
\end{remark}

Our first main result is regarding the finite time singularity of the original De Gregorio model.
\begin{thm}\label{thm:blowup_a1}
   There exist some $C_c^{\infty}$ initial data on $\R$ such that the solution of \eqref{eq:DG} with $a= 1$ develops an expanding and asymptotically self-similar singularity in finite time with scaling exponent $\g = -1$ and compactly supported self-similar profile $\Om \in H^1(\R)$.
 \end{thm}

Although the initial data and the self-similar profile $\Omega$ have compact support, due to the expanding nature of the blowup, the support of the solution will become unbounded at the blowup time.

\begin{remark} \label{rem:wx}
Surprisingly, the blowup solution in Theorem \ref{thm:blowup_a1} satisfies the property that $|| \om(x, t) / x ||_{L^{\infty}}$ is uniformly bounded up to the blowup time, $\sup_{ t\in [0, T )} || \om(x,t) / x||_{\infty} < +\infty$, which can be seen from the special scaling exponent $\g = -1$ and the proof of Theorem \ref{thm:blowup_a1}. 
\end{remark}

\begin{remark}
The uniform boundedness of $|| \om(t)/ x||_{L^{\infty}}$ over $[0, T)$ implies that $\om(x, t)$ cannot blowup at any finite $x$, which is consistent with the expanding nature of the blowup.
\end{remark}

The second result is finite time blowup of \eqref{eq:DG} for small $|a|$ with $C_c^{\infty}$ initial data. 
 \begin{thm}\label{thm:blowup_a0}
There exists a positive constant $\d >0 $ such that for $|a| < \d$, the solution of \eqref{eq:DG} with some $C_c^{\infty}$ initial data develops a focusing and asymptotically self-similar singularity in finite time with self-similar profile $\Om \in H^1(\R)$.
 \end{thm}

The third result is finite time blowup of \eqref{eq:DG} for all $a$ with $C_c^{\al}$ initial data. 
 \begin{thm}\label{thm0:unif}
 There exists $C_1 >0 $ such that for $0 < \al < \min( 1/4, \;C_1 / |a|)$,  the solution of \eqref{eq:DG} with some $C_c^{\al}$ initial data develops a focusing and asymptotically self-similar singularity in finite time  with self-similar profile $\Om$ satisfying $|x|^{-1/2}\Om \in L^2$ and $|x|^{1/2}\Om_x \in L^2$.
 \end{thm}

The blowup results in Theorem \ref{thm:blowup_a0} and Theorem \ref{thm0:unif} also hold for the De Gregorio model on the circle.

\begin{thm} \label{thm:circle}
Consider \eqref{eq:DG} on the circle. (1) There exists $C_1 > 0$ such that if $|a|  < C_1$, the solution of \eqref{eq:DG} develops a singularity in finite time for some $C_c^{\infty}$ initial data. (2) If $0 < \al < \min( 1/4, \;C_1/|a|)$, then the solution of \eqref{eq:DG} develops a finite time singularity for some initial data $\om_0 \in C^{\al} $ with compact support. 
\end{thm}

\begin{remark}
Due to the fact that \eqref{eq:DG} on a circle does not enjoy the perfect spatial scaling symmetry, we do not establish the result on the asymptotically self-similar singularity in the above theorem. 
\end{remark}

The initial data $\om_0$ we constructed for the previous theorems all satisfy the property that $\om_0$ is odd and $\om_0 \leq 0$ for $x > 0 $. 
The following theorem implies that for $a>0$, the H\"older regularity for $\om_0$ in this class is crucial for the focusing self-similar blow-up.

\begin{thm}\label{thm:crit}
There exists a universal positive constant $a_0$ such that the following statement holds true. Suppose that $a>0, 1 \geq \al > a_0 / a$, and $\om_0 \in C^{\al}$  is odd,  non-positive for $x >0$ and has compact support. Then  the solution of \eqref{eq:DG} with initial data $\om_0$ cannot develop an asymptotically self-similar singularity with blowup scaling $c_l  > - \al^{-1}$ in finite time. In particular, the solution of \eqref{eq:DG} with initial data $\om_0$ cannot develop a focusing and asymptotically  self-similar singularity in finite time.
\end{thm}

Theorem \ref{thm0:unif} and Theorem \ref{thm:crit} show that the critical H\"older exponent of the initial data for a finite time, focusing and asymptotically self-similar singularity is $\al  \approx \f{1}{a}$ for large positive $a$. For \eqref{eq:DG} on the circle, we can prove stronger results. See Theorem \ref{thm:nonblow_S1} and Proposition \ref{prop:crit_a1} in Section \ref{sec:crit_S1}.  In particular, for a class of initial data $\om_0$ that vanishes to the order $|x|^{\kp}$ near $x=0$ with $\kp$ larger than certain threshold, we show that $u_x(0,t)$ cannot blow up at the first singularity time $T$ if it exists. In contrast, $u_x(0,t)$ actually blows up at the first singularity time for the blowup solution that we construct in Theorem \ref{thm:circle}.

Recently, the first author established finite time blowup of \eqref{eq:DG} on the circle with $1-\d < a < 1$ from smooth initial data for some $\d>0$ in \cite{chen2020slightly}. This resolves the endpoint case of the conjecture made in \cite{Elg17,okamoto2014steady} that equation \eqref{eq:DG} develops a finite time singularity for $a < 1$ from smooth initial data in the case of a circle. We remark that Theorems \ref{thm:blowup_a1}, \ref{thm:circle} and the result in \cite{chen2020slightly} do not rule out the possibility that the De Gregorio model \eqref{eq:DG} with $a=1$ is globally well-posed for smooth initial data on the circle. In a recent paper by Jia, Stewart and Sverak  \cite{Sve19}, they studied the De Gregorio model with $a=1$ on a circle and proved the nonlinear stability of the equilibrium $A \sin( 2(\th -\th_0))$ of \eqref{eq:DG} for periodic solutions with period $\pi$. In \cite{lei2019constantin}, Lei, Liu and Ren proved global well-posedness of the solution of \eqref{eq:DG} with $a=1$ on the real line or a circle for initial data $\om_0$ that does not change sign and $|\om_0|^{1/2} \in H^1(S^1)$. These results shed useful light on the DG model on $S^1$ for smooth solutions.

We remark that an important observation made by Elgindi and Jeong in \cite{Elg17} is that the advection term can be substantially weakened by choosing $C^{\al}$ data with small $\al$.
We use this property in the proof of Theorem \ref{thm0:unif}. After we completed our work, we learned from Dr. Elgindi that results similar to Theorems \ref{thm:blowup_a0} and \ref{thm0:unif} have recently been established independently by Elgindi, Ghoul and Masmoudi \cite{Elg19} on the asymptotically self-similar solutions of \eqref{eq:DG} with finite energy and the stability of the asymptotically self-similar blowup.

\subsection{A novel method of analysis}\label{Sec:novel}
One of the main contributions of this paper is that we introduce a novel method of analysis that enables us to prove finite time singularity for the original De Gregorio model with $C_c^\infty$ initial data. Our method of analysis consists of several steps. The first step is to construct an approximate self-similar profile for the De Gregorio model with a small residual error in some energy norm. The second step is to perform linear stability analysis around this approximate self-similar profile in the dynamic rescaling equation with some appropriately chosen normalization conditions and energy norm. The third step is to establish nonlinear stability using a bootstrap argument. See Section \ref{outline-stability} for more details on these steps.

Finally, we choose an initial perturbation sufficiently small in the energy norm so that the initial condition of the De Gregorio model has compact support and show that the solution develops a singularity in finite time. Moreover, we prove that the solution of the dynamic rescaling equation converges to the exact self-similar solution exponentially fast in time in the weighted $L^2$ norm. This enables us to show that by rescaling the solution of \eqref{eq:DG} dynamically, it converges to the exact self-similar profile at the blowup time in the weighted $L^2$ norm and the singularity is asymptotically self-similar.

The method of analysis presented in this paper provides a promising new framework to analyze potential finite time singularity of a nonlinear and nonlocal system of partial differential equations. We have been able to generalize this method of analysis in several aspects.  The first author of this paper has generalized this framework to prove finite time asymptotically self-similar blowup of \eqref{eq:DG} with dissipation for certain range of $a$ in \cite{chen2020singularity}. We have also established finite time self-similar blowup of the HL model proposed in \cite{hou2013finite,luo2013potentially-2} with $C_c^{\infty}$ initial data (see also a recent paper in \cite{choi2014on}). Recently the first two authors of this paper have been able to generalize this framework to prove finite time blowup of the 2D Boussinesq and 3D axisymmetric Euler equations with $C^{1,\alpha}$ velocity and boundary in \cite{chen2019finite2}, which share the same symmetry and sign property as the Luo-Hou scenario \cite{luo2013potentially-2,luo2013potentially-1}. The analysis of the HL model, 2D Boussinesq equations or the 3D Euler equations is much more challenging than that of the De Gregorio model since it is a nonlinear nonlocal system. We are currently working to extend our method of analysis to prove the finite time blowup of the 2D Boussinesq system with smooth initial data.

\vspace{0.1in}
\paragraph{\bf{Organization of the paper}}

In Section \ref{outline-stability}, we outline our general strategy that we use to prove nonlinear stability for various cases. In Section \ref{sec:a0}, we study the De Gregorio model with small $|a|$. In Section \ref{sec:a1}, we construct an approximate self-similar profile with a small residual error numerically for the case of $a=1$  and apply our method of analysis to prove the finite time self-similar blowup for $C_c^{\infty}$ initial data. In Section \ref{sec:alpha}, we study the case with any $a \in \R$ and 
prove finite time singularity for any $a \in \R$ on both $\R$ and $S^1$ for some $C^{\al}$ initial data with compact support. Finally, in Section \ref{sec:DG_neg}, we use a Lyapunov functional argument to prove finite time blowup for all $a<0$ with smooth initial data. 
In the Appendix, we prove several useful properties of the Hilbert transform and some functional inequalities. 

\vspace{0.1in}
\paragraph{\bf{Notations}}
Since the functions that we consider in this paper, e.g. $\om, u$, have odd or even symmetry, we just need to consider $R^{+}$.
The inner product is defined on $R^+$, i.e.
\[
\la f , g \ra  \teq \int_0^{\infty} f g dx ,\quad || f||_{L^p} \teq \lt(\int_0^{\infty} |f|^p dx \rt)^{1/p} .
\]
In Section \ref{sec:a1}, we further restrict the inner product and the norm to the interval $[0, L]$, e.g 
$\la f, g \ra = \int_0^L fg  dx$, since the support of $\om ,\bar{\om}$ lies in $[-L, L]$. 

We use $C, C_i$ to denote absolute constants and $C(A,B,..,Z)$ to denote constant depending on $A,B ,..,Z$. These constants may vary from line to line, unless specified. We also use the notation $A \les B$ if there is some absolute constant $C$ such that $A \leq CB$, and denote $A \asymp B$ if $A \les B$ and $B\les A$. We use $\rightarrow$ to denote strong convergence and $\rightharpoonup$ to denote weak convergence in some norm. The upper bar notation is reserved for the approximate profile, e.g. $\bar{\om}$. The letters $e,f, a_1, a_2, a_3$ are reserved for some parameters that we will choose in Section \ref{sec:a1}.

\section{Outline of the general strategy in establishing nonlinear stability}\label{outline-stability}

Our general strategy in establishing nonlinear stability is to first construct an approximate self-similar profile with a small residual error for the De Gregorio model \eqref{eq:DG}, then prove linear and nonlinear stability of this profile in the dynamic rescaling equation (see equation \eqref{eqn-dyn-rescale} below).
We use both analytic and numerical approaches to construct the approximate self-similar profile in various cases. The analytic approach is based on a class of self-similar profiles of the Constantin-Lax-Majda model (CLM) \cite{CLM85}, or equivalent \eqref{eq:DG} with $a=0$, 
which are derived in \cite{Elg17}. In \cite{Elg17}, the exact self-similar profiles of \eqref{eq:DG} with $a \neq 0 $ are also constructed in various cases. We remark that our analysis {\it does not} rely on these profiles of \eqref{eq:DG} with $a \neq 0 $.

In general, it is very difficult to construct a self-similar profile analytically. An important observation is that the self-similar profile is equivalent to the steady state of the dynamic rescaling equation. If we can solve the dynamic rescaling equation  for long enough time numerically to obtain an approximate steady state with a small residual error, this will give an approximate self-similar profile. Due to this connection, we will not distinguish the approximate steady state of the dynamic rescaling equation and the approximate self-similar profile of the De Gregorio model throughout this paper. We will use this approach to obtain a piecewise smooth approximate self-similar profile $\bar{\om}$ with a small residual error for \eqref{eq:DG} in the case of $a=1$.

A very essential part of our analysis is to prove linear and nonlinear stability of the approximate steady state of the dynamic rescaling equation. The dynamic rescaling equation of \eqref{eq:DG} is given below
\begin{equation}
\label{eqn-dyn-rescale}
\om_t + (c_l(t) x + au) \om_x = (c_{\om}(t) + u_x ) \om \; ,
\end{equation}
where $c_l(t)$ and $c_{\om}(t)$ are time-dependent scaling parameters. See \eqref{eq:rescal1}-\eqref{eq:DGdy01} in subsection \ref{sec:dsform} for more discussion on the dynamic rescaling formulation.
Let $(\bar{\omega}, \bar{u}, \bar{c_l}, \bar{c_{\om}})$ be an approximate steady state of the dynamic rescaling equation. We define the linearized operator $L(\om)$
\beq\label{eq:lin_abstract}
L(\om) = - ( \bar{c}_l x +a \bar{u} ) \om_x + ( \bar{c}_{\om} + \bar{u}_x ) \om  + ( u_x + c_{\om} )  \bar{\om} - ( a u + c_l x ) \bar{\om}_x, \quad u_x = H \om,
\eeq
where the scaling factors $c_l$ and $c_{\om}$, which depend on $\om$, will be chosen later. 
Let $\om$ be the perturbation around the approximate steady state $\bar\omega$. The stability around $\bar\omega$ is reduced to analyzing the nonlinear stability of the dynamic equation
\begin{equation}\label{eq:illustration}
\om_t = L(\om) + N(\om) + F
\end{equation} 
around $\om=0$. The perturbation $\om$ lies in $\mathcal{H}(\Omega)$, a Hilbert space on a domain $\Omega$. Here $F =  (\bar c_{\om} + \bar{u}_x ) \bar{\om} -(\bar{c_l} x + a\bar{u}) \bar{\om}_x$ is the residual error and $N(\om) = ( c_{\om} + u_x ) \om - (c_l x + u) \om_x$ is the non-linear operator. We remark that $L(\om)$ and $N(\om)$ are nonlocal operators since $u_x = H(\om)$ is nonlocal. Due to the presence of the non-linear operator $N$ and the error term $F$, it is not sufficient to only show that the spectrum of $L$ has negative real parts.

Our approach is to first perform the weighted $L^2$ estimate with appropriate weight function $\varphi$ to establish the linear stability (we drop the terms $N(\om)$ and $F$ to illustrate the main ideas)
\beq\label{eq:coer}
\f{1}{2} \f{d}{dt} \la  \vp \om, \om  \ra = 
 \langle\varphi \om ,L(\om)\rangle \leq -\lambda \langle\varphi \om,\om\rangle ,
 \quad \om \in \mathcal{H}(\Omega)
\eeq
for some $\lambda >0$ and then extend the above estimates to the weighted $H^1$ estimates. We can use a bootstrap argument to establish the nonlinear stability of \eqref{eq:illustration}, provided that $F$ is sufficiently small in the energy norm.  

We will focus on the linear stability \eqref{eq:coer} to illustrate the main ideas. The linearized equation around some approximate self-similar profile $(\bar{\om}, \bar{u}, \bar{c}_l, \bar{c}_{\om})$ reads
\[
\om_t  = -   ( \bar{c}_l x +a \bar{u} ) \om_x + ( \bar{c}_{\om} + \bar{u}_x ) \om  + ( u_x + c_{\om} )  \bar{\om} - ( a u + c_l x ) \bar{\om}_x  \; .
\]
The linear stability of the profile is mainly due to the damping effect from some local terms and cancellation among several nonlocal terms.

\subsection{Derivation of the damping term}\label{sec:damp}
The damping effect of the equation comes from two parts that depend locally on $\om$: the stretching term $( \bar{c}_l x +a \bar{u} )\om_x$ and the vortex stretching term $( \bar{c}_{\om} + \bar{u}_x ) \om $. An important observation of the approximate profile is that $( \bar{c}_{\om} + \bar{u}_x )$ is negative for large $|x|$, thus the vortex stretching term $( \bar{c}_{\om} + \bar{u}_x ) \om $ is a damping term for large $|x|$. This is the main source of the damping effect for large $|x|$. However, $( \bar{c}_{\om} + \bar{u}_x ) \om$ is not a damping term for $x$ near $0$ since $ \bar{c}_{\om} + \bar{u}_x$ is positive.

For $x$ close to $0$, we choose a singular weight $x^{-k}, k \in \N_+$ to take advantage of the stretching term.
Performing the weighted $L^2$ estimate, we get
\beq\label{eq:lin_idea0}
\bal
\f{1}{2}\f{d}{dt} \la \om^2, x^{-k} \ra
 &= \la - (\bar{c}_l x + a \bar{u} ) \om_x + (\bar{c}_{\om} + \bar{u}_x) \om , \om x^{-k} \ra \\
 &+ \la ( u_x + c_{\om} )  \bar{\om} - ( a u + c_l x ) \bar{\om}_x , \om x^{-k} \ra \teq I + II.
 \eal
\eeq

The profile we constructed satisfies $\bar c_l x +  a \bar u > 0$ for all $x >0$ and $\bar c_l x + a \bar u \approx C x$ near $x = 0$ for some $C>0$, which can be seen in later sections. Hence, we make a simplified assumption that $\bar{c}_l x + a \bar{u} = C x$ for some $ C > 0$ to illustrate the idea. Using integration by parts, we obtain
\[
\bal
I = \B\la - \f{C(k-1)}{2}  + (\bar{c}_{\om} + \bar{u}_x)  ,\om^2 x^{-k} \B\ra \teq \la D, \om^2 x^{-k}\ra \; .
\eal
\]
We will choose $k$ so that the coefficient $D$ is negative (we choose $k=4$ for $a=1$ and small $|a|$). In our analysis, the main damping term for $x$ near $0$ is obtained from $( \bar c_l x + a \bar u) \om_x$. In addition, for large $|x|$, under the assumption $(\bar c_l x + a \bar u ) \om_x = C x \om_x$, we can obtain a damping term from $(\bar c_l x + a \bar u ) \om_x$ in the $L^2(x^{-k})$ energy estimate after performing integration by parts provided that $ k-1 > 0$. In this case, the weight $x^{-k}$ decays faster than $x^{-1}$.
Similar analysis and results also hold for $(\bar c_l x + a \bar u) \om_x$ in the range of large $|x|$ without assuming $\bar c_l x + a \bar u = Cx$ for all $x$ and some $C>0$. 
In order to control the perturbation $\om$ in the far field, we have to choose a weight $\vp$ that decays more slowly than $x^{-1}$ so that the weighted $L^2$ norm of $\om$ is not too weak for large $|x|$. As a result, $( \bar c_l x + a \bar u  ) \om_x$ does not produce a damping term for large $|x|$ in our weighted $L^2$ estimate after performing integration by parts. This is one of the subtleties in our analysis.

The above derivations also apply to the case of $a = 1$, where the approximate steady state $\bar \om$ and the perturbation $\om$ have finite support $[-L, L]$. The damping term near $x=0$ is mainly from $ (\bar c_l x + \bar u) \om_x$, while the damping term near $x= \pm L$  is mainly from $ (\bar c_{\om} + \bar u_x) \om$. 

Another subtlety in our analysis is that we do not use a singular weight to derive a damping term from $(\bar c_lx + \bar u )\om_x$ in all cases with different $a$. In the case of $a = 1$, we need to estimate the perturbation near the endpoints $x =0, \pm L$ carefully. We choose a singular weight $\vp$ of order $O( (x-L)^{-2})$ near $x = L$ in order to obtain a sharp estimate of $u$. See more discussions in next Section.

\subsection{Estimates of the nonlocal terms}
The II term in \eqref{eq:lin_idea0} consists of several nonlocal terms that are difficult to control. To estimate the vortex stretching term $ ( u_x + c_{\om} )  \bar{\om}$ in \eqref{eq:lin_idea0}, we take full advantage of the cancellation between $u_x$ and $\om$, see Lemmas \ref{lem:vel_a1} and \ref{lem:vel}. 
To control the last term  $- ( a u + c_l x ) \bar{\om}_x $  in \eqref{eq:lin_idea0}, we have to 
choose appropriate functional spaces $(X, Y)$ and develop several functional inequalities $ || u ||_{X} \leq C_{XY} || \om ||_{Y}$ with a sharp constant $C_{XY}$. For example, we need to make use of the isometry property of the Hilbert transform. We remark that an overestimate of the constant $C_{XY}$ could lead to the failure of the linear stability analysis since the effect of the advection term can be overestimated. To implement the above ideas in obtaining the damping term and estimating the nonlocal terms, we need to design the singular weight very carefully. See \eqref{eq:wg_a0} and \eqref{eq:wg_a1} for some singular weights that are used in our analysis.

We remark that some weighted Sobolev spaces with singular weights have been used in \cite{lei2019constantin,Sve19} for the nonlinear stability analysis of the steady state of \eqref{eq:DG} with $a=1$ on the circle. Singular weights similar to those in Sections \ref{sec:a0}, \ref{sec:alpha} and in the form of linear combination of $|x|^{-k}$ have also been designed independently in \cite{elgindi2019finite,Elg19} for the stability analysis.

\subsection{ Energy estimates with computer assistance}

In the case of $a=1$, we use computer-assisted analysis in the following aspects. 
As we discuss at the beginning of Section \ref{outline-stability}, we construct the approximate self-similar profile numerically. We use numerical analysis with rigorous error control to verify that the residual error is small in the energy norm. The key part of the stability analysis is to use energy estimates to establish the linear stability. In the energy estimates, instead of bounding several coefficients by some absolute constants, which leads to overestimates, we keep track of these coefficients. Since these coefficients depend on the approximate self-similar profile constructed numerically, we use numerical computation with rigorous error control to verify several inequalities that involve these coefficients. See Sections \ref{subsec:Computation}, \ref{Numerical-verification} and \ref{sec:non_a1} for more discussions.

There is another computer-assisted approach to establish the stability by tracking the spectrum of a given operator and quantifying the spectral gap; see, e.g. \cite{castro2020global}. The key difference between this approach and our approach is that we \textit{do not} use computation to quantify the spectral gap of the linearized operator $L$ in \eqref{eq:lin_abstract}. In fact, the linearized operator $L$ is not a compact operator due to the Hilbert transform $u_x = H\om$ and the non-compact part of $L$ cannot be treated as a small perturbation. Thus we cannot approximate the linearized operator by a finite rank operator which can be estimated using numerical computation. We refer to \cite{gomez2019computer} for an excellent survey of other computer assisted proofs in PDE.

\section{Finite Time Self-Similar Blowup for Small $|a|$}\label{sec:a0}

In this section, we will present the proof of Theorem \ref{thm:blowup_a0}. We use this example to illustrate the main ideas in our method of analysis by carrying stability analysis around an approximate self-similar profile with a small residual error by using a dynamic rescaling formulation. In this case, we have an analytic expression for the approximate steady state $\bar{\om}$.

\subsection{Dynamic rescaling formulation}\label{sec:dsform}
We will prove Theorem \ref{thm:blowup_a0} by using a dynamic rescaling formulation. 
Let $ \om(x, t), u(x,t)$ be the solutions of the original equation \eqref{eq:DG}, then it is easy to show that 
\beq\label{eq:rescal1}
  \td{\om}(x, \tau) = C_{\om}(\tau) \om(   C_l(\tau) x,  t(\tau) ), \quad   \td{u}(x, \tau) = C_{\om}(\tau) 
C_l(\tau)^{-1}u(C_l(\tau) x, t(\tau))
\eeq
are the solutions to the dynamic rescaling equations
 \beq\label{eq:DGdy00}
\bal
\td{\om}_{\tau}(x, \tau) + ( c_l(\tau) x + a \td{u} ) \td{\om}_x(x, \tau)  &=   c_{\om}(\tau) \td{\om} + \td{u}_x \om \quad \td{u}_x &= H \td{\om} ,
\eal
\eeq
where  
\beq
\label{eq:DGdy01}
\bal
  C_{\om}(\tau) = \exp\lt( \int_0^{\tau} c_{\om} (s)  d \tau\rt), \ C_l(\tau) = \exp\lt( \int_0^{\tau} -c_l(s) ds    \rt) , \  t(\tau) = \int_0^{\tau} C_{\om}(\tau) d\tau \;.
\eal
\eeq
We have the freedom to choose the time-dependent scaling parameters $c_l(\tau)$ and $c_{\om}(\tau)$ according to some normalization conditions. After we determine the normalization conditions for $c_l(\tau)$ and $c_{\om}(\tau)$, the dynamic rescaling equation is completely determined and the solution of the dynamic rescaling equation is equivalent to that of the original equation using the scaling relationship described in \eqref{eq:rescal1}-\eqref{eq:DGdy01}, as long as $c_l(\tau)$ and $c_{\om}(\tau)$ remain finite. We remark that the dynamic rescaling formulation was introduced in \cite{mclaughlin1986focusing,  landman1988rate} to study the self-similar blowup of the nonlinear Schr\"odinger equations. This formulation is also called the modulation technique in the literature and has been developed by Merle, Raphael, Martel, Zaag and others.
It has been a very effective tool to analyze the formation of singularities for many problems like the nonlinear Schr\"odinger equation \cite{kenig2006global,merle2005blow}, the nonlinear wave equation \cite{merle2015stability}, the nonlinear heat equation \cite{merle1997stability}, the generalized KdV equation \cite{martel2014blow}, and other dispersive problems.

If there exist $C,c>0$ such that for any $\tau > 0$, $c_{\om}(\tau) \leq -C <0$ and $|\td{\om}|$ is bounded from below $|| \td \om( \cdot , \tau)||_{L^{\infty}} \geq c$ for all $\tau >0$, we then have 
\[
\bal
C_{\om}(\tau) &\leq e^{-C\tau}, \ t(\infty) \leq \int_0^{\infty}  e^{-C \tau } d \tau =C^{-1} <+ \infty \; ,
\eal
\]
and that $ || \om(\cdot, t(\tau) ||_{L^{\infty}} =  || \om(   C_l(\tau)  \cdot,  t(\tau) ) ||_{L^{\infty}} = C_{\om}(\tau)^{-1}  ||\td{\om}(x, \tau) ||_{L^{\infty}} \geq c e^{C\tau} $ blows up in finite time $T = t(\infty)$. Suppose that $\td{\om}(\tau)$ converges to $\Om_{\infty}$ in some weighted $L^2$ norm and $c_{l}(\tau), c_{\om}(\tau)$ converge to $c_{l, \infty}, c_{\om,\infty}$, respectively, as $\tau \to \infty$, with $( \Om_{\infty}, c_{l, \infty}, c_{\om,\infty}) $ being a steady state of \eqref{eq:DGdy00} and $\Om_{\infty} \neq 0$ in some weighted $H^1$ space. Since the steady state equation of \eqref{eq:DGdy00} is the same as the self-similar equation \eqref{eq:self_similar_eqn}, we can use \eqref{eq:self_similar_solu} to obtain a self-similar singular solution of \eqref{eq:DG}. We refer to Propositions \ref{prop:stab_a0}, \ref{prop:cvg_a0} and Section \ref{sec:conv_a0} for more details about the convergence and the regularity of $\Om_{\infty}$ in the case of small $|a|$. Similar statements apply to other cases.

To simplify our presentation, we still use $t$ to denote the rescaled time in the rest of the paper.

\subsection{Nonlinear stability of the approximate self-similar profile} \label{sec:nonsta_a}

 Consider the dynamic rescaling equation
 \beq\label{eq:DGdy}
 \bal
 \om_t + (c_l x + a u ) \om_x& = (c_{\om} + u_x) \om \;, \\
          u_x &= H \om \;.\\
 \eal
 \eeq
For $a= 0$, we have the following analytic steady state obtained in \cite{Elg17}
\beq\label{eq:profile_orig}
\bar{\om} = \f{-x}{b^2 + x^2},  \quad \bar{u}_x = \f{b }{b^2 + x^2},  \ c_l =  1,  \ c_{\om} = -  1 \;,
\eeq
where $b = 1/2$. The above steady state can also be obtained by using the exact formula of the solution of \eqref{eq:DG} with $a=0$ given in \cite{CLM85} and analyzing the profile for smooth solution near the blowup time.

We will use the strategy and the general ideas outlined in Section \ref{outline-stability} to establish the linear and nonlinear stability of the approximate self-similar profile.

Choosing an appropriate singular weight function plays a crucial role in the stability analysis. 
We will use the following weight functions in later $L^2$ and $H^1$ estimates :,
\begin{align}
\vp &=  -\f{1}{\bar{\om} x^3}  - \f{1}{ b^2 \bar{\om} x } = \f{ (b^2 + x^2)^2 }{ b^2 x^4} \label{eq:wg_a0} , \\
\psi & = x^2 \vp  =  -\f{1}{\bar{\om} x} - \f{x}{b^2 \bar{\om}} 
=\f{ (b^2 + x^2)^2 }{ b^2 x^2} \;  \label{eq:wg_a02} ,
\end{align}
where $\bar{\om}$ is defined in \eqref{eq:profile_orig} and $b = 1/2$. Note that $\vp \asymp x^{-4} + 1$ and $\psi \asymp x^{-2} + x^2$. 

Theorem \ref{thm:blowup_a0} is the consequence of the following two propositions.

\begin{prop}\label{prop:stab_a0}
Let $\bar{\om}, \vp,\psi$ be the function and weights defined in \eqref{eq:profile_orig}, \eqref{eq:wg_a0} and \eqref{eq:wg_a02}. There exist some absolute constants $ a_0, \mu, c > 0$, such that if $|a|<a_0$ and the initial data $\bar{\om} + \om_0$ of \eqref{eq:DGdy} ($\om_0$ is the initial perturbation) satisfies that $\om_0$ is odd, $\om_0 \in H^2$, $\om_{0,x}(0) = 0$ and $E(0) < c |a|$, where 
\[
E^2 (t) \teq  \la \om^2(t), \vp \ra + \mu \la \om_x^2(t) , \psi \ra,
\]
then we have (a) In the dynamic rescaling equation \eqref{eq:DGdy}, the perturbation remains small for all time: $E(t) < c |a|$ for all $t>0$; (b) The physical equation \eqref{eq:DG} with initial data $ \bar{\om}+\om_0$ develops a singularity in finite time.
\end{prop}

\begin{prop}\label{prop:cvg_a0}
There exists some universal constant $\d$ with $0 < \d < a_0$ such that, if $ |a|< \d$ and the initial perturbation $\om_0$ satisfies the assumptions in Proposition \ref{prop:stab_a0}, then the solution of the dynamic rescaling equation \eqref{eq:DGdy}, $(\bar{\om} + \om, \bar c_l + {c}_l, \bar{c}_{\om} + c_{\om})$, converges to $ (\Om_{\infty}, c_{l, \infty},c_{\om ,\infty}) $ with  $\Om_{\infty} - \bar \om \in L^2( \vp), \Om_{\infty, x} -\bar \om_x\in  L^2 (\psi)$, $c_{l, \infty} > 0, c_{\om ,\infty} <0 $. 
Moreover, $\bar{\om}+ \om $ converges to $\Om_{\infty}$ in $L^2(\vp)$ exponentially fast
 and  $(\Om_{\infty}, c_{l, \infty}, c_{\om ,\infty})$ is the steady state of \eqref{eq:DGdy}. In particular, the physical equation \eqref{eq:DG} with initial data $ \bar{\om}+\om_0$ develops a focusing and asymptotically self-similar singularity in finite time with self-similar profile $\Om_{\infty} \in H^1(\R)$.
\end{prop}

In the Appendix \ref{app:hil}, we describe some properties of the Hilbert transform. We will use these properties to estimate the velocity.

\begin{proof}[Proof of Proposition \ref{prop:stab_a0}]
For any $ |a| \leq a_0$, where $a_0 > 0$ is to be determined. We consider the following approximate self-similar profile by perturbing $c_l$ in \eqref{eq:profile_orig} :
\beq\label{eq:profile_a0}
\bal
\bar{\om} &= \f{-x}{b^2 + x^2},  \quad \bar{u}_x = H\bar{\om} = \f{b }{b^2 + x^2}, \quad    \bar{u} = \arctan \f{x}{b} \;, \\
\bar{c}_l   &= 1 - a \bar{u}_x(0) = 1 - 2a , \quad \bar{c}_{\om} = -  1 \;,
\eal
\eeq
where $ b = 1/2$.  We consider the equation for any perturbation $\om, u $ around the above approximate self-similar profile
\beq\label{eq:lin_a0}
\om_t +  ( \bar{c}_l x +a \bar{u} ) \om_x = ( \bar{c}_{\om} + \bar{u}_x ) \om  + ( u_x + c_{\om} )  \bar{\om} - ( a u + c_l x ) \bar{\om}_x +N(\om) + F(\bar{\om})\;,
\eeq
where $N$ and $F$ are the nonlinear terms and the error, respectively, and are defined below:
\beq\label{eq:NF_a0}
N(\om) = ( c_{\om} + u_x ) \om - (c_l x + a u) \om_x, \quad  F(\bar{\om}) = - a (  \bar{u} - \bar{u}_x(0) x ) \bar{\om}_x \; .
\eeq
We choose the following normalization condition for $c_l$ and $c_{\om}$
\beq\label{eq:normal_a0}
c_l(t) = - au_x(t, 0),  \quad c_{\om}(t) =  - u_x(t, 0) .
\eeq
Note that $\bar{\om}$ is smooth and odd, the initial data $\om_0 +\bar{\om} \in H^2$ and the evolution of \eqref{eq:DGdy} preserves the odd symmetry of the solution.
Standard local well-posedness results imply that $\om(t, \cdot) + \bar{\om}$ remains in $H^2$ locally in time, so does $\om(t, \cdot)$. Using the above normalization condition, the original equation \eqref{eq:DGdy} and the fact that $\om,u$ are odd, we can derive the evolution equation for $\om_x(t,0)$ as follows 
\[
\bal
\f{d}{dt} (\om_x(t, 0) + \bar{\om}_x(0)  ) 
=& [ ( c_{\om} + \bar{c}_{\om} + u_x + \bar{u}_x ) (\bar{\om} + \om) ]_x- 
[ ( \bar{c}_l x+a \bar{u} + c_l x + au ) (\om_x+\bar{\om}_x) ]_x \B|_{x=0} \\
 =&[( c_{\om} + \bar{c}_{\om} + u_x + \bar{u}_x )  -( \bar{c}_l + c_l + a\bar{u}_x + a u_x)    ] (\bar{\om}_x + \om_x) \B|_{x=0} \\
 =&[(  \bar{c}_{\om} + \bar{u}_x )  -( \bar{c}_l + a\bar{u}_x )    ] (\bar{\om}_x + \om_x) \B|_{x=0} 
  = 0,
 \eal
\]
where we have used \eqref{eq:profile_a0} and $\bar{u}_x(0) =2$ to obtain the last equality. It follows
\beq\label{eq:vanish_a0}
\f{d}{dt} \om_x(t, 0) = \f{d}{dt} (\om_x(t, 0) + \bar{\om}_x(0)  ) = 0,
\eeq
which implies $\om_x(t, 0 ) \equiv  \om_{0,x}(0)$.

In the following discussion, our goal is to construct an energy functional
$ E^2(\om) \teq  \la \om^2, \vp \ra +  \mu \la \om_x^2 , \psi \ra$
for some universal constant $\mu$ and show that $E$ satisfies an ODE inequality
\[
\f{1}{2}\f{d}{dt} E^2(\om) \leq C  E^3  - (1/4 - C|a| ) E^2 + C |a| E,
\]
for some universal constant $C$. Then we will use a bootstrap argument to establish nonlinear stability.

\paragraph{\bf{Linear Stability }}
We use $\vp$ defined in \eqref{eq:wg_a0} for the following weighted $L^2$ estimates. Note that $\vp$ is singular and is of order $O(x^{-4})$ near $x=0$. For an initial perturbation $\om_0 \in H^2$ that is odd and satisfies $\om_{0,x}(0) = 0$, $\om(t, \cdot)$ preserves these properties locally in time (see \eqref{eq:vanish_a0}). We will choose $\om_0(x)$ that has $O(|x|^{-1})$ decay as $|x| \rightarrow \infty$ (same decay as $\bar{\om}$). Hence, $\la \om^2, \vp \ra$ is finite. We perform the weighted $L^2$ estimate
\beq\label{eq:lin_a00}
\bal
& \f{1}{2} \f{d}{dt} \la \om^2, \vp \ra  = \la - ( \bar{c}_l x +a \bar{u} ) \om_x +    ( \bar{c}_{\om} + \bar{u}_x ) \om, \om \vp \ra
 + \la ( u_x + c_{\om} )  \bar{\om} , \om  \vp \ra  \\
 &- \la  (a  u + c_l x ) \bar{\om}_x ,\om \vp\ra  + \la  N(\om) ,\om \vp\ra + \la F(\bar{\om}) , \om \vp \ra  \teq I + II + III + N_1 + F_1 .
\eal
\eeq
For $I$, we use integration by parts to obtain
\[
I = \B\la \f{1}{2\vp} (  ( \bar{c}_l x +a  \bar{u} ) \vp )_x + ( \bar{c}_{\om} + \bar{u}_x ) ,\om^2 \vp \B\ra.
\]
Recall $\bar{c}_l = 1 - 2a $ \eqref{eq:profile_a0}. 
Using the explicit formula of profile \eqref{eq:profile_a0} and weight \eqref{eq:wg_a0}, we can 
evaluate the terms in $I$ that do not involve $a$ as follows
\beq\label{eq:damp_a01}
\bal
& \f{1}{2\vp} (  x \vp )_x + ( \bar{c}_{\om} + \bar{u}_x ) 
 = \f{b^2 x^4}{ 2(b^2 + x^2)^2 } \lt(  \f{ (b^2 + x^2)^2 }{b^2x^3}   \rt)_x  + \frac{b}{b^2+x^2} - 1 \\
  =& \f{b^2 x^4}{ 2(b^2 + x^2)^2 }  \lt( 4\f{ x(b^2 +x^2)}{b^2 x^3}  - 3 \f{ (b^2 + x^2)^2}{b^2 x^4}    \rt)
+ \frac{b}{b^2+x^2} - 1   =  \f{2x^2 + b}{x^2 + b^2}  - \f{5}{2}  =  -\f{1}{2},
 \eal
\eeq
where we have used $b = 1/2$. From \eqref{eq:profile_a0} and \eqref{eq:wg_a0}, we have
\beq\label{eq:damp_a02}
\bal
 &\B|\B|\f{1}{ 2 \vp} [ ( \bar{c}_l x - x + a\bar{u} )\vp ]_x\B| \B|_{L^{\infty}} = |a| 
 \B| \B| \f{1}{2\vp}(  ( -2x + \bar{u}  ) \vp )_x \B| \B|_{L^{\infty}} \\
 \leq & |a| \B| \B|  \f{ -2 + \bar{u}_x}{2}+ \f{ -2 x + \bar{u}}{x} \f{ x \vp_x}{2\vp}  \B| \B|_{L^{\infty}} 
 \leq |a|  (1 + || \bar{u}_x||_{\infty} ) \lt( 1+  \B| \B|  \f{x \vp_x}{\vp}  \B| \B|_{\infty} \rt) \les |a|.
 \eal
\eeq
Hence, we can estimate $I$ as follows
\beq\label{eq:lin_a01}
I = \B\la \f{1}{2\vp} (  ( \bar{c}_l x + a  \bar{u} ) \vp )_x + ( \bar{c}_{\om} + \bar{u}_x ) ,\om^2 \vp \B\ra
\leq - \lt(\f{1}{2} - C |a| \rt) \la \om^2 ,\vp \ra \; ,
\eeq
for some absolute constant $C$. Denote $\td{u} \teq u(x) - u_x(0) x $. \eqref{eq:normal_a0} implies that
\[
c_l x + a u = a \td{u}, \quad  \td{u}_x = u_x + c_{\om} .
\]
Using the definition of $II$ in \eqref{eq:lin_a00},\eqref{lem:vel3} and \eqref{lem:vel4}, we obtain
\beq\label{eq:lin_a02}
\bal
II =  - \B\la   ( u_x- u_x(0))  \om, \f{1}{x^3}+ \f{1}{ b^2 x}  \B\ra =   -\f{\pi}{2b^2} u^2_x(0) \leq 0.
\eal
\eeq
For $III$, we use the Cauchy-Schwarz inequality to get
\beq\label{eq:lin_a03_ref1}
\bal
III = -a \la  \td{u} \om,  \bar{\om_x}\vp \ra  \leq |a| \B| \B|   \td{u} \sqrt{x^{-6} + x^{-4}}     \B| \B|_2  \B| \B|   \bar{\om}_x \lt(x^{-6} + x^{-4}\rt)^{-1/2} \vp \om    \B| \B|_2  \; .
\eal
\eeq
For $\td{u}$, we use the Hardy inequality \eqref{eq:hd1} to obtain
\beq\label{eq:lin_a03_ref2}
\bal
&\la  \td{u}^2 , x^{-6} + x^{-4} \ra \lesssim  \la \td{u}^2_x, x^{-4} + x^{-2} \ra  \lesssim \la \om^2 , x^{-4} + x^{-2} \ra  \lesssim \la \om^2, \vp\ra \; .
\eal
\eeq
Note that \eqref{eq:profile_a0} and \eqref{eq:wg_a0} implies
\[
 \B|  \bar{\om}_x \lt(x^{-6} + x^{-4}\rt)^{-1/2} \vp \B| 
=  \B| \f{-b^2 + x^2}{  (b^2 + x^2)^2} \cdot \f{x^3}{(x^2+1)^{1/2}} \cdot \f{b^2 + x^2}{bx^2}\vp^{1/2} \B| 
 \lesssim  \vp^{1/2}.
 \]
We get
\beq\label{eq:lin_a03}
III \leq C |a|  \la \om^2, \vp\ra .
\eeq
Combining the estimates \eqref{eq:lin_a01}, \eqref{eq:lin_a02} and \eqref{eq:lin_a03}, we obtain
\beq\label{eq:lin_a0_L2}
\f{1}{2} \f{d}{dt} \la \om^2, \vp \ra \leq  -(1/2  - C|a|) \la   \om^2 , \vp \ra + N_1 + F_1 \; .
\eeq

\paragraph{\bf{Weighted $H^1$ estimate}} 
The weighted $H^1$ estimate is similar to the $L^2$ estimate. 
We use the weight $\psi$ defined in \eqref{eq:wg_a02} and perform the weighted $H^1$ estimates
\beq\label{eq:lin_a0_H1_0}
\bal
\f{1}{2} \f{d}{dt} \la \om_x^2, \psi \ra & = \la - ( ( \bar{c}_l x +a \bar{u} ) \om_{x} )_x +   ( ( \bar{c}_{\om} + \bar{u}_x ) \om )_x, \om_x \psi \ra
 + \la ( ( u_x + c_{\om} )  \bar{\om}  )_x, \om_x  \psi \ra  \\
 & - \la ( (a  u + c_l x ) \bar{\om}_x)_x ,\om_x \psi\ra  + \la N(\om)_x, \om_x \psi \ra +
\la F(\om)_x , \om_x  \psi \ra   \\
& \teq I + II + III  + N_2 + F_2 \; .
\eal
\eeq
For $I$, we obtain by using integration by parts that
\[
\bal
I& = \la - ( \bar{c}_l x +a \bar{u} ) \om_{xx} + ( -\bar{c}_l - a \bar{u}_x + \bar{c}_{\om} + \bar{u}_x) \om_x
+ \bar{u}_{xx}  \om, \om_x \psi \ra \\
&= \B\la   \f{1}{2\psi} (  ( \bar{c}_l x +a  \bar{u} ) \psi )_x + ( \bar{c}_{\om} -\bar{c}_l+ (1-a)\bar{u}_x ) ,\om_x^2 \psi \B\ra
 - \B\la \f{1}{2} ( \bar{u}_{xx} \psi)_x  , \om^2   \B\ra  .
 \eal
\]
Similar to \eqref{eq:damp_a01}, we use formula \eqref{eq:profile_a0}, \eqref{eq:wg_a02} to evaluate the terms that do not involve $a$.
\[
\bal
 \f{1}{2\psi} (  x  \psi )_x + ( \bar{c}_{\om}  - 1 + \bar{u}_x ) &= 
 \f{b^2 x^2}{2 (b^2 + x^2)^2} \lt(  \f{ (b^2 + x^2)^2}{b^2x}  \rt)_x - 2 + \f{b}{b^2 + x^2} =- \f{1}{2}, \\ 
 \lt(\bar{u}_{xx} \psi \rt)_x  &= \lt( - \f{2bx}{ (b^2 + x^2)^2} \cdot \f{(b^2+x^2)^2} {b^2x^2} \rt)_x  =\f{2}{b x^2} > 0 \;.
  \eal 
\]
Similar to \eqref{eq:damp_a02}, we use  \eqref{eq:profile_a0} and \eqref{eq:wg_a02} to show that the remaining terms in $I$ are small. We get
\[
\bal
 \B| \B|  \f{1}{2\psi} ( (\bar{c}_l x - x + a\bar{u} ) \psi)_x - (\bar{c}_l - 1) -a \bar{u}_x\B|\B|_{L^{\infty}} = |a| \B| \B|   \f{1}{2\psi}  ( ( -2 x + \bar{u}    ) \psi )_x +  2 - \bar{u}_x \B| \B|_{l^{\infty}}
 \les |a|,
 \eal
\]
where we have used $\bar{c}_l - 1 = -2a$. Therefore, we can estimate $I$ as follows 
\beq\label{eq:lin_a04_I}
I \leq  - (  \f{1}{2} - C |a| ) \la \om_x^2, \psi \ra ,
\eeq
where $C$ is some absolute constant.  For $II$, we have
\beq\label{eq:lin_a04_II}
\bal
II &=  \la ( ( u_x + c_{\om} )  \bar{\om}  )_x, \om_x  \psi \ra =  \la u_{xx} \bar{\om} ,\om_x \psi \ra  + \la  ( u_x + c_{\om} )  \bar{\om}_x, \om_x \psi \ra \\
 & =  - \B\la u_{xx} \om_x, \f{1}{x}  + \f{x}{b^2} \B\ra  - \la \td{u}_x , \om_x \bar{\om}_x \psi \ra  \teq II_1 + II_2 \; ,
\eal
\eeq
where $\td{u} = u - u_x(0) x , \td{u}_x = u_x - u_x(0)$. Note that
\[
u_{xx} = H\om_x, \quad \om_x(0) = u_{xx}(0) = 0.
\]
Applying \eqref{lem:vel3} with $(u_x, \om)$ replaced by $(u_{xx}, \om_x)$ and \eqref{lem:vel5}, we obtain
\beq\label{eq:lin_a04}
\B\la u_{xx} \om_x, \f{1}{x} \B\ra =0, \quad \la u_{xx} \om_x, x \ra = 0.
\eeq
It follows that
\beq\label{eq:lin_a04_II1}
II_1 = -\B\la u_{xx} \om_x, \f{1}{x} \B\ra - \f{1}{b^2}\la u_{xx} \om_x, x \ra  = 0.
\eeq
For $II_2$ in \eqref{eq:lin_a04_II},  we use an argument similar to \eqref{eq:lin_a03_ref1}
to obtain
\[
| II_2 | \les \la \td{u}_x^2 , x^{-4} + x^{-2} \ra^{1/2} \cdot
\la ( x^{-4} + x^{-2} )^{-1} (\bar{\om}_x \psi)^2, \om_x^2  \ra^{1/2} .
\]
\eqref{eq:lin_a03_ref2} shows that this first term in the RHS is bounded by $\la \om^2 , \vp\ra^{1/2}$. For the second term, we use the definition \eqref{eq:profile_a0} and \eqref{eq:wg_a02} to obtain 
\[
\B| ( x^{-4} + x^{-2} )^{-1} (\bar{\om}_x \psi)^2 \B| 
=\B| \f{  x^4 }{x^2 + 1}  \lt( \f{-b^2 + x^2}{  (b^2 + x^2)^2}  \rt)^2 \f{(b^2 + x^2)^2}{ b^2 x^2 } \B| \psi\les \psi.
\]
Hence, we have
\beq\label{eq:lin_a04_II2}
II_2 \les \la \om^2 , \vp \ra^{1/2}  \la  \om_x^2,  \psi \ra^{1/2}.
\eeq
For $III$ in \eqref{eq:lin_a0_H1_0}, we note that $c_l x +au = a (u - u_x(0) x )$. Similarly, we have 
\beq\label{eq:lin_a04_III}
| III | \lesssim |a|  \la \om^2, \vp \ra^{1/2}  \la \om_x^2, \psi \ra^{1/2}.
\eeq
In summary, combining \eqref{eq:lin_a04_I},\eqref{eq:lin_a04_II}, \eqref{eq:lin_a04_II1}, \eqref{eq:lin_a04_II2} and \eqref{eq:lin_a04_III}, we prove that
\beq\label{eq:lin_a0_H1}
\f{1}{2}\f{d}{dt}  \la \om_x^2, \psi \ra \leq  C \la \om^2, \vp \ra^{1/2}  \la \om_x^2, \psi\ra^{1/2} - (\f{1}{2} - C |a| )\la   \om_x^2 , \psi \ra + N_2 + F_2,
\eeq
where $C$ is some absolute constant.

\paragraph{\bf{Estimate of nonlinear and error terms}}
We use the following estimate to control $\| u_x \|_{\infty}$
\[
|| u_x||_{\infty} \leq  C || u_x||^{1/2}_2 || u_{xx}||^{1/2}_2 = C||w||^{1/2}_2 ||w_x||^{1/2}_2  \leq  C \la \om^2, \vp \ra^{1/4}  \la \om_x^{2}, \psi  \ra^{1/4} .
\]
Recall the definition of $N(\om), F(\bar{\om})$ in \eqref{eq:NF_a0}. For the nonlinear part $N_1, N_2$, we have
\beq\label{eq:a0_non}
\bal
N_1  & = \la N(\om), \om \vp \ra  \lesssim   (|a|+1) || u_x||_{\infty} \la \om^2,  \vp \ra \lesssim  || u_x||_{\infty} \la \om^2,  \vp \ra \; , \\
N_2 & = \la N(\om)_x, \om_x \psi  \ra  \lesssim   (|a|+1) || u_x||_{\infty} \la \om_x^2,  \psi \ra \lesssim  || u_x||_{\infty} \la \om_x^2,  \psi \ra, \\
\eal
\eeq
where we use that $|a|<1$ since we only consider small $|a|$ in Theorem \ref{thm:blowup_a0}.
We note that $F(\bar{\om})$ \eqref{eq:NF_a0} satisfies $F(\bar{\om} )= O(x^3) $ near $0$ and $F(\bar{\om}) = O(x^{-1})$ for large $x$. From \eqref{eq:wg_a0} and \eqref{eq:wg_a02}, we have $F(\bar{\om}) \in L^2(\vp) $ and $(F(\bar{\om}))_x \in L^2(\psi)$. Then for the error terms $F_1, F_2$, we can use the Cauchy Schwarz inequality to obtain
\beq\label{eq:a0_err}
\bal
| F_1| &=  | \la F(\bar{\om}), \om \vp \ra |  \leq  \la F^2(\bar{\om}), \vp \ra^{1/2} \la \om^2, \vp \ra^{1/2} \lesssim |a| \la \om^2, \vp \ra^{1/2} \;, \\
|F_2|  & =  | \la (F(\bar{\om}))_x, \om_x \psi \ra |  \leq  \la (F(\bar{\om}))_x^2, \psi \ra^{1/2} \la \om_x^2, \psi \ra^{1/2} \lesssim |a| \la \om_x^2, \psi \ra^{1/2}.
\eal
\eeq

\paragraph{\bf{Nonlinear Stability}}
Let $\mu < 1$ be some positive parameter to be determined. We consider the following energy norm
\[
E^2(t) \teq  \la \om^2, \vp \ra + \mu \la \om_x^2 , \psi \ra.
\]
Using the previous estimates on $u_x$ and the Cauchy Schwarz inequality, we have
\[
\la \om^2, \vp \ra^{1/2}  \la \om_x^{2}, \psi  \ra^{1/2} \leq \mu^{-1/2} E^2,   
\quad || u_x||_{\infty}  \leq C \la \om^2, \vp \ra^{1/4}  \la \om_x^{2}, \psi  \ra^{1/4} \leq  C  \mu^{-1/4} E.
\]
Combining \eqref{eq:lin_a0_L2}, \eqref{eq:lin_a0_H1}, \eqref{eq:a0_non}, \eqref{eq:a0_err} and the above estimate, we derive
\[
\bal
\f{1}{2}\f{d}{dt}  E^2(t) & \leq  - \lt( \f{1}{2}  - C |a| \rt) E^2 +  C\mu  \la \om^2, \vp \ra^{1/2}  \la \om_x^2, \psi\ra^{1/2} + C |a| E +
 C || u_x||_{\infty} E^2  \\
 & \leq - \lt( \f{1}{2}  - C |a|  - C \sqrt{\mu}\rt) E^2  + C |a| E + C \mu^{-1/4}  E^3 \;, \\
 \eal
\]
where $C$ is some absolute constant. Now we choose  $\mu$ such that $C\sqrt{\mu} < 1/4$. Note that $\mu$ is also a universal constant. It follows that
\beq\label{eq:boot_a0}
\f{1}{2}\f{d}{dt}  E^2(t) \leq  - \lt( \f{1}{4}  - C_1 |a|  \rt) E^2  + C_1 |a| E + C_1 E^3 \; ,\\
\eeq
where $C_1$ is a universal constant. For $c_{\om}(t)$ and $ c_l(t)$, they satisfy the following estimate
\[
| c_{\om}(t) |  = |u_x(t, 0) |\leq C_2 E, \quad |c_l(t) | = |a u_x(0)| \leq C_2 E \; ,
\]
for some absolute constant. Hence there exist absolute constants $a_0 , c >0$ with $C_1 a_0 < 1/8$, such that for $|a| < a_0$, if $E(0)< c |a|$, using a bootstrap argument, we obtain 
\beq\label{eq:boot_res_a0}
\bal
E(t) < c |a| , \quad |c_{\om}(t) |  , |c_l(t)|  \leq C_2E(t) < C_2 c|a|,
\eal
\eeq
 for all $t> 0$. We can further require
 \[
  a_0 <  \min (  \f{1}{8C_1}, \f{1}{2C_2c} ) \; ,
 \]
 so that we get $|c_{\om}(t)| , |c_l(t)| < C_2 c |a| < \f{1}{2}$, which implies
 \beq\label{eq:scaling_a0}
 \bar{c}_{\om}  + c_{\om}(t) < -1/2 ,  \quad  c_l(t) + \bar{c}_l  > 1/2 .
 \eeq

As a result, we can choose small initial perturbation $\om_0$ which modifies $\bar{\om}$ in the far field so that we have an initial data $\bar{\om} + \om_0$ with compact support. We can also require that $\om_{0, x}(0) =0$ and $E(0) < c |a|$. Then the bootstrap result and $\bar{c}_{\om} + c_{\om}(\tau) < -1/2 < 0$ imply the finite time blowup. We conclude the proof of Proposition \ref{prop:stab_a0}.
\end{proof}

Based on the a-priori estimate, we can further obtain the convergence result.

\subsection{Convergence to the self-similar solution}\label{sec:conv_a0}

\begin{proof}[Proof of Proposition \ref{prop:cvg_a0}]
An important observation is that the approximate self-similar profile is time-independent. Therefore, we take the time derivative in \eqref{eq:lin_a0} to obtain
\beq\label{eq:time_a0}
\om_{tt} +  ( \bar{c}_l x +a \bar{u} ) \om_{tx} = ( \bar{c}_{\om} + \bar{u}_x ) \om_t  + ( u_{x, t} + c_{\om, t} )  \bar{\om} - ( a u_t + c_{l, t} x ) \bar{\om}_x +N(\om)_{t},
\eeq
where the error term $F(\bar{\om})$ vanishes since it depends on the approximate self-similar profile only.  Note that the normalization condition also implies
\[
\f{d}{dt} w_x(t, 0) = 0.
\]
\paragraph{\bf{Exponential convergence}}
Note that the linearized operator in \eqref{eq:time_a0} is exactly the same as that in the 
weighted $L^2$ estimate \eqref{eq:lin_a0}.
Therefore, we obtain
\beq\label{eq:cvg_a0_ODE}
\f{1}{2} \f{d}{dt} \la \om_t^2, \vp \ra \leq  -(1/2  - C |a|) \la   \om_t^2 , \vp \ra + \la N(\om)_t, \om_t \vp \ra .
\eeq
The nonlinear part reads
\[
N(\om)_t = ( c_{\om, t} + u_{x, t}) \om +  ( c_{\om} + u_{x}) \om_t   -  (c_{l, t} x + a u_{t}) \om_x -  (c_{l} x + a u) \om_{x, t} \teq I + II + III + IV \; ,
\]
where $c_{\om,t} = -u_{x,t}(0), c_{l,t} = -a u_{x,t}(0)$ according to the \eqref{eq:normal_a0}.
We are going to show that 
\beq\label{eq:cvg_a0_nlin}
|  \la N(\om)_t, \om_t \vp \ra | \lesssim E(t) \la \om_t^2 ,\vp \ra .
\eeq



From previous estimates, we can control $|| \om ||_{\infty}, \ || u_x ||_{\infty},  ||\f{u}{x} ||_{L^{\infty}}, |c_{\om}|, |c_l|$ by $E(t)$. Using \eqref{eq:hd1} with $p=2,4$, $x^{-4} + x^{-2} \les \vp$ (see \eqref{eq:wg_a0}) and the $L^2$ isometry of the Hilbert transform, we have 
\[
\bal
||  (u_x - u_x(0) )  (x^{-4} + x^{-2})^{1/2} ||_2 & \lesssim || \om \vp^{1/2} ||_{L^2} \lesssim E(t), \\
  || (u_{x, t} - u_{x, t}(0) ) (x^{-4} + x^{-2})^{1/2}) ||_2  & \les || \om_t \vp^{1/2} ||_{L^2} .
\eal
\]
Moreover, we have 
\[
\B| \f{u_t(x)}{x}  \B| =   \f{1}{\pi}\B| \int_{y > 0}  \log \B| \f{x + y} {x - y} \B|   \f{1}{x}  \om_t(y)  dy \B| 
\les \la \om_t^2, \vp \ra^{1/2} \B\la \lt(  \log \B|\f{x + y} {x - y} \B| \f{1}{x}\rt)^2 , \vp^{-1} \B\ra^{1/2}
\lesssim  \la \om_t^2, \vp \ra^{1/2}.
\]
Taking $x =0$ in the above estimate, we also yield the bound for $|u_{x,t}(0)|$ and thus that for $|c_{\om,t}|, |c_{l,t}|$. 
The tail behavior of $\vp$ \eqref{eq:wg_a0} satisfies
\[
\vp = \f{b^2}{x^4} + \f{2}{x^2} + \f{1}{b^2} = O(x^{-2}) + b^{-2}  , \quad   \vp -b^{-2 }=\f{b^2}{x^4} + \f{2}{x^2}< \vp.
\]
Recall $\td{u} = u - u_x(0)x$ and \eqref{eq:normal_a0}.  We can estimate different parts of $N(\om)_t$ as follows
\[
\bal
| \la I , \om_t \vp \ra|  & \leq  |  \la  ( c_{\om, t} +  u_{x, t}) \om , \om_t (\vp - b^{-2}) \ra|  +  b^{-2}  |  \la  (c_{\om,t} + u_{x, t} ) \om , \om_t  \ra |  \\
& \les \la \td{u}^2_{x, t} , (x^{-4} + x^{-2} \ra^{1/2}  || \om||_{\infty}  \la \om^2_t, \vp \ra^{1/2}
+b^{-2} |c_{\om,t}|  || \om ||_2 || \om_t \vp^{1/2} ||_2 \\
&  \qquad +b^{-2} || u_{x, t} ||_2  || \om||_{\infty} ||\om_t ||_2   \lesssim E(t)  \la \om_t^2, \vp \ra \; ,  \\
\la  II + IV, \om_t \vp \ra& =  \B\la c_{\om } + u_x  + \f{ ( ( c_l x + au) \vp)_x}{2 \vp} , \om_t^2  \vp \ra \B\ra  \lesssim || u_x ||_{\infty} \la \om_t^2, \vp \ra
\lesssim E(t)  \la \om_t^2, \vp \ra \; , \\
\la III , \om_t \vp \ra &   = \B\la c_{l, t} + a \f{u_t}{x} , \om_x x \vp^{1/2} \om_t \vp^{1/2} \ra \lesssim \B| \B| c_{l, t} + a \f{u_t}{x}     \B| \B|_{\infty}
|| \om_x \vp^{1/2} x||_2 || \om_t\vp^{1/2}||_2  \\
& \lesssim E(t ) \la \om_t^2 ,\vp \ra ,
\eal
\]
where we have used $ | x \vp_x / \vp | \les 1 $ to estimate $II + IV$ and $ || \om_x \vp^{1/2} x||_2  = || \om_x \psi^{1/2}||_2 \les E(t)$ to obtain the last inequality. In summary, we have proved \eqref{eq:cvg_a0_nlin}. Consequently, by substituting the above estimates and \eqref{eq:boot_res_a0} into \eqref{eq:cvg_a0_ODE}, we obtain
\[
\bal
\f{1}{2} \f{d}{dt} \la \om_t^2, \vp \ra &\leq  -(1/2  - C |a|) \la   \om_t^2 , \vp \ra + C_3 E(t) \la \om_t^2, \vp \ra  \\
& \leq  -(1/2  - C |a|) \la   \om_t^2 , \vp \ra + C_3 c |a|  \la \om_t^2, \vp \ra  =   -(1/2  - C |a| - C_3 c |a| ) \la   \om_t^2 , \vp \ra
\eal
\]
for some universal constant $C_3$. Thus, there exists $0 < \d< a_0$ such that
\[
C \d + C_3 c \d < \f{1}{4}.
\]
Hence, if $|a| < \d$, we obtain
\beq\label{eq:exp_a01}
\f{d}{dt} \la \om_t^2,  \vp \ra  \leq    -(1/2  - C |a| - C_3 c |a| ) \la   \om_t^2 , \vp \ra \leq  -\f{1}{4} \la  \om_t^2 , \vp \ra.
\eeq
It follows that $ \la \om_t^2, \vp \ra $ converges to $0$  exponentially fast as $t \to \infty$ and that $\om(t)$ is a Cauchy sequence in $L^2(\vp)$ as $t \to \infty$.
It admits a limit $\om_{\infty}$ and we have
\beq\label{eq:strongl2}
||  (\om(t) - \om_{\infty}) \vp^{1/2} ||_2 \leq e^{-t/4  }.
\eeq
According to the a-priori estimate $\la \om_x(t, \cdot)^2, \psi \ra < E^2(t) < (ca)^2 $, there is a subsequence $\om(t_n)$ of $\om(t)$, such that $\om_x(t_n) \psi^{1/2}$ converges weakly in $L^2$, and the limit must be $\om_{\infty,x} \psi^{1/2}$. Therefore, we conclude that $\om_{\infty} \in L^2 (\vp)$ and $\om_{\infty, x} \in L^2 (\psi)$. Using these convergence results, we obtain
\beq\label{eq:strongcl}
 c_l(t) = -a u_x(t, 0) \to -a H \om_{\infty}(0), \quad c_{\om} = -u_x(t,0) \to - H \om_{\infty}(0),
 \eeq
 as $t \to \infty$. Using the formulas of $\bar \om$ in \eqref{eq:profile_orig}, $\vp, \psi$ in \eqref{eq:wg_a0} and the above result, we obtain $\om_{\infty} , \bar \om   \in H^1(\R)$, which implies $\om_{\infty} + \bar \om   \in H^1(\R)$.

\paragraph{\bf{Convergence to self-similar solution}}
Finally, we verify that $\om_{\infty} + \bar{\om}$ with some $c_{l,\infty}, c_{\om, \infty}$ is a 
steady state of \eqref{eq:DGdy}. 

We use $\Om, U, \kp_l, \kp_{\om}$ to denote the original solution of \eqref{eq:DGdy}
\[
\Om = \om + \bar{\om}, \ U = u + \bar{u}, \ \kp_l = c_l + \bar{c}_l , \ \kp_{\om} = c_{\om}  + \bar{\om}.
\]
In particular, we define $(\Om_{\infty}, U_{\infty})$ by
\[
\Om_{\infty} = \om_{\infty} + \bar{\om},  \quad U_{\infty, x} = H (\Om_{\infty} ).
\]
Notice that
\[
\om_t = \Om_t = (\kp_{\om} + U_x) \Om - (\kp_l x + a U) \Om_x \teq K(t).
\]
Due to the exponential convergence \eqref{eq:exp_a01}, we have
\beq\label{eq:cvg_a01}
\la K(t)^2, \vp \ra  \to 0  \quad \textrm{as } t \to +\infty.
\eeq


Suppose that $\{ \om(t_n, \cdot) \}_{n\geq 1}$ is a subsequence of $\{ \om(t, \cdot)  \}_{t\geq 0}$ such that as $n \to \infty$, $t_n \to \infty$ and $\om_x(t_n) \psi^{1/2}$ converges weakly to $\om_{\infty,x} \psi^{1/2}$ in $L^2$. From \eqref{eq:strongl2}, we obtain that $\{\om(t_n) \}_{n\geq 1}$ converges strongly to $\om_{\infty}$ in $L^2(\vp)$. Using these convergence results, we yield
\beq\label{eq:cvg_a02}
\bal
\Om(t_n, \cdot)  \vp^{1/2}  -  \Om_{\infty} \vp^{1/2} & = ( \om(t_n, \cdot) + \bar{\om} ) \vp^{1/2} - ( \om_{\infty} + \bar{\om} )  \vp^{1/2} \rightarrow  0  \quad \textrm{ in } L^2  \; ,\\
\Om(t_n, \cdot)_x  \psi^{1/2}  - \Om_{\infty,x} \psi^{1/2} & =
( \om(t_n, \cdot)_x + \bar{\om}_x ) \psi^{1/2} -   ( \om_{\infty,x} + \bar{\om}_x ) \psi^{1/2}
\rightharpoonup 0  \quad \textrm{ in } L^2 \; .
\eal
\eeq
Note that $ \psi = x^2 \vp$. It follows that
\[
  x \Om(t_n, \cdot)_x \vp^{1/2} - x \Om_{\infty, x} \vp^{1/2} \rightharpoonup 0  \textrm{ in } L^2 \; .
\]
Interpolating the convergence results in \eqref{eq:cvg_a02},
we get the pointwise convergence
\beq\label{eq:cvg_a03}
 U_x(t_n)  \to H( \Om_{\infty}  ), \quad \f{U(t_n)}{x} \to  \f{ U_{\infty}}{x}
\eeq
in $L^{\infty}$. Recall the normalization condition and the definition of $U$
\[
c_{\om}(t) = -u_x(t, 0), \quad c_{l}(t) = - au_x(t,0), \quad U = u + \bar u.
\]
We get the following convergence
\beq\label{eq:cvg_a04}
\bal
\kp_l(t_n) &= \bar{c}_l + c_l(t_n) =  \bar{c}_l-a u_x(t_n, 0) \to \bar{c}_l - a ( U_{x, \infty}(0) -\bar{u}_x(0) ) \teq c_{l,  \infty} \; , \\
\kp_{\om}(t_n) & = \bar{c}_{\om} + c_{\om}(t) = \bar{c}_{\om} - u_x(t, 0) \to \bar{c}_{\om} -  (U_{x, \infty}(0) -\bar{u}_x(0) )  \teq c_{\om, \infty} \; .
\eal
\eeq
Combining the convergence results \eqref{eq:cvg_a02}, \eqref{eq:cvg_a03} and \eqref{eq:cvg_a04}, 
we obtain that $K(t_n) \vp^{1/2} - K(\infty) \vp^{1/2}$ converges weakly to $0$ in $L^2$, i.e. 
\[
\B( ( \kp_{\om} + U_x) \Om - ( \kp_l  + a \f{U}{x} ) x \Om_x \B) \vp^{1/2}- \B(  ( c_{\om, \infty} + U_{\infty,x}) \Om_{\infty} - ( c_{l,\infty}  + a \f{ U_{\infty} }{x} ) x \Om_{\infty,x} \B) \vp^{1/2} 
 \rightharpoonup 0.
\]

Note that \eqref{eq:cvg_a01} shows that $K(t_n) \to 0$ in $L^2(\vp)$. We get
\[
( c_{\om, \infty} + U_{\infty,x}) \Om_{\infty} - ( c_{l,\infty} x + a U_{\infty} ) \Om_{\infty,x} = 0
\]
in $L^2(\vp)$. The \textit{a-priori} estimate \eqref{eq:scaling_a0} and the convergence result imply that $
c_{l, \infty} > 1/2 >0, \ c_{\om, \infty} < -1/2 < 0$. Therefore, the solution $\Om(t)$ in the dynamic rescaling equation converges to $\Om_{\infty}$ in $L^2(\vp)$ and
$(\Om_{\infty}, c_{l,\infty}, c_{\om,\infty})$ is a steady state of \eqref{eq:DGdy}, or equivalently, a solution of the self-similar equation \eqref{eq:self_similar_eqn}. Using the rescaling relations \eqref{eq:rescal1} and \eqref{eq:DGdy01}, we obtain that the singularity is asymptotically self-similar. Since $ \g = - \f{c_{l, \infty}}{c_{\om,\infty}}  > 0$, the asymptotically self-similar singularity is focusing. The regularity $\Om_{\infty} \in H^1(\R)$ follows from the result below \eqref{eq:strongcl}.
\end{proof}

\begin{remark}
An argument similar to that of proving convergence to the self-similar solutions by time-differentiation given above has been developed independently in \cite{elgindi2019finite}. There is a difference between two approaches in the sense that an artificial time variable was introduced in \cite{elgindi2019finite}, while we use the dynamic rescaling time variable.
\end{remark}

\section{Finite Time Blowup for $a=1$ with $C_c^{\infty}$ Initial Data}
\label{sec:a1}

In this section, we will prove Theorem \ref{thm:blowup_a1} regarding the finite time self-similar blowup of the original De Gregorio model with $a=1$. Compared to the De Gregorio model with small $|a|$ analyzed in the previous Section, the case of $a=1$ is much more challenging since we do not have a small parameter $a$ in the advection term $u \om_x$. The smallness of $|a|$ has played an important role both in the construction of analytic approximate self-similar profile \eqref{eq:profile_a0} and the stability analysis, where we treat the advection term as a small perturbation. We will use the same method of analysis presented in the previous section except that the approximate steady state is constructed numerically. Since our approximate steady state is constructed numerically, we also present a general strategy how to obtain rigorous error bounds for various terms using Interval arithmetic guided by numerical error analysis, see subsection \ref{Numerical-verification}.

To begin with, we consider \eqref{eq:DG} with $a=1$. The associated dynamic rescaling equation reads
 \beq\label{eq:DGdya1}
 \bal
 \om_t + (c_l x +  u ) \om_x = (c_{\om} + u_x) \om\;, \quad
          u_x = H \om \; .
 \eal
 \eeq
For odd initial datum $\om_0$ supported in $[-L, L]$, we use the following normalization conditions
  \beq\label{eq:DGnormal}
c_l =  - \f{ u(L)}{L},  \quad  c_{\om} = c_l.
 \eeq
 We fix $L = 10$. With the above conditions, we have $(c_l x + u) \B|_{x = \pm L}=0$ and
\beq\label{eq:vanish_a1}
\bal
  \pa_t  \om_x(t,0) & = \pa_x ( (u_x+ c_{\om} ) \om - (c_l x + u) \om_x )\B|_{x= 0}  \\
  &=(c_{\om} + u_x(t,0) - c_l - u_x(t,0)) \om_x(t, 0) = 0.
  \eal
 \eeq
 Thus $\om_x(t,0)$ remains constant and $x= \pm L$ is a stationary point of \eqref{eq:DGdya1} and the support of $\om$ will remain in $[-L , L]$, as long as the solution of the dynamic rescaling equation remains smooth.
 
The reader who is not interested in the numerical computation can skip the following discussion on the numerical computation and go directly to Section \ref{sec:comp} and later subsections for the description of the approximate profile and the analysis of linear stability.

\subsection{Construction of the approximate self-similar profile}\label{subsec:Computation}
We approximate the steady state of \eqref{eq:DGdya1} numerically by using the normalization conditions \eqref{eq:DGnormal}. Since $\om$ is supported on $[-L,L]$ and remains odd for all time, we restrict the computation in the finite domain $[0,L]$ and adopt a uniform discretization with grid points $x_i = i h, i =0, 1,.., n = 8000, h = L/8000$. In what follows, the subscript $i$ of $\omega_i^k$ stands for space discretization, and the superscript $k$ stands for time discretization. We solve \eqref{eq:DGdya1} numerically using the following discretization scheme:

\begin{enumerate}
	\item Initial guess is chosen as $\omega_i^0 = -\f{L - x_i}{\pi}\sin(\frac{\pi x_i}{L}), i =0, 1,.., n$.
	\item The whole function $\omega^k$ is obtained from grid point values $w_i^k$ using a standard cubic spline interpolation on $[-L,L]$, with odd extension of $w^k$ on $[-L,0]$. We approximate $w_{x,i}^k$ at the boundary using a second order extrapolation: 
	\[w_x^k(-L) = w_x^k(L) = w_{x,n}^k = \frac{3\omega_{n}^k - 4\omega_{n-1}^k+\omega_{n-2}^k }{2h}.\]
	The resulting $\omega^k$ is a piecewise cubic polynomial and $\omega^k\in C^{2,1}$. The derivative point values $w_{x,i}^k$ are evaluated to be $w_x^k(x_i)$. 
	\item Value of $u^k$ and $u_x^k$ at grid points are obtained using the kernel integrals: 
	\[u_i^k =\f{1}{\pi} \int_0^L \omega^k(y)\log\left|\frac{x_i-y}{x_i+y}\right|dy,\quad u_{x,i}^k = \f{1}{\pi}\int_0^L \frac{2y}{x_i^2-y^2}\omega^k(y)dy.\]
	In particular, for each $x_i$, the contributions to the above integrals from the neighboring intervals $[x_{i-m},x_{i+m}]$ are integrated explicitly using the piecewise cubic polynomial expressions of $\omega$; the contributions from the intervals $[0,L] \backslash [x_{i-m},x_{i+m}]$ are approximate by using a piecewise $8$-point Legendre-Gauss quadrature, in order to avoid large round-off error. We choose $m=8$. We have also computed $u^k_{xx}$ similarly and will use it later. 
	\item The integration in time is performed by the $4_{\text{th}}$ order Runge-Kutta scheme with adaptive time stepping. The discrete time step size $\Delta t_k = t_{k+1}-t_k$ is given by $\Delta t_k = \frac{1}{2}\frac{h}{\max_i | c_l x_i + u_i^k|}$, respecting the CFL stability condition $|c_l x + u^k|\frac{h}{\Delta t_k} \leq 1$. 
	\item After each time step, we apply a local smoothing on $w^k_i$ to prevent oscillation:
	\[w_i^k \longleftarrow \frac{1}{4}w_{i-1}^k + \frac{1}{2}w_i^k+\frac{1}{4}w_{i+1}^k,\quad i = 1,\dots,n-1.\]
\end{enumerate}
Our computation stops when the pointwise residual 
\[F_{\omega,i}^k = (c_{\om}^k + u_{x,i}^k) \om_i^k - (c_l^k x_i +  u_i^k ) \om_{x,i}^k\]
satisfies $\max_i|F_{\omega,i}^k|\leq 10^{-5}$. Then we use $\bar \omega = \omega^k$ as our approximate self-similar profile. The corresponding scaling is 
\[
\bar{c}_l = \bar{c}_{\om} = -0.6991
\]
by rounding up to $4$ significant digits. 

We remark that we observe second order convergence in space and fourth order convergence in time for the numerical method described above. However, we do not actually need to do convergence study (by refining the discretization) for our scheme, as we can measure the accuracy of our approximate self-similar profile {\it a posteriori}. The criterion for a good approximate self-similar profile is that it is piecewise smooth and has a small residual error
in the energy norm.

 \begin{figure}[t]
   \centering
   \includegraphics[width =0.7\textwidth ,height = 0.35\textwidth ]{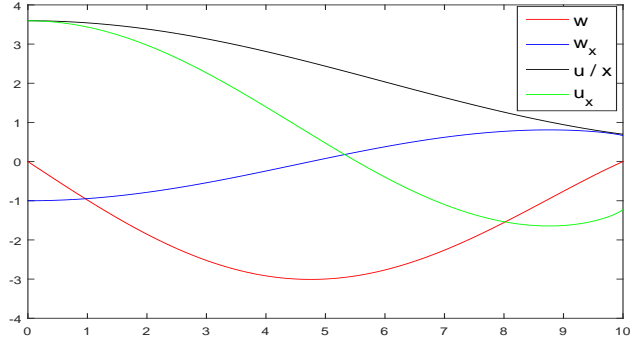}
   \caption{Approximate self-similar profile}
   \label{fig:profile}
 \end{figure}

All the numerical computations and quantitative verifications are performed by MATLAB (version 2019a) in the double-precision floating-point operation. The MATLAB codes can be found via the link \cite{Matlabcode}. To make sure that our computer-assisted proof is rigorous, we adopt the standard method of interval arithmetic (see \cite{rump2010verification,moore2009introduction}). In particular, we use the MATLAB toolbox INTLAB (version 11 \cite{Ru99a}) for the interval computations. Every single real number $p$ in MatLab is represented by an interval $[p_l,p_r]$ that contains $p$, where $p_l,p_r$ are precise floating-point numbers of 16 digits. Every computation of real number summation, multiplication or division is performed using the interval arithmetic, and the outcome is hence represented by the resulting interval $[P_l,P_r]$ that strictly contains $P$. We then obtain a rigorous upper bound on $|P|$ by rounding up $\max\{|P_l|,|P_r|\}$ to 2 significant digits (or 4 when necessary). We remark that, when encountering a non-essential ill-conditioned computation, especially a division, we will replace it by an alternative well-conditioned one. For example, for some function $f(x)$ such that $f(0)=0,f_x(0)<+\infty$, the evaluation of $\f{f(x)}{x}$ at $x=0$ will be replaced by the evaluation of $f_x(0)$.

\subsubsection{Compact support of the approximate profile}\label{sec:comp}
The approximate profile $\bar{\om}$ we obtain actually has compact support. Below we explain how we obtain a compactly supported approximate self-similar profile. First let us assume that $\om$ is a solution of the steady state equation (or equivalently self-similar equation), i.e. taking $\om_t = 0$ in \eqref{eq:DGdya1},
\[
(c_l x +  u ) \om_x = (c_{\om} + u_x) \om  ,   \quad    u_x = H \om .
\]
Differentiating both sides and then evaluating the resulting equation at $x=0$, we obtain 
\[
(c_l + u_x) \om_x |_{x=0} = (c_{\om} + u_x) \om_x |_{x=0} ,
\]
which implies $c_l = c_{\om}$, provided that $\om_x(0) \neq 0$. Suppose that we have a finite time self-similar blowup. Then the scaling factor $c_{\om}$ is negative. See the discussion in Section \ref{sec:dsform}. It follows that $c_l = c_{\om} < 0$. This also holds true for the approximate profile: $\bar c_l = \bar c_{\om} < 0$. Moreover, we have that $\bar u > 0$ for $x>0$ and grows sublinearly for large $x$. The difference between the signs of $\bar c_l x$ and $\bar u(x)$ and their different growth rates for large $|x|$ lead to the following change of sign in the approximate profile
\[
\bar c_l x_0 + \bar u(x_0) = 0,  \quad \bar c_l x +\bar u(x) > 0  \textrm{ for } 0\leq x < x_0, \quad \bar c_l x + \bar u(x) < 0  \textrm{ for } x> x_0,
\]
for some $x_0 > 0$. We expect that a similar change of sign occurs in the dynamic variable $c_l x + u$ and the solution of \eqref{eq:DGdya1} will form a shock. 
When we solve $\bar{\om}$ numerically, we can fix 
the point where the sign of $c_l x + u$ changes by imposing \eqref{eq:DGnormal}. Moreover, the approximate profile satisfies that $\bar c_{\om} + \bar u_x(x)$ is negative for $x > x_0$ (see Figure \ref{fig:profile}). For $x > x_0$, we expect that the dynamic variable $ c_{\om} +  u_x(x) $ is also negative, which implies that $( c_{\om} +  u_x(x) )  \om$ in \eqref{eq:DGdya1} is a damping term. For $x > x_0$, due to the transport term $ (c_l x + u) \om_x$ with $c_l x + u(x) < 0$ and the damping effect $(c_{\om } + u_x(x)) \om$, the solution tends to have compact support. For this reason, in our computation, we have chosen the initial data with compact support and controlled the support of the solution by imposing \eqref{eq:DGnormal}. As a result, the approximate profile also has compact support.

\subsubsection{Regularity of the approximate profile}\label{sec:regular}
In the domain $[-L, L]$, since $\bar{\om}$ is obtained from the cubic spline interpolation, it has the regularity $C^{2, 1}[-L, L]$. Moreover, since $\bar \om(x) = 0$ for $|x| \geq L$, $\bar \om$ is a Lipschitz function on the real line.
We remark that $\bar{\om}$ is in $H^1(\RR)$ but not in $H^2(\RR)$ since $\bar{\om}_x$ is discontinuous at $x = \pm L$, i.e. $\om_x( \pm L ) \neq 0$ (see Figure \ref{fig:profile}). Multiplying $(x^2 - L^2)$, we get a compactly supported and global Lipschitz function $(x^2 - L^2) \bar{\om}_x$. Hence we can define the Hilbert transform of $( (x^2 - L^2) \bar{\om}_x)_x$ which is in  $L^p$ for any $ 1 \leq p < +\infty$.

Applying \eqref{eq:commute4} in Lemma \ref{lem:commute}, we have
\[
\bar{u}_{xx} = H \bar{\om}_x , \quad \bar{u}_{xxx} (x^2 - L^2)= H ( \bar{\om}_{xx} (x^2 -L^2)) .
\]
Using the regularity of $\bar{\om}$, we have that $\bar{u}$ is at least $C^3$ in $(-L, L)$ and $\bar{u}_{xx}$ grows 
logarithmically near $x = \pm L$ since $\bar{\om}_x$ is discontinuous at $x = \pm L$.

\subsubsection{Regularity of the perturbation}
We will choose odd initial perturbation $\om_0$ such that $\om_0 + \bar{\om} \in C_c^{\infty}$ and $\om_{0,x}(0) =0$. Standard local well-posedness result shows that $\om + \bar{\om}$ remains smooth locally in time. Hence, the regularity of $\om$ and $\bar{\om}$ are the same before blowup. Since the odd symmetry of the solution $\om + \bar{\om}$ is preserved and $\bar{\om}$ is odd, this implies that $\om$ is odd. From this property and $\om_x(0) =0$ (see \eqref{eq:vanish_a1}), $\om$ is of order $O(x^3)$ near $x=0$. On the other hand, we have $\om(\pm L) = 0$ since its support lies in $[-L, L]$. In the following derivation, the boundary terms when we perform integration by parts on $\om$ terms will vanish, which can be justified by these vanishing conditions. We will use this property without explicitly mentioning it.

In \cite{martineznonlinear}, the De Gregorio model \eqref{eq:DG} with $a=1$ was solved numerically on $\R$ for $t \in [0,1]$. The author demonstrated the growth of the solution numerically and plotted the solutions at several times that have similar profiles, which share some similar structure with our $\bar{\om}$.

\subsection{Linear stability of the approximate self-similar profile}\label{sec:a1_L2}

Linear stability analysis plays a crucial role in establishing the existence and stability of the self-similar profile. We will establish the linear stability of the approximate self-similar profile in this subsection.

Linearizing \eqref{eq:DGdya1} around $\bar{\om}, \bar{u}, \bar{c}_l,\bar{c}_{\om}$ yields
\beq\label{eq:DGline}
\om_t +  ( \bar{c}_l x + \bar{u} ) \om_x = ( \bar{c}_{\om} + \bar{u}_x ) \om  + ( u_x + c_{\om} ) \bar{\om} - ( u + c_l x ) \bar{\om}_x +N(\om) + F(\bar{\om}) \;,
\eeq
where $\om, u, c_l, c_{\om}$ are the perturbations of the approximate self-similar profile, $N$ and $F$ are the nonlinear terms and the residual error, respectively
\beq\label{eq:NF_a1}
N(\om) = ( c_{\om} + u_x ) \om - (c_l x + u) \om_x, \quad F(\bar{\om}) = ( \bar{c}_{\om} + \bar{u}_x ) \bar{\om}- (\bar{c}_l x +\bar{ u}) \bar{\om}_x.
\eeq

\paragraph{\bf{Main ideas in our linear stability analysis}}
Compared to \eqref{eq:lin_a0}, \eqref{eq:DGline} does not contain a small parameter $a$ in the nonlocal term $(u+c_l x) \bar \om_x$, which makes it substantially harder to establish linear stability. There are three key observations in our linear stability estimates. First of all, we observe that the $u_x \bar \om$ term (vortex stretching) is harmless to the linear stability analysis as we have shown in Section \ref{sec:a0}. We construct the weight function carefully to fully exploit the cancellation between $u_x$ and $\om$ (see Lemma \ref{lem:vel_a1}). Secondly, we observe that there is a competition between the advection term $u \om_x$ and the vortex stretching term $u_x \om$. We expect some cancellation between their perturbation $u \bar{\om}_x$ and $u_x \bar{\om}$. By exploiting this cancellation, we obtain a sharper estimate of $u/x$ by $\om$, which improves the corresponding estimate using the Hardy inequality \eqref{eq:hd1}. Roughly speaking, for $x$ close to $0$, the term $u/x$ can be bounded by $ \om / 5$ in some appropriate norm; similarly, for $x$ close to $L$, the term $( u(x) - u(L)) / (x-L)$ can be bounded by $\om / 3$ in some appropriate norm.
The small constants, $1/5$ and $1/3$, are essential for us to obtain sharp estimates on the non-local term $u$. If we had used a rough estimate with constant $1/5$ replacing by $1/2$,
we would have failed to establish linear stability. Using the first two observations, the estimate of most interactions can be reduced to the estimate of some boundary terms. In order to obtain a sharp stability constant, we express these boundary terms as the projection of $\om$ onto some functions and exploit the cancellation between different projections to obtain the desired linear stability estimate.

Due to the odd symmetry of $u, \om$, we just need to focus on the positive real line. 
Denote
\[
\la f, g \ra  \teq \int_0^{L} f g dx ,\quad  || f ||_p = || f||_{L^{p}[0,L]}
\]
for any $1 \leq p \leq \infty$. For most integrals we consider, it is the same as the integral from $0$ to $+\infty$ since
the support of $\om$ lies in $[-L, L]$. Define a singular weight function on $[0, L]$
\beq\label{eq:wg_a1}
\vp \teq  \lt(- \f{1}{x^3} - \f{e}{x} - \f{f \cdot 2 x}{L^2 - x^2} \rt)
\cdot \lt( \chi_1  \lt(\bar{\om} - \f{x \bar{\om}_x }{5} \rt) + \chi_2 \lt( \bar{\om} -\f{ (x-L) \bar{\om}_x}{3}  \rt)  \rt)^{-1},
\eeq
where $\chi_1, \chi_2 \geq 0$ are cutoff functions such that $\chi_1 + \chi_2 =1 $ and
\[
\chi_1(x) = \begin{cases}
1  & x \in [0,4] \\
0 & x \in [6, 10] \\
\end{cases} ,\quad
\chi_1(x) = \f{  \exp\lt( \f{1}{x-4} + \f{1}{x-6}  \rt) }{ 1 +  \exp\lt( \f{1}{x-4} + \f{1}{x-6}  \rt)  } \
\forall  x \in[4, 6] .
\]
Note that the denominator in \eqref{eq:wg_a1} is negative in $(0,L)$
and that $\vp > 0 $ is a singular weight and is of order $O(x^{-4})$ near $x=0$, $O(  (x-L)^{-2})$ near $x= L$.

Performing the weighted $L^2$ estimate on \eqref{eq:DGline} yields
\beq\label{eq:lin_a11}
\bal
\f{1}{2} \f{d}{dt} \la \om^2 ,\vp \ra  &= \B\la - ( \bar{c}_l x +\bar{ u} )\om_x + (\bar{c}_w + \bar{u}_x) \om, \om \vp \B\ra
+ \B\la (u_x + c_{\om}) , \bar{\om} \om \vp \ra  - \la (  c_l x + u) , \bar{\om}_x \om \vp \B\ra  \\
& +  \la N(\om), \om \vp \ra + \la F(\bar{\om}) , \om  \vp \ra \teq D + I + N_1 +F_1.
\eal
\eeq
Note that the support of the perturbation lies in $[-L, L]$, the above integral is effectively from $0$ to $L$. For $D$, we use integration by parts to obtain 
\beq\label{eq:lin_a1_D}
D = \B\la  \f{1}{2\vp}   (  ( \bar{c}_l x +\bar{ u}  ) \vp)_x    +  (\bar{c}_w + \bar{u}_x) , \om^2 \vp \B\ra \teq \la D(\bar{\om}), \om^2 \vp \ra .
\eeq
From \eqref{eq:wg_a1}, we know that $\vp(x) = O(x^{-4})$ near $x=0$ and $\vp(x) = O( (x-L)^{-2})$ near $x =L$. Using these asymptotic properties of $\vp$, one can obtain that 
\[ D(\bar{\om})(0) =  -( \bar{c}_l + \bar{u}_x(0)) /2  < 0, \quad D(\bar{\om})(L) =   ( \bar{c}_l + \bar{u}_x(L)) /2  <0 . \]
We can verify rigorously that $D(\bar{\om})(x) $ is negative on $[0, L)$ pointwisely. In particular, we treat $ \la D(\bar{\om}), \om^2 \vp \ra$ as a damping term. See Section \ref{sec:damp} for the discussions on the derivation of the damping term.

We estimate the interaction near $x =0$ and $x=L$ differently. First we split the $I$ term into two terms as follows:
\beq\label{eq:I0}
I= \la (u_x + c_{\om})  \bar{\om}  - (  c_l x + u)  \bar{\om}_x, \om \vp \chi_1 \ra
 +  \la (u_x + c_{\om})  \bar{\om}  - (  c_l x + u)  \bar{\om}_x, \om \vp \chi_2 \ra \teq I_1 + I_2.
 \eeq
 We use different decompositions of $(u_x + c_{\om})  \bar{\om}  - (  c_l x + u)  \bar{\om}_x$ for $x$ close to $0$ and to $L$. For $x$ close to $0$ (the $\chi_1$ part), we use $c_{\om} = c_l$ to obtain
 \[
 \bal
&(u_x + c_{\om})  \bar{\om}  - (  c_l x + u)  \bar{\om}_x
= (u_x + c_{\om}) \lt(\bar{\om} - \f{1}{5} \bar{\om}_x x \rt)
+  x \bar{\om}_x \lt(  \f{1}{5}(u_x + c_{\om}) - \f{ u+ c_l x}{x}  \rt)  \\
 =& (u_x + c_{\om}) \lt(\bar{\om} - \f{1}{5} \bar{\om}_x x \rt)
+  x \bar{\om}_x \lt( \f{1}{5} (u_x -u_x(0)) - \f{ u - u_x(0)x}{x}  - \f{4}{5}(c_{\om} +u_x(0))  \rt) .
\eal
 \]
For $x$ close to $L$ (the $\chi_2$ part), using $ c_{\om} =c_l   = - u(L) /L $ \eqref{eq:DGnormal}, we have
 \[
  u + c_l x = u - u(L) + c_l (x - L).
 \]
 Therefore, we obtain
\[
\bal
&(u_x + c_{\om})  \bar{\om}  - (  c_l x + u)  \bar{\om}_x
= (u_x + c_{\om}) \bar{\om}
-   (x-L) \bar{\om}_x \cdot \f{ u - u(L) + c_l (x - L)}{x- L}   \\
 =& (u_x + c_{\om}) \lt(\bar{\om} - \f{1}{3} \bar{\om}_x (x-L) \rt)
+  (x-L) \bar{\om}_x \lt(  \f{1}{3}(u_x + c_{\om}) - \f{ u - u(L) + c_l (x - L)}{x- L}  \rt)  \\
 = & (u_x + c_{\om}) \lt(\bar{\om} - \f{1}{3} \bar{\om}_x (x-L) \rt)
+  (x-L) \bar{\om}_x  \lt(  \f{1}{3}(u_x - u_x(L)) - \f{ u - u(L) - u_x(L) (x - L)}{x- L} \rt) \\
& - \f{2}{3} (x-L) \bar{\om}_x (c_{\om} + u_x(L))  . \\
 \eal
\]
 Using \eqref{eq:I0} and the above decompositions near $x=0$, we get 
 \beq\label{eq:I1}
 \bal
I_1 &=\B\la \lt( \f{1}{5}\f{u_x - u_x(0)}{x^2} - \f{u - u_x(0) x}{x^3}  \rt) , x^3 \bar{\om}_x\om \vp \chi_1 \B\ra
+ \B\la (c_{\om} + u_x),  \lt( \bar{\om} - \f{1}{5} \bar{\om}_x x  \rt) \om\chi_1 \vp \B\ra  \\
&- \f{4}{5} (c_{\om} + u_x(0)) \la x \bar{\om}_x, \om \chi_1 \vp   \ra
\teq I_{11} + I_{12} + I_{13}  .
\eal
\eeq
Similarly, near $x = L$, we have 
\beq\label{eq:I2}
\bal
 I_2 & = \B\la \lt( \f{1}{3} \f{  u_x - u_x(L) }{x -L} - \f{ u -u(L) -u_x(L) (x-L)}{ (x-L)^2}   \rt) 
(x-L)^2 \bar{\om}_x \om \vp \chi_2  \B\ra   \\
& + \B\la (c_{\om} + u_x) ,\lt(\bar{\om} - \f{1}{3}\bar{\om}_x (x-L)   \rt)\om \vp \chi_2  \B\ra
 - \f{2}{3} (c_{\om} + u_x(L)) \la  (x-L) \bar{\om}_x , \om \vp \chi_2 \ra  \\ 
 & \teq I_{21} + I_{22}  +I_{23}. 
 \eal
 \eeq

 \subsubsection{The first part: the interior interaction}
To handle the first term on the right hand side of \eqref{eq:I1} and \eqref{eq:I2}, i.e. $I_{11}, I_{21}$, we use the Cauchy-Schwarz inequality to obtain
\beq\label{eq:I1_1}	
\bal
I_{11} &\leq  \B| \B| \lt( \f{1}{5}\f{u_x - u_x(0)}{x^2} - \f{u - u_x(0) x}{x^3}  \rt) \B| \B|_2
|| x^3 \bar{\om}_x\om \vp \chi_1 ||_2  ,\\
I_{21} &\leq   \B| \B|  \f{1}{3} \f{  u_x - u_x(L) }{x -L} - \f{ u -u(L) -u_x(L) (x-L)}{ (x-L)^2}     \B| \B|_2 ||  (x-L)^2 \bar{\om}_x \om \vp \chi_2||_2.
\eal
\eeq
Using integration by parts yields
\beq\label{eq:iden}
\bal
&\B| \B| \lt( \f{1}{5}\f{u_x - u_x(0)}{x^2} - \f{u - u_x(0) x}{x^3}  \rt) \B| \B|^2_2 \\
 =& \f{1}{25}  \B| \B| \f{u_x - u_x(0)}{x^2} \B| \B|^2_2  - \f{2}{5} \int_0^L \f{ (u_x - u_x(0))\cdot (  u - u_x(0) x) }{x^5} dx + \B| \B| \f{u - u_x(0) x}{x^3} \B| \B|^2_2 \\
 =& \f{1}{25}  \B| \B| \f{u_x - u_x(0)}{x^2} \B| \B|^2_2 -
  \f{1}{5} \f{ (u - u_x(0) x)^2}{x^5} \B|_0^L - \f{1}{5} \cdot 5 \int_0^L  \f{ (  u - u_x(0) x)^2 }{x^6} dx
+ \B| \B| \f{u - u_x(0) x}{x^3} \B| \B|^2_2 \\
 = & \f{1}{25}  \B| \B| \f{u_x - u_x(0)}{x^2} \B| \B|^2_2 -  \f{1}{5 L^5} (  u(L) - u_x(0) L  )^2 =  \f{1}{25}  \B| \B| \f{u_x - u_x(0)}{x^2} \B| \B|^2_2 -  \f{1}{5 L^3} (  c_{\om}+ u_x(0)  )^2 \\
 \leq & \f{1}{25}  \B| \B| \f{\om}{x^2}\B| \B|^2_2 -  \f{1}{5 L^3} (  c_{\om}+ u_x(0)  )^2, \\
\eal
\eeq
where we have used $c_{\om}=c_l = - u(L)/L$ in the second to the last line. To obtain the last inequality, we have used estimate \eqref{eq:hd1} with $p=4$, the facts that the integral in $|| \cdot ||_2$ is from $0$ to $L$ and that $\om$ supports in $[-L, L]$. Denote $v \teq u -u(L) - u_x(L) (x-L) .$ Obviously, we have
\[v(L) = v_x(L) = 0, \ v(0)  = -u(L) + u_x(L) L  = L (c_{\om} + u_x(L)).  \]
Using the above formula and integration by parts, we obtain 
\beq\label{eq:iden21}
\bal
 &\B| \B|  \f{1}{3} \f{  u_x - u_x(L) }{x -L} - \f{ u -u(L) - u_x(L) (x-L)}{ (x-L)^2}     \B| \B|_2^2  =
\B| \B|  \f{1}{3} \f{  v_x }{x -L} - \f{  v  }{ (x-L)^2}     \B| \B|_2^2   \\
 =& \f{1}{9}  \B| \B|  \f{  v_x }{x -L} \B| \B|_2^2
- \f{2}{3}\int_0^L \f{v v_x}{(x-L)^3} dx  +  \B| \B|  \f{  v }{ (x -L)^2} \B| \B|_2^2  \\
=& \f{1}{9}  \B| \B|  \f{  v_x }{x -L} \B| \B|_2^2
-  \f{1}{3} \f{ v^2}{(x-L)^3} \B|^L_0 - \f{1}{3} \cdot 3 \int_0^L \f{ v^2}{(x-L)^4} dx  +\B| \B|  \f{  v }{ (x -L)^2} \B| \B|_2^2  \\
= &  \f{1}{9}  \B| \B|  \f{  v_x }{x -L} \B| \B|_2^2   +  \f{1}{3} \f{v(0)^2}{(0-L)^3}  = \f{1}{9}  \B| \B|  \f{  u_x - u_x(L) }{x -L} \B| \B|_2^2 - \f{1}{3 L} ( c_{\om} + u_x(L))^2   .  \\
\eal
\eeq
Using a formula similar to \eqref{lem:vel0} yields
\[
( u_x - u_x(L)) (x-L)^{-1} = H\lt( \om (x-L)^{-1 } \rt) .
\]
We further obtain the following by using the $L^2$ isometry of the Hilbert transform
\beq\label{eq:iden22}
\bal
  \int_0^L \f{(u_x - u_x(L))^2}{(x-L)^2 } dx  &= \int_{x\in R} \f{\om^2}{(x-L)^2} dx -\int_{x \notin [0, L]}  \f{  ( u_x - u_x(L) )^2 }{ (x -L)^2} dx.
\eal
\eeq
Note that the Cauchy-Schwarz inequality implies
\[
\bal
&\int_{x \notin [0, L]}  \f{  ( u_x - u_x(L) )^2 }{ (x -L)^2} dx \geq \int_{-L}^0 \f{(u_x - u_x(L))^2}{(x-L)^2 } dx  \\
 \geq & \lt(\int_{-L}^0 (u_x - u_x(L)) dx \rt)^2  \lt(  \int_{-L}^0 (x-L)^2 dx \rt)^{-1}  \\
 =&  ( u(0) - u(-L) -u_x(L) L  )^2 \lt( \f{7}{3} L^3\rt)^{-1} = \f{3}{7} \f{ (c_{\om} + u_x(L))^2 L^2}{L^3}
 = \f{3}{7} \f{ (c_{\om} + u_x(L))^2 }{L} .
\eal
\]
Combining \eqref{eq:iden21}, \eqref{eq:iden22} and the above inequality, we get
\beq\label{eq:iden23}
\bal
 &\B| \B|  \f{1}{3} \f{  u_x - u_x(L) }{x -L} - \f{ u -u(L) - u_x(L) (x-L)}{ (x-L)^2}     \B| \B|_2^2 \\
 =& \f{1}{9} \int_{x\in R} \f{\om^2}{(x-L)^2} dx - \f{1}{9}\int_{x \notin [0, L]}  \f{  ( u_x - u_x(L) )^2 }{ (x -L)^2}  dx - \f{1}{3 L} ( c_{\om} + u_x(L))^2   \\
 \leq& \f{1}{9} \int_{x\in R} \f{\om^2}{(x-L)^2} dx - \lt(\f{1}{3 L}+ \f{1}{21L} \rt) ( c_{\om} + u_x(L))^2  . \\
 \eal
\eeq
Combining \eqref{eq:I1_1} , \eqref{eq:iden} and \eqref{eq:iden23} and using the elementary inequality
$xy \leq \lam x^2 + \f{1}{ 4\lam} y^2$, we obtain the estimate for $I_{11}, I_{21}$,
\beq\label{eq:I1_2}
\bal
I_{11} + I_{21} &\leq 25 a_1 \B| \B| \lt( \f{1}{5}\f{u_x - u_x(0)}{x^2} - \f{u - u_x(0) x}{x^3}  \rt) \B| \B|^2_2 + \f{1}{100 a_1}   || x^3 \bar{\om}_x\om \vp \chi_1 ||^2_2   \\
&+ 9 a_2  \B| \B|  \f{1}{3} \f{  u_x - u_x(L) }{x -L} - \f{ u -u(L) -u_x(L) (x-L)}{ (x-L)^2}     \B| \B|^2_2 + \f{1}{36 a_2}   ||  (x-L)^2 \bar{\om}_x \om \vp \chi_2||^2_2 \\
& \leq a_1  \B| \B| \f{\om}{x^2}\B| \B|^2_2 + \f{1}{100 a_1}   || x^3 \bar{\om}_x\om \vp \chi_1 ||^2_2  + a_2 \int_{x\in R} \f{\om^2}{(x-L)^2} dx \\
& + \f{1}{36 a_2}   ||  (x-L)^2 \bar{\om}_x \om \vp \chi_2||^2_2 - a_2 \lt( \f{3}{L} +  \f{3}{7L}\rt) ( c_{\om} + u_x(L))^2  ,\\
\eal
\eeq
where $a_1, a_2 > 0$ are some parameters to be chosen later.

\subsubsection{The second part}
Combining $I_{12}, I_{22}$ in \eqref{eq:I1}, \eqref{eq:I2} respectively, and using the definition of $\vp$ \eqref{eq:wg_a1}, we obtain
\beq\label{eq:I2_1}
\bal
& I_{12} + I_{22} =  \B\la (c_{\om} + u_x), \lt\{  \lt( \bar{\om} - \f{1}{5} \bar{\om}_x x  \rt) \chi_1
+   \lt(\bar{\om} - \f{1}{3}\bar{\om}_x (x-L)   \rt) \chi_2 \rt\}
\om  \vp \B\ra  \\
   =& \B\la  (c_{\om} + u_x) \om, \lt(- \f{1}{x^3} - \f{e}{x} - \f{f \cdot 2 x}{L^2 - x^2} \rt)   \B\ra  \\
    =& (c_{\om} + u_x(0)) \B\la \om, - \f{1}{x^3} - \f{e}{x}  \B\ra
    +\B\la  (u_x - u_x(0)) \om, - \f{1}{x^3} - \f{e}{x}  \B\ra + \B\la  (c_{\om} + u_x) \om, - \f{f \cdot 2 x}{L^2 - x^2}    \B\ra,
\eal
\eeq
where $e$ and $f$ are constants in the definition of $\vp$ \eqref{eq:wg_a1}.
Since $\om \in C^{2, 1}$ and $\om(0) = \om_x(0) = \om_{xx}(0) =0$, we have $\om \cdot x^{-3} \in L^1$ and the above integrals are well-defined.
Using  \eqref{lem:vel3} and \eqref{lem:vel4}, we obtain 
\beq\label{eq:iden31}
\bal
\B\la \f{ (u_x - u_x(0) )  w}{x^3} \B\ra & = \f{1}{2} \int_{\RR} \f{ (u_x - u_x(0) )  w}{x^3} dx = 0,
\\
  \B\la \f{ (u_x -u_x(0)) \om}{x} \B\ra &= \f{1}{2} \int_{\RR} \f{ (u_x - u_x(0) )  w}{x} dx =  \f{\pi}{4} u_x^2(0) . \\
  \eal
  \eeq
  Note that $(c_{\om} + u_x) \om$ is odd. The Tricomi identity Lemma \ref{lem:tric} implies
  \beq\label{eq:iden32}
  \bal
  &\B\la (c_{\om} + u_x) \om , -\f{2x}{L^2 - x^2}  \B\ra = -\int_{\RR^+} (c_{\om} + u_x) \om \lt( \f{1}{L- x} -\f{1}{L + x}  \rt) dx \\
  =& -\int_{\RR}  \f{  (c_{\om} + u_x) \om   }{L-x} dx = - \pi H( (c_{\om} +u_x) \om )(L) =  -\pi c_{\om} H \om (L) - \pi H(u_x \om)(L)  \\
  =& - \pi c_{\om} u_x(L) - \f{\pi}{2} ( u_x^2(L) - \om^2(L) )  =  - \pi c_{\om} u_x(L) - \f{\pi}{2}  u_x^2(L)  . \\
\eal
\eeq
Combining \eqref{eq:I2_1}, \eqref{eq:iden31} and \eqref{eq:iden32}, we obtain
\beq\label{eq:I2_2}
I_{12} + I_{22} = (c_{\om} + u_x(0)) \B\la \om,  \lt(- \f{1}{x^3} - \f{e}{x} \rt) \B\ra   - \f{\pi e}{4} u_x^2(0)   - f \pi c_{\om} u_x(L) - \f{ f \pi}{2}  u_x^2(L).
\eeq

\subsubsection{The remaining part: the boundary interaction}\label{sec:the_remaining_part}
Let $a_3 \teq a_2 ( \f{3}{L} +  \f{3}{7L} )$.
The negative term that appears in the last term of \eqref{eq:I1_2} can be written as
\beq\label{eq:I3_1}
- a_2 ( \f{3}{L} +  \f{3}{7L} ) ( c_{\om} + u_x(L))^2 = - 
a_3 ( c_{\om} + u_x(L))^2 .
\eeq
Combining \eqref{eq:I3_1}, \eqref{eq:I2_2}, $I_{13}, I_{23}$ in \eqref{eq:I1} and \eqref{eq:I2}, we obtain
\beq\label{eq:I3_2}
\bal
& I_{12} + I_{22} + I_{13} + I_{23}  - a_3 ( c_{\om} + u_x(L))^2  \\
 =& (c_{\om} + u_x(0)) \B\la \om,  \lt(- \f{1}{x^3} - \f{e}{x} \rt) \B\ra   - \f{ e \pi }{4} u_x^2(0)   - f \pi c_{\om} u_x(L) - \f{ f \pi}{2}  u_x^2(L)  \\
  & - \f{4}{5} (c_{\om} + u_x(0)) \la \om, x \bar{\om}_x \chi_1 \vp   \ra - \f{2}{3} (c_{\om} + u_x(L)) \la \om, (x-L) \bar{\om}_x \chi_2 \vp \ra - a_3 ( c_{\om} + u_x(L))^2\\
  =& u_x(0) \lt( \B\la \om,  \lt(- \f{1}{x^3} - \f{e}{x} \rt) -\f{4}{5} x \bar{\om}_x \chi_1 \vp  \B\ra   - \f{e \pi}{4} u_x(0)   \rt)  \\
 & + c_{\om} \lt(  \B\la \om,  \lt(- \f{1}{x^3} - \f{e}{x} \rt) -\f{4}{5} x \bar{\om}_x \chi_1 \vp
 - \f{2}{3} (x-L) \bar{\om}_x \chi_2 \vp\B\ra
   - f \pi u_x(L) - a_3 c_{\om} \rt)  \\
   & + u_x(L) \lt(  \B\la w , - \f{2}{3} (x-L) \bar{\om}_x \chi_2 \vp \B\ra - \f{ f\pi}{2}u_x(L) - 2 a_3c_{\om} - a_3 u_x(L) \rt). \\
\eal
\eeq
Note that
\[
\bal
u_x(0)  & = -\f{2}{\pi} \int_0^L \f{\om}{x} dx   , \quad u_x(L)  =  \f{1}{\pi} \int_0^L  \f{2 x}{L^2 - x^2}  \om dx ,\\
c_{\om} & = - \f{u(L)}{L} = \f{1}{ L\pi} \int_0^L \log \lt(  \f{ L+x}{L-x} \rt) \om(x) dx .\\
\eal
\]
All the integrals in \eqref{eq:I3_2} and $c_{\om}, u_x(0), u_x(L)$ are the projection of $\om$ onto some explicit functions. We use the cancellation of these functions to obtain a sharp estimate of the right hand side of \eqref{eq:I3_2}. 
Denote
\beq\label{eq:defg}
\bal
g_{c_{\om}} & \teq  \f{1}{L \pi }  \log \lt( \f{  L+x }{L-x}\rt)  ,\quad g_{u_x(0)} \teq - \f{2}{\pi x} , \quad g_{u_x(L)} \teq \f{2 x} { \pi (L^2 - x^2)} , \\
g_1 & \teq  \lt(- \f{1}{x^3} - \f{e}{x} \rt) -\f{4}{5} x \bar{\om}_x \chi_1  \vp - \f{ e \pi }{4} g_{u_x(0)} ,\\
g_2 &\teq    \lt(- \f{1}{x^3} - \f{e}{x} \rt) -\f{4}{5} x \bar{\om}_x \chi_1 \vp
 - \f{2}{3} (x-L) \bar{\om}_x \chi_2 \vp   - f \pi  g_{u_x(L)} -a_3 g_{c_{\om}} ,\\
 g_3 & \teq - \f{2}{3} (x-L) \bar{\om}_x \chi_2 \vp  - \lt( \f{f\pi}{2} +a_3 \rt) g_{u_x(L)} -2 a_3 g_{c_{\om}}.
\eal
\eeq
With these notations, we can rewrite \eqref{eq:I3_2} as follows
\beq\label{eq:I3_3}
\bal
&u_x(0) \la \om, g_1 \ra  + c_{\om}  \la \om, g_2 \ra + u_x(L) \la \om, g_3 \ra \\
=&  \la \om , g_{u_x(0)} \ra \la \om, g_1 \ra + \la \om , g_{c_{\om}} \ra \la \om, g_2 \ra + \la \om, g_{u_x(L)} \ra \la \om, g_3 \ra .
\eal
\eeq
For some function $R \in C([0,L]), R> 0$ to be chosen, we introduce
\beq\label{eq:opt61}
\bal
y &\teq  (R\vp)^{1/2} \om , \quad  f_1 \teq (R\vp)^{-1/2} g_{u_x(0)} , \quad
f_2 \teq  (R\vp)^{-1/2} g_1, \quad f_3 \teq  (R \vp)^{-1/2} g_{c_{\om}},\\
 f_4 &\teq  (R\vp)^{-1/2} g_2 , \quad f_5 \teq (R\vp)^{-1/2}g_{u_x(L)} , \quad f_6 \teq (R \vp)^{-1/2} g_3.
 \eal
\eeq
Our goal is to find the best constant of the following inequality for any $\om \in L^2(\vp)$ 
\beq\label{eq:opt62}
\bal
 \la f_1 , y\ra \la f_2, y \ra + \la f_3 , y\ra \la f_4 , y\ra  + \la f_5 , y\ra \la f_6 , y\ra & \leq C_{opt} ||y||_2^2 ,\\
\eal
\eeq
which is equivalent to
\[
 \la \om , g_{u_x(0)} \ra \la \om, g_1 \ra + \la \om , g_{c_{\om}} \ra \la \om, g_2 \ra + \la \om, g_{u_x(L)} \ra \la \om, g_3 \ra   \leq C_{opt} \la R, \om^2 \vp\ra,
\]
so that we can estimate \eqref{eq:I3_3} by $\la R, \om^2 \vp \ra$ with a sharp constant. From the definition of functions $g, f$, we have that $g_3 \in \mathrm{span} ( g_{c_{\om}}, g_{u_x(0)}, g_{u_x(L)},g_1, g_2 ) $ and
\beq\label{def:V}
f_6 \in \mathrm{span} ( f_1, f_2,..,f_5 ) \teq V , \quad \mathrm{dim} V = 5.
\eeq
 Without loss of generality, we assume $y \in V$ since $|| Py ||_2 \leq ||y||_2$ and $\la y , f_i \ra = \la P y , f_i \ra$, where $P$ is the orthogonal projector onto $V$. Suppose that $\{e_i\}_{i=1}^5$ is an orthonormal basis (ONB) in $V$ with respect to the $L^2$ inner product on $[0, L]$. It can be obtained via the Gram-Schmidt procedure. Then we have $z = \sum_{i=1}^5 \la z ,e_i \ra e_i$ for any $z \in V$. We consider the linear map $ T : V \to R^5$ defined by $(T z)_i = \la z, e_i \ra, \ \forall z  \ \in V$. It is obvious that $T$ is a linear isometry from $(V, \la \cdot , \cdot \ra_{L^2})$ to $R^5$ with the Euclidean inner product, i.e. $|| Tz ||_{l^2} = || z||_{L^2}$. Denote $v = Ty, v_i = T f_i \in R^5$ . Using the linear isometry, i.e. $\la f_i , y\ra = v^T v_i$ and $||y||^2_2 = v^Tv$,  we can reduce \eqref{eq:opt62} to
  \[
 \sum_{1 \leq i \leq 3} (v^T v_{2i-1} ) ( v_{2i}^T v ) =  v^T (  \sum_{1 \leq i \leq 3}  v_{2i-1} v_{2i}^T   ) v  \leq C_{opt} v^T v  .
 \]
  Denote $M \teq  \sum_{1 \leq i \leq 3}  v_{2i-1} v_{2i}^T \in R^{5\times 5}$. Then the above inequality becomes $v^T M v  \leq C_{opt} v^T v$. Using the fact that $v^T M v = v^T M^T v $, we can symmetrize it to obtain 
  \[
v^T \f{M+M^T}{2} v \leq C_{opt} v^T v.
 \]
 Since $(M^T + M)/2$ is symmetric, the optimal constant $C_{opt}$ is the maximal eigenvalue of $ (M + M^T) / 2$, i.e.
 \beq\label{eq:opt64}
C_{opt} = \lam_{\max}\lt( \f{M+M^T}{2} \rt) =   \lam_{\max} ( \f{1}{2} \sum_{1 \leq i \leq 3} ( v_{2i-1} v_{2i}^T +
v_{2i} v_{2i-1}^T)).
 \eeq
We remark that maximal eigenvalue $\lam_{\max}$ is independent of the choice of the ONB of $V$. For other ONB, the resulting $\lam_{\max}$ will be $\lam_{\max} (  Q (M+M^T) Q^T /2 )$ for some orthonormal matrix $Q \in R^{5\times 5}$, which is the same as \eqref{eq:opt64}. Using \eqref{eq:I3_2}, \eqref{eq:I3_3}, \eqref{eq:opt62} and \eqref{eq:opt64}, we have proved
\beq\label{eq:opt65}
I_{12} + I_{22} + I_{13} + I_{23}  - a_3 ( c_{\om} + u_x(L))^2 
\leq \lam_{\max} ( \f{1}{2} \sum_{1 \leq i \leq 3} ( v_{2i-1} v_{2i}^T +
v_{2i} v_{2i-1}^T)) \la R, \om^2 \vp \ra,
\eeq
where $v_{i} \in R^5$ is the coefficient of $f_i$ (see \eqref{eq:opt61}) expanded under an ONB $\{ e_i\}_{i=1}^5$ of $V = \mathrm{span}(f_1, f_2,..,f_5)$, i.e. the $j$-th component of $v_i$ satisfies $v_{ij} = \la f_i, e_j \ra$. We will choose $R$ so that $\lam_{\max} < 1$ and then the left hand side can be controlled by $\la R, \om^2 \vp \ra$.

\subsubsection{Summary of the estimates}
\label{coeff-optimized}
In summary, we collect all the estimates of $I_{ij}, i=1,2, j=1,2,3$, \eqref{eq:I1}, \eqref{eq:I2}, \eqref{eq:I1_2} and \eqref{eq:opt65}
to conclude
\beq\label{eq:I_tot}
\bal
&\la (u_x + c_{\om})  \bar{\om}  - (  c_l x + u) , \bar{\om}_x, \om \vp  \ra = I = I_1 + I_2
= \sum_{i=1,2, j=1,2,3} I_{ij} \\
\leq & a_1  \B| \B| \f{\om}{x^2}\B| \B|^2_2 + \f{1}{100 a_1}   || x^3 \bar{\om}_x\om \vp \chi_1 ||^2_2  + a_2 \int_{x\in R} \f{\om^2}{(x-L)^2} dx  \\
+& \f{1}{36 a_2}   ||  (x-L)^2 \bar{\om}_x \om \vp \chi_2||^2_2  + \lam_{\max} ( \f{1}{2} \sum_{1 \leq i \leq 3} ( v_{2i-1} v_{2i}^T + v_{2i} v_{2i-1}^T)) \la R, \om^2 \vp \ra \\
\teq & \la A(\bar{\om}) ,\om^2 \vp \ra + \lam_{\max} ( \f{1}{2} \sum_{1 \leq i \leq 3} ( v_{2i-1} v_{2i}^T +
v_{2i} v_{2i-1}^T)) \la R, \om^2 \vp \ra,
\eal
\eeq
where $A(\bar{\om})$ is the sum of the four terms in the first inequality and is given by
\[
A(\bar{\om}) =  \lt( \f{a_1}{x^4} + \f{a_2}{ (x-L)^2} + \f{a_2}{(x+L)^2} \rt) \vp^{-1}
+ \f{ (x^3\bar{\om}_x \chi_1)^2 \vp }{100 a_1} + \f{ ( (x-L)^2  \bar{\om}_x \chi_2 )^2 \vp  }{36 a_2} .
\]

\paragraph{\bf{Optimizing the parameters}} To optimize the estimate, we choose
\beq\label{eq:para_opt}
\bal
e &= 0.005, \ f = 0.004, \ a_1 =\f{1}{6}, \ a_2 = 1.4 f = 0.0056,  \ a_3  =a_2  ( 3 +  \f{3}{7}) /L = 0.00192. 
\eal
\eeq

After specifying these parameters, the coefficient of the damping term $D(\bar{\om})$ (see \eqref{eq:lin_a11}) and the coefficient of the estimate of the interior interaction $A(\bar{\om})$ are completely determined. Then we choose
\beq\label{eq:tot1R}
R(\bar{\om}) = -D(\bar{\om}) - A(\bar{\om}) - 0.3 
\eeq
in \eqref{eq:opt61}.  
The numerical values of $D(\bar{\om})$, $A(\bar{\om})$ and $R(\bar{\om})$ on the grid points are plotted in the first subfigure in Figure \ref{DG_linear}. We can verify rigorously (see the discussion below) 
that $R(\bar{\om}) =-D(\bar{\om}) - A(\bar{\om}) - 0.3 > 0$. In particular, the coefficient of the damping term satisfies $D(\bar{\om}) < -0.3 - A(\bar{\om})$ and is negative pointwisely.
The corresponding $f_i$ in \eqref{eq:opt62} are determined. 
The optimal constant in \eqref{eq:opt65} can be computed  : 
\beq\label{eq:I_tot2}
C_{opt} = \lam_{\max} ( \f{1}{2} \sum_{1 \leq i \leq 3} ( v_{2i-1} v_{2i}^T + v_{2i} v_{2i-1}^T)) < 1 .
\eeq
Combining $\la D(\bar{\om}),  \om^2 \vp \ra$ in \eqref{eq:lin_a11}, \eqref{eq:I_tot} and \eqref{eq:I_tot2},
we obtain the linear estimate
\beq\label{eq:lin_a12}
\bal
&\f{1}{2}\f{d}{dt}  \la \om^2 , \vp \ra =  \la D(\bar{\om}),  \om^2 \vp \ra + I + N_1 + F_1  \\
\leq &\la D(\bar{\om}),  \om^2 \vp \ra + \la A(\bar{\om}), \om^2 \vp \ra + \la R(\bar{\om}) ,\om^2 \vp \ra
+ N_1 + F_1  = -0.3 \la \om^2 , \vp \ra + N_1 + F_1.
\eal
\eeq

\begin{figure}[t]
      \centering
      \includegraphics[width=\textwidth]{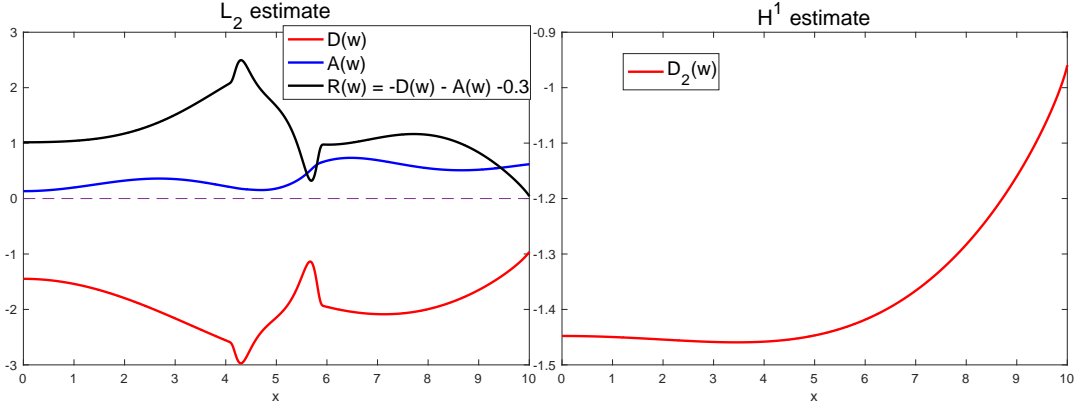}
\caption{ Left: Coefficients of the damping term $D(\bar{\om})$ in the $L^2$ estimate, the estimate of the interior interaction $A(\bar{\om})$ and the remaining terms $R(\bar{\om})$. 
Right: Coefficient of the damping term $D_2(\bar \om)$ in the $H^1$ estimate. 
 }
\label{DG_linear}
\vspace{-0.1in}
    \end{figure}

For those who are not interested in the rigorous verification of the numerical values, they can skip the following discussion and jump to Section \ref{sec:a1h1} for the weighted $H^1$ estimate.

\subsection{Rigorous verification of the numerical values}
\label{Numerical-verification}

We will use the following strategy to verify $R(\bar{\omega}) > 0$ \eqref{eq:tot1R}, $C_{opt} < 1$ \eqref{eq:I_tot2} and $D_2(\bar{\om}) < -0.95$ \eqref{eq:a1_H1_damping} to be discussed later. These quantities appear in the weighted Sobolev estimates and are determined by the profile.

(a) \textbf{Obtaining an explicit approximate self-similar profile.} As described in section \ref{subsec:Computation}, our approximate self-similar profile $\bar\omega$ is expressed in terms of a piece-wise cubic polynomial over the grid points $x_i=\frac{iL}{n},i=0,\cdots,n$. The function values, $\bar\omega(x_i),\bar \omega_x(x_i)$, which are used to construct the cubic Hermite spline, are computed accurately up to double-precision, and will be represented in the computations using the interval arithmetic with exact floating-point bounding intervals. All the following computer-assisted estimates are based on the rigorous interval arithmetic. 

(b) \textbf{Accurate point values of $\bar u,\bar u_x,\bar u_{xx}$.} 
We have described how to compute the value of 
$\bar u_x(x)$ (or $\bar u(x),\bar u_{xx}(x)$) from certain integrals involving $\bar \om$ on $[-L,L]$ in paragraph (3) in Section \ref{subsec:Computation}. For any $x\in [0,L]$, the integral contribution to $\bar u_x(x)$ from mesh intervals within $m=8$ mesh points distance from $x$ is computed exactly using analytic integration. In the outer domain that is $8h$ distance away from $x$, the integrand $\bar \om(y) / (x-y)$ is not singular and we use a composite $8$-point Legendre-Gauss quadrature. 
There are two types of errors in this computation. The first type of error is the round-off error in the computation. 
The second type of error is due to the composite Gaussian quadrature that we use to approximate the integral in the outer domain. Notice that in each interval $[ih, (i+1)h]$ away from $x$, $\bar \om$ is a cubic polynomial and the integrand $\bar \om(y) / (x-y)$ is smooth. We can estimate high order derivatives of the integrand rigorously in these intervals. 
With the estimates of the derivatives, we can further establish error estimates of the Gaussian quadrature. In particular, we prove the following error estimates of the composite Gaussian quadrature in the computation of $\bar u_x, \bar u , \bar u_{xx}$ in the Supplementary material \cite[Section 7]{CHHSupp19}
\[
Error_{GQ}(u_x) < 2 \cdot 10^{-17}, \quad 
Error_{GQ}(u) < 2 \cdot 10^{-19}, \quad 
Error_{GQ}(u_{xx}) < 5 \cdot 10^{-18}.
\]

These two types of errors will be taken into account in the interval representations of $\bar u_x$. That is, each $\bar u_x(x)$ will be represented by $[\lfloor\bar u_x(x)-\epsilon\rfloor_f,\lceil\bar u_x(x)+\epsilon\rceil_f]$ in any computation using the interval arithmetic, where $\lfloor\cdot\rfloor_f$ and $\lceil\cdot\rceil_f$ stand for the rounding down and rounding up to the nearest floating-point value, respectively. We remark that we will need the values of $\bar u_x(x)$ at finitely many points only. The same arguments apply to $\bar u(x)$ and $\bar u_{xx}(x)$ as well.

(c) \textbf{Rigorous estimates of integrals.} In many of our discussions, we need to rigorously estimate the integral of some function $g(x)$ on $[0,L]$. In particular, we want to obtain $c_1,c_2$ such that $c_1\leq \int_0^Lg(x)dx\leq c_2$. A straightforward way to do so is by constructing two sequences of values $g^{up}=\{g^{up}_i\}_{i=1}^n,g^{low}=\{g^{low}_i\}_{i=1}^n$ such that 
\[g^{up}_i \geq \max_{x\in[x_{i-1},x_i]}g(x)\quad\text{and}\quad g^{low}_i \leq \min_{x\in[x_{i-1},x_i]}g(x).\]
Then we can bound 
\[h\cdot \sum_{i=1}^ng^{low}_i \leq \int_0^Lg(x)dx\leq h\cdot\sum_{i=1}^ng^{up}_i.\]

In most cases, we will construct $g^{up}$ and $g^{low}$ from the grid point values of $g$ and an estimate of its first derivative. Let $g^{\max}=\{g^{\max}_i\}_{i=1}^N$ denote the sequence such that $g^{\max}_i = \max\{|g^{up}_i|,|g^{low}_i|\}$. Then if we already have $g_x^{\max}$, we can construct $g^{up}$ and $g^{low}$ as 
\[g^{up}_i = g(x_i) + h\cdot(g_x^{\max})_i\quad \text{and}\quad g^{low}_i = g(x_i) - h\cdot(g_x^{\max})_i.\]
We can use this method to construct the piecewise upper bounds and lower bounds for many functions we need. For example, our approximate steady state $\bar{\om}$ is constructed to be piecewise cubic polynomial using the standard cubic spline interpolation. Since $\bar{\om}_{xxx}$ is piecewise constant, we have $\bar{\om}_{xxx}^{up}$ and $\bar{\om}_{xxx}^{low}$ for free from the grid point values of $\bar{\om}$. Then we can construct $\bar{\om}_{xx}^{up/low},\bar{\om}_{x}^{up/low}$ and $\bar{\om}^{up/low}$ recursively.

Note that for some explicit functions, we can construct the associated sequences of their piecewise upper bounds and lower bounds more explicitly. For example, for a monotone function $g$, $g^{up}$ and $g^{low}$ are just the grid point values.

Moreover, we can construct the piecewise upper bounds and lower bounds for more complicated functions. For instance, if we have $f_a^{up/low}$ and $f_b^{up/low}$ for two functions, then we can construct $g^{up/low}$ for $g = f_af_b$ using standard interval arithmetic. In this way, we can estimate the integral of all the functions we need in our computer-aided arguments. 

Sometimes we need to handle the ratio between two functions, which may introduce a removable singularity. For example, in the construction of $D(\bar \om)^{up}$ and $D(\bar \om)^{low}$ for $D(\bar \om)$ in \eqref{eq:lin_a1_D}, it involves $ \f{x \vp_x}{\vp}, \f{\bar u \vp_x}{\vp}$ and $\vp$ is a singular weight of order $x^{-4}$ near $x=0$. Directly applying interval arithmetic to the ratio near a removable singularity can lead to large errors. We hence need to treat this issue carefully. For example, let us explain how to reasonably construct $g^{up/low}$ for a $g(x) = f(x)/x$ such that $f(x)$ has continuous first derivatives and $f(0)=0$. Suppose that we already have $f^{up/low}$ and $f_x^{up/low}$. Then for some small number $\varepsilon>0$, we let 
\[g^{up}_i = \max\left\{\frac{f^{up}_i}{x_{i-1}},\frac{f^{low}_i}{x_{i-1}},\frac{f^{up}_i}{x_i},\frac{f^{low}_i}{x_i}\right\}\quad \text{for each index $i$ such that $x_{i-1}\geq \varepsilon$}.\]
Otherwise, for $x\in [0,\varepsilon)$, we have
\[g(x) = \frac{f(x)}{x} = f_x(\xi(x))\quad \text{for some $\xi(x)\in[0,x)\subset[0,\varepsilon)$}.\] 
Then we choose $g^{up}_i = \max_{x\in[0,\varepsilon]} f_x^{up}$ for every index $i$ such that $x_i\leq \varepsilon$. The parameter $\varepsilon$ needs to be chosen carefully. On the one hand, $\varepsilon$ should be small enough so that the bound $f_x(\xi(x))\leq \max_{\tilde{x}\in[L-\varepsilon,L]}|f_x(\tilde{x})|$ is almost tight for $x\in[L-\varepsilon,L]$. On the other, the ratio $\varepsilon/h$ must be large enough so that $f^{up}_i/x_{i-1},f^{low}_i/x_{i-1},f^{up}_i/x_i $ and $f^{low}_i/x_i$ are close to each other for $x_{i-1}\geq \varepsilon$. Other kinds of removable singularities can be handled in a similar way. 

See more detailed discussions in the Supplementary Material \cite[Section 1.3]{CHHSupp19}.

(d) \textbf{Estimates of some (weighted) norms of $\bar{\omega},\bar u$.} Once we have used the preceding method to obtain $\bar{\om}_{xxx}^{up/low},\bar{\om}_{xx}^{up/low},\bar{\om}_{x}^{up/low}$ and $\bar{\om}^{up/low}$ from the grid point values of $\bar{\om}$, we can further estimate some (weighted) norms of $\bar{\om}$, e.g. $||\bar{\om}_x||_{L^{\infty}}, ||\bar{\om}_{xx}||_{L^{\infty}}$, rigorously. Moreover, from the discussion of the regularity of $\bar{u}, \bar{\om}$ in Section \eqref{sec:regular}, the needed norms of $\bar{u}$, e.g. $||\bar{u}_x ||_{\infty}, || \bar{u}_{xx}||_{L^2}$, can be bounded by some norms of $\bar{\om}$. See more detailed discussions in the Supplementary Material \cite[Section 1.3]{CHHSupp19}.

(e) \textbf{Rigorous and accurate estimates of certain integrals.} Our rigorous estimate for integrals in the preceding part (c) is only first order accurate. Yet this method is not accurate enough if the target integral is supposed to be a very small number. 
When we need to obtain a more accurate estimate of the integral of some function $P$, we use the composite Trapezoidal rule  
\[
\int_{ah}^{bh} P(x) dx = \sum_{a \leq i < b} ( P( x_i) + P( x_{i+1}) ) h/ 2 + \mathrm{error}(P) .
\] 
The composite Trapezoidal rule uses the values of $P$ on the grid points only, which can be obtained up to the round off error. The numerical integral error, $\mathrm{error}(P)$, can be bounded by the $L^1$ norm of its second order derivative, i.e. $C|| P^{\prime \prime} ||_{L^1} h^2$ for some absolute constant $C$. We use this approach to obtain integral estimates of some functions involving the residual $F(\bar{\om})$. For each function $P$ that we integrate, we prove in the Supplementary Material \cite[Section 3]{CHHSupp19} that $|| P^{\prime \prime} ||_{L^1}$ can be bounded by some (weighted) norms of $\bar u, \bar \om$, e.g. $|| \bar \om ||_{L^{\infty}}$, $|| \bar u_x ||_{L^{\infty}}$ and $||  \f{\bar \om_{xx} }{x}||_{L^2}$. Since these norms can be estimated by the method discussed previously, we can establish rigorous error bound for the integral.

(f) \textbf{Rigorous estimates of $C_{opt}$.} 
Denote by $M_s$ the matrix in \eqref{eq:opt64}
\[
M_s \triangleq \f{1}{2} \sum_{i=1}^3 (v_{2i-1} v_{2i}^T +  v_{2i} v_{2i-1}^T ) = \frac{1}{2}V_1V_2^T,\]
where $V_1 \triangleq [v_1,v_2,v_3,v_4,v_5,v_6]\in \mathbb{R}^{6\times 6}$ and $V_2 \triangleq [v_2,v_1,v_4,v_3,v_6,v_5]\in \mathbb{R}^{6\times 6}$, and $\{v_i\}_{i=1}^6$ are defined as in Section~\ref{sec:the_remaining_part}. Note that $M_s$ is symmetric, but not necessarily positive semidefinite. The optimal constant $C_{opt}$ is then the maximal eigenvalue of $M_s$. To rigorously estimate $C_{opt}$, we first bound it by the Schatten $p$-norm of $M_s$:
\begin{equation}\label{C*_bound}
C_{opt} \leq \|M_s\|_{p} \triangleq \mathrm{Tr}[|M_s|^p]^{1/p}\quad \text{for all $p\geq 1$}.
\end{equation}
Here $|M_s| = \sqrt{M_s^T M_s} = \sqrt{M_s^2}$. In particular, if $p$ is an even number, we have $|M_s|^p = M_s^p$. Therefore,
\[\mathrm{Tr}[|M_s|^p] = 2^{-p}\cdot \mathrm{Tr}[(V_1V_2^T)^p] = 2^{-p}\cdot \mathrm{Tr}[(V_2^TV_1)^p] \triangleq 2^{-p}\cdot \mathrm{Tr}[X^p]\]
where $X = V_2^TV_1$. Note that each entry of $X$ is the inner product between some $v_i$ and $v_j$, $i,j=1,\dots,6$. Recall from \eqref{def:V} and its following paragraph that $v_i = T f_i, i=1,2,..,6$
and that $T : V \to R^5$ is a linear isometry. We have 
\[
\la  f_i,f_j \ra =  \la Tf_i, Tf_j \ra = v_i^Tv_j.\]
Therefore, to compute the entries of $X$, we only need to compute the pairwise inner products between $f_1,\dots,f_6$ (we do not need to compute the coordinate vectors $v_i$ explicitly). This is done by interval arithmetic based on the discussion in the preceding part (c). Therefore each entry $X_{ij}$ of $X$ is represented by a pair of numbers that we can bound from above and below.
Once we have the estimate of $X$, we can compute an upper bound of $\mathrm{Tr}[X^p]$ stably and rigorously by interval arithmetic, which then gives a bound on $C_{opt}$ via \eqref{C*_bound}. In particular, we choose $p=4$ in our computation, and we can rigorously verify that $C_{opt}<1$.

\subsection{Weighted $H^1$ estimate}\label{sec:a1h1}
We choose
\beq\label{eq:wg_a1_H1}
\psi =  - \f{1}{\bar{\om}} \lt( \f{1}{x} - \f{x}{L^2} \rt) , \quad x \in [0, L],
\eeq
as the weight for the weighted $H^1$ estimate. Note that the weight $\psi$ is nonnegative for $0\leq x\leq L$, and is of order $x^{-2}$ near $x = 0$ and $O(1)$ near $x= L$.
We can perform the weighted $H^1$ estimate as follows
\beq\label{eq:lin_a1_H1}
\bal
\f{1}{2} \f{d}{dt} \la \om_x^2, \psi \ra & = \la - ( ( \bar{c}_l x + \bar{u} ) \om_{x} )_x +   ( ( \bar{c}_{\om} + \bar{u}_x ) \om )_x, \om_x \psi \ra
 + \la ( ( u_x + c_{\om} )  \bar{\om}  )_x, \om_x  \psi \ra  \\
 & - \la ( ( u + c_l x ) \bar{\om}_x)_x ,\om_x \psi\ra  + \la N(\om)_x, \om_x \psi \ra +
\la F(\bar{\om})_x , \om_x  \psi \ra   \\
& \teq I + II + III  + N_2 + F_2.
\eal
\eeq
For $I$, we use $\bar{c}_l = \bar{c}_{\om}$ and integration by parts to get
\beq\label{eq:a1_H1_D2}
\bal
I &= \B\la - (\bar{c}_l x + \bar{u}) \om_{xx} +\bar{u}_{xx} \om, \om_x \psi \B\ra \\
& = \B\la   \f{1}{2\psi} (  ( \bar{c}_l x +  \bar{u} ) \psi )_x  ,\om_x^2 \psi \B\ra + \la \bar{u}_{xx} \om, \om_x \psi \ra \teq \la D_2(\bar{\om}), \om_x^2 \psi\ra + \la \bar{u}_{xx} \om, \om_x \psi \ra .
\eal
 \eeq
The first term in $I$ is a damping term. We plot the numerical values of $D_2(\bar{\om})$ on the grid points in Figure \ref{DG_linear}. We can verify rigorously that it is bounded from above by $-0.95$. Thus we have
\beq\label{eq:a1_H1_damping}
I = \la D_2(\bar{\om}), \om_x^2 \psi\ra + \la \bar{u}_{xx} \om, \om_x \psi \ra \leq -0.95 \la \om_x^2, \psi \ra+ \la \bar{u}_{xx} \om, \om_x \psi \ra 
\teq -0.95 \la \om_x^2, \psi \ra+ I_2,
\eeq
where $I_2 = \la \bar{u}_{xx} \om, \om_x \psi \ra$. For $II, III$, we note that
\[
\bal
II + III &= \la u_{xx} \bar{\om} + (u_x + c_{\om}) \bar{\om}_x
 - (u_x + c_l) \bar{\om}_x - (c_l x + u) \bar{\om}_{xx}, \om_x \psi \ra   \\
 &= \la u_{xx} \bar{\om} ,\om_x \psi \ra  -\la  (c_l x + u) \bar{\om}_{xx}, \om_x \psi \ra
  \teq II_1 + II_2.
\eal
\]
Using the definition of $\psi$, we get
\[
II_1 = \la u_{xx} \bar{\om}, \om_x \psi \ra  =  \B\la  u_{xx} \om_x, - \f{1}{x} + \f{x}{L^2}   \B\ra .
\]
Since $\om_x(0) = 0$ by the normalization condition and $u_{xx}(0) = 0$ by the odd symmetry, we can use the same cancellation as we did in \eqref{eq:lin_a04} to get
\[
\B\la u_{xx} \om_x ,  -\f{1}{x} \B\ra = 0, \quad \la u_{xx} \om_x, x \ra =0.
\]
Therefore $II_1$ vanishes and we get
\beq\label{eq:a1_H1_rem1}
II  + III = II_2  =-\la  (c_l x + u) \bar{\om}_{xx}, \om_x \psi \ra,
\eeq
which is a cross term. In fact, after performing integration by parts, it becomes interaction among some lower order terms, i.e. of the order lower than $\om_x$ (e.g. $u, u_x, \om$).

\begin{remark}\label{rem:com_ass1}
So far, we have established all the delicate estimates of the linearized operator that exploit cancellations of various nonlocal terms. We have obtained the linear stability at the $L^2$ level and the linear stability estimates for the terms of the same order as $\om_x$, e.g. $u_{xx}$, in the weighted $H^1$ estimates after performing integration by parts. The remaining estimates \textit{do not} require specific structure of the equation. Suppose that we have a sequence of approximate steady states $\om_{h_i}$ with $h_i$ converging to $0$ that enjoy similar estimates and have approximation error $\la F(\om_{h_i})^2, \vp\ra + \la F(\om_{h_i})_x^2, \psi \ra$ of order $h_i^{\b}$ for some constant $\b>0$ independent of $h_i$, where $F(\om_{h_i})$ is defined similarly as that in \eqref{eq:NF_a1}. Then we can apply the above stability analysis to the profile $\om_{h}$ and the argument in Sections \ref{sec:nonsta_a},\ref{sec:conv_a0} to finish the remaining steps of the proof by choosing a sufficiently small $h = h_n$. Here $h$ plays a role similar to the small parameter $a$ in these sections. 
An important observation is that $h_n$ and the required approximation error to close the whole argument can be estimated effectively. Once we have determined $h_n$, we can construct the approximate steady state $\om_{h_n}$ numerically and verify whether $\om_{h_n}$ enjoys similar estimates and has the desired approximation error {\it a posteriori}.
\end{remark}

In the following discussion, we first give some rough bounds and show that the remaining terms can be bounded by the weighted $L^2$ or $H^1$ norm of $\om$ with constants depending continuously on $\bar{\om}$. This property implies that similar bounds will also hold true if we replace the approximate steady state $\bar{\om}$ by another profile $\hat{\om}$, if $\bar \om - \hat \om$ is sufficiently small in some energy norm.
We will provide other steps in the computer-assisted part of the paper later in this section.



 The remaining linear terms in the weighted $H^1$ estimate are $I_2 = \la \bar{u}_{xx} \om, \om_x \psi \ra $ in \eqref{eq:a1_H1_damping} and $II_2$ in \eqref{eq:a1_H1_rem1}. Denote $\rho = x^{-2} + (x-L)^{-2}$. Note that $u + c_l x |_{x=0, L} =0, c_{\om} = c_l$.  Applying integration by parts to the integral $ || u_x + c_l - \f{1}{2} ( u+ c_lx) / x ||_2^2 , || u_x + c_l - \f{1}{2} ( u + c_lx ) / (x - L) ||_2^2$ and using an argument similar to those in \eqref{eq:iden}, \eqref{eq:iden21}, we get
\[
|| (u+ c_l x )\rho^{1/2} ||_2^2 = \int_0^L  (u+ c_l x)^2 (\f{1}{x^2} + \f{1}{ (x-L)^2} ) dx \leq 8 \int_0^L (u_x + c_l)^2 dx .
\]
Using the $L^2$ isometry of the Hilbert transform, the identity $ \int_0^L u_x dx = u(L) = - L  \cdot c_l$ and expanding the square, we further obtain 
\[
 || (u+ c_l x )\rho^{1/2} ||_2^2 \leq 8 || \om||_2^2 + 8( 2 c_l \cdot u(L) + L c_l^2) 
 \leq  8 || \om||_2^2  \leq  8 || \vp^{-1} ||_{L^{\infty}}  \la \om^2, \vp \ra.
\]
Applying the Cauchy-Schwarz inequality, we can estimate $I_2, II_2$ as follows 
\[
\bal
|I_2| &= | \la  \bar{u}_{xx} \om, \om_x \psi  \ra |    \leq || \bar{u}_{xx} \psi^{1/2} \vp^{-1/2} ||_{L^{\infty}[0,L]} \la \om^2, \vp\ra^{1/2}  \la \om_x^2, \psi \ra^{1/2},  \\
|II_2|  &= |\la  (c_l x + u) \bar{\om}_{xx}, \om_x \psi |\ra  \leq 
  || \rho^{-1/2} \bar \om_{xx} \psi^{1/2} ||_{L^{\infty}[0,L]} 
  \la (c_l x + u)^2, \rho \ra^{1/2} \la \om_x^2, \psi \ra^{1/2} . 
\eal
\]
Hence, combining the above estimates, we yield
\beq\label{eq:a1_H1_rough}
|I_2| + |II_2| \leq C_1(\bar{\om}) \la \om^2, \vp\ra^{1/2} 
\la \om_x^2, \psi \ra^{1/2},  \\
\eeq
where 
\beq\label{eq:lin_a1_C1}
\bal
C_1(\bar{\om}) & \teq  || \bar{u}_{xx} \psi^{1/2} \vp^{-1/2} ||_{L^{\infty}[0,L]}
+  \sqrt{8} || \rho^{-1/2} \bar \om_{xx} \psi^{1/2} ||_{L^{\infty}[0,L]} || \vp^{-1}||^{1/2}_{L^{\infty}}
\eal
\eeq
and $\rho = x^{-2} + (x-L)^{-2}$.
From the definitions of $\vp, \psi$ \eqref{eq:wg_a1}, \eqref{eq:wg_a1_H1},  the quantities appeared in $C_1(\bar{\om})$ satisfy that 
\[
\bal
  &\vp^{-1} = O( ( x^{-4} + (x-L)^{-2} )^{-1} ), \quad  |\bar{u}_{xx} \psi^{1/2} \vp^{-1/2} | = O( | \bar{u}_{xx}  ( x^{-1} + (L-x)^{-1})^{-1} | ),  \\
 & | (x^{-2} + (x-L)^{-2} )^{-1/2} \bar{\om}_{xx} \psi^{1/2} | = O( |(1 + (x-L)^{-2}  )^{-1/2}  \bar{\om}_{xx} | ) .
 \eal
\]
In particular, these quantities are bounded for any $x \in [0, L]$ and thus $C_1(\bar{\om})$ is finite. 

Therefore, combining \eqref{eq:lin_a1_H1}, \eqref{eq:a1_H1_damping}, \eqref{eq:a1_H1_rem1} and \eqref{eq:a1_H1_rough}, we prove for any $\e >0$, 
\beq\label{eq:lin_a13}
\f{1}{2} \f{d}{dt}  \la \om_x^2, \psi \ra
\leq - 0.95 \la \om_x^2 ,\psi \ra + \e \la \om_x^2 ,\psi \ra + (4\e)^{-1}C_1(\bar{\om})^2 \la \om^2, \vp \ra
+  N_2 + F_2 ,
\eeq
From \eqref{eq:lin_a12} and \eqref{eq:lin_a13}, we can choose $\e , \mu > 0 $ and construct the energy $E(t)^2 = \la \om^2,\vp \ra + \mu \la \om_x^2, \psi \ra$ such that
\beq\label{eq:lin_a1_tot}
\f{d}{dt} E(t)^2 \leq - C(\mu, \e) E(t)^2 + N_1 + F_1 + \mu (N_2 + F_2),
\eeq
where $C(\mu ,\e) > 0$ depends on $\mu, \e$. For example, one can choose $\e = 0.65, \; \mu = 0.4 \e C_1(\bar{\om})^{-2}$ to obtain $C(\mu, \e) = 0.2$. We have now completed the weighted $L^2$ and $H^1$ estimates at the linear level.

\subsubsection{Nonlinear stability}\label{sec:non_a1}
Recall that $N, F$ are defined in \eqref{eq:NF_a1}, $N_1, F_1$ in \eqref{eq:lin_a11}, and $N_2, F_2$ in \eqref{eq:lin_a1_H1}. Since $c_l = c_{\om}$, a direct calculation yields
$ \pa_x N(\om) =  u_{xx} \om - (c_l x + u) \om_{xx} $.

Using integration by parts similar to that in \eqref{eq:lin_a1_D} and \eqref{eq:a1_H1_D2}, we obtain 
\[
\bal
& N_1  + \mu N_2 = \B\la \f{1}{2\vp}   (  ( {c}_l x + u  ) \vp)_x    +  (c_{\om} + u_x) , \om^2 \vp \B\ra
+ \mu \B\la \f{1}{2\psi} (  ( c_l x +  u ) \psi )_x , \om_x^2 \psi \B\ra 
+ \mu \la u_{xx}  \om, \om_x \psi \ra .\\
\eal
\]
Recall $E(t) = ( \la \om^2 ,\vp \ra + \mu \la \om_x^2 ,\psi \ra)^{1/2}$. We can estimate $ u_x, \om, u_{xx}$ as follows 
\[
\bal
||u_x||_{L^{\infty}} &\leq  2||u_x||_{L^2(\R^+)}^{1/2} ||u_{xx}||_{L^2(\R^+)}^{1/2} 
\leq 2 || \om ||_2^{1/2} || \om_{x}||_2^{1/2} \\
&\leq  2 \mu^{-1/4} || \vp^{-1} ||^{1/4}_{L^{\infty}}  || \psi^{-1} ||^{1/4}_{L^{\infty}} E(t), \\
  || \om||_{L^{\infty}} & \leq || \om_x||_{L^1}
 \leq \la \om_x^2, \psi \ra^{1/2}  || \psi^{-1} ||^{1/2}_{L^1[0,L]} 
 \leq  \mu^{-1/2}|| \psi^{-1} ||^{1/2}_{L^1[0,L]}  E(t) , \\
 || u_{xx} x^{-1}||_2 &\leq || \om_x x^{-1} ||_2 
 \leq \mu^{-1/2} || \psi^{-1/2} x^{-1} ||_{L^{\infty}}  E(t),
  \eal  
\]
where we have used \eqref{eq:commute3}, $\om_x(0) = 0$ and the $L^2$ isometry of the Hilbert transform to obtain the last estimate. Recall $c_l = c_{\om} = -u(L) / L$ \eqref{eq:DGnormal}. We have $c_l x + u |_{x= 0, L}= 0, |c_l |  = |c_{\om} | \leq || u_x||_{L^{\infty}}$ and
\[
\bal
 | c_l x + u | &\leq \min(|x|, |L-x|) \cdot || c_{\om} + u_x||_{L^{\infty}[0, L]}  \leq 
 2\min(|x|, |L-x|)  || u_x||_{\infty} .
 \eal
\]
For any $x \in [0, L]$, using the Leibniz rule, we derive
\[
\B|\f{((c_l x + u ) \vp  )_x }{\vp}\B|  
+ \B|\f{ ((c_l x + u ) \psi  )_x}{\psi} \B|  
\leq 2  ( 2+ \B|\B| ( |x| \wedge (L-x) ) ( \f{ |\vp_x|}{\vp} + \f{ |\psi_x|}{\psi} ) \B| \B|_{L^{\infty}} ) || u_x||_{L^{\infty} } \teq C_2(\bar{\om}) ||u_x||_{L^{\infty}}.
\]

Combining the above estimates, we prove
\beq\label{eq:lin_a142}
\bal
 N_1  + \mu N_2  &\leq  ( C_2(\bar \om) +2 ) || u_x ||_{L^{\infty}}
 (  \la \om^2,  \vp \ra + \mu \la \om_x^2, \psi \ra  )  \\
&+ \mu || x \psi^{1/2} ||_{L^{\infty}} || u_{xx} x^{-1}||_2 || \om ||_{L^{\infty}}  \la \om_x^2, \psi \ra^{1/2}  \leq  C_3(\bar \om ,\mu) E(t)^3, 
\eal
\eeq
where
\[
 C_3( \bar \om, \mu) =  2 \mu^{-1/4}  ( C_2(\bar \om) +2 ) || \vp^{-1} ||^{1/4}_{L^{\infty}}   
 || \psi^{-1} ||^{1/4}_{L^{\infty}}
 + \mu^{-1/2} || x \psi^{1/2}||_{L^{\infty}} || x^{-1} \psi^{-1/2}||_{L^{\infty}}  
|| \psi^{-1}||^{1/2}_{L^1[0,L]}.
 \]
 We remark that the above $L^{\infty}$ norms are taken over $[0, L]$. From the definition of $\vp, \psi$, it is not difficult to verify that $C_3(\bar \om, \mu) < +\infty$.

To estimate the error term, we use the Cauchy-Schwarz inequality 
\beq\label{eq:lin_a15}
\bal
  F_1 + \mu F_2 &= \la F(\bar{\om}), \om \vp \ra + 
\mu \la F(\bar{\om})_x, \om_x \psi\ra  \\
&\leq   ( \la F(\bar{\om})^2, \vp \ra + \mu \la F(\bar{\om})_x^2, \psi\ra  )^{1/2} E(t) \teq \mathrm{error}(\bar{\om}) E(t),
\eal
\eeq


\paragraph{\bf{Guideline for the remaining computer assisted steps}}
Recall the definition of $\vp,\psi$ in \eqref{eq:wg_a1} and \eqref{eq:wg_a1_H1}. From the weighted $L^2$ and $H^1$ estimates, and the estimates of the nonlinear terms, we see that the 
coefficients and constants, e.g. $D(\bar{\om})$ in \eqref{eq:lin_a1_D}, $C_1(\bar{\om})$ in \eqref{eq:lin_a1_C1} and $C_3(\bar{\om},\mu)$ in \eqref{eq:lin_a142}, depend continuously on $\bar{\om}$. Hence, for two different approximate steady states $\om_{h_1}, \om_{h_2}$ computed using different mesh $h_2 < h_1$, 
if $\om_{h_1} - \om_{h_2}$ is small in some norm, e.g. some weighted $L^2$ or $H^1$ norm, we expect that all of these estimates hold true for these two profiles with very similar coefficients and constants. At the same time, the residual error of the profile computed using the finer mesh $\mathrm{error}(\om_{h_2})$ can be much smaller than that of the coarse mesh 
$\mathrm{error}(\om_{h_1})$. In particular, if the numerical solution $\om_h$ exhibits convergence in a suitable norm as we refine the mesh size $h$, then we can obtain a sequence of approximate steady states that enjoy similar estimates with decreasing residual $\mathrm{error(\om_h)}$. See also the Remark \ref{rem:com_ass1}. From our numerical computation, we did observe such convergence 
of $\om_h$ computed using several meshes with decreasing mesh size $h$.
Using the estimates that we have established, we can obtain nonlinear estimate for each profile $\bar{\om}$ similar to \eqref{eq:boot_a0} 
\[
\f{1}{2} \f{d}{dt} E^2(t) \leq  -K_1(\bar{\om}) E^2(t) + K_2(\bar{\om}) E^3(t)  + \mathrm{error}(\bar{\om}) E(t),
\]
where $E(t)^2 = \la \om^2 , \vp \ra + \mu(\bar{\om}) \la \om_x^2, \psi \ra$ and the positive constants $K_1(\bar{\om}), K_2(\bar{\om}), \mu(\bar{\om}) $ depend continuously on $\bar{\om}$. From this inequality, we can estimate the size of $\mathrm{error}(\bar{\om})$ that is required to close the bootstrap argument. A sufficient condition is that there exists $ y > 0$ such that $-K_1(\bar{\om}) y^2+ K_2(\bar{\om}) y^3 + \mathrm{error}(\bar{\om}) y < 0$, which is equivalent to 
\beq\label{eq:boot_error}
4\cdot  \mathrm{error}(\bar{\om}) \cdot K_2(\bar{\om}) < K_1(\bar{\om})^2.
\eeq
Hence, we obtain a good estimate on $\mathrm{error}(\bar{\om})$ that is required to close the whole estimate.

In practice, we first compute an approximate steady state $\bar{\om}_h$ using a relatively coarse mesh, e.g. mesh size $h= L / 1000$ or $L / 2000$ (correspond to 1000 or 2000 grid points). Then we can perform all the weighted $L^2$, $H^1$ estimates and determine the weights $\vp, \psi$, the decomposition in the estimates and all the parameters in \eqref{eq:para_opt} to obtain the linear stability, and perform the nonlinear estimates. After we obtain these estimates, we can determine a upper bound of $\mathrm{error}(\bar{\om})$ using \eqref{eq:boot_error} and choose a finer mesh with mesh size $h_2$ to construct a profile $\bar \om_{h_2}$ with a residual error less than this upper bound. After we extend all the corresponding estimates to the profile $\bar{\om}_{h_2}$, we found that the corresponding constants and coefficients in the estimates are almost the same as those that we have obtained using $\bar \om_h$ constructed by a coarser mesh. Therefore, we can perform analysis on $\bar{\om}_{h_2}$ and close the whole argument.

In the Supplementary material \cite[Sections 2,4]{CHHSupp19}, we will provide much sharper estimates of the cross terms \eqref{eq:a1_H1_rough}, \eqref{eq:lin_a13} and the nonlinear terms \eqref{eq:lin_a142}. These sharper estimates provide an estimate of the upper bound of $\mathrm{error}(\bar \om)$ in \eqref{eq:boot_error} that is not too small. This enables us to choose a modest mesh to construct an approximate profile with a residual error less than this upper bound. In particular, we choose $h = 2.5 \cdot 10^{-5}$ and the computational cost of $\bar{\om}_h$ is affordable even for a personal laptop computer. The rigorous estimate for the residual error of this profile in the energy norm is established in the Supplementary material \cite[Section 3]{CHHSupp19}. More specifically, we can prove the following estimate, which improves the estimate given by \eqref{eq:lin_a13} significantly.

\begin{lem}\label{lem:cross}
The weighted $H^1$ estimate satisfies
\[
\f{1}{2} \f{d}{dt}  \la \om_x^2, \psi \ra  = I + II_2 + N_2 + F_2
\leq - 0.25 \la \om_x^2 ,\psi \ra + 7.5 \la \om^2, \vp \ra +  N_2 + F_2,
\]
where $I, II_2$ combine the damping and the cross terms and are defined in \eqref{eq:a1_H1_damping}, \eqref{eq:a1_H1_rem1}, respectively.
\end{lem}

These refinements are not necessary if one can construct an approximate profile with a much smaller residual error using a more powerful computer with probably $10-100$ times more grid points. With these refined estimates and the rigorous estimate of the residual error of $\bar \om_h$,
we choose $\mu = 0.02$ and bootstrap assumption $E(t) = \la \om^2, \vp \ra + \mu \la \om_x^2, \psi \ra < 5 \cdot 10^{-4}$ to complete the final bootstrap argument. We refer the reader to the Supplementary material \cite[Section 5]{CHHSupp19} for the detailed estimates in the bootstrap argument.

The remaining steps are the same as those in the proof of Theorem \ref{thm:blowup_a0}. Recall the weights $\vp$ \eqref{eq:wg_a1} and $\psi$ \eqref{eq:wg_a1_H1} in the weighted $L^2$ and $H^1$ estimates and the regularity of the approximate profile $\bar \om$ in Section \ref{sec:regular}. Note that $\vp$ is of order $O(x^{-4})$ near $x=0$ and $ O(( x-L)^{-2})$ near $x =  L$, and $\psi$ is of order $O(x^{-2})$ near $x=0$ and $ O(1)$ near $x =  L$. We can choose a small and odd initial perturbation $ \om$ supported in $[-L, L]$ with vanishing $\om_x(0) = 0$ such that $ \om$ restricted to $[0, L]$ satisfies $\om \in L^2(\vp), \om_{x} \in L^2(\psi) $ and $\om  + \bar \om \in C_c^{\infty} $. The bootstrap result implies that for all time $t>0$, the solution $\om(t)+\bar{\om}$, $c_l +\bar{c}_l = c_{\om}(t) + \bar{c}_{\om}$ remain close to $\bar{\om}, \bar{c}_{\om}  ( \bar c_{\om}< -0.69 )$, respectively. Moreover, in the Supplementary Material \cite[Section 6]{CHHSupp19}, we have established the following estimate
\[
\f{1}{2} \f{d}{dt} \la \om_t^2, \vp \ra \leq -0.15  \la \om_t^2, \vp \ra .
\]
Using this estimate and a convergence argument similar to that in Section \ref{sec:conv_a0}, we prove that the solution eventually converges to the self-similar profile $\om_{\infty}$ with scaling factors $c_{l, \infty} = c_{\om, \infty} < 0$. Since $ \g=-\f{c_{l,\infty}}{c_{\om,\infty}} = -1 < 0 $, the asymptotically self-similar singularity is expanding. Thus we obtain an expanding and asymptotically self-similar blowup of the original De Gregorio model with scaling exponent $ \g= -1$ in finite time.

\section{Finite Time Blowup for $C^{\alpha}$ Initial Data}\label{sec:alpha} 


In \cite{Elg17}, Elgindi and Jeong obtained the $C^{\alpha}$ self-similar solution $\om_{\al}$ of the Constantin-Lax-Majda equation
\[
c_l x \om_x = (c_{\om} + u_x) \om 
\]
for all $\al \in (0, 1]$, which reads 
\beq\label{eq:CLM_self}
\bal
w_{\al} & = - \f{ 2\sin\lt( \f{\al \pi}{2}   \rt)  \textrm{sgn} (x) |x|^{\al}   }{ 1 + 2 \cos\lt( \f{\al \pi}{2}   \rt) |x|^{\al}  + |x|^{2\al}   } , \quad u_{\al,x} =   \f{ 2(1 + \cos\lt( \f{\al \pi}{2}   \rt) |x|^{\al}   ) }{ 1 + 2 \cos\lt( \f{\al \pi}{2}   \rt) |x|^{\al} + |x|^{2\al}  } \; , \\
c_l &= \f{1}{\al},  \quad  c_{\om} = -1  , \\
\eal
\eeq
where $c_l, c_{\om}$ are the scaling parameters. 

In this section, we will use the above solutions to construct approximate self-similar solutions analytically and 
use the same method of analysis presented in Section \ref{sec:a0} to prove finite time asymptotically self-similar singularity for $C^{\alpha}$ initial data with small $\alpha$ on both the real line and on the circle. We will focus on solution of \eqref{eq:DG} with odd symmetry that is preserved during the evolution. In particular, we will construct odd approximate steady state and analyze the stability of odd perturbation around the approximate steady state.

\subsection{  Finite time blowup on $\R$ with $C_c^{\al}$ initial data} \label{sec:unif} 

 In this section, we prove Theorem \ref{thm0:unif}. Throughout the proof, we impose $|a \al| < 1$ and $\al < \f{1}{4}$. We choose the following weights in the stability analysis 
\beq\label{eq:wg_alpha}
 \vp_{\al} = - \f{1}{ \sgn(x) \om_{\al}} \f{ 1 + 2 \cos\lt( \f{\al \pi}{2}   \rt) |x|^{\al}  + |x|^{2\al}    }{ |x|^{1 + 2 \al}} , \quad   \psi_{\al} =  \f{1}{\al^2} \vp_{\al} x^2.
\eeq
We choose these weights so that the estimates of $\la\om^2, \vp_{\al}\ra$ and $\la \om_x^2, \psi_{\al} \ra$ are comparable in the energy estimates.

\subsubsection{Normalization Conditions and Approximate  Steady State}
The self-similar equation of DG model with parameter $a$ reads
\beq\label{eq:DG_selfeq}
(c_l x + a u) \om_x = (c_{\om} + u_x) \om \; .
\eeq
For any $a > 0, \al \in (0,1)$, we  construct $C^{\al}$ approximate self-similar profile of  \eqref{eq:DG_selfeq} below
\beq\label{eq:solu_calpha}
\bal
\om_{\al}, \quad u_{\al} , \quad  \bar{c}_{l, \al} &= \f{1}{\al} -  a u_{\al, x}(0) = \f{1}{\al} - 2a,  \quad  \bar{c}_{\om} = -1  . \\
\eal
\eeq
The only difference between the above solution and the $C^{\al}$ self similar solutions of CLM \eqref{eq:CLM_self}  is the $c_l$ term. The above solution satisfies \eqref{eq:DG_selfeq} up to an error
\beq\label{eq:error_alpha}
F_{\al}(\om_{\al})   = -(\bar{c}_l x - \f{1}{\al} x + au_{\al}) \om_{\al, x} =   - a ( u_{\al} - u_{\al,x}(0) x )  \om_{\al, x}  .
\eeq
Linearizing the dynamic rescaling equation \eqref{eqn-dyn-rescale} around the approximate self-similar profile in \eqref{eq:solu_calpha}, we obtain the following equation for the perturbation $\om, u, c_l, c_{\om}$
\beq\label{eq:lin_alpha}
\om_t +  ( \bar{c}_{l, \al} x +au_{\al} ) \om_x = ( \bar{c}_{\om} + u_{\al, x} ) \om  + ( u_x + c_{\om} )  \om_{\al} - ( a u + c_l x )  \om_{\al,x} +N(\om) + F_{\al}(\om_{\al})\; ,
\eeq
where the error term $F_{\al}(\om_{\al})$ is given in \eqref{eq:error_alpha} and the nonlinear part is given by
\[
N(\om) = ( c_{\om} + u_x ) \om - (c_l x + a u) \om_x.
\]

We choose the following normalization conditions for $c_l(t) , c_{\om}(t) $
\beq\label{eq:normal_al}
\bal
& c_l(t) =  - a u_x(t, 0) , \quad c_{\om}(t) = - u_x(t, 0) .
\eal
\eeq
Using  \eqref{eq:solu_calpha} and $u_{\al,x}(0)= 2$, we can rewrite the above conditions as
\beq\label{eq:normal_al2}
 c_l(t) + \bar{c}_l  = \f{1}{\al} - a (u_x(t, 0) + u_{\al,x}( 0) ),\quad c_{\om} + \bar{c}_{\om} = 1 - (u_x(t,0) + u_{\al,x}(0) ).
\eeq

\subsubsection{Estimate of the velocity, the self-similar solution and the error}
We introduce the notation
\beq\label{eq:tildeu}
\td{u} \teq u - u_x(0) x, \quad \td{u}_x = u_x - u_x(0) \;,
\eeq
and use the weighs defined in  \eqref{eq:wg_alpha}  to perform the $L^2, H^1$ estimates.

We first state some useful properties of the $C^{\al}$ approximate self-similar solution that we will use in our stability analysis.
\begin{lem}\label{lem:est_profile}
For $\al \in (0, 1]$, we have the following estimates for the self-similar solutions defined in \eqref{eq:CLM_self}.
(a)  Uniform estimates on the damping effect
\beq\label{eq:est_dp}
\bal
&\f{1}{2\vp_{\al}} \lt( \f{1}{\al} x   \vp_{\alpha} \rt)_x + ( \bar{c}_{\om} + u_{\alpha,x} )   =  -1/2\; , \\
& \f{1}{2\psi_{\al}} \lt( \f{1}{\al} x   \psi_{\al} \rt)_x + ( \bar{c}_{\om} + u_{\alpha,x} ) - \f{1}{\al}   =  -1/2\; , \\
&  \f{  ( u_{\al, xx} \psi_{\al} )_x }{2 \psi_{\al}}  x^2 = \f{ 4 \al^2 |x|^{\al} ( |x|^{\al} + \cos\lt( \f{\al \pi}{2}   \rt)     ) }  { ( 1 + 2 \cos\lt( \f{\al \pi}{2}   \rt) |x|^{\al}  + |x|^{2\al} )^2} \geq 0 \; .
\eal
\eeq

(b) Vorticity and velocity estimates:
\begin{align}
&\B|\B| \f{x w_{\al,x}}{w_{\al}}  \B| \B|_{\infty}  \les   \al  , \quad \B| \B| \f{x^2 w_{\al,xx}}{w_{\al}}  \B| \B|_{\infty} \lesssim \al , \quad  \B| \B| \f{ x^2 \om_{\al, xx} +x \om_{\al, x}  }{ \om_{\al}} \B| \B|_{\infty}  \lesssim \al^2  \; , \label{eq:est_w}\\
&\B|  \f{u_{\al}}{x} - u_{\al, x}(0) \B|   \lesssim  |x|^{\al} \wedge 1 ,\quad  \B|  \f{u_{\al}}{x} - u_{\al, x} \B|   \lesssim  \al ( |x|^{\al} \wedge 1 ) \; . \label{eq:est_v}
\end{align}

(c) Asymptotic estimates of $\vp_{\al} , \psi_{\al}$:
\beq\label{eq:est_asy}
\bal
&\vp_{\al}  \asymp \f{1}{\al}   ( |x|^{-1 - 3\al}  + |x |^{-1 + \al}) \; ,\\
&\psi_{\al}   = \f{1}{\al^2} x^2 \vp_{\al}  \asymp \f{1}{\al^3}   ( |x|^{1 - 3\al}  + |x |^{1 + \al})\; , \\
& \B| \B| \f{ x \psi_{\al, x}  }{\psi_{\al}} - 1  \B|\B|_{\infty}  \lesssim \al, \quad \B|\B| \f{ x \vp_{\al, x}  }{\vp_{\al}} + 1  \B| \B|_{\infty} \lesssim \al \; ,
\eal
\eeq
where $A \asymp B$ means that $A \leq CB$ and $B \leq CA $ for some universal constant $C$.

(d) The smallness of the weighted $L^2$ and $H^1$ errors:
\begin{align}
&\la F_{\al}(\om_{\al})^2, \vp_{\al} \ra \lesssim  a^2 \al^2, \quad \la(  F_{\al}(\om_{\al}))_x^2, \psi_{\al} \ra \lesssim  a^2 \al^2,  \label{eq:est_err}  \\
& \la ( |x|^{\al} \wedge 1 )^2 \om^2_{\al, x}, \psi_{\al} \ra \les 1 \label{eq:est_err2}.
\end{align}
\end{lem}


These estimates are elementary and we defer the proof to the Appendix \ref{app:profile}. 

\begin{remark}
We will use \eqref{eq:est_dp} to derive the damping terms in the weighted $L^2$ and $H^1$ estimates. Using \eqref{eq:est_w}, we gain a small factor $\al$ from the derivatives of $\om_{\al}$.
This enables us to show that the perturbation term $u\om_{\al, x}$ is small. Estimates \eqref{eq:est_asy} shows that $x\psi_{\al,x} / \psi_{\al}, \  x \vp_{\al, x} / \vp_{\al}$  are close to $1$ and $ -1$, respectively, which allows us to estimate $\vp_{\al,x}, \psi_{\al,x}$ effectively.
\end{remark}



\begin{lem}[$L^{\infty}$ estimate]\label{lem:vel_inf}
\begin{align}
 || u_x||_{\infty} &\lesssim  \la \om^2 ,\vp_{\al} \ra^{1/4}    \la \om_x^2 ,\psi_{\al} \ra^{1/4}  ,  \label{lem:vel_inf1} \\
 \B| \td{u}_x - \f{\td{u}}{x} \B|  &\lesssim  \al  \la \om_x^2, \psi_{\al} \ra^{1/2} |x^{\al}|  \wedge 1
\lesssim  \al  \la \om_x^2, \psi_{\al} \ra^{1/2} ,
 \label{lem:vel_inf2} \\
 | \om(x)| &\lesssim  \al  \la \om_x^2, \psi_{\al} \ra^{1/2} |x^{\al}|  \wedge 1,
 \label{lem:w_inf}
  \end{align}
where $\td{u} = u - u_x(0) x$.
\end{lem}


We also defer the proof to the Appendix \ref{app:profile}. \eqref{lem:vel_inf2} shows that we can gain a small factor $\al$ from $\td{u}_x - \f{\td{u}}{x}  = u_x - u / x$.

We use a strategy similar to that in the proof of Theorem \ref{thm:blowup_a0} to prove Theorem \ref{thm0:unif}.
The key step is establishing linear stability by taking advantage of the following:

(a) the stretching effect $\bar{c}_{l, \alpha} x \om_x  $ and the damping term $( \bar{c}_{\om} + u_{x, \al}) \om$ ;

(b) the cancellation \eqref{lem:vel2}, \eqref{lem:vel3} involving the vortex stretching term $ u_x \om_{\al}$;

(c) the smallness of the advection term $a u \om_{\al ,x }$ (see \eqref{eq:est_w}) by choosing $|a \al|$ to be sufficiently small .

To control the velocity $u$,  we need to use Lemma \ref{lem:vel} in the Appendix, which states some nice properties of the Hilbert transform for a H\"older continuous function.

\subsubsection{Linear Estimate}
We first perform the weighted $L^2$ estimate with respect to \eqref{eq:lin_alpha}. We proceed as follows
\beq\label{eq:lin_L2}
\bal
\f{1}{2} \f{d}{dt} \la  \om^2,  \vp_{\al} \ra  &= \la -   ( \bar{c}_{l, \al} x +au_{\al} ) \om_x  +  ( \bar{c}_{\om} + u_{\al, x} ) \om , \om \vp_{\al} \ra
+  \la ( u_x + c_{\om} )  \om_{\al}  ,  \om \vp_{\al} \ra \\
& -   \la ( a u + c_l x )  \om_{\al,x} ,\om \vp_{\al} \ra   + \la N(\om), \om \vp_{\al} \ra +  \la F_{\al}(\om_{\al}), \om \vp_{\al} \ra   \\
&  \teq I + II + III + N + F .
\eal
\eeq
For $I$, we use integration by parts,  \eqref{eq:est_dp} and $\bar{c}_{l, \al} = \f{1}{\al} - a u_{\al, x} (0)$  to get
\beq\label{eq:lin_alpha1_1}
\bal
I &= \B\la \f{1}{2\vp_{\al}} (  ( \bar{c}_{l,\alpha} x +a  u_{\alpha} ) \vp_{\alpha} )_x + ( \bar{c}_{\om} + u_{\alpha,x} )   ,\om^2 \vp_{\alpha} \B\ra  \\
 & =  -\f{1}{2}  \la \om^2, \vp_{\al} \ra + a \B\la  \f{1}{2\vp_{\al} }   ( (u_{\al} - u_{\al, x}(0) x)  \vp_{\al})_x , \om^2 \vp_{\al} \B\ra   .\\
 \eal
\eeq
For the second term, we use \eqref{eq:est_v} and \eqref{eq:est_asy} to yield
\[
\bal
&\B| \f{1}{2\vp_{\al} }   ( (u_{\al} - u_{\al, x}(0) x)  \vp_{\al})_x \B|
=\B| \f{1}{2} (u_{\al, x} - u_{\al,x}(0)) + \f{ u_{\al} - u_{\al, x}(0) x  }{x}  \f{ x\vp_{\al, x} }{2 \vp_{\al}
} \B| \\
=&\B| \f{1}{2} ( u_{\al, x} - \f{u_{\al}}{x}) + \f{ u_{\al} - u_{\al, x}(0) x  }{x}
 \lt( \f{ x\vp_{\al, x} }{2 \vp_{\al}} + \f{1}{2} \rt) \B| \les \al + 1 \cdot \al \les \al \; .
\eal
\]
Combining \eqref{eq:lin_alpha1_1} with the above estimate, we derive 
\beq\label{eq:lin_alpha1}
I \leq  -\f{1}{2}  \la \om^2 , \vp_{\al} \ra + C |a| \al  \la \om^2 , \vp_{\al} \ra  =  -  \lt( \f{1}{2}  - C |a| \al \rt)  \la \om^2 , \vp_{\al} \ra ,
\eeq
where $C >0$ is some universal constant.

Recall the definitions of $\vp_{\alpha}$ in \eqref{eq:wg_alpha}, $c_l = -a u_x(0), c_{\om} =-u_x(0)$ in \eqref{eq:normal_al} and $\td{u}, \td{u}_x$ in \eqref{eq:tildeu}. We have $c_l x + au = a\td{u}, c_{\om} + u_x = \td{u}_x$. For $II$, we use the cancellation \eqref{lem:vel2} and \eqref{lem:vel3} to get
\beq\label{eq:lin_alpha2}
\bal
II&  =  \la \td{u}_x  \om_{\al} ,\om  \vp_{\al}  \ra = -  \B\la \td{u}_x \om \cdot \sgn(x),  |x|^{-1-2\al}+  2 \cos\lt( \f{\al \pi}{2}   \rt)  ||x|^{-1 - \al} + |x|^{-1} \B\ra \\
 & \leq  -  \la \td{u}_x \om \cdot \sgn(x),  |x|^{-1} \ra = -\f{\pi}{2} u_x^2(0) \leq 0.
\eal
\eeq
For $III$, we have
\[
\bal
|III|& =  |    \la ( a u + c_l x )  \om_{\al,x} ,\om \vp_{\al} \ra |   = \B| a  \B\la \f{\td{u}}{x}  \f{\om_{\al,x} x} {\om_{\al}} ,  \om  \f{ 1 + 2 \cos\lt( \f{\al \pi}{2}   \rt) |x|^{\al}  + |x|^{2\al}    }{ |x|^{1 + 2 \al}}   \B\ra \B| \\
& \lesssim   |a|  \B\la \B|  \f{\td{u}}{x} \B|   \B|  \f{\om_{\al,x} x} {\om_{\al}} \B| , | \om|  ( |x|^{-1 -2 \al} + |x|^{-1} ) \B\ra.
\eal
\]
Using the estimate for $\om_{\al}$ \eqref{eq:est_w} and the Hardy inequality \eqref{eq:hd2}, we obtain
\beq\label{eq:lin_alpha3}
\bal
|  III |& \lesssim |a|\al   \B\la \B|  \f{\td{u}}{x} \B|  , | \om|  ( |x|^{-1 -2 \al} + |x|^{-1} ) \ra \\
&\lesssim  |a|  \al   \la \td{u}^{2}, |x|^{-3 - 3 \al }\ra^{1/2}   \la \om^2, |x|^{-1 -\al} \ra^{1/2} +  |a| \al  \la \td{u}^{2}, |x|^{-3 -  \al }\ra^{1/2}   \la \om^2, |x|^{-1 + \al} \ra^{1/2}  \\
& \lesssim |a| \al  \al^{-1}    \la \om^{2}, |x|^{-1 - 3 \al }\ra^{1/2}   \la \om^2, |x|^{-1 -\al}  \ra^{1/2}  + |a| \al \al^{-1} \la  \om^{2}, |x|^{-1 -  \al }\ra^{1/2}   \la \om^2, |x|^{-1 + \al} \ra^{1/2} \\
& \lesssim |a| \al  \la \om^2, \vp_{\al}\ra,
\eal
\eeq
where we have used \eqref{eq:est_asy} to obtain the last inequality.

Plugging \eqref{eq:lin_alpha1}, \eqref{eq:lin_alpha2} and \eqref{eq:lin_alpha3} in \eqref{eq:lin_L2}, we arrive at
\beq\label{eq:lin_alphaL2}
\f{1}{2} \f{d}{dt} \la \om^2 ,\vp_{\al} \ra  \leq -\lt( \f{1}{2} - C |a| \al  \rt) \la \om^2, \vp_{\al } \ra + 
\la N(\om) , \om \vp_{\al} \ra + \la F_{\al}(\om_{\al}) ,  \om  \vp_{\al} \ra .
\eeq

\subsubsection{Weighted $H^1$ Estimate}
Recall the definition of the weight $\psi_{\al}$ in  \eqref{eq:wg_alpha}. We now perform the weighted $H^1$ estimate with respect to \eqref{eq:lin_alpha}
\beq\label{eq:alpha_H1}
\bal
\f{1}{2} \f{d}{dt} \la \om_x^2, \psi_{\al} \ra & = \la - ( ( \bar{c}_{l, \al} x +au_{\al} ) \om_{x} )_x +   ( ( \bar{c}_{\om} + u_{\al, x} ) \om )_x, \ \om_x \psi_{\al} \ra \\
 &+ \la ( ( u_x + c_{\om} ) \om_{\al}  )_x,  \ \om_x  \psi_{\al} \ra  
  - \la ( (a  u + c_l x )  \om_{\al, x})_x ,\ \om_x \psi_{\al}\ra \\
  & + \la N(\om)_x. \om_x \psi_{\al} \ra +
\la F_{\al}(\om_{\al})_x , \  \om_x  \psi_{\al} \ra    \teq I + II + III  + N_2 + F_2.
\eal
\eeq

For $I$, we again use integration by parts to obtain
\[
\bal
I  &= \la - ( \bar{c}_{l, \al} x +au_{\al} ) \om_{xx}   - ( \bar{c}_{l, \al}  + a u_{\al, x}) \om_x  + ( \bar{c}_{\om} + u_{\al, x}  ) \om_x,  \om_x \psi_{\al } \ra
+ \la u_{\al, xx}  \om, \om_x \psi \ra  \\
& = \B\la  \f{    (  ( \bar{c}_{l, \al} x +au_{\al} ) \psi_{\al}     )_x      } {2 \psi_{\al}}    - ( \bar{c}_{l, \al}  + a u_{\al, x})  +   ( \bar{c}_{\om} + u_{\al, x}  ), \om_x^2 \psi_{\al}   \B\ra   - \B\la   \f{  ( u_{\al, xx} \psi_{\al} )_x }{2 \psi_{\al}} ,\om^2 \psi  \B\ra \teq I_1 + I_2.
\eal
\]
From \eqref{eq:est_dp}, we get
\[
I_2 \leq 0 .
\]
Recall that $\bar{c}_{l, \al} = \f{1}{\al}  - a  u_{\al, x}$ and
\[
\bar{c}_{l, \al} x + a u_{\al, x} = \f{1}{\al} + a (  u_{\al} -  u_{\al, x} x) = \f{1}{\al}  + a \td{u}_{\al},  \quad  \bar{c}_{l, \al} + a u_{\al; x} = \f{1}{\al} + \td{u}_{\al, x}.
\]
We can use \eqref{eq:est_dp} to obtain
\[
\bal
I_1 & =  - \f{1}{2} \la \om_x^2, \psi_{\al} \ra  + a \B\la  \f{  ( \td{u}_{\al} \psi_{\al}     )_x      } {2 \psi_{\al}}  - \td{u}_{\al, x}
, \om_x^2 \psi_{\al}  \B\ra  \\
 &=  - \f{1}{2} \la \om_x^2, \psi_{\al} \ra   + a \B\la  \f{ \td{u}_{\al}}{x}  \f{ x \psi_{\al,x}}{ 2\psi_{\al}}   - \f{ \td{u}_{\al, x} }{2}, ,\om_x^2\psi_{\al} \B\ra.
\eal
\]
Using \eqref{eq:est_v} and \eqref{eq:est_asy}, we get
\[
\B|   \f{ \td{u}_{\al}}{x}  \f{ x \psi_{\al,x}}{ 2\psi_{\al}}   - \f{ \td{u}_{\al, x} }{2}  \B|
\leq  \B|  \f{\td{u}_{\al} }{x}   \B|   \cdot  \B|     \f{ x \psi_{\al,x}}{ 2\psi_{\al}} - \f{1}{2} \B|  +   \f{1}{2}  \B|  \f{\td{u}_{\al}}{x}  - \td{u}_{\al, x}  \B|
\lesssim || u_{\al, x}||_{\infty} \al + \al \lesssim \al .
\]
It follows that
\begin{align}
I_1 & \leq    -\f{1}{2} \la \om_x^2, \psi_{\al} \ra + C  |a| \al  \la \om_x^2,  \psi_{\al} \ra  =  -\lt(  \f{1}{2} - C |a| \al  \rt)  \la \om_x^2 ,\psi_{\al} \ra , \notag  \\
\Rightarrow  I & = I_1 + I_2 \leq I_1  \leq  -\lt(  \f{1}{2} - C |a| \al  \rt)  \la \om_x^2 ,\psi_{\al} \ra \label{eq:lin_alpha4} .
\end{align}

For $II$, we have
\beq\label{eq:alpha_H1_II}
II = - \la u_{xx} \om_{\al},  \om_x  \psi_{\al} \ra  + \la \td{u}_x \om_{\al, x}, \om_x  \psi_{\al} \ra  \teq II_1 + II_2.
\eeq
Note that
\[
H(\om_x ) = u_{xx}, \  H(\om_x x)(0) = 0 \Rightarrow  H(\om_x x) = u_{xx } x.
\]
Moreover, we have that $\om_x x$ is odd, $(xu_{xx})(0) = (x \om_x)(0) = 0$. Applying the cancellation \eqref{lem:vel2}, \eqref{lem:vel3} with $(u_x, \om)$ replaced by $(x u_{xx}, x\om_x)$, we can estimate $II_1$ as follows
\beq\label{eq:lin_alpha51}
\bal
II_1  & = - \B\la (xu_{xx} ) (x  \om_x) ,  \f{1}{\al^2} \f{ (1 + 2 \cos\lt( \f{\al \pi}{2}   \rt) |x|^{\al}  + |x|^{2\al}  ) } { \sgn(x)  |x|^{1 + 2\al }  } \B\ra \\
& \leq - \B\la  (xu_{xx} ) (x  \om_x),  \f{1}{\sgn(x) \al^2 |x|} \B\ra  = - \f{\pi}{2\al^2} (x u_{xx})(0)^2  = 0  .
\eal
\eeq

The remaining terms in the weighted $H^1$ estimate are $II_2$ in \eqref{eq:alpha_H1_II} and $III$ in \eqref{eq:alpha_H1}, which can be decomposed as follows
\[
\bal
 III & =   - \la ( (a  u + c_l x )  \om_{\al, x})_x ,\om_x \psi_{\al}\ra = - a \la   \td{u}_{x} \om_{\al, x} +  \td{u} \om_{\al,xx}, \om_x \psi_{\al} \ra \\
& = - a \B\la     ( \td{u}_x - \f{\td{u}}{x})    \om_{\al, x}   , \om_x \psi_{\al}\B\ra -  a  \B\la   \f{\td{u}}{x}  (  \om_{\al, x} +  x\om_{\al, xx} ), \om_x \psi_{\al} \B\ra
\teq  III_1 + III_2.
\eal
\]

We perform the estimate of $III_1, III_2$ and the estimate of $II_2$ can be done similarly.  Using  the pointwise estimate \eqref{lem:vel_inf2} and the Cauchy Schwarz inequality, we can estimate $III_1$ as follows
\beq\label{eq:lin_alpha61}
\bal
III_1 & \leq   |a| \al \la  \om_x^2, \psi_{\al} \ra^{1/2}  \cdot \la ( |x|^{\al} \wedge 1 )  | \om_{\al, x}|,    |  \om_x | \psi_{\al} \ra  \\
& \lesssim |a| \al \la  \om_x^2, \psi_{\al} \ra   \cdot  \la ( |x|^{\al} \wedge 1 )^2 \om^2_{\al, x}, \psi_{\al} \ra^{1/2}  \lesssim |a| \al \la  \om_x^2, \psi_{\al} \ra,
\eal
\eeq
where we have used \eqref{eq:est_err2} to obtain the last inequality.

For $III_2$, we first use \eqref{eq:est_w} to obtain
\[
\bal
| III_2 | &= |a|  \B|    \B\la   \f{\td{u}}{x}  (  \om_{\al, x} +  x\om_{\al, xx} ), \om_x  |x|^{1- 2 \al}   \f{ (1 + 2 \cos\lt( \f{\al \pi}{2}   \rt) |x|^{\al}  + |x|^{2\al}  ) }{\al^2\sgn(x) \om_{\al}} \B\ra \B|  \\
& = |a|  \B|   \B\la   \f{\td{u}}{x^2}  \f{ x(  \om_{\al, x} +  x\om_{\al, xx} )}{ \sgn(x) \om_{\al}}, \om_x    |x|^{1- 2 \al}      \f{ (1 + 2 \cos\lt( \f{\al \pi}{2}   \rt) |x|^{\al}  + |x|^{2\al}  ) }{\al^2} \B\ra \B| \\
& \lesssim  |a|    \B\la \B|  \f{\td{u}}{x^2} \B|   \al^2  , | \om_x  |    \   |x|^{1- 2 \al}      \f{ (1 + 2 \cos\lt( \f{\al \pi}{2}   \rt) |x|^{\al}  + |x|^{2\al}  ) }{\al^2} \B\ra \\
&= |a|    \B\la \B|  \f{\td{u}}{ x}   \B|  , | \om_x   |\lt(1 +2 \cos\lt( \f{\al \pi}{2}   \rt)  |x|^{-\al} +  |x|^{-2 \al}\rt)  \B\ra  \lesssim
|a|    \B\la \B|  \f{\td{u}}{ x}   \B|  , | \om_x  | (1 + |x|^{-2 \al})  \B\ra.
\eal
\]
Then we use the Hardy inequality \eqref{eq:hd2} to estimate $\td{u}$
\[
\bal
|III_2|  &\lesssim |a|     \B\la \B|  \f{\td{u}}{ x}   \B|  , | \om_x  | (1 + |x|^{-2 \al})  \B\ra  \\
&\lesssim |a|   \la \td{u}^{2}, |x|^{-3 -3 \al} \ra^{1/2} \la \om_x^2, |x|^{1 - \al} \ra^{1/2}
+ |a|   \la \td{u}^{2}, |x|^{-3 - \al} \ra^{1/2} \la \om_x^2, |x|^{1 + \al} \ra^{1/2}  \\
& \lesssim |a|\al^{-1}   \la \om^{2}, |x|^{-1 -3 \al} \ra^{1/2} \la \om_x^2, |x|^{1 - \al} \ra^{1/2}
+ |a| \al^{-1}   \la \om^2, |x|^{-1 - \al} \ra^{1/2} \la \om_x^2, |x|^{1 + \al} \ra^{1/2} . \\
\eal
\]
Using \eqref{eq:est_asy}, we derive
\beq\label{eq:lin_alpha62}
|III_2|  \lesssim |a| \al^{-1} \al^{1/2} \la \om^2, \vp_{\al} \ra^{1/2} \al^{3/2}   \la \om_x^2, \psi_{\al}\ra^{1/2} = |a|\al  \la \om^2, \vp_{\al} \ra^{1/2}    \la \om_x^2, \psi_{\al}\ra^{1/2}.
\eeq
Similarly, for $II_2$,  using the smallness $ \B|\B| \f{x w_{\al,x}}{w_{\al}}  \B| \B|_{\infty}  \les   \al $ in \eqref{eq:est_w}, the weighted estimate \eqref{lem:vel1} and estimate of $\psi_{\al}, \vp_{\al}$ \eqref{eq:est_asy}, one can obtain 
\beq\label{eq:lin_alpha5}
| II_2| \les \la \om^2 ,\vp_{\al} \ra^{1/2}  \la \om_x^2, \psi_{\al} \ra^{1/2}.
\eeq

Plugging \eqref{eq:lin_alpha4}, \eqref{eq:alpha_H1_II}, \eqref{eq:lin_alpha51}, \eqref{eq:lin_alpha61}, \eqref{eq:lin_alpha62} and \eqref{eq:lin_alpha5} in \eqref{eq:alpha_H1}, we obtain 
\beq\label{eq:lin_alphaH1}
\bal
\f{1}{2} \f{d}{dt} \la \om_x^2, \psi_{\al} \ra & \leq -\lt( \f{1}{2} - C  |a| \al \rt) \la \om_x^2, \psi_{\al} \ra
+ C   \la \om^2, \vp_{\al} \ra^{1/2}    \la \om_x^2, \psi_{\al}\ra^{1/2}  \\
& +\la N(\om)_x , \om_x \psi_{\al} \ra + \la F_{\al}(\om_{\al})_x , \  \om_x  \psi_{\al} \ra ,  \\
\eal
\eeq
for some universal constant $C$, where we have used $|a \al| < 1$.

In the following two subsections, we aim to control the nonlinear and error terms
\[
 \la N(\om) , \om \vp_{\al} \ra ,  \ \la F_{\al}(\om_{\al}) ,  \om  \vp_{\al} \ra ,\ \la N(\om)_x, \om_x \psi_{\al} \ra ,  \la F_{\al}(\om_{\al})_x , \  \om_x  \psi_{\al} \ra   \\
\]
in \eqref{eq:lin_alphaL2} and \eqref{eq:lin_alphaH1}.

\subsubsection{Estimates of nonlinear terms}
Recall from \eqref{eq:normal_al} and \eqref{eq:tildeu} that
\[
c_l x + a u = a ( u - u_x(0) x) = a \td{u}, \quad c_{\om} + u_x =u_x - u_x(0) = \td{u}_x .
\]
For the nonlinear terms in \eqref{eq:lin_alphaL2} and \eqref{eq:lin_alphaH1}, we use integration by parts to obtain 
\[
\bal
\la N(\om), \om \vp_{\al} \ra  &= \la (c_{\om} + u_x  ) \om - (c_l x + a u) \om_x,  \om \vp_{\al} \ra   = \B\la   \td{u}_x  + \f{ (a \td{u} \vp_{\al}  )_{x} }{ 2 \vp_{\al}}, \om^2 \vp_{\al} \B\ra \\
& = \la \td{u}_x,\om^2 \vp_{\al} \ra  + \f{a}{2}\B\la  \lt(\td{u}_x  + \f{\td{u}}{x}  \f{ x \vp_{\al, x}}{\vp_{\al}} \rt), \om^2 \vp_{\al} \B\ra \teq I_1 + I_2 , \\
\la N(\om)_x, \om_x \psi_{\al} \ra & =
\la ( (c_{\om} + u_x  ) \om - (c_l x + a u) \om_x)_x ,\om_x \psi_{\al} \ra  \\
& = \la u_{xx} \om + \td{u}_x \om_x , \om_x \psi_{\al} \ra
-a \B\la  \td{u}_x \om_x + \td{u} \om_{xx}, \om_x \psi_{\al}   \B\ra \\
& =   \la \td{u}_x \om_x ,\om_x \psi_{\al} \ra +\la u_{xx} \om , \om_x \psi_{\al} \ra
+ a \B\la  - \td{u}_x + \f{ (\td{u} \psi_{\al})_x}{2\psi_{\al}} ,\om^2_x \psi_{\al}   \B\ra   \teq II_1 + II_2 + II_3 .\\
\eal
\]
For each term $I_i, II_j$, 
we use Lemma \ref{lem:vel_inf} to control the $L^{\infty}$ norm of $\om, \td{u}/x, \td{u}_x $ or $\td{u}_x - \td{u}/x$, and use $\la \om^2, \vp_{\al} \ra$, $\la \om_x^2, \psi_{\al} \ra$ to control other terms. We present the estimate of $II_3$ that has a large coefficient $a$ and is more complicated. Other terms can be estimated similarly. For $II_3$, we notice that
\[
\bal
 - \td{u}_x + \f{ (\td{u} \psi_{\al})_x}{2\psi_{\al}}  & = -\f{1}{2}  \td{u}_x + \f{1}{2} \f{\td{u}}{x} \f{ \psi_{\al,x} x}{\psi_{\al}}   = -\f{1}{2} \lt(  \td{u}_x - \f{\td{u}}{x}  \rt) + \f{1}{2} \f{\td{u}}{x} \lt(  \f{ \psi_{\al,x} x}{\psi_{\al}}  -1 \rt).  \\
\eal
\]

Then we use the $L^{\infty}$ estimate \eqref{lem:vel_inf2} to control $\td{u}_x - \td{u}/x$, \eqref{lem:vel_inf1} to control $\td{u} / x = u/x - u_x(0)$  and \eqref{eq:est_asy} to estimate the terms involving $\psi_{\al}$. This gives
\beq\label{eq:nlin_al32}
\bal
&II_3 
 =  \f{a}{2} \B\la - \lt(  \td{u}_x - \f{\td{u}}{x}  \rt) +  \f{\td{u}}{x} \lt(  \f{ \psi_{\al,x} x}{\psi_{\al}}  -1 \rt), \om_x^2 \psi_{\al} \B\ra \\
& \les |a| \lt( \B| \B|  \td{u}_x - \f{\td{u}}{x} \B|\B|_{L^{\infty}} + || u_x||_{\infty} \B| \B|   \f{ \psi_{\al,x} x}{\psi_{\al}}  -1\B|\B|_{L^{\infty}}  \rt)\la \om_x^2 ,\psi_{\al} \ra \\
& \lesssim ( |a|\al   \la \om_x^2, \psi_{\al}\ra^{1/2} + |a|\al \la \om^2, \vp_{\al}\ra^{1/4} \la \om_x^2, \psi_{\al} \ra^{1/4} )  \la \om_x^2 ,\psi_{\al} \ra \les (  \la \om^2, \vp_{\al} \ra + \la \om_x^2 ,\psi_{\al}\ra )^{3/2},
\eal
\eeq
where we have used $|a \al| < 1$. Similarly, we have 
\beq\label{eq:nlin_alother}
I_1, I_2, II_1, II_2 \les (  \la \om^2, \vp_{\al} \ra + \la \om_x^2 ,\psi_{\al}\ra )^{3/2}.
\eeq
Combining \eqref{eq:nlin_al32} and \eqref{eq:nlin_alother}, we obtain the following estimates for the nonlinear terms
\beq\label{eq:nlin_al}
\bal
\la N(\om), \om \vp_{\al} \ra  &= I_1 + I_2 \lesssim   (  \la \om^2, \vp_{\al} \ra + \la \om_x^2 ,\psi_{\al}\ra )^{3/2} ,\\
\la N(\om)_x, \om_x \psi_{\al}\ra & = II_1 + II_2 +II_3  \les (  \la \om^2, \vp_{\al} \ra + \la \om_x^2 ,\psi_{\al}\ra )^{3/2} .
\eal
\eeq

\subsubsection{Estimates of the error terms}
Recall the error terms in the weighted $L^2$, $H^1$ estimates in \eqref{eq:lin_alphaL2} and \eqref{eq:lin_alphaH1} are given by
\[
\la F_{\al}(\om_{\al}), \om \vp_{\al} \ra, \quad \la (F_{\al}(\om_{\al}))_x, \om_x \psi_{\al} \ra.
\]
Using the Cauchy-Schwarz inequality and the error estimate \eqref{eq:est_err}, we obtain
\beq\label{eq:err_al}
\bal
\la F_{\al}(\om_{\al}), \om \vp_{\al} \ra & \leq \la F_{\al}(\om_{\al})^2, \vp_{\al} \ra^{1/2}
\la \om^2, \vp_{\al} \ra^{1/2} \lesssim |a| \al \la \om^2, \vp_{\al} \ra^{1/2} , \\
\la (F_{\al}(\om_{\al}) )_x, \om_x \psi_{\al} \ra & \leq \la ( F_{\al}(\om_{\al}))_x^2, \psi_{\al} \ra^{1/2}
\la \om_x^2, \psi_{\al} \ra^{1/2} \lesssim   |a| \al \la \om_x^2, \psi_{\al} \ra^{1/2} .\\
\eal
\eeq

\subsubsection{ Nonlinear stability and convergence to self-similar solution }

Now, we combine the weighted $L^2$, $H^1$ estimates \eqref{eq:lin_alphaL2}, \eqref{eq:lin_alphaH1}, the estimates of nonlinear terms \eqref{eq:nlin_al} and error terms \eqref{eq:err_al}. Using these estimates and an argument similar to that in the analysis of nonlinear stability in Section \ref{sec:nonsta_a}
, we can choose an absolute constant $ 0 < \mu$ such that the following energy 
\[
E^2(t) \teq \la \om^2, \vp_{\al} \ra + \mu \la \om_x^2, \psi_{\al} \ra
\]
satisfies the differential inequality 
\beq\label{eq:boot_al2}
\f{1}{2} \f{d}{dt} E^2(t) \leq  - \lt( \f{3}{8} - C |a| \al \rt) E^2(t) + C |a| \al  E(t) +  C E^3(t),
\eeq
where $C > 0$ is an absolute constant. From \eqref{eq:est_asy}, we have 
\[
\bal
&|u_x(0)| \les \int | \om_x(y)|  |\log(y)| dy \les (\int \om_x^2 \psi_{\al})^{1/2} ( \int \psi_{\al}^{-1} |\log y|^2)^{1/2}  \\
 \les &E(t)  \lt(\al^3 \int | \log y |^2 (|y|^{1+\al} + |y|^{1- 3\al})^{-1}  dy \rt)^{1/2} 
\les E(t) (\al^3 \al^{-1})^{1/2} \les \al E(t).
\eal
\]
The normalization condition \eqref{eq:normal_al} implies 
\beq\label{eq:boot_cw}
| c_{\om} (t)| = |u_x(t, 0 )| \leq C_3\al E(t) , \quad |c_l(t)| = |a u_x(0)|\leq  C_3 |a| \al E(t) ,
\eeq
for some absolute constant $C_3 > 0$.

Finally, we perform the bootstrap argument. We first choose $C_2 = 16 C$, where $C$ is defined in \eqref{eq:boot_al2}, and then choose $C_1$ small such that
\beq\label{eq:boot_al3}
\bal
C_2  = 16 C ,  \quad   CC_1 + C C_2C_1 + C_3 C_2 C_1 + C_1 <1/16 . 
\eal
\eeq
Using the bootstrap argument and an argument similar to that in \eqref{eq:boot_a0}, we obtain that if
\[
|a| \al < C_1  , \quad E(0) < C_2 |a| \al ,
\]
then  we must have $E(t) < C_2 |a| \al < C_2 C_1$. This is because the right hand side of \eqref{eq:boot_al2} at $E(t) = C_2 |a| \al$ is given by
\[
\bal
&E(t)^2 \lt( -  \f{3}{8} + C |a| \al + \f{C |a|\al}{E(t)} + C  E(t)   \rt)  = E(t)^2 \lt( - \f{3}{8} + (C+CC_2) |a| \al + \f{C }{C_2}   \rt)  
<  -\f{1}{4} E^2(t)< 0.
\eal
\]
Finally, we show that $c_{\om} + \bar{c}_{\om} < 0$. The bootstrap results, \eqref{eq:boot_cw} and \eqref{eq:boot_al3} imply that
\[
|c_{\om}(t)| , |c_l(t)| < C_3 E(t) < C_3 C_2 C_1 < \f{1}{16} \; .
\]
It follows 
\beq\label{eq:scaling_al}
\bal
\bar{c}_{\om} + c_{\om}(t)  =-1 + c_{\om}(t) < - \f{1}{2},   \quad
\bar{c}_l + c_l(t) = \f{1}{\al} - 2 a + c_l(t) > \f{1}{2\al} - \f{1}{16} \geq \f{1}{4 \al} .
\eal
\eeq
As a result, we can choose small initial perturbation $\om_0$ with $E(0) < C_2 |a| \al$.
Moreover, we choose $\om_0$ in such a way that it modifies $\om_{\al}$ at the far field to make the initial data $\bar{\om} + \om_0$ have compact support. The bootstrap result and $\bar{c}_{\om} + c_{\om}(\tau) < -1/2 < 0$ imply that the physical solution blows up in finite time.

\subsubsection{Convergence to the steady state}
Using the same argument as that in the analysis of the case of small $|a|$ in Section \ref{sec:conv_a0} and the a-priori estimate \eqref{eq:scaling_al}, we can prove that there exists
\[
c_{l, \infty} \geq (4\al)^{-1}, \ c_{\om, \infty} <-1/2, \ \om_{ \infty} \in H^1(\psi_{\al}) \cap L^2(\vp_{\al}), \ u_{\infty, x} = H\om_{\infty},
\]
which satisfy the self-similar equation \eqref{eq:DG_selfeq} in $L^2(\vp_{\al})$ and in the dynamic rescaling equation, $\om(t) + \om_{\al}$ converges to $\om_{\infty}$ exponentially fast in $L^2(\vp_{\al})$.  Therefore, the solution of \eqref{eq:DG} develops a focusing ($c_{l,\infty} > 0$) and asymptotically self-similar singularity in finite time.

\subsection{Finite Time Blowup on Circle }\label{sec:circle}
In this subsection, we consider the De Gregorio model on $S^1$
\beq\label{eq:DG_S1}
\bal
\om_t + a u \om_x& = u_x \om \quad  x \in  [-\pi /2, \pi /2] \; , \quad
       u_x =  H_c \om \; ,
\eal
\eeq
where $\om, u $ are $\pi$-periodic and $H_c$ is the Hilbert transform on the circle
\beq\label{eq:hil_circ}
u_x = H_c \om = \f{1}{\pi} \int_{-\pi/2}^{\pi/2} \om(y) \cot( x- y) dy.
\eeq



Our goal is to prove Theorem \ref{thm:circle}. The proof is based on the comparison of the Hilbert transform on the real line and on $S^1$,  and on the control of the support of the vorticity $\om$. If the asymptotically self-similar blowup on $\R$ from compactly supported initial data is focusing, we can show that the support of the solution at the blow-up time remains finite.
Moreover, we show that the difference between the velocities generated by different Hilbert transforms in the support of $\om$ can be arbitrarily small by choosing initial data with small support. Therefore, the blowup mechanism of \eqref{eq:DG} on the real line applies to \eqref{eq:DG} on the circle.

We focus on the $C^{\al}$ case, i.e. case (2) in Theorem \ref{thm:circle}. The proof of the other case for small $|a|$ is similar and simpler.

\subsubsection{Dynamical Rescaling}
We consider the following dynamic rescaling of \eqref{eq:DG_S1}
\[
\bal
\Om(x, \tau) &= C_{\om}(\tau) \om( C_l(\tau) x, t(\tau)) , \quad U_x(x, \tau) = C_{\om}(\tau) u_x(C_l(\tau) x, t(\tau)) . \\
 \eal
\]
Denote by $S(\tau)$ the size of support of $\Om(\cdot , t(\tau))$, i.e. $\supp(\Om) = [-S(\tau), S(\tau)]$. This is equivalent to assuming that the size of $supp(\omega)$ is $C_l(\tau)S(\tau)$. We will choose $C_l(0)S(0) $ to be small
and show that $C_l(\tau) S(\tau)$ remains small up to the blowup time.
We have
\beq\label{eq:dyn_hil}
\bal
U_x( x, \tau) &=C_{\om}(\tau) u_x(C_l x, t(\tau)) = \f{1}{\pi} C_{\om}(\tau) \int_{-\pi/2}^{\pi/2} \om(y, t(\tau)) \cot( C_l(\tau)x - y) dy   \\
& = \f{1}{\pi} C_{\om}(\tau) \int_{- C_l(\tau) S(\tau)}^{ C_l(\tau) S(\tau)} \om(y, t(\tau)) \cot( C_l(\tau)x - y) dy   \\
& = \f{C_{\om}(\tau)}{\pi} \int_{ -  S(\tau) }^{   S(\tau)} \om(C_l y ,t(\tau) )  \cot(C_l(\tau)x - C_l(\tau) y) C_l(\tau) dy \\
& = \f{1}{\pi} \int_{ - S(\tau)}^{ S(\tau)} \Om(y ,\tau) \cot(C_l(\tau)x - C_l(\tau) y) C_l(\tau) dy
\teq H_{\tau} \Om (x).
\eal
\eeq
We introduce the time-dependent Hilbert transform $H_{\tau}$. The corresponding $U$ is given by
\beq\label{eq:hilu}
\bal
U(x, \tau) &= \int_0^x U_x( y, \tau) dy = \f{1}{\pi} \int_{-S(\tau)}^{S(\tau)}
\Om(y) \log| \sin( C_l(\tau) x - C_l(\tau) y)| dy  \\
& =  \f{1}{\pi} \int_{0}^{S(\tau)}
\Om(y) \log\B| \f{ \sin( C_l(\tau) x - C_l(\tau) y) }{  \sin( C_l(\tau) x + C_l(\tau) y) }\B| dy . \\
\eal
\eeq

With this notation, we can formulate the dynamic rescaling equation below
\beq\label{eq:DGdy_S1}
\bal
\Om_{\tau} + (c_l x + a U ) \Om_x  &= (c_{\om} + U_x) \Om , \\
 U_x &= H_{\tau} \Om.
\eal
\eeq

To simplify our notations, we still denote $\Om, U, \tau$ in the dynamic rescaling space by $\om, u, t$ i.e.
\[
 (\Om, U, \tau) \to (\om, u, \ t).
\]

\subsubsection{The bootstrap assumption}
We make the following bootstrap assumption.

(a) Support of $\om$ in the physical space : For all $t > 0$ we have
\beq\label{eq:boot_asm1}
C_l(t) S(t) < \f{\pi}{4}.
\eeq

(b) 
Boundedness of the solution: 
Let $\vp_{\al}, \psi_{\al}$ be the weights in \eqref{eq:wg_alpha}. We assume
\beq\label{eq:boot_asm2}
\bal
&\la \om^2, |y|^{-1 - \al} +  |y|^{-1 + \al} \ra +  \la \om_x^2, |y|^{1 - \al} + |y|^{1 + \al} \ra \\
< & 10 \la \om_{\al}^2, |y|^{-1 - \al} +  |y|^{-1 + \al} \ra +  10 \la \om_{\al,x}^2, |y|^{1 - \al} + |y|^{1 + \al} \ra + \al^{-2}  + 1 ,\\
 &| c_{\om}(t) + 1 | <   \f{1}{2} ,\quad  |c_l(t) -\f{1}{\al}| < \f{1}{2\al} ,
 \eal
\eeq
where $c_l, c_{\om}, \om$ is the solution of \eqref{eq:DGdy_S1}. We remark that we do not require smallness of $\om, \om_x$ in the assumption.

\subsubsection{Control of the support}

We choose the same weights $\vp_{\al}, \psi_{\al}$ as in \eqref{eq:wg_alpha} for later energy estimate. 
The evolution of the support of $\Om$ in \eqref{eq:DGdy_S1}, i.e. $S(t)$, is given by 
\beq\label{eq:S1_gw0}
\f{d}{d t} S(t)  = c_l(t) S(t) +  a U(S(t), t).
\eeq
Firstly, we show that $U$ has a sublinear growth if $\la \om^2 , \vp_{\al}\ra$ is bounded. Using \eqref{eq:hilu} and the Cauchy-Schwarz inequality, we get
\beq\label{eq:S1_gwu1}
\bal
| U(S(t)|&\lesssim  \la \om^2, |y|^{-1+\al} \ra^{1/2}
\lt(\int_0^{S(t)}  |y|^{1 - \al} \lt(\log\B| \f{ \sin( C_l(t) S(t) - C_l(t) y) }{  \sin( C_l(t) S(t) + C_l(t) y) }\B| \rt)^2 dy  \rt)^{1/2} .
\eal
\eeq
Since $ 0 <  y  < S(t) $ and $(|y| + S(t)) C_l(t) < \pi/2$ \eqref{eq:boot_asm1}, we can use
\[
\f{2}{\pi} x \leq \sin(x) \leq x , \quad x \in [0, \pi/2]
\]
to obtain for any $y \in [0, S(t)]$
\[
\bal
\lt(\log\B| \f{ \sin( C_l(t) S(t) - C_l(t) y) }{  \sin( C_l(t) S(t) + C_l(t) y) }\B| \rt)^2 \lesssim
1 +  \lt( \log \B| \f{ C_l(t) (S(t) - y)}{ C_l(t) (S(t) + y )} \B| \rt)^2  =1 +  \lt( \log \B| \f{ S(t) - y}{S(t) + y} \B| \rt)^2   .
\eal
\]
Substituting the above estimate in the integral in \eqref{eq:S1_gwu1}, we obtain
\[
\bal
| U(S(t)|  &\les  \la \om^2, |y|^{-1+\al} \ra^{1/2}
\lt( \int_0^{S(t)}  |y|^{1 - \al}   \lt( 1 +  \lt( \log \B| \f{ S(t) - y}{S(t) + y} \B| \rt)^2 \rt) dy \rt)^{1/2} \\ 
& \les \la \om^2, |y|^{-1+\al} \ra^{1/2}
\lt( |S(t)|^{2 - \al} \int_0^1 |z|^{1- \al} \lt( 1 + \lt(\log \B| \f{1 - z}{1 + z} \B| \rt)^2\rt) dz
 \rt)^{1/2}  ,
\eal
\]
where we have used the change of variable $y = S(t) z$ to get the second inequality. 
Using the above estimate and \eqref{eq:boot_asm2}, we obtain 
\[
| U(S(t)| \les \la \om^2, |y|^{-1+\al} \ra^{1/2} S(t)^{1-\al  /2}
\lesssim_{\al} S(t)^{1 - \al/2} .
\]
Substituting the above estimate in \eqref{eq:S1_gw0}, we yield
\beq\label{eq:S1_gw2}
\f{d}{dt} S(t) \leq c_l(t) S(t) + C(a, \al) S(t)^{1- \al/2},
\eeq
where the constant $C(a,\al)$ only depends on $a, \al$. Recall
\[
C_{l}(t) = C_l(0)   \exp\lt( -\int_0^t c_l(s) ds\rt).
\]
Denote $P(t) \teq C_l(t )S(t) $. \eqref{eq:S1_gw2} implies the following differential inequality
\beq\label{eq:circle_P}
\bal
  \f{d}{dt} P(t) &= \f{d}{dt} (C_l(t )S(t) ) \leq C(a, \al) C_l(t)^{ \al / 2}  (C_l(t )S(t) )^{1 - \al/2}  \\
&= C(a,\al) C_l(t)^{ \al / 2} P(t)^{1 -\al/2}  .
\eal
\eeq
Using the bootstrap assumption $c_l(t) > \f{1}{2\al}$ \eqref{eq:boot_asm2}, we have
$C_l(t) \leq C_l(0) e^{ -\f{t}{ 2\al}}.$ From this estimate and \eqref{eq:circle_P}, we further obtain 
\[
 \f{d}{dt} P(t)^{\al/2} \leq C(a,\al) C_l(t)^{ \al / 2} \leq C(a,\al) C_l(0)^{\al/2}\exp\lt( -\f{t}{4}\rt) 
 \; ,
 \]
 which implies
 \[
  P(t)^{\al / 2}  \leq P(0)^{\al/2} + C(a,\al) C_l(0)^{\al/2} \int_0^t  \exp\lt(-\f{s}{4}\rt) ds
< P(0)^{\al/2} + C(a,\al) C_l(0)^{\al/2} ,
\]
where the $C(a,\al)$ only depends on $a,\al$ and may vary from line to line. Recall $P(0) = C_l(0)S(0)$. As a result of the above estimate, we obtain 
\beq\label{eq:S1_gw3}
P(t)^{\al/2} \leq ( 1 + C(a,\al) S(0)^{-\al/2} )P(0)^{\al/2} \Rightarrow P(t) \leq C(a,\al, S(0)) P(0)  ,
\eeq
where the constant $C(a,\al, S(0))$ depends on $a,\al$ and $S(0)$. 

\subsubsection{Comparison between different Hilbert transforms}
\begin{lem}[Comparison of Hilbert transforms]\label{lem:hil}
With the bootstrap assumptions \eqref{eq:boot_asm1} and \eqref{eq:boot_asm2}, for $|x| \leq S(t)$, the difference between $H_{t}$ \eqref{eq:dyn_hil} on the circle and the Hilbert transform on the real line $H$ can be controlled by
\beq\label{eq:dhil}
\bal
 | ( H_{t} \om )(x) - H \om(x) | & \lesssim_{\al}  C_l(t) S(t) \; , \\
  | x ( H_{t} \om_x)(x) - x (H \om_x)(x) |  &\lesssim_{\al}  C_l(t) S(t) \; .
 \eal
\eeq
\end{lem}

\begin{remark}
We only care about $x$ in the support of $\Om$ since for $x$ outside the support of $\Om$, $U(x)$ does not enter the equation \eqref{eq:DGdy_S1}.
\end{remark}

\begin{proof}
 It suffices to consider $x \in [0, S(t)]$ due to the symmetry. We only prove the second inequality in \eqref{eq:dhil} and the first one can be proved similarly. Firstly, from \eqref{eq:dyn_hil}, we have
\beq\label{eq:dhil_pf}
|x (H_{t} \om_x ) (x) -x (H \om_x)(x) | =\B|\f{ x }{\pi} \int_{ - S(t)}^{ S(t)} \om_x(y ,t)\lt( \cot(C_l(t)x - C_l(t) y) C_l(t)   -  \f{1}{x - y} \rt) dy \B|. 
\eeq
The bootstrap assumption \eqref{eq:boot_asm1} shows that $ | C_l (x- y)| \leq \f{\pi}{2}$ for $|x|, |y| \leq S(t)$. Using the elementary inequality
$\B| \f{1}{z} - \cot z \B| \lesssim  \min( |z| , 1) \les 1 , \  \forall |z| \lesssim \f{\pi}{2}$, we obtain 
\[
\B|  \cot(C_l(t)x - C_l(t) y) C_l(t)   -  \f{1}{x - y} \B| 
= C_l(t) \B| \cot(C_l(t)x - C_l(t) y) -\f{1}{C_l(t)(x-y)} \B|\les C_l(t).
\]
Using the Cauchy-Schwarz inequality, we can estimate \eqref{eq:dhil_pf} as follows
\[
\bal
&|x (H_{t} \om_x ) (x) -x (H \om_x)(x) | \les 
 C_l(t)|x| \int_{-S(t)}^{S(t)}  | \om_x (y, t)|  dy \\
 \leq & C_l(t) |x| \la \om_x^2, |y|^{1-\al}+ |y|^{1+\al} \ra^{1/2} \lt( \int_R  \f{1}{ |y|^{1+\al} + |y|^{1-\al} } dy \rt)^{1/2} \; .\\
\eal
\]
Using $|x|\leq S(t)$ and \eqref{eq:boot_asm2}, we yield
\[
|x (H_{t} \om_x ) (x) -x (H \om_x)(x) | \les_{\al}  C_l(t) S(t).
\]
 \end{proof}

\subsubsection{Finite time blowup}
Recall that for compactly supported solution $\om(x, \tau)$ with support size $S(\tau)<+\infty$ in the dynamic rescaling equation \eqref{eq:DGdy_S1}, it corresponds to a solution $\om_{phy}$ at time $t(\tau)$ in the physical space \eqref{eq:DG_S1} via
\[
\bal
&\om_{phy}(x, t(\tau)) = C_{\om}(\tau)^{-1} \om( C_l(\tau)^{-1} x, \tau) ,  \\
& C_l(\tau) = C_l(0) \exp\lt( -\int_0^{\tau} c_l(s) ds  \rt) ,\quad t(\tau) = \int_0^{\tau} \exp \lt( \int_0^s c_{\om}(r) dr \rt) ds .
\eal
\]
See the discussion in Section \ref{sec:dsform}.
By abusing the notation, we still use $t$ as the time variable in the dynamic rescaling equation. 
We can rewrite \eqref{eq:DGdy_S1} as follows
\beq\label{eq:DGdy_S1m1}
\bal
\om_{t} + (c_l x + a u ) \om_x  &= (c_{\om} + u_x) \om  +  ( ( H_{t} \om)(x) - (H\om)(x) ) \om + a ( (I\om)(x) -  (I_{t}\om)(x) ) x\om_x
 \\
 u_x &= H \om
\eal
\eeq
where $u = x (I\om)(x)$ and the operator $I_{t} \om, I\om$ are
\[
\bal
(I_{t}\om)(x) & =  \f{1}{x} \int_0^x  (H_{t} \om)(y) dy  , \quad (I\om)(x)& = \f{1}{x} \int_0^x (H\om)(y) dy ,\\
\eal
\]
i.e. $1/x$ times the velocity generated by different Hilbert transforms. 
We choose the following normalization condition
\beq\label{eq:normal_circle}
c_l(t) = \f{1}{\al} -  a ( H_{t} \om(t, \cdot))(0) , \quad c_{\om}(t) =  1-  ( H_{t} \om(t, \cdot))(0) .%
\eeq
The difference between \eqref{eq:normal_al} and the above condition is the Hilbert transform, which can be bounded by \eqref{eq:dhil}. 

For the difference of the Hilbert transform in \eqref{eq:DGdy_S1m1}, we use \eqref{eq:dhil} to obtain the pointwise estimate of $H_t \om - H\om$ and $x (H_t \om - H \om)_x$. Similarly, we have the pointwise estimate of $I\om(x) - I_t \om(x)$ for all  $ |x| \leq S(t)$
\beq\label{eq:hil2}
\bal
| (I\om)(x) -  (I_{t}\om)(x)| & \leq \sup_{ |y| \leq |x|} | ( H_{t} \om )(x) - H \om(x) | \les_{\al}   C_l(t) S(t) \; ,  \\
 | x (I \om - I_t \om )_x(x)  |  &\leq  |  (x (I \om - I_t \om ))_x| + | (I \om - I_t \om) (x) |  \\
  &= | (( H_{t} \om )(x) - H \om(x)   |  +|(I \om - I_t \om) (x)|  \les_{\al} C_l(t) S(t).
  \eal
\eeq

The proof of Theorem \ref{thm:circle} for the $C^{\alpha}$ case is essentially the same as that of Theorem \ref{thm:blowup_a0} and Theorem \ref{thm0:unif} so we only give a sketch. We construct compactly supported approximate steady state $\bar{\om}_c$ by truncating the approximate steady state $\om_{\al}$ in \eqref{eq:solu_calpha}. This truncation allows us to have compactly supported
perturbation $ \om -\bar{\om}_c$ if the initial data $\om$ has compact support and then apply the comparison Lemma \ref{lem:hil}. 
 The associated profiles for the velocity and $\bar{c}_l, \bar{c}_{\om}$ are
\beq\label{eq:circle_cw1}
\bar{u}_x = H_t \bar{\om}_c, \quad  \bar{c}_l = \f{1}{\al} - a \bar{u}_x(0), \quad \bar{c}_{\om} = 1 - \bar{u}_x(0) .
\eeq
We remark that the above profiles are time dependent due to the transform $H_t$. Yet, they are close to the counterparts with $H_t$ replaced by the Hilbert transform $H$ on $\R$ and we can treat them as almost time independent. 
The above choices of $\bar{c}_{\om}, \bar{c}_l$ are consistent with those in \eqref{eq:normal_circle} and \eqref{eq:normal_al} for the $C^{\al}$ case on $\R$. We can truncate $\om_{\al}$ in the far field so that $\bar{\om}_c$ is sufficiently close to $\om_{\al}$ in the sense that
\beq\label{eq:circle_cw2}
| H \bar{\om}_c (0)-H \om_{\al}(0)| < \al^{10} 10^{-10}, \quad 
 \la (\bar{\om}_c - \om_{\al})^2, \vp_{\al} \ra + \la (\bar{\om}_{c,x} - \om_{\al,x})^2, \psi_{\al} \ra
 < \al^{10} 10^{-10},
\eeq
 where $\vp_{\al}, \psi_{\al}$ are the weights used in the analysis of the $C^{\al}$ case in \eqref{eq:wg_alpha}. 

Denote by $\bar{S}$ the size of support of $\bar{\om}_c$.  We remark that $\bar{u}_x(0)$ and $\bar{c}_l, \bar{c}_{\om}$ given above are close to $2, \f{1}{\al} - 2 a, -1$, respectively, since we have 
\beq\label{eq:circle_cw3}
\bar{u}_x(0) = H_t \bar{\om} (0)= (  H_t(\bar{\om}_c(0)) -H \bar{\om}_c(0) ) +  ( H \bar{\om}_c (0)-H \om_{\al}(0) )  + 2,
\eeq
where we have used $H \om_{\al}(0) = 2$ (see \eqref{eq:CLM_self}). The second term is small according to \eqref{eq:circle_cw2} and the first term can be made arbitrarily small by choosing $C_l(0)$ to be sufficiently small later.

For compactly supported initial data $\om \in C^{\al}$ with support size $S(0) > \bar{S}$, all the nonlinear stability analysis in the proof of 
Theorem \ref{thm0:unif} can be derived for the perturbation $\om - \bar{\om}_c$ in almost the same way with two minor differences.
Firstly, the resulting estimates have slightly larger constants due to the small difference between $\om_{\al}$ and $\bar{\om}_c$, which is of order $\al$ due to \eqref{eq:circle_cw2}. Secondly, they contain additional terms depending on the difference between two Hilbert transforms, which can 
be bounded using \eqref{eq:hil2}. 

Therefore, under the bootstrap assumption \eqref{eq:boot_asm1} and \eqref{eq:boot_asm2}, we can derive the following estimates similar to \eqref{eq:boot_al2}, \eqref{eq:boot_cw}
\beq\label{eq:boot_S1}
\bal
 \f{1}{2} \f{d}{dt} E^2(t) & \leq  - \lt( \f{3}{8} - C |a| \al \rt) E^2(t) + C |a| \al  E(t) +  C E^3(t)
+  C_4(a, \al  ) C_l(t) S(t) E^2(t), \\
| c_{\om}(t) - \bar{c}_{\om} | & \leq  C_3\al  E(t) + C_4(a, \al  ) C_l(t) S(t)   , \\
 |c_l(t) - \bar{c}_l|  &\leq  C_3 |a| \al E(t) + C_4(a, \al  ) C_l(t) S(t)  ,  \\
 |  H_t(\bar{\om}_c(0)) - & H \bar{\om}_c(0) | < C_4(a, \al  ) C_l(t) S(t) ,
\eal
\eeq
where 
\beq\label{eq:energy_alpha}
E^2(t)=  \la (\om(t) - \bar{\om}_c )^2, \vp_{\al} \ra + \mu  \la (\om_x(t) - \bar{\om}_{c,x} )^2, \psi_{\al} \ra 
\eeq
 for some absolute constant $0<\mu < 1$ and the constant $C_4(a, \al)$ depends on $a, \al$. Using the control of the support \eqref{eq:S1_gw3}, we have
\beq\label{eq:boot_S10}
C_l(t) S(t) = P(t)   \leq  C(a,\al, S(0)) P(0) = C(a,\al, S(0)) C_l(0) S(0) .
\eeq

Consider a function 
\beq\label{eq:boot_func}
f(x) = - ( \f{3}{8}  - C |a| \al ) x^2 + \f{1}{8} x^2+ C |a| \al x + C x^3.
\eeq

Clearly, if $|a| \al < C_1$ for some sufficiently small constant $C_1$, there exists an absolute constant $C_2$ such that $ f( C_2 |a| \al) < 0$. We can further require that $C_1$ be so small that 
\beq\label{eq:circle_para}
C_2 |a| \al < C_2 C_1< \f{\mu}{100}, \quad (C_3 +  1) C_2 |a| \al 
< (C_3 +  1) C_2 C_1 < \f{1}{100 } .
\eeq

Note that $C_l(0)$ is independent of the initial data $\om(0, \cdot)$ in the dynamic rescaling space and only depends on how we rescale $\om(0,\cdot)$ to get the data in the physical space. 
We first choose compactly supported  $\om(0,\cdot)$ with $\bar{S} \leq S(0) < +\infty$ that satisfies $E(0) < C_2 |a| \al$, where $\bar{S}$ is the size of support of $\bar{\om}_c$. Then we choose $C_l(0)$ sufficiently small such that
\[
( C(a, \al, S(0)) + C_4(a, \al  ) C(a, \al, S(0)) + 1)  C_l(0) S(0) < \f{1}{16}.
\]

Under the bootstrap assumption \eqref{eq:boot_asm1} and \eqref{eq:boot_asm2}, we plug the above inequality in \eqref{eq:boot_S10} and \eqref{eq:boot_S1} to get
\[
\bal
&(C_4(a,\al) + 1)C_l(t) S(t) \leq (C_4(a,\al) + 1) C(a, \al, S(0)) C_l(0) S(0)  < \f{1}{16} < \f{\pi}{4} ,\\
&  \f{1}{2} \f{d}{dt} E^2(t) \leq  - \lt( \f{3}{8} - C |a| \al \rt ) E^2(t) + C |a| \al  E(t) +  C E^3(t)+ \f{1}{16} E^2(t) .
 \eal
\]

From the definitions of $f$ in \eqref{eq:boot_func}, $C_2$ and $f(C_2 |a|\al) < 0$, we know that the additional bootstrap assumption $E(t) < C_2 |a| \al$ can be continued. Finally, we verify that $E(t) < C_2 |a|\al$ implies the bootstrap assumptions \eqref{eq:boot_asm1}, \eqref{eq:boot_asm2} so that all of these assumptions can be continued. From \eqref{eq:circle_para}, we have $E(t) < C_2 |a| \al < \min(\f{\mu}{100} , \f{1}{100})$. Denote 
\[
\rho_1(x) \teq |x|^{-1 - \al} +  |x|^{-1 + \al}, \quad \rho_2(x) \teq |x|^{1 - \al} + |x|^{1 + \al} .
\]

Using the triangle inequality and $\rho_1(x)  \leq \vp_{\al}, \rho_2(x) \leq  \psi_{\al} $ (see \eqref{eq:wg_alpha}),  
we get
\[
\bal
\la \om^2, \rho_1 \ra +  \la \om_x^2, \rho_2 \ra & \leq   5 ( \la \om_{\al}^2, \rho_1 \ra +  \la \om_{\al,x}^2, \rho_2 \ra ) + 5 (\la (\om_{\al} - \bar{\om}_c)^2, \vp_{\al} \ra +  \la (\om_{\al,x} - \bar{\om}_{c,x})^2, \psi_{\al} \ra)  \\
& +  5( \la (\om  - \bar{\om}_c)^2, \vp_{\al} \ra +  \la (\om_x - \bar{\om}_{c,x})^2, \psi_{\al} \ra )  \teq J_1 + J_2 + J_3.
\eal
\]
Using \eqref{eq:circle_cw2}, \eqref{eq:energy_alpha} and $E(t) < \f{\min(\mu,1)}{100}$, we have $J_2 < \f{1}{10}, J_3 < \f{1}{10}$. Hence, we prove the first inequality in \eqref{eq:boot_asm2}.
From \eqref{eq:circle_cw1} and \eqref{eq:circle_cw3}, we have 
\[
\bar{c}_{\om} + 1 = 2 - \bar{u}_x(0) =  -(  H_t(\bar{\om}_c(0)) -H \bar{\om}_c(0) ) -  ( H \bar{\om}_c (0)-H \om_{\al}(0) ) .
\] 
Using the triangle inequality and \eqref{eq:circle_cw3}, we obtain 
\[
| c_{\om} +1 | \leq  |\bar{c}_{\om} - c_{\om}| 
+ |   H_t(\bar{\om}_c(0)) -H \bar{\om}_c(0)  |  +   | H \bar{\om}_c (0)-H \om_{\al}(0) |.
\] 
The estimate of each term on the right hand side follows from \eqref{eq:circle_cw2}, \eqref{eq:boot_S1} and the estimates of $E(t), S(t) C_l(t)$ established above. Similarly, we can estimate $c_l - \f{1}{\al}$. These estimates imply the second inequality in \eqref{eq:boot_asm2}.

The remaining steps to obtain finite time blowup are exactly the same as those in the proof of Theorem \ref{thm:blowup_a0}  and we conclude the proof of Theorem \ref{thm:circle} for the $C^{\al}$ case. For the case of small $|a|$, the proof is completely similar and we omit the proof here.

 \subsection{Criticality of the $C^{\alpha}$ Regularity }

Note that the self-similar solutions in \eqref{eq:CLM_self} all satisfy that $w_{\al}$ is odd and $w_{\al}$ is negative for $x > 0 $, we shall consider general initial datum within this class. We remark that the results in this Section only hold true for positive $a$.

\subsubsection{The DG equation on the real line}
In this Section, we prove Theorem \ref{thm:crit}, which implies that for large positive $a$, the H\"older regularity in the initial datum is crucial for the focusing self-similar blow-up.

\begin{remark}
In the later proof, we choose $C_1 = (1 + 0.015) / (1 -0.015) \approx 1.03$ in Theorem \ref{thm:crit}.
The compact support assumption in Theorem can be relaxed easily by imposing a growth condition on $\om_0$, e.g. $\om_0$ is bounded. 
\end{remark}

To prove Theorem \ref{thm:crit}, we need the following crucial Lemma, which indicates that the advection term can be stronger than the nonlinear term.

\begin{lem}\label{lem:compare}
Let $\e = 0.015$. Suppose that $\b \in [1,2)$ and $a >0$ satisfies
\beq\label{eq:comp_cond}
a > \f{ \e (\b -1) + 1 }{ (1-\e) (\b-1)}.
\eeq
The following inequality
  \beq\label{eq:compare}
  \int_0^{ \infty}  \f{u_x \om - a u \om_x}  {y^{\b}} dy \geq 0 
  \eeq
  holds as long as the left hand side is well-defined and that $\om$ is odd and non-positive for $x>0$.
  \end{lem}

\begin{proof}
From the assumption of $\om$, we know
\[
\om(x) = O(|x|^{\al}) ,  u(x) = O(|x|) \; ,
\]
for small $|x|$. Denote by $I$ the integral in \eqref{eq:compare}. Using integration by parts and expanding the kernel, we get 
\[
\bal
I &= \int_0^{\infty} (1+a) \f{u_x \om}{x^{\b}} - a \b \f{u\om}{x^{1+\b}} dx = \f{1}{\pi} \int_0^{\infty}  (1+a) \f{\om(x)}{x^{\b}} \int_0^{\infty} \om(y) \lt( \f{1}{x-y} - \f{1}{x+y}\rt) dy dx  \\
& \quad \quad + \f{1}{\pi}\int_0^{\infty} a \b \f{\om(x)}{ x^{1 +\b}} \int_0^{\infty} \log \B|\f{ y+x}{y-x}\B| \om(y) dy dx \\
&= \f{1 + a}{\pi} \int_0^{\infty}\int_0^{\infty} \lt[ \f{1}{x^{\b}} \lt( \f{1}{x-y} - \f{1}{x+y}   \rt)
+ \f{a\b}{1+a} \f{1}{x^{\b+1}}  \log \B|\f{ y+x}{y-x}\B|  \rt] \om(x) \om(y)  dx dy.
\eal
\]
Since $\om$ is odd, we can symmetrize the integral kernel
\[
\bal
I = \f{1+a}{2\pi} \int_0^{\infty}\int_0^{\infty} &\lt[
\f{1}{x^{\b}} \lt( \f{1}{x-y} - \f{1}{x+y}   \rt)
+ \f{a\b}{1+a} \f{1}{x^{\b+1}}  \log \B|\f{ y+x}{y-x}\B|
\rt. \\
 + &\lt.  \f{1}{y^{\b}} \lt( \f{1}{y-x} - \f{1}{x+y}   \rt)
+ \f{a\b}{1+a} \f{1}{y^{\b+1}}  \log \B|\f{ y+x}{y-x}\B|  \rt] \om(x) \om(y) dx dy.
\eal
\]
Denote
\[
\tau = \f{a\b}{1 + a}, \quad  s  = \f{y}{x}.
\]
We can simplify the integrand as follows
\[
\bal
&\f{1}{x^{\b}} \lt( \f{1}{x-y} - \f{1}{x+y}   \rt)  +  \f{1}{y^{\b}} \lt( \f{1}{y-x} - \f{1}{x+y}   \rt) \\
= & \f{1}{y^{1  + \b}} \f{y^{1+\b}}{x^{1+\b}} \lt( \f{x}{x-y} - \f{x}{x+y} \rt)
+ \f{1}{y^{1+\b}}  \lt( \f{y}{y-x} - \f{y}{x+y}   \rt) \\
=& \f{1}{y^{1+\b}} s^{1 + \b} \lt( \f{1}{1 - s} - \f{1}{1+s} \rt) + \f{1}{y^{1+\b}} \lt( \f{s}{s-1} - \f{s}{s+1}   \rt) =  - \f{1}{y^{1+\b}} (s^{1+\b} -1) \f{2s}{s^2 - 1}  ,\\
& \f{1}{x^{\b+1}}  \log \B|\f{ y+x}{y-x}\B| + \f{1}{y^{\b+1}}  \log \B|\f{ y+x}{y-x}\B|   =  \f{1}{y^{1+\b}}  (s^{1 + \b} +1)  \log \B|\f{ s+1}{s-1}\B| .
\eal
\]
Then $I$ becomes
\beq\label{eq:comp_ineq}
I = \f{1+a}{2\pi} \int_0^{\infty}\int_0^{\infty}
\f{1}{y^{1+\b}} \lt( \tau ( s^{\b+1}  + 1)  \log \B|\f{ s+1}{s-1}\B| - (s^{1 + \b} - 1) \f{2s}{s^2 - 1}  \rt) \om(x)\om(y) dx dy.
\eeq
Denote
\[
F(s, \b) \teq \f{ 1 - s^{1+\b} }{1 + s^{1+\b}} \f{2s}{1-s^2} \lt(\log \B|\f{ s+1}{s-1}\B| \rt)^{-1}, \quad s \in [0, 1], \ \b \in [1,2] .
\]
We have the following basic property for $F(s, \b)$.

\begin{lem}\label{lem:comp_F}
Assume that $s\in[0,1], \b \in [1,2]$. Then
(a) $F(s, \b)$ is monotonically increasing with respect to $\b$. (b) For any $s \in[0,1]$, we have
\beq\label{eq:comp_F}
\bal
F(s, 1) \leq 1 , 
\quad F(s, \b) < 1 + 0.015 (\b-1) \quad \forall  \b \in (1, 2] .\\
\eal
\eeq
\end{lem}
We defer the proof of the above Lemma to the Appendix. Let $\e = 0.015$. As a result, if
\beq\label{eq:comp_ineq2}
 \tau = \f{a \b}{1+a} >  1 + \e (\b-1) \iff  a > \f{ \e (\b -1) + 1 }{ (1-\e) (\b-1)},
\eeq
then $I$ in \eqref{eq:comp_ineq} is non-negative
\[
I = \f{1+a}{2\pi} \int_0^{\infty}\int_0^{\infty}
\f{1}{y^{1+\b}} \lt( ( s^{\b+1}  + 1)  \log \B|\f{ s+1}{s-1}\B| (\tau - F(s, \b))  \rt) \om(x)\om(y) dx dy \geq0,
\]
 where we have used $\om(x) \om(y) \geq 0 \ \forall x , y \geq0 $.
\end{proof}
Now, we are in a position to prove Theorem \ref{thm:crit}.

\begin{proof}[Proof of Theorem \ref{thm:crit}]
Denote $a_0  = \f{1 + \e}{1-\e}$.   
If $1 \geq \al > a_0/ a$, we get
\[
a \al > a_0 = \f{1 + \e}{1 - \e} \geq \f{ \e \al + 1 }{1 -\e}  \Rightarrow a > \f{ 1 + \e \al}{ (1 -\e) \al}.
\]
Therefore, we can choose $1 \leq \b < \al + 1$, e.g. $\b = 1+\al - \d$ for some sufficiently small $\d$, such that
\[
a >  \f{ 1 + \e (\b -1)}{ (1 -\e) (\b-1)},
\]
i.e. $(a, \b)$ satisfies the assumption \eqref{eq:comp_cond} in Lemma \ref{lem:compare}. For $\om \in C^{\al}$ 
with compact support (or some growth condition at the far field), we have $\om(y) |y|^{-\b} \in L^1$. 
Using \eqref{eq:DG} and Lemma \ref{lem:compare}, we get
\[
\f{d}{dt} \int_0^{\infty} \f{\om(t, y)}{y^{\b}} dy = \int_0^{\infty} \f{ u_x \om - a u \om_x }{y^{\b} } dy \geq 0.
\]
Note that $\om$ is odd and non-positive for $ x >0$. We yield
\[
 0 \geq \int_0^{\infty} \f{\om(t, y)}{y^{\b}} dy \geq \int_0^{\infty} \f{\om(0, y)}{y^{\b}} dy
 \ \Rightarrow   \ \B| \B|  \f{\om(t, \cdot)}{|y|^{\b}}\B|\B|_1  \leq \B| \B|  \f{\om(0, \cdot)}{|y|^{\b}}\B|\B|_1< +\infty
\]
for all $t > 0$. If $\om$ blows up in a self-similar fashion, i.e.
\[
\om(t) \to (T-t)^{-1} \Om\lt( \f{x}{(T-t)^{c_l}}\rt)\; ,
\]
in some suitable functional space, (the convergence can be measured in the dynamic rescaling space), we can plug the self-similar blowup ansatz in $I(t)$ to yield
\[
\B| \B|  \f{\om(t, \cdot)}{|y|^{\b}}\B|\B|_1 \to (T-t)^{-1} \int_{\RR}\B| \Om\lt( \f{x}{(T-t)^{c_l}}\rt) \B| |x|^{-\b} dx
= (T-t)^{-1+ c_l - \b c_l } \int_{\RR} \B| \f{\Om(x)}{x^{\b}} \B| dx
\]
as $t \to T$. Since $\B| \B|  \f{\om(t, \cdot)}{|y|^{\b}}\B|\B|_1$ is bounded uniformly in $t$, we get
\[
-1 + c_l - \b c_l \geq 0 \Rightarrow  c_l \leq -\f{1}{\b-1} \; ,
\]
for any $\b < 1+ \al$. Letting $\b \to \al + 1$, we get $ c_l \leq - \al^{-1}$.
\end{proof}

\subsubsection{DG equation on the circle}
\label{sec:crit_S1}
For the DG equation the circle, we can prove a stronger result.

\begin{thm}\label{thm:nonblow_S1}
Suppose that $\om \in C^{\al}$ is odd, $\pi$ periodic, $\om \leq 0$ for $x \in (0, \pi /2)$ and $1 \geq \al > a_0/a$.
Then the result in Theorem \ref{thm:crit} holds true. Furthermore, 
$u_x(0,t)$ and $|| \om||_1$ do not blow up at the first singularity time $T$, if it exists, and grow at most exponentially fast up to $T$.
\end{thm}

 \begin{remark}
  From our numerical experiments, we found that for various initial data, $u_x(0,t ) = \max u_x(x,t)$ for some finite time. Theorem \ref{thm:nonblow_S1} gives a strong indication that $u_x$ is bounded from above if $u_x(0)$ is bounded. From
  \[
  \f{d}{dt}\max \om \leq  ( \max u_x)  \max \om,
  \]
  ( $\max \om >0$) and the assumption $u_x(0,t ) = \max u_x(x,t)$, we obtain the boundedness of $\max \om$ for all time. Since $\om$ is odd, we get
  $|| \om||_{\infty} =   \max \om$
  is bounded globally. Applying the BKM-type blowup criterion yields the global well-posedness.
  \end{remark}

The following Lemma is an analogy of Lemma \ref{lem:compare} on the circle.
  \begin{lem}\label{lem:compare_S1}
With the assumption as Lemma \ref{lem:compare}, 
  \[
  \int_0^{\pi/2}  ( u_x \om - a u \om_x)   \lt( \cot y\rt)^{\b}  dy \geq 0 .
  \]
  \end{lem}
The proof is similar to that of Lemma \ref{lem:compare} and we defer it to the Appendix.

 \begin{proof}[Proof of Theorem \ref{lem:compare_S1}]
Using Lemma \ref{lem:compare_S1} and an argument similar to that in the proof of Theorem \ref{thm:crit}, we derive
\[
\int_0^{\pi/2} \om_0 \lt( \cot y\rt)^{\b}  \leq  \int_0^{\pi/2} \om \lt( \cot y\rt)^{\b}  \leq 0
\Rightarrow  ||  \om(t, \cdot) |\cot y|^{\b} ||_{L^1} \leq || \om_0 | \cot y|^{\b} ||_1 < +\infty \; ,
\]
for some $1 < \b < 1 + \al$. Next, we estimate the $L^1$ norm of $\om$. 
Integrating \eqref{eq:DG_S1} from $\pi / 2$ to $\pi$ yields
  \beq\label{eq:l11}
  \bal
  &\f{d}{dt }\int_{0}^{  \pi/2} \om\lt(y + \f{\pi}{2}\rt) dy  = -(1 + a) \int_0^{\pi/2} \int_0^{\pi/2}    \om\lt(x + \f{\pi}{2}\rt)  \om\lt(y + \f{\pi}{2} \rt)  \cot \lt( x+y  \rt) dx dy \\
  =&  -2(1 + a) \int_{0 \leq x \leq y \leq \pi/2  }  \om\lt(x + \f{\pi}{2}\rt)  \om\lt(y + \f{\pi}{2}\rt)  \cot \lt(x+y  \rt) dx dy . \\
    =&  -2(1 + a) \int_0^{\pi/2}   \om\lt(x + \f{\pi}{2}\rt)  \lt( \int_x^{\pi/2}  \om\lt(y + \f{\pi}{2}\rt)  \cot \lt(x + y  \rt) dy \rt) dx.  \\
\eal
  \eeq
  Note that $\om \geq 0 $ on $(\pi/2, \pi)$ and $\cot( x+ y) \geq 0$ if $x+y \leq \pi/2$. We yield
  \[
  -  \int_x^{\pi/2}  \om(y + \pi/2)  \cot \lt( x+ y \rt) dy
  \leq  - \int_{x \vee (\pi/2 - x)}^{\pi/2} \om(y + \pi/2) \cot \lt( x+ y \rt) dy .
  \]
  For $x \in [0, \pi/2],  x \vee (\pi/2 - x) \leq y \leq \pi /2$, we have
\[
0 \leq x + y  - \pi / 2  \leq y \leq \pi / 2 , \  \pi /4 \leq y \ \Rightarrow  \ -\cot(x + y)   = \tan(  x + y - \pi / 2 ) \leq \tan y \leq (\tan y)^{\b} \; ,
\]
where $\b \geq 1$ satisfies the assumption in Lemma \ref{lem:compare}. It follows that
\[
\bal
 & -  \int_x^{\pi/2}  \om(y + \pi/2)  \cot \lt( x+ y \rt) dy
\leq \int_{x \vee (\pi/2 - x)}^{\pi/2} \om(y + \pi/2) \tan y dy  \\
 \leq & \int_0^{\pi/2} \om(y + \pi/2) (\tan y)^{\b} dy \leq || \om(t,\cdot) |\cot y|^{\b} ||_{L^1} \leq || \om_0 |\cot y|^{\b} ||_{L^1}.
\eal
\]

Plugging the above estimate in \eqref{eq:l11} implies
\[
\f{d}{dt }\int_{0}^{  \pi/2} \om(y + \pi/2 ) dy  \les (1 + a)|| \om_0 |\cot y|^{\b} ||_{L^1} \int_0^{\pi/2} \om( y + \pi/2) dy.
\]
Note that $\om \geq 0 $ on $[\pi/2 , \pi]$ and $|| \om||_{L^1} = 2 \int_{0}^{  \pi/2} \om(y + \pi/2 )  $.
Using the Gronwall inequality, we obtain
  \[
  || \om(t, \cdot)||_{L^1} \leq \exp \lt( C (1+a) \B| \B|   \om_0  | \cot y |^{\b}  \B| \B|_{L^1} t  \rt)   || \om_0||_{L^1},
  \]
  where $C$ is some universal constant. Interpolating $|| \om||_{L^1}$ and $\B| \B|   \om  | \cot y\|^{\b}  \B| \B|_{L^1}$ gives
  \[
 | u_x(0)  |   = \B| \f{2}{\pi} \int_{0}^{\pi/2} \om   \cot y  dy\B| \lesssim || \om||_{L^1}^{ 1- 1/\b} \B| \B|   \om  |\cot y |^{\b}  \B| \B|^{1/\b}_{L^1}
 \lesssim K_1 \exp \lt( K_2 t  \rt),
   \]
  where $K_1, K_2$ depend on the initial datum and $a, \al$ only.
\end{proof}

Finally, we state a result for $a = 1$. 
\begin{prop}\label{prop:crit_a1}
For $a=1$, suppose that $\om_0 \in H^s(S^1), s > 5/2$,  $\om_{0, x}(0) = 0$ and $\om_0 \leq 0$ for $x \in (0, \pi/2)$, then $u_x(t,0),\; ||\om||_1$ do not blow up at the blowup time $T<+\infty$, if it exists. 
\end{prop}

\begin{proof}
Since $\om \in H^s, s > 5/2$, we have local well-posedness and that $\om(t,\cdot) \in C^2$ by the Sobolev embedding. Note that for $a = 1$, $\om_x(t, 0) \equiv \om_x(0, 0) = 0$. Since $ \om(t, 0) = \om_x(t, 0) = 0$,
we have $\om(t, x) = O(x^2)$ near $x=0$. Define 
\[
I \teq \int_0^{\pi/2}  ( u_x \om -  u \om_x)   \lt( \cot y\rt)^{\b}  dy  .
\]
for any $\b \in (1, 3)$. In particular, for $\b =2.2$, using an argument similar to that in the proof of Lemma \ref{lem:compare_S1}, one can show that $I > 0$. The boundedness of $u_x(0) , || \om||_1$ follows by using an 
argument similar to that in the proof of Theorem \ref{thm:nonblow_S1}.
\end{proof}
\begin{remark}
The regularity of $\om_0$ can be relaxed easily and we do not explore it. 
\end{remark}

\section{Finite Time Blowup for Negative $a$ with $C^{\infty}$ initial data}\label{sec:DG_neg}

For the sake of completeness, we state the finite time blowup result of \eqref{eq:DG} for negative $a$ with smooth initial data.
\begin{thm}\label{thm:neg}
Let $\om \in C_c^{\infty}(\RR)$ or $\om \in C^{\infty}(S^1)$ be an odd function such that $u_x(0) =H\om(0)>0$. Then \eqref{eq:DG} with $a<0$ develops a singularity in finite time.
 \end{thm}

The real line case was proved in the work of Castro and C\'ordoba \cite{Cor10}. We will present a proof for $S^1$. We consider $\pi$ periodic and use the Hilbert transform given in \eqref{eq:hil_circ}.

\begin{proof}
Taking the Hilbert transform on \eqref{eq:DG} yields
\[
(u_x)_t  = \f{1}{2} (u_x^2 - w^2) - a H(  u \om_x) .
\]
Note that $\om(0) =0$.  Choosing $x= 0$ gives
 \beq\label{eq:ux0}
\f{d}{dt} u_x(t, 0)  = \f{1}{2} u_x(t,0)^2  - a H( u \om_x)(t,0).
 \eeq
Next we show that $H( u \om_x)(t, 0) \leq 0$. Since $\om$ is odd, $\pi$-periodic and smooth locally in time 
, it admits a decomposition 
\[
\om(t, x) = \sum_{n \geq 1} a_n(t) \sin (2nx) , \quad \om_x = \sum_{n \geq 1} 2n a_n(t) \cos (2nx),
\]
for some $a_n(t)$ decays sufficiently fast as $n \to +\infty$. It is easy to show that 
\[
u(t, x) =  -\sum_{ n\geq 1} \f{a_n}{2n} \sin(2nx).
\]
Next, we compute $ u / \sin(x), \om_x \cos x$. Using telescoping, we get 
\[
\f{\sin(2nx)}{\sin(x)} = \sum_{1 \leq k \leq n} 2\cos( (2k-1)x) , \quad  \cos(2nx ) \cos x =  \f{\cos(2n-1) x + \cos(2n+1)x}{2} .
\]

It follows that
\[
\bal
\f{u}{\sin x} &=-\sum_{ n\geq 1} \f{a_n}{2n} \sum_{ 1 \leq k\leq n} 2 \cos((2k-1) x) 
 = - \sum_{k \geq 1} \cos((2k-1) x) \sum_{n \geq k} \f{a_n}{n}, \\
 \om_x \cos x &= \sum_{n \geq 1} 2n a_n \f{\cos(2n-1) x + \cos(2n+1)x}{2} = 
 \sum_{n\geq 1 } \cos( (2n-1)x)  ( n a_n +(n-1 ) a_{n-1} ),
 \eal
\]
where $a_0 = 0$ and we have used summation by parts to get the last two identities, which are valid since $a_n$ decays sufficiently fast. Using the orthogonality of $\{ \cos( (2n-1) x) \}_{n \geq 1}$ on $L^2( -\pi/2, \pi/2)$, we derive
\[
H( u\om_x)(t, 0 ) = - \f{1}{\pi} \int_{-\pi/2}^{\pi/2} \f{u}{\sin x} \om_x \cos(x) dx =  \f{1}{2} \sum_{k \geq 1} (\sum_{n \geq k} \f{a_n}{n}) ( k a_k +(k-1 ) a_{k-1} ).
\]
Denote $S_k \teq \sum_{n \geq k} \f{a_n}{n}$ for   $k \geq 1$ and  $S_0 = 0$.
Since $a_n$ decays sufficiently fast, so does $S_n$. We then have $a_k = k (S_k - S_{k+1})$ and
\[
 k a_k +(k-1 ) a_{k-1} = k^2( S_k - S_{k+1}) + (k-1)^2(S_{k-1}- S_k). 
\]
We can reduce $H(u\om_x)(t,0)$ to 
\[
\bal
 &H( u\om_x)(t, 0 )  =  \f{1}{2} \sum_{k \geq1} S_k ( k^2( S_k - S_{k+1}) + (k-1)^2(S_{k-1}- S_k) ) \\
 =& \f{1}{2} \sum_{ k \geq 1} S_k^2 (2k-1) - \f{1}{2}  \sum_{ k\geq 1} S_k S_{k+1} k^2   + \f{1}{2} \sum_{ k \geq 1} S_k S_{k-1}(k-1)^2    
 = \f{1}{2} \sum_{ k \geq 1} S_k^2 ( 2k-1)  \geq 0 .
\eal
\]
Consequently, for $a< 0$,  \eqref{eq:ux0} implies
\[
\f{d}{dt}u_x(t,0)  \geq \f{1}{2} u_x^2(t,0) .
\]
Since $u_x(0,0)>0$, it follows that the solution must develop a finite time singularity. 
  \end{proof}

\appendix
\section{}
\subsection{Properties of the Hilbert transform}\label{app:hil}
Throughout this section, without specification, we assume that $\om$ is smooth and decays sufficiently fast. The general case can be obtained easily by approximation. The following identity is very well known whose proof can be found in, e.g. \cite{Elg17}.
\begin{lem}[The Tricomi identity]\label{lem:tric}
 We have
\[
H( \om H\om)  =  \f{1}{2} (  (H \om)^2 - \om^2  ).
\]
\end{lem}

The Hilbert transform has a nice property that it almost commutes with the power $x^{-1}, x$.
\begin{lem}\label{lem:commute} Suppose that $u_x = H\om$. Then we have
\beq\label{lem:vel0}
\f{u_x - u_x(0)}{x} = H\lt( \f{\om}{x}\rt)  , \textrm{ or equivalently } \ (H\om)(x) = (H\om)(0) + x H\lt( \f{\om}{x}\rt) .
\eeq
Similarly, we have
\beq\label{eq:commute2}
u_{xx} = H\om_x, \quad x u_{xx} = H(x \om_x).
\eeq
Suppose that in addition $\om$ is odd. Then we further have
\beq\label{eq:commute3}
  x^2 u_{xx}  =  H (x^2 \om_x),  \quad x u_x = H(x\om), \quad \f{u_{xx}}{x} = H \lt(  \f{\om_x - \om_x(0)}{x} \rt) .
\eeq
If $\om$ is odd and a piecewise cubic polynomial supported on $[-L,L]$ with $\om(L) = \om(-L) = 0$
($\om^{\prime}, \om^{\prime \prime}$ may not be continuous at $x = \pm L$), then we have
\beq\label{eq:commute4}
 u_{xxx} (x^2 - L^2) = H(  \om_{xx}(x^2 -L^2) )  .
\eeq
\end{lem}

\begin{proof}
The identity \eqref{lem:vel0} is very well known. We have 
\[
\f{u_x - u_x(0)}{x} = \f{1}{\pi x} P.V. \int   \om(y) \lt( \f{1}{x-y} + \f{1}{y} \rt) dy
 = \f{1}{\pi} P.V. \int \f{\om(y)}{(x-y) y} dy= H\lt( \f{\om}{y}\rt)(x).
 \]

For \eqref{eq:commute2}, note that
\[
H \om_x = u_{xx} , \quad H(x \om_x)(0) = - \f{1}{\pi}\int \om_x  dx= 0 .
\]
From \eqref{lem:vel0}, we get
\[
H(x \om_x)(x) = H(x\om_x)(0) + x (H\om_x)(x) =  x u_{xx}(x).
\]

For \eqref{eq:commute3}, if $\om$ is odd, then we obtain
\[
H(x^2 \om_x)(0) = - \f{1}{\pi} \int  x \om_x dx  = \f{1}{\pi} \int \om dx = 0 .	
\]
Applying \eqref{lem:vel0} again yields
\[
H(x^2 \om_x) = H(x^2 \om_x)(0) + x H(x\om_x) =x H(x\om_x) = x^2 u_{xx}.
\]

For the second identity, since $\om$ is odd, we can apply a similar argument to yield $H(x \om)(0) = - \f{1}{\pi} \int \om dx  = 0 $ and 
\[
H(x \om)(x) = H(x\om)(0) + x H\om  = x  H \om = x u_x .
\]

For the third identity in \eqref{eq:commute3}, first of all, we have
\[
\om_x  = -H u_{xx}.
\]
If $\om$ is odd, then $u, u_{xx}$ are also odd. $\f{ \om_x - \om_x(0)}{x}$ and $\f{u_{xx}}{x}$ are $L^2$
for $\om$ smooth with suitable decay at infinity. Using an argument similar to that in the proof of \eqref{lem:vel0} implies
\[
\f{\om_x - \om_x(0)}{x} =  -H \lt(  \f{u_{xx}}{x} \rt).
\]
Applying the Hilbert transform on both sides proves the third identity.

Next, we consider \eqref{eq:commute4}. From the assumption of $\om$, we know $\om \in H^1(\RR)$. We can apply \eqref{eq:commute3} to yield
\[
x^2 u_{xx} = H(x^2 \om_x) , \quad L^2 u_{xx} = L^2 H(\om_x) ,  
\]
which implies $(x^2 -L^2) u_{xx}= H( \om_x(x^2 - L^2))$. Since $\om$ is a piecewise cubic polynomial on $[-L,L]$ and is continuous globally, we further have that 
$\om_{x}(x^2 -L^2)$ is globally Lipschitz and it is in $H^1(\RR)$. 
By the $L^2$ isometry of the Hilbert transform, we get $u_{xx}(x^2-L^2) \in H^1(\RR)$. Using the fact that the derivative commutes with the Hilbert transform, we yield 
\[
\pa_x H( \om_x(x^2-L^2)) = H(  \pa_x ( \om_x(x^2 - L^2) ) ),
\]
which implies 
\[  u_{xxx}(x^2 -L^2) + 2 u_{xx} x = H( \om_{xx}(x^2 -L^2)  + 2x \om_x) .\]
Using the linearity of the Hilbert transform and $u_{xx} x = H(x \om_x)$\eqref{eq:commute2}, we conclude the proof of \eqref{eq:commute4}.
\end{proof}

The cancellation in the following Lemma is crucial in our linear stability analysis.

\begin{lem}\label{lem:vel_a1} Suppose $u_x = H\om$.
(a)  We have
\begin{align}
\int_{\RR} \f{  ( u_x - u_x(0) ) \om    }{  x  } dx & = \f{\pi}{2} ( u_x^2(0) + \om^2(0)) \geq 0 .\label{lem:vel3} 
\end{align}
Furthermore, if $\om$ is odd (so is $u_{xx}$ due to the symmetry of Hilbert transform), we have
\begin{align}
\int_{\RR} \f{  ( u_x - u_x(0) ) \om    }{  x^3  } dx & =  \f{\pi}{2} ( \om_x^2(0) -u^2_{xx}(0)) =\f{\pi}{2}  \om_x^2(0) \geq 0 \label{lem:vel4}.
\end{align}
In particular, the right hand side of \eqref{lem:vel3} vanishes if $ u_x(0) = \om(x) =0$.

(b)  We have
\beq\label{lem:vel5}
   \int_{\RR} u_{xx} \om_x x   dx = 0  .
\eeq

(c) The Hardy inequality: Suppose that $\om$ is odd and $\om_x(0)=0$. For $p = 2, 4$, we have
\beq\label{eq:hd1}
\int \f{  (u  - u_x(0) x)^2} { |x|^{p+2} } dx \leq  \lt(\f{2}{p +1 } \rt)^2   \int \f{  (u_x - u_x(0))^2}{ |x|^{ p } }  dx = \lt(\f{2}{p +1 } \rt)^2  \int   \f{  \om^2}{ |x|^{ p } } dx.
\eeq
\end{lem}

\begin{proof}[Proof of \eqref{lem:vel3}]
Note that $u_x = H \om, \  u_x(0)  = -\f{1}{\pi} \int \f{\om}{x} dx.$
Using Lemma \ref{lem:tric}, we get
\[
\bal
\int \f{ (u_x -u_x(0)) \om}{x} dx & = \int \f{\om \cdot H \om}{x} dx - u_x(0) \int \f{\om}{x} dx
= -\pi H ( \om \cdot H \om)(0) +  \pi u_x(0) \cdot u_x(0) \\
& = \f{\pi}{2} (\om^2(0) - u^2_x(0)) + \pi u_x^2(0) = \f{\pi}{2} ( \om^2(0) + u^2_x(0) ) .
\eal
\]
If $\om(0) =0 $, the above estimates are reduced to $\f{\pi}{2} u^2_x(0)$.
\end{proof}


\begin{proof}[Proof of \eqref{lem:vel4}]
If $\om$ is odd and smooth, then $\om / x$ is even and smooth and $H(\om / x)$ is odd. Using \eqref{lem:vel0} and Lemma \ref{lem:tric}, we have
\[
\bal
\int \f{  ( u_x - u_x(0) ) \om    }{  x^3  } dx &= \int \f{1}{x} \f{\om}{x} H\lt( \f{\om}{x}\rt) dx
= -\pi H\lt( \f{\om}{x} H\lt( \f{\om}{x}\rt)    \rt)(0)   \\
&= \f{\pi}{2} \lt\{   \lt( \f{\om}{x}(0)  \rt)^2 - H\lt( \f{\om}{x}  \rt)(0)^2   \rt\}
 = \f{\pi}{2} ( \om_x^2(0) - u^2_{xx}(0)) .
\eal
\]
If $u_{xx}(0) = 0$, the above equality is reduced to $\f{\pi}{2} \om^2_x(0)$.
\end{proof}

\begin{proof}[Proof of \eqref{lem:vel5}]
Applying \eqref{lem:vel3} with $(u_x, \om)$ replaced by $(u_{xx}, \om_x)$ yields
\[
\bal
\la u_{xx} \om_x, x\ra &= \int \f{  (x\om_x)  H(x\om_x) }{x}  dx= \int \f{  (x\om_x)  ( H(x\om_x )-H(x\om_x )(0))  }{x}  dx \\
&= \f{\pi}{2} (  (x\om_x)^2(0) + ( x u_{xx})^2(0)    ) =0 ,
\eal
\]
where we have used $(x u_{xx})(0) = (x\om_x)(0) = 0$ to obtain the last equality.
\end{proof}

\begin{proof}[Proof of \eqref{eq:hd1}]
The first inequality in \eqref{eq:hd1} is the standard Hardy inequality \cite{Har52}. Since $\om$ is odd and $\om_x(0) =0$, $\om/x, \om/x^2 \in L^2(\RR)$.  From \eqref{lem:vel_a1}, we have
\[
\f{u_x - u_x(0)}{x} = H\lt( \f{\om}{x}\rt), \quad  H\lt( \f{\om}{x^2} \rt) =\f{1}{x} \lt( H\lt( \f{\om}{x}\rt) -H\lt(\f{\om}{x}\rt)(0) \rt) .
\]
Since $\om$ is odd, we obtain $ H( \f{\om}{x}) = 0$. Hence, we can simplify the second equality as follows
\[
\bal
 H\lt( \f{\om}{x^2} \rt) = \f{1}{x} H\lt( \f{\om}{x}\rt) = \f{1}{x} \f{ u_x - u_x(0)}{x} = \f{u_x - u_x(0)}{x^2}.
 \eal
\]

Applying the $L^2$ isometry property of the Hilbert transform $H$ to $H( \f{\om}{x}), H(\f{\om}{x^2})$, we establish the equality in \eqref{eq:hd1}.
\end{proof}

The following Lemma is an analogy of Lemma \ref{lem:vel_a1} for H\"older continuous functions. \eqref{lem:vel1},\eqref{lem:vel15} and \eqref{lem:vel2} are from C\'ordoba \& C\'ordoba \cite{Cor06}.

\begin{lem}[Weighted estimate for $C^{\al}$ functions]\label{lem:vel}
Suppose that $u_x = H \om$ and $\om$ is odd in \eqref{lem:vel1}, \eqref{lem:vel2} and \eqref{eq:hd2}.
(a)
For $\b \in (0, 2) $, we have
\begin{align}
\int \f{  (u_x - u_x(0))^2}{ |x|^{1+\b} }  dx \leq \f{ 1  }{  \tan^2 \f{\b \pi}{4} \wedge \cot^2 \f{\b \pi}{4}   }   \int \f{w^2}{|x|^{1+\b}} dx  \lesssim\f{1}{ ( \b \wedge (2-\b) )^2}     \int \f{w^2}{|x|^{1+\b}} dx , \label{lem:vel1} \\
\int \f{  u_x^2}{ |x|^{1-\b} }  dx\leq \f{ 1  }{ \tan^2 \f{\b \pi}{4} \wedge \cot^2 \f{\b \pi}{4}   }   \int \f{w^2}{|x|^{1 -\b}} dx  \lesssim\f{1}{ ( \b \wedge (2-\b) )^2}     \int \f{w^2}{|x|^{1 - \b}} dx  , \label{lem:vel15}
\end{align}
provided that the right hand side is finite, where $ a \wedge b = \min(a,b)$. Note that we do not need to assume that $\om$ is odd in \eqref{lem:vel15}.

(b) For $ \b \in (0,2)$, we have
\beq\label{lem:vel2}
\int \f{  ( u_x - u_x(0) ) \om    }{ \sgn(x) |x|^{1 + \b}  } dx \geq 0  .
\eeq

(c) 1D Hardy inequality \cite{Har52}:  For $ \b  \in (0,1)$, we have
\beq\label{eq:hd2}
\int \f{  (u  - u_x(0) x)^2}{  |x|^{3 + \b} }  dx \leq  \lt(\f{2}{ \b+2 } \rt)^2   \int \f{  (u_x - u_x(0))^2}{ |x|^{ \b + 1 } } dx
 \lesssim \f{1}{\b^2}   \int   \f{  \om^2}{ |x|^{  \b + 1 } }.
\eeq
\end{lem}
The first inequality in \eqref{eq:hd2} is the Hardy inequality \cite{Har52} and the second inequality in \eqref{eq:hd2} follows from \eqref{lem:vel1}.

\subsection{Estimate of the $C^{\alpha}$ approximate self-similar solution}\label{app:profile}
We establish the estimates of the approximate self-similar solution in Lemma \ref{lem:est_profile} in this section.

\subsubsection*{Proof of Lemma \ref{lem:est_profile}}

\subsubsection*{Proof of \eqref{eq:est_dp}}
Recall the explicit formula for $\om_{\al}, u_{\al,x}$ in \eqref{eq:CLM_self} and the weight $\vp_{\al}, \psi_{\al}$ in \eqref{eq:wg_alpha}. Denote $c_{\al} = \cos (\al \pi / 2), s_{\al} = \sin(\al \pi / 2)$. Without loss of generality, we consider $x>0$. We have
\[
 \vp_{\al} = \f{1}{2s_{\al}} \f{  (1 + 2 c_{\al} x^{\al} + x^{2\al} )^2}{ x^{1 + 3\al}  }, \quad 
 \f{x \vp_{\al,x}}{\vp_{\al}} = x (\log(\vp_{\al})_x = \f{ 2( 2\al x^{2\al} +2 \al c_{\al} x^{\al}  ) }{  1 + 2 c_{\al} x^{\al} + x^{2\al}  } - (1 + 3\al),
\]
which implies 
\[
\bal
&\f{1}{2\vp_{\al}} \lt( \f{1}{\al} x   \vp_{\alpha} \rt)_x + ( \bar{c}_{\om} + u_{\alpha,x} )  
=  \f{1}{2\al} + \f{x \vp_{\al, x}}{2\al \vp_{\al}} -1 +  \f{ 2(1 + c_{\al}  x^{\al}   ) }{ 1 + 2 c_{\al}x^{\al} + x^{2\al}  }  \\
= &  - \f{1+3\al}{2\al} + \f{ 2( x^{2\al} + c_{\al} x^{\al}) }{  
1 + 2 c_{\al}x^{\al} + x^{2\al}} +\f{1}{2\al}-1 +  \f{ 2(1 + c_{\al}  x^{\al}   ) }{ 1 + 2 c_{\al}x^{\al} + x^{2\al}  }  = -\f{3}{2} -1 + 2 = -\f{1}{2},
\\
\eal
\]
which is the first identity in \eqref{eq:est_dp}. The second identity can be proved similarly. For the third inequality, we have
\[
\bal
&u_{\al,xx} \psi_{\al} = \pa_x\lt( \f{2 (1 + c_{\al} x^{\al})}{ 1 + 2c_{\al} x^{\al} +x^{2\al}} \rt) \f{(1 + 2c_{\al}x^{\al} + x^{2\al})^2  }{ 2 s_{\al} \al^2 }x^{1- 3\al}  \\
 =& \lt( 2\al c_{\al}x^{\al-1} (1 + 2c_{\al}x^{\al} + x^{2\al} )
 - 2(1 + c_{\al}x^{\al}) 2 \al ( c_{\al} x^{\al-1} + x^{2\al -1} )   \rt)  x^{1-3\al}  (2 s_{\al} \al^2)^{-1} \\ 
 = & 2\al (  -  c_{\al} x^{3\al -1}  - 2 x^{2\al - 1}  - c_{\al}x^{\al-1} )   x^{1-3\al}  (2 s_{\al} \al^2)^{-1}  = - ( 2 x^{ -\al }  + c_{\al} (x^{-2\al} + 1) )(s_{\al} \al)^{-1}
 ,
 \eal
\]
which is monotone increasing with respect to $x$ and the desired inequality follows.

\subsubsection*{Proof of \eqref{eq:est_w}}
Since all quantities are symmetric, we only consider $x \geq 0$.  For \eqref{eq:est_w}, we have
\[
\bal
\B| \f{ x \om_{\al, x}} {\om} \B| &=\al \B| \f{ 1 - x^{\al}   }{  1 +2  \cos\lt( \f{\al \pi}{2} \rt) |x|^{\al}+  |x|^{2\al}  } \B|  \leq \al \; , \\
 \B| \f{ x^2 \om_{\al, xx} +x \om_{\al, x}  }{ \om_{\al}} \B|
 &= \al^2\B| \f{ 1 - 6 x^{2\al} + x^{4\al} -2 \cos\lt( \f{\al \pi}{2}   \rt) x^{\al} (1 +   x^{2\al} )  }{ (1 + 2 \cos\lt( \f{\al \pi}{2}   \rt) x^{\al} + x^{2\al} )^2 } \B| \les \al^2,
\eal
\]
uniformly for all $x \geq 0$. Using the triangle inequality, we get
\[
 \B|\B| \f{ x^2 \om_{\al, xx}  }{ \om_{\al}} \B| \B|_{\infty} \leq  \B| \B| \f{ x \om_{\al, x}} {\om} \B|\B|_{\infty} +  \B|\B| \f{ x^2 \om_{\al, xx} +x \om_{\al, x}  }{ \om_{\al}} \B| \B|_{\infty}\les \al.
\]

\subsubsection*{Proof of \eqref{eq:est_v}}
Assume $y \geq 0 $. For \eqref{eq:est_v}, we have
\[
\bal
\B|  \f{u_{\al}(y)}{y} - u_{\al, x}(0) \B|
 &= \B| \f{1}{y} \int_0^y  \lt(  \f{ 2(1 + \cos\lt( \f{\al \pi}{2}   \rt) |x|^{\al}   ) }{ 1 + 2 \cos\lt( \f{\al \pi}{2}   \rt) |x|^{\al} + |x|^{2\al}  }  - 2 \rt)  dx \B|
  =  \B| \f{1}{y}  \int_0^y \f{ 2 |x|^{2 \al} + 2 \cos\lt( \f{\al \pi}{2}   \rt) |x|^{\al}  }{  1 + \cos\lt( \f{\al \pi}{2}   \rt) |x|^{\al} +|x|^{2 \al}    } dx  \B|  \\
  &\les \B| \f{1}{y} \int_0^y |x|^{\al } \wedge 1 dx \B| \leq \min\lt( \B| \f{1}{y} \f{|y|^{1+\al}}{1+\al} \B|,  \ 1   \rt) \les |y|^{\al} \wedge 1 \;,  \\
  \B|  \f{u_{\al}(y) }{y} - u_{\al, x}(y) \B| &  = \B| \f{1}{y }\int_0^y (u_{\al, x}(x) - u_{\al,x}(y)) dx
\B|  = \B| \f{1}{y} \int_0^y x u_{\al, xx}(x) dx \B|  \\
& = \B| \f{2\al}{y} \int_0^y \f{x^{\al} ( 2x^{\al} + \cos\lt( \f{\al \pi}{2}   \rt)(1 + x^{2\al}) )  }{ ( 1 + \cos\lt( \f{\al \pi}{2}   \rt) |x|^{\al} +|x|^{2 \al}  )^2  }  dx\B|
\les \B| \f{2\al}{y} \int_0^y |x|^{\al} \wedge 1 dx \B| \les \al |y|^{\al} \wedge 1 .
\eal
\]

\subsubsection*{Proof of \eqref{eq:est_asy}}
For \eqref{eq:est_asy}, the first two inequalities follow from the definition of
$\vp_{\al}, \psi_{\al}$ in \eqref{eq:wg_alpha} and
\[
- \f{ 1}{ \om_{\al} } \asymp \f{1}{\al} (|x|^{\al} + |x|^{-\al} ).
\]
From the definition $\psi_{\al} = x^2 \vp_{\al}/\al^2$, we know
\[
\f{x \psi_{\al, x}}{\psi_{\al}} -1 =  \f{x (x^2 \vp_{\al})_x}{ x^2 \vp_{\al}} -1= \f{ x^3 \vp_{\al,x} + 2x^2 \vp_{\al}   }{x^2 \vp_{\al}} -1= \f{ x \vp_{\al, x}  }{\vp_{\al}} + 1.
\]
Hence, for the third inequality in \eqref{eq:est_asy}, we get
\[
\bal
 \B| \f{ x \psi_{\al, x}  }{\psi_{\al}} - 1  \B|   = \B| \f{ x \vp_{\al, x}  }{\vp_{\al}} + 1  \B|  &= \al \B|  \f{-3 + x^{2\al} - 2 \cos\lt( \f{\al \pi}{2}   \rt)x^{\al}} { 1 + \cos\lt( \f{\al \pi}{2}   \rt) x^{\al} +x^{2 \al}  } \B| \les \al. \\
 \eal
\]

\subsubsection*{Proof of \eqref{eq:est_err}}
Recall the definition of error term in \eqref{eq:error_alpha}
\[
F_{\al}(\om_{\al})  =   - a ( u_{\al} - u_{\al,x}(0) x )  \om_{\al, x}.
\]
For the first inequality in \eqref{eq:est_err}, we use the result \eqref{eq:est_w} and \eqref{eq:est_v} 
that we just proved to yield
\[
\bal
\la F_{\al}(\om_{\al})^2 ,\vp_{\al} \ra & = a^2 \B\la ( u_{\al} - u_{\al,x}(0) x )^2  \om^2_{\al, x},
- \f{1}{ \sgn(x) \om_{\al}} \f{ 1 + 2 \cos\lt( \f{\al \pi}{2}   \rt) |x|^{\al}  + |x|^{2\al}    }{ |x|^{1 + 2 \al}}  \B\ra  \\
& \les a^2 \B\la ( u_{\al} - u_{\al,x}(0) x )^2  \om^2_{\al, x},
- \f{1}{ \sgn(x) \om_{\al}}  ( |x|^{-1} + |x|^{-1 - 2\al} )  \B\ra \\
& = a^2 \B\la \lt(\f{u_{\al}}{x} - u_{\al, x}(0) \rt)^2 \f{x^2  \om^2_{\al, x}}{\om^2_{\al}} ,
| \om_{\al}|  ( |x|^{-1} + |x|^{-1 - 2\al} )  \B\ra \\
&\les a^2 \B\la (|x|^{\al} \wedge 1)^2 \al^2, |\om_{\al}| ( |x|^{-1} + |x|^{-1 -2\al} ) \B\ra\\
& \les a^2 \al^2 \B\la |\om_{\al}|, |x|^{-1} \ra = a^2 \al^2 \B| \int_{\RR} \f{\om_{\al}}{x} dx \B|
= a^2 \al^2 \pi | u_{\al, x}(0)| \les a^2 \al^2 ,
\eal
\]
where we have used $\om_{\al} /x \leq 0 $ for all $x \in \RR$ to obtain the last line.

For the second inequality in \eqref{eq:est_err}, we first rewrite $(F_{\al}(\om_{\al}) )_x$ as follows
\[
\bal
(F_{\al}(\om_{\al}) )_x &=  - a ( ( u_{\al} - u_{\al,x}(0) x )  \om_{\al, x})_x
= - a \lt\{  \lt(\f{u_{\al}}{x} - u_{\al,x}(0) \rt) (x \om_{\al, x})  \rt\}_x \\
& = -a \lt(\f{u_{\al}}{x} - u_{\al,x}(0) \rt)_x (x \om_{\al, x})
- a \lt(\f{u_{\al}}{x} - u_{\al,x}(0) \rt) (x \om_{\al, x})_x  \\
& = a \lt( \f{u_{\al}(x)  }{x}  - u_{\al, x}(x) \rt) \om_{\al, x}  - a\lt(\f{u_{\al}}{x} - u_{\al,x}(0) \rt) \f{ x^2 \om_{\al, xx} + x\om_{\al, x} }{x} .
\eal
\]
We can use \eqref{eq:est_v} to estimate $u_{\al}$ and $u_{\al, x}$ as follows:
\[
|(F_{\al}(\om_{\al}) )_x | \les a \al \lt( |x|^{\al} \wedge 1 \rt) | \om_{\al, x} | +
a ( |x|^{\al} \wedge 1 )  \B| \f{ x^2 \om_{\al, xx} + x\om_{\al, x} }{x}\B| .
\]
Then we use \eqref{eq:est_w} to estimate $\om_{\al,x}, x^2\om_{\al, xx} + x\om_{\al,x} $
\beq\label{eq:pf_err11}
|(F_{\al}(\om_{\al}) )_x |  \les a \al \lt( |x|^{\al} \wedge 1 \rt) \al \B| \f{\om_{\al}}{x} \B|
+ a ( |x|^{\al} \wedge 1 )  \al^2 \B| \f{\om_{\al}}{x} \B|
\les a\al^2 (|x|^{\al} \wedge 1) \B| \f{\om_{\al}}{x} \B|.
\eeq
Hence, we can estimate the weighted $H^1$ error as follows
\beq\label{eq:pf_err12}
\bal
&\la (F_{\al}(\om_{\al}) )^2_x, \psi \ra = \B\la  (F_{\al}(\om_{\al}) )^2_x,
\f{x^2}{ \al^2 |\om_{\al}| } \f{ 1 + 2 \cos\lt( \f{\al \pi}{2}   \rt) |x|^{\al}  + |x|^{2\al}    }{ |x|^{1 + 2 \al}}  \B\ra \\
\les&  \B\la  (F_{\al}(\om_{\al}) )^2_x,
\f{x^2}{ \al^2 |\om_{\al}| } ( |x|^{-1} + |x|^{-1-2\al} ) \B\ra  \\
 \les& a^2 \al^4 \B\la (|x|^{\al} \wedge 1)^2 \B| \f{\om_{\al}}{x} \B|^2, \f{x^2}{ \al^2 |\om_{\al}| } ( |x|^{-1} + |x|^{-1-2\al} )\B\ra \\
 \les& a^2 \al^4 \B\la \B| \f{\om_{\al}}{x} \B|^2, \f{x^2}{ \al^2 |\om_{\al}| } |x|^{-1}\B\ra
 = a^2 \al^2 \la |\om_{\al}|, |x|^{-1} \ra = a^2 \al^2 \pi | u_{\al, x}(0)| \les a^2 \al^2.
\\
\eal
\eeq

\subsubsection*{Proof of \eqref{eq:est_err2}}
Using \eqref{eq:est_w}, we get
\[
 ( |x|^{\al} \wedge 1 ) | \om_{\al, x}| \les \al  ( |x|^{\al} \wedge 1 )  \B| \f{\om_{\al}}{x} \B|.
\]
Note that the above bound is the same as \eqref{eq:pf_err11} up to a factor $a\al$. Hence, we can use the same estimate as in \eqref{eq:pf_err12} to yield
\[
\la ( |x|^{\al} \wedge 1 )^2 \om^2_{\al, x}, \psi_{\al} \ra  \les 1 . \qedhere
\]

\subsubsection*{Proof of Lemma \ref{lem:vel_inf}}
\subsubsection*{Proof of \eqref{lem:vel_inf1}}
Firstly, we apply the weighted inequality \eqref{lem:vel15} with $\b = \al, 2 - \al$ to yield
\[
\bal
&\int \f{u_x^2}{|x|^{1 - \al} } dx \les \f{1}{\al^2} \int \f{\om^2} {|x|^{1 - \al}} dx , \quad \int  u_{xx}^2 |x|^{1 - \al}  dx \les \f{1}{\al^2} \int \om_x^2 |x|^{1-\al} dx . \\
\eal
\]
It follows that
\[
\bal
|| u_x||^2_{\infty} &\leq  2\int | u_x u_{xx} | dx
\leq 2\lt(  \int \f{u_x^2}{|x|^{1 - \al} } dx  \rt)^{1/2} \lt( \int  u_{xx}^2 |x|^{1 - \al}  dx \rt)^{1/2} \\
&\les \f{1}{\al^2}\lt(  \int \f{\om^2}{|x|^{1 - \al} } dx  \rt)^{1/2} \lt( \int  \om_{x}^2 |x|^{1 - \al}  dx \rt)^{1/2}.
\eal
\]
Using the asymptotic properties of $\vp_{\al}, \psi_{\al}$ in \eqref{eq:est_asy} and
$|x|^{1 - \al} \leq |x|^{1- 3\al} + |x|^{1+\al}$, 
we conclude
\[
|| u_x||_{\infty} \les   \la \om^2 ,\vp_{\al} \ra^{1/4}    \la \om_x^2 ,\psi_{\al} \ra^{1/4} .
\]

\subsubsection*{Proof of \eqref{lem:vel_inf2}}
Firstly, we use $\om(0)= 0$ and integration by parts to get
\[
\bal
J \teq&\int_0^{\infty} \om_x(y) \lt( \f{y}{x} \log\B|\f{ x +y }{ x -y}  \B| - 2  \rt) dy 
= - P.V. \int_0^{+\infty} \om(y)\lt( \f{1}{x} \log\B|\f{ x +y }{ x -y}  \B| 
+ \f{y}{x} (\f{1}{y+x} - \f{1}{y-x}) \rt) dy  \; .
\eal
\]
Using 
\[
\f{y}{x} (\f{1}{y+x} - \f{1}{y-x}) = \f{1}{x-y}  -\f{1}{x+y},
\]
 $\td{u} = u - u_x(0)x$ and that $\om$ is odd, we derive 
\[
J = - P.V. \int_0^{+\infty} \om(y)\lt( \f{1}{x} \log\B|\f{ x +y }{ x -y}  \B| 
 + (  \f{1}{x-y}  -\f{1}{x+y} )\rt) dy   =\pi \lt(  \f{u}{x} - u_x \rt)
 = \pi \lt(  \f{\td{u} }{x} - \td{u}_x \rt) .
\]
Using this integral formula of $\td{u}/x - \td{u}_x = J$, the asymptotic property \eqref{eq:est_asy} and the Cauchy-Schwartz inequality, we have
\[
\bal
\B| \f{\td{u}}{x} - \td{u}_x  \B|
&\les \la \om_x^2 , |y|^{1-3\al} + |y|^{1 + \al}  \ra^{1/2}
\lt(\int_0^{\infty} \lt( \f{y}{x} \log\B|\f{ x +y }{ x -y}  \B| - 2  \rt)^2 \f{1}{|y|^{1-3\al} + |y|^{1+\al}} dy \rt)^{1/2} \\
& \les \al^{3/2} \la \om_x^2, \psi_{\al} \ra \lt(\int_0^{\infty} \lt( \f{y}{x} \log\B|\f{ 1 +y /x}{ 1 -y/x}  \B| - 2  \rt)^2 \f{1}{|y|^{1-3\al} + |y|^{1+\al}} dy \rt)^{1/2}.
\eal
\]
Next, we estimate the integral
\[
I(x ) \teq \int_0^{\infty} \lt( \f{y}{x} \log\B|\f{ 1 +y /x}{ 1 -y/x}  \B| - 2  \rt)^2 \f{1}{|y|^{1-3\al} + |y|^{1+\al}} dy .
\]
To conclude \eqref{lem:vel_inf2}, it suffices to prove
\[
|I(x)| \les \f{1}{\al} ( |x|^{2\al} \wedge 1).
\]
Without loss of generality, we assume $x \geq 0$. Using change of variable $s = y /x$ yields
\[
\bal
I(x) &= \int_0^{\infty} \lt(  s \log \B|  \f{ s+1 }{s-1} \B| - 2\rt)^2 \f{x}{  (xs)^{1-3\al}
+ (xs)^{1+\al}  } ds\\
 &= \lt( \int_0^{x^{-1} \wedge 1/2} + \int_{x^{-1} \wedge 1/2}^{1/2}
  + \int^{\infty}_{1/2} \rt)  \lt(  s \log \B|  \f{ s+1 }{s-1} \B| - 2\rt)^2 
  \f{x}{  (xs)^{1-3\al}
+ (xs)^{1+\al}  } ds \\
&\teq I_1(x) + I_2(x) + I_3(x).
\eal
\]
We introduce $f(s)$ and it satisfies the following estimate 
\beq\label{eq:kernel_f}
f(s) \teq \B| s \log \B|  \f{ s+1 }{s-1} \B| - 2  \B| \les s^{-1}  , \ \forall s > 2 \ ; \quad 
\B| s \log \B|  \f{ s+1 }{s-1} \B| - 2  \B|  \les 1 , \ \forall s < 1/2 .
\eeq
For $I_3(x)$, we use $ (xs)^{1-3\al}+ (xs)^{1-\al} \geq xs$ and the decay of $f(s)$ in \eqref{eq:kernel_f}
 to obtain
\[
\bal
I_3(x) &\leq \min\lt\{ \int^{\infty}_{1/2}   \lt(  s \log \B|  \f{ s+1 }{s-1} \B| - 2\rt)^2 \f{x}{ xs } ds,
\int^{\infty}_{1/2}   \lt(  s \log \B|  \f{ s+1 }{s-1} \B| - 2\rt)^2 \f{x}{ (xs)^{1-3\al} } ds
 \rt\} \\
 & = \min\lt\{
\int^{\infty}_{1/2}   f(s)^2 \f{1}{ s } ds,  |x|^{3\al}
\int^{\infty}_{1/2}   f(s)^2 \f{1}{ s^{1-3\al} } ds \rt\} 
 \les \min\lt( 1, |x|^{3\al}   \rt).
\eal
\]

Next, we estimate $I_1(x)$. For $s \in [0, 1/2]$, we use the boundedness of $f(s)$ in \eqref{eq:kernel_f} to obtain 
\[
\bal
I_1(x)  &\les \int_0^{x^{-1} \wedge 1/2}  \f{x} {  (xs)^{1-3\al} + (xs)^{1+\al}} ds
\leq \int_0^{x^{-1} \wedge 1/2}  \f{x} {  (xs)^{1-3\al} } ds \\
&= x^{3\al} \int_0^{x^{-1} \wedge 1/2}  s^{-1 + 3\al} ds  = \f{1}{ 3\al} x^{3\al} (x^{-1} \wedge 1/2)^{3\al} \les \f{1}{\al} |x|^{2\al} \wedge 1.
\eal
\]
For $I_2(x)$, by definition, $I_2(x) = 0$ if $x^{-1} \geq 1/2$ (i.e. $ x \leq 2$). For $x  > 2$, we have
\[
\bal
I_2(x)  &\les \int_{x^{-1} }^{1/2}  \f{x} {  (xs)^{1-3\al} + (xs)^{1+\al}} ds
\leq \int_{x^{-1} }^{1/2}  \f{x} {  (xs)^{1+\al} } ds \\
&= x^{-\al} \int_{x^{-1} }^{1/2}  s^{-1  -\al} ds
\leq x^{-\al} \f{1}{\al} (x^{-1} )^{-\al} \leq \f{1}{\al} \leq  \f{1}{\al} |x|^{2\al} \wedge 1.
\eal
\]

Therefore, we conclude
\[
I(x) = I_1(x) + I_2(x) + I_3(x) \les \f{1}{\al} ( |x|^{2\al} \wedge 1 ).
\]
This completes the proof of \eqref{lem:vel_inf2}.

\subsubsection*{Proof of \eqref{lem:w_inf}}
Without loss of generality, we assume $x \geq 0$. Using \eqref{eq:est_asy} and the Cauchy-Schwartz inequality, we get
\beq\label{eq:pf_winf1}
\bal
|\om(x)|& \leq \int_0^x| \om_x| dx  \leq \la \om_x^2, |x|^{1-3\al} + |x|^{1+\al} \ra^{1/2}
\lt(\int_0^{x} \f{1}{ |x|^{1-3\al} + |x|^{1+\al}  } dx\rt)^{1/2} \\
& \les \al^{3/2} \la \om_x^2, \psi \ra^{1/2} \lt(\int_0^{x} \f{1}{ |x|^{1-3\al} + |x|^{1+\al}  } dx\rt)^{1/2}.  
\eal
\eeq
If $x \in[0,1]$, the integral is bounded by
\beq\label{eq:pf_winf2}
\int_0^{x} |x|^{ - 1+3\al}   dx \les \f{1}{\al} |x|^{3\al} \leq \f{1}{\al} |x|^{2\al}
\leq \f{1}{\al}(  |x|^{2\al} \wedge 1 ).
\eeq
Otherwise, it is bounded by
\beq\label{eq:pf_winf3}
\int_0^{1} |x|^{ - 1+3\al}   dx + \int_1^{x}  |x|^{ - 1- \al}  dx \les \f{1}{\al}  \leq \f{1}{\al}(  |x|^{2\al} \wedge 1 ).
\eeq
Therefore, combining \eqref{eq:pf_winf1}, \eqref{eq:pf_winf2} and \eqref{eq:pf_winf3}, we conclude
\[
| \om(x)| \les \al^{3/2} \la \om_x^2, \psi \ra^{1/2} \al^{-1/2} (|x|^{2\al} \wedge 1 )^{1/2}
 = \al \la \om_x^2, \psi \ra^{1/2}(|x|^{\al} \wedge 1 )  . \qedhere
\]

\subsection{Proof of other Lemmas}
\begin{proof}[Proof of Lemma \ref{lem:comp_F}]
Recall the definition of $F(s, \b)$
\[
F(s, \b) \teq \f{ 1 - s^{1+\b} }{1 + s^{1+\b}} \f{2s}{1-s^2} \lt(\log \B|\f{ s+1}{s-1}\B| \rt)^{-1}, \quad s \in [0, 1], \ \b \in [1,2].
\]
For $s \in [0, 1]$, $s^{1+\b}$ is decreasing with respect to $\b$. Hence
\[
\f{1- s^{1+\b}}{1 + s^{1+\b}} \textrm{ is increasing w.r.t. } \b \  \Rightarrow  \ F(s, \b)  \textrm{ is increasing w.r.t.  } \b.
\]
Next, we show that $F(s,1) \leq 1$. Denote
\[
G(s) = \log \B|\f{ s+1}{s-1}\B|  - \f{2s}{1+s^2} ,\quad  s\in[0,1].
\]
Note that
\[
G^{\prime}(s) = \f{2}{1-s^2}  - \f{2}{1+s^2} + \f{4s^2}{ (1+s^2)^2}\geq 0 , \quad \lim_{s \to 0^+} G(s) = 0.
\]
We conclude $G(s) \geq 0 $ for $s \in [0, 1]$. It follows that
\[
F(s,1) =  \f{ 1 - s^{2} }{1 + s^{2}} \f{2s}{1-s^2} \lt(\log \B|\f{ s+1}{s-1}\B| \rt)^{-1}
=  \f{2s }{1 + s^{2}}  \lt(\log \B|\f{ s+1}{s-1}\B| \rt)^{-1}  \leq  1.
\]
The equality is achieved at $s= 0$. Next, we verify that $F(s, \b) \leq 1 + 0.015(\b-1)$. We split $s, \b \in [0,1] \times  [1,2]$ into several domains and prove this inequality separately.
\paragraph{\bf{Case a.} $ (s, \b) \in [0.5, 1] \times [1,2]$  }
 Using the fact that $F(s, \b)$ is increasing w.r.t. $\b$, we know
\[
F(s, \b) \leq F(s, 2) <0.96< 1  \quad \forall s \geq 0.5, \b \in[1,2],
\]
where the inequality $F(s, 2) <1$ for $s  \in [0.5,1]$ can be verified numerically with rigorous error control. For $s$ close to 1, $F(s,\b)$ goes to $0$ since $\log( (1+s)/ (1-s) )$ blows up at $s = 1$.

\paragraph{\bf{Case b.} $(s , \b) \in[0.04, 0.5] \times [1, 2]$  }
In this domain, $F(s, \b)$ is smooth. Denote
\[M(\b) \teq \max_{s \in [0.04, 0.5]} F(s, \b). \]
For $\b = 2$, we can estimate  $F(s,\b)$ using Matlab (evaluating the function at some discrete points with error control and then using the boundedness of  $\pa_sF(s,\b)$ to estimate other points)
\[
\bal
M(2) & =  \max_{s \in [0.04, 0.5]} F(s, 2) \leq 1.0123 \; , \\
 \Rightarrow    M(\b) - 1 &\leq M(2) - 1 \leq 0.0123  \leq  0.015(\b-1)  \quad   \forall \b \in [1.85, 2].
\eal
\]
The constant $0.015$ in \eqref{eq:comp_F} comes from the above inequality $M(2) \leq 1.0123$. Similarly, we can estimate $M(1.85), M(1.5), M(1.1)$ to get
\[
\bal
M(1.85) \leq 1.0071 & \Rightarrow M(\b) - 1 \leq M(1.85) - 1 \leq 0.0071  \leq  0.015(\b-1) \;, \    \forall \b \in [1.5, 1.85]  ,\\
 M(1.5) \leq 1.0007  & \Rightarrow M(\b) - 1 \leq 0.0007  \leq  0.015(\b-1) \;, \quad    \forall \b \in [1.1, 1.5]  ,\\
M(1.1) \leq 0.9989  & \Rightarrow M(\b) - 1 \leq 0  \leq  0.015(\b-1) \;, \quad    \forall \b \in [1, 1.1]  .\\
\eal
\]
Therefore, for $s, \b \in [0.04, 0.5] \times [1,2]$, the above inequalities imply $F(s, \b) \leq M(\b) \leq 1 + 0.015(\b-1)$. All the above inequalities on $\max F(s, \b)$ can be established rigorously using the method discussed in paragraphs (c), (d) in Section \ref{Numerical-verification}.

\paragraph{\bf{Case c.} $(s , \b) \in[0, 0.04] \times [1, 2]$  }
The partial derivative  $\pa_{\b} F(s, \b)$  is given by
\[
\pa_{\b} F(s, \b)  =  - \f{2 s^{1+\b}   \log s   }{ (1 + s^{1+\b})^2 }     \f{2s}{1-s^2} \lt(\log \B|\f{ s+1}{s-1}\B| \rt)^{-1}.
\]
It is not singular near $s = 0$ due to the power $s^{2 + \b}$. Since $s^{1+\b}$ is decreasing with respect to $\b$ and $t / (1+t)^2 $ is increasing for $t \in [0, 1]$,  we get
\[
 \f{ s^{1+\b}} { (1 + s^{1+\b})^2 }    \leq \f{s^2}{ (1+s^2)^2 }.
\]
Notice that $-\log(s) \geq 0  $ for $s\in[0, 1]$. We obtain
\[
0 \leq  \pa_{\b} F(s, \b)   \leq \f{-2 s^2 \log s} { (1 + s^2)^2}   \f{2s}{1-s^2} \lt(\log \B|\f{ s+1}{s-1}\B| \rt)^{-1} \teq H(s).
\]
For $s \in [0,0.04]$, we can estimate $H(s)$ numerically with rigorous error control using the method discussed in paragraphs (c), (d) in Section \ref{Numerical-verification} and obtain $H(s) \leq 0.011$. Therefore for $s \in[0,0.04]$, we yield
\[
F(s, \b) \leq F(s, 1) + 0.011(\b - 1) \leq 1 + 0.015 (\b-1)  .
\]
Combining case (a), (b) and (c) , we conclude the proof of \eqref{eq:comp_F} and Lemma \ref{lem:comp_F}.

\end{proof}

\begin{proof}[ Proof of Lemma \ref{lem:compare_S1} ]
Assume that $S^1$ is $\pi$- periodic. Then we have
\[
u_x(x) = \f{1}{\pi} \int_0^{\pi} \om(y) \cot(x - y) dy , \quad u(x) = - \f{1}{\pi} \int_0^{\pi/2} \log \B|
 \f{\tan y + \tan x}{\tan y - \tan x}  \B| \om(y) dy.
\]
Symmetrizing the convolution kernel as we did in the proof of Lemma \ref{lem:compare} and \eqref{eq:comp_ineq}, we obtain 
\beq\label{eq:comp_ineq_S1}
\bal
I \teq  \f{1}{\pi}\int_0^{\pi/2}  ( u_x \om - a u \om_x)   \lt( \cot x\rt)^{\b} dx
 = \f{1}{\pi} \int_0^{\pi} \om(x) \om(y) K_{sym}(x, y) dx dy,
\eal
\eeq
where the symmetrized kernel $K_{sym}$ is given by 
\beq\label{eq:sym_S10}
\bal
K_{sym}(x, y) &=\f{1+a}{2} (\cot y)^{\b-1} \lt(  \tau (s^{\b-1}+1 )\log \B|  \f{s+1}{s-1}   \B| - (s^{\b-1}-1)\f{2s}{s^2-1} \rt) \\
& +  \f{1+a}{2} (\cot y)^{\b+1} \lt(  \tau (s^{\b+1}+1 )\log \B|  \f{s+1}{s-1}   \B| - (s^{\b+1}-1)\f{2s}{s^2-1} \rt),
\eal
\eeq
with 
\[
\tau = \f{a \b}{1+a},  \quad s = \f{\tan y} {\tan x}  = \f{\cot x}{\cot y}.
\]

Since $a, \;\b$ satisfy the assumption in Lemma \ref{lem:compare}, from the proof of Lemma \ref{lem:compare}, we know that
\beq\label{eq:sym_S11}
 \tau (s^{\b+1}+1 )\log \B|  \f{s+1}{s-1}   \B| - (s^{\b+1}-1)\f{2s}{s^2-1}  \geq 0 , \quad \forall s  \geq 0.
\eeq
Recall $\b > 1$. For $s \in [0, 1]$, we have $s^{\b-1} \geq s^{\b+1}$ and
\beq\label{eq:sym_S12}
\f{s^{\b-1} + 1}{1 - s^{\b-1}}  (1 - s^2)  \geq  \f{s^{\b+1} + 1}{1 - s^{\b+1}}  (1 - s^2) .
\eeq
It is not difficult to show that the above inequality also holds true for $s \geq 1$ if we replace $s$ by $s^{-1}$.
Combining \eqref{eq:sym_S11} and \eqref{eq:sym_S12}, we get
\[
\f{s^{\b-1} + 1}{1 - s^{\b-1}}  (1 - s^2)  \geq  \f{s^{\b+1} + 1}{1 - s^{\b+1}}  (1 - s^2) \geq
\f{1}{\tau} 2s \lt(\log \B|  \f{s+1}{s-1}   \B|\rt)^{-1} ,
\]
which implies 
\beq\label{eq:sym_S13}
\bal
 \tau (s^{\b-1}+1 )\log \B|  \f{s+1}{s-1}   \B| - (s^{\b-1}-1)\f{2s}{s^2-1}  \geq 0 , \quad \forall s  \geq 0.
 \eal
\eeq
Substituting \eqref{eq:sym_S11} and \eqref{eq:sym_S13} in \eqref{eq:sym_S10}, we conclude
\[
K_{sym}(x , y) \geq 0 ,  \quad \forall x, y \in [0, \pi/2].
\]
Finally, noticing that $\om(x) \om(y) \geq 0 $ for all $x, y\in [0, \pi/ 2]$, we prove $I \geq 0$ in \eqref{eq:comp_ineq_S1}.
\end{proof}

\vspace{0.2in}
\noindent
{\bf Acknowledgments.} The authors would like to acknowledge the generous support from the National Science Foundation under Grant No. DMS-1907977 and DMS-1912654. We would like to thank Tarek Elgindi for valuable comments and suggestion related to Remark \ref{rem:wx}. We would also like to thank the two referees for their constructive comments on the original manuscript, which improve the quality of our paper.

\bibliographystyle{plain}
\bibliography{selfsimilar}

\end{document}


\begin{abstract}
In \cite{chen2019finite}, we have presented the weighted $L^2$ and $H^1$ estimates of the linearized equation in the case of the De Gregorio model with $a=1$ and obtained linear stability. The purpose of this supplementary material is to provide relatively sharp estimates for the cross terms in the weighted $H^1$ estimate and the nonlinear terms. These sharp estimates provide an estimate of the upper bound of the residual error, which is not too small but is sufficient for us to close the bootstrap argument in the nonlinear stability analysis. This enables us to use only a personal laptop computer and a mesh with mesh size $h=2 \cdot 10^{-5}$ to construct our approximate self-similar profile with a residual error less than the above upper bound. In addition, we will establish rigorous estimate for the residual error of the approximate self-similar profile in the energy norm, and estimate the time derivative of the solution, which will be used to establish the convergence of the solution to the self-similar solution. We also use the Interval arithmetic software and error analysis for the Trapezoidal rule and Gaussian quadrature to estimate a number of integrals involving our approximate self-similar profile with rigorous error control. We will provide the estimates for the cross terms in Section \ref{sec:cross}, the error term in Section \ref{sec:error}, the nonlinear terms in Section \ref{sec:non}, the energy in the bootstrap argument in Section \ref{sec:boot}, the time derivative of the solution in Section \ref{sec:convg}, and the error of the Gaussian quadrature in Section \ref{sec:GQ}.
\end{abstract}

 \maketitle

\setcounter{section}{6}
\section{Lemmas, Functions and Notations}
We first introduce some Lemmas, functions and notations that were used in \cite{chen2019finite}.

\subsection{Functions}

The weight $\vp$ in the $L^2$ estimate is defined as follows:
\beq\label{eq:wg_a1}
\vp \teq  \lt(- \f{1}{x^3} - \f{e}{x} - \f{f \cdot 2 x}{L^2 - x^2} \rt)
\cdot \lt( \chi_1  \lt(\bar{\om} - \f{x \bar{\om}_x }{5} \rt) + \chi_2 \lt( \bar{\om} -\f{ (x-L) \bar{\om}_x}{3}  \rt)  \rt)^{-1},
\eeq
where $\chi_1, \chi_2 \geq 0$ are some cutoff functions such that $\chi_1 + \chi_2 =1 $ and
\[
\chi_1(x) = \begin{cases}
1  & x \in [0,4] \\
0 & x \in [6, 10] \\
\end{cases} ,\quad
\chi_1(x) = \f{  \exp\lt( \f{1}{x-4} + \f{1}{x-6}  \rt) }{ 1 +  \exp\lt( \f{1}{x-4} + \f{1}{x-6}  \rt)  } \
\forall  x \in[4, 6] \; .
\]
The weight $\psi$ in the $H^1$ estimate is given below:
\beq\label{eq:wg_a1_H1}
\psi =  - \f{1}{\bar{\om}} \lt( \f{1}{x} - \f{x}{L^2} \rt) , \quad x \in [0, L].
\eeq

The normalization conditions for $c_l, c_{\om}$ are defined as follows:
  \beq\label{eq:DGnormal}
c_l =  - \f{ u(L)}{L},  \quad  c_{\om} = c_l \; .
 \eeq

The inner product is defined on the interval $[0, L]$ 
\[
\la f, g \ra \teq  \int_0^L f \cdot g dx,
\]
since the support of $\om, \bar{\om}$ lies in $[-L, L]$. We only consider the half real line due to symmetry.

The nonlinear and the error terms in the weighted $L^2$ and $H^1$ estimates are defined below: 
\beq\label{eq:NF_a1}
\bal
N(\om) = ( c_{\om} + u_x ) \om - (c_l x + u) \om_x, \quad  & F(\bar{\om}) = ( \bar{c}_{\om} + \bar{u}_x ) \bar{\om}- (\bar{c}_l x +\bar{ u}) \bar{\om}_x , \\
 N_1 \teq \la N(\om), \om \vp \ra , \quad  F_1 \teq  \la F(\bar{\om}) , \om  \vp \ra,  \quad 
 & N_2  \teq \la (N(\om))_x, \om_x \psi \ra  , \quad F_2 \teq  \la ( F(\bar{\om}) )_x, \om  \vp \ra .  
 \eal
\eeq

We express $u_x(0), u_x(L)$ and $c_{\om} = c_l$ as the projection of $\om$ onto some functions
\beq\label{eq:defg}
\bal
&c_{\om} = c_l = \la \om , g_{c_{\om}} \ra, \quad u_x(0) = \la \om , g_{u_x(0)} \ra, \quad u_x(L) = \la \om, g_{u_x(L)} \ra, \\
&g_{c_{\om}}  \teq  \f{1}{L \pi }  \log \lt( \f{  L+x }{L-x}\rt)  ,\quad g_{u_x(0)} \teq - \f{2}{\pi x} , \quad g_{u_x(L)} \teq \f{2 x} { \pi (L^2 - x^2)} . \\
\eal
\eeq

\subsection{Parameters and Notations}\label{sec:parameter_notation} 
We will use the following parameters and notations
\[ L=10,\quad M=5,\quad n = 8000,\quad  h_0 =\f{L}{n}= \f{1}{800}, \]
\[N =  400000,\quad h = \f{L}{N} = 2.5 \cdot 10^{-5} ,\quad x_i = ih, i=0,1,..,N. \]
We introduce the following constants which can be estimated accurately
\beq\label{eq:const_a1}
\bal
K_1 &\teq ||\bar{\om}_x||_{L^{\infty}[0,L]}, \quad K_{1 r}   \teq ||\bar{\om}_x||_{L^{\infty}[M,L]},
\quad K_2 \teq || \bar{u}_x||_{L^{\infty}[0,L]}  ,\\
K_3  & \teq || \bar{c}_l + \bar{u}_x||_{L^\infty[0,L]} , \quad  K_4 \teq || \bar{u}_{xx} ||_{L^{\infty}[0,M]},\quad K_{5l} \teq \B|\B| \f{\bar{\om}}{x (x^2 - L^2)} \B|\B|_{L^{\infty}[0,M]},\\ 
K_{5} &\teq \B|\B| \f{\bar{\om}}{ (x^2 - L^2)} \B|\B|_{L^{\infty}[0,L]} ,\quad K_6 \teq || (\bar{\om}_{xx}(x^2 - L^2))_x ||_{L^{\infty}[0.25,L - 0.25]}, \\
K_7 &\teq || \bar{\om}_{xx}(x^2 - L^2) ||_{L^{\infty}[0,L]},  \quad K_8 = || \bar{\om}_{xx}||_{L^{\infty}[0.25,L-0.25]} ,\\
J_1 & \teq \int_0^M \bar{\om}^2_{xxx} dx ,\quad J_2 \teq \int_0^M \f{\bar{\om}^2_{xx}}{x^2} dx,
 \quad J_3 \teq \int_0^L  \f{ (\om_{x} - \om_{x}(0) )^2}{x^2} dx , \\
J_4 &\teq  \int_0^L \bar{\om}^2_{xx}(x^2-L^2)^2dx ,\quad J_5 \teq \int_0^L \bar{\om}^2_{xxx}(x\wedge (L-x))^2 dx ,\\
J_{5r} &\teq \int_M^L \bar{\om}^2_{xxx}(x\wedge (L-x))^2 dx ,\quad J_6 \teq \int_0^L \bar{\om}_x^2 dx ,\\
J_7  &\teq \int_0^L \bar{\om}^2_{xx} dx,\quad  J_{7r}  \teq \int_M^L \bar{\om}^2_{xx} dx.
\eal
\eeq
Using the analytic estimates and the interval arithmetic introduced in \cite[Section 4]{chen2019finite}, we can obtain rigorous upper bounds for the positive quantities above. In particular, for each quantity $q$, our numerical verification using the interval arithmetic will provide an interval $[q_l,q_r]$ that rigorously contain the exact value, where $q_l,q_r$ are two accurate real numbers with 16 digits. We then round up $q_r$ to 4 significant digits to be a strict upper bound of $q$. One can verify that 
\begin{equation}\label{eqt:numeric_estimates}
\bal
& K_1< 1.001,\quad K_{1,r} < 0.8092,\quad  K_2 < 3.606 \quad K_3 <  2.907,\quad K_4 < 0.9296, \\
& K_{5,l}\leq 1.001\times 10^{-2},\quad K_5\leq 4.433\times 10^{-2} , \quad K_6 < 11.13 ,\quad K_7 < 26.65, \quad K_8 < 0.3210, \\
& J_1^{1/2}\leq 0.1683, \quad J_2^{1/2}\leq 0.2141,\quad J_3^{1/2}\leq 0.5757,\quad J_4^{1/2}\leq 53.37,\quad J_{5}^{1/2} \leq 0.5431, \\
& J_{5,r}^{1/2}\leq 0.4673,\quad  J_6^{1/2}\leq 2.096,\quad J_7^{1/2} \leq 0.6965,\quad J_{7,r}^{1/2}\leq 0.4500 ,\\
&|| (x^2 -L^2)^{-1} \psi^{1/2}\vp^{-1/2} ||_{L^{\infty}[0.5,9.5]} < 0.1, \quad ||\psi_x  \psi^{-1/2} \vp^{-1/2} ||_{L^{\infty}[0.5,9.5]} <2.
\eal
\end{equation}

\subsection{Rigorous verification of the numerical values}

All the numerical computations and quantitative verifications are performed by MATLAB (version 2019a) in the double-precision floating-point operation. The MATLAB codes can be found via the link \cite{Matlabcode}.

\subsubsection{Accurate point values of $\bar{u}, \bar{u}_x, \bar{u}_{xx}$}\label{sec:intro_GQ}
Firstly, the approximate profile $\bar \om$ is a piecewise cubic polynomial with $n = 8000$ subintervals, i.e. $[ (i-1)h_0,  i h_0], i = 1,2,.., n $. We use the Hermite basis to obtain the exact values of $\bar \om, \bar \om_x$ on the finer mesh $ x_i = ih, h = 2.5 \cdot 10^{-5}, i =1,2.., N = 400000$. Then we compute $\bar{u}, \bar{u}_x, \bar{u}_{xx}$ on the same grid points $x_i $, 
which involves the Hilbert transform that is a global integral of $\bar \om$.  
To approximate the integral, we use a mixture of analytic integration and the composite $8$-point Legendre-Gauss integral. 
We focus on the computation of $\bar{u}_x = H \bar{\om}$ and $\bar{u}_{xx}, \bar{u}$ are computed similarly. Denote $m= 8$. 
We fix some $i \in \{ 1,2.., n-1, n\}$ and $ x \in [ (i-1) h_0,  i h_0 ]$. We remark that the interval $[(i-1) h_0, i h_0]$ contains $ N / n + 1 = 51$ grid points $x_j$. Since $\bar{\om}$ is odd, we have
\[
\bar{u}_x(x) = H \bar{\om} = \f{1}{\pi} P.V. \int_0^L \bar{\om}(y) ( \f{1}{x-y} -  \f{1}{x+y} ) dy.
\]

Denote $J_0 \teq \{ 0, 1,2,.., n-1\}$ and 
\beq\label{eq:Jset}
\bal
&J_1 \teq \{ j \in J_0: | j -(i-1)| \leq m \}, \quad J_2 \teq J_0 \bsh J_1 ,  \\
 &J_3\teq \{ j \in J_0: | j +(i-1)| \leq m \}, \quad J_4 \teq J_0 \bsh J_3 .\\
\eal
\eeq
$J_1$ and $J_3$ denote the indexes that are close to $i-1$.
Since $[0,L] = \cup_{ 0\leq j \leq n-1} [jh_0, (j+1) h_0]$, we can partition $[0,L]$ into singular region and nonsingular region according to $x-y$ and $x+y$
  \beq\label{eq:GQ_error0}
  \bal
\bar{u}_x(x) &= \f{1}{\pi} P.V. ( \int_{ \cup_{ j \in J_1} [jh_0, (j+1) h_0] } \f{\bar{\om}(y)}{x-y} dy
- \int_{ \cup_{ j \in J_3} [jh_0, (j+1) h_0] } \f{\bar{\om}(y)}{x+y} dy  )  \\
& + \f{1}{\pi}   \int_{ \cup_{ j \in J_2} [jh_0, (j+1) h_0 ] } \f{\bar{\om}(y)}{x-y} dy
- \f{1}{\pi}\int_{ \cup_{ j \in J_4} [jh_0, (j+1) h_0] } \f{\bar{\om}(y)}{x+y} dy  \teq  I_1 + I_2 + I_3. 
\eal
 \eeq
 Since in each interval $[jh_0, (j+1) h_0]$, $\bar{\om}$ is a piecewise polynomial, we have closed-form formula for the integral $\int_{ [jh_0, (j+1)h_0]}  \f{\bar{\om}(y)}{  x \pm y }dy$. We use this  formula to obtain the exact expression of $I_1$. We remark that in the exact expression, we do not have singular term due to the principal integral. For $I_2, I_3$, we cannot apply the exact formula of the integral since evaluating $F( (l+1)h_0 ) - F( l h_0)$ for some regular function $F$ and large $l$ will lead to significant roundoff error. We use the $8$-point Legendre-Gaussian quadrature instead.

Let $A_i, z_i, i=1,2,..,K$ be the weights and nodes in the $K$-point Legendre-Gauss quadrature rule and $\bar{A}_i, \bar{z}_i$ be their numerical approximations. For $f \in C^{2K}[a,b]$, we denote by $GQ_K(f, a,b)$ the Gaussian quadrature and by $NGQ_K(f, a,b)$ its numerical realization 
\beq\label{def:GQ}
\bal
GQ_K(f, a,b) &\teq \f{b-a}{2} \sum_{ i =1}^K  A_i f( \f{b-a}{2} z_i + \f{b+a}{2} ), \\ 
NGQ_K(f, a,b) &\teq\f{b-a}{2} \sum_{ i =1}^K \bar{A}_i f( \f{b-a}{2} \bar{z}_i  + \f{b+a}{2} ).
\eal
\eeq


Since the exact values of $A_i, z_i$ are related to the roots of some high order polynomial, we do not have their closed-form formulas. In the numerical realization of the Gaussian quadrature, we need to use their approximations $\bar{A}_i, \bar{z}_i$ and $NGQ_K(f,a,b)$ instead. We use composite $NGQ_K$ with $K=8$ to compute $I_2, I_3$ in \eqref{eq:GQ_error0}. 

There are two types of errors in this computation. The first type of error is the roundoff error in the computation, which can be addressed using the interval arithmetic. The second type of error is due to the numerical Gaussian quadrature, i.e. $\int_a^b f(x) dx - NGQ_K(f, a, b)$. 
The error estimates of these computations only depend on $|| \pa_x^i \bar \om ||_{L^{\infty}}, i =0,1,2,3$ which have been estimated in \eqref{eq:const_a1}. In particular, we have the following estimate of the second type of error in the computation of $\bar u_x, \bar u , \bar u_{xx}$ 
\[
Error_{GQ}(u_x) < 2 \cdot 10^{-17}, \quad 
Error_{GQ}(u) < 2 \cdot 10^{-19}, \quad 
Error_{GQ}(u_{xx}) < 5 \cdot 10^{-18}.
\]
We will present the detailed derivation of the above estimates in Section \ref{sec:GQ}. 

We remark that although these errors are small, we have to take them into 
account in the interval representations of $\bar{u}_x$ (and also $\bar{u}, \bar{u}_{xx}$). That is, each $\bar u_x(x)$ will be represented by $[\lfloor\bar u_x(x)-\epsilon\rfloor_f,\lceil\bar u_x(x)+\epsilon\rceil_f]$ in any computation using the interval arithmetic, where $\lfloor\cdot\rfloor_f$ and $\lceil\cdot\rceil_f$ stand for the rounding down and rounding up to the nearest floating-point value, respectively. 


\subsubsection{Rigorous estimate of $L^\infty$ norms}\label{sec:RE_L_infty}
 We explain how to rigorously bound the $L^\infty$ norms given in \eqref{eq:const_a1}. Consider a general function $g:[0,L]\rightarrow\mathbb{R}$. Suppose that we are given the grid points values $g(x_i)$ for $i=0,\dots,N$. Let $g^{\max}=\{g^{\max}_i\}_{i=1}^N$ be a sequence such that $g^{\max}_i \geq \max_{x\in[x_{i-1},x_i]} |g(x)|$. That is, $g^{\max}$ dominates the piecewise maximum of the absolute value of $g$. If we can construct such a sequence $g^{\max}$, then we can simply bound $\|g\|_{L^\infty}\leq \max_i g^{\max}_i$.

If we are also given a bound $C$ on the $L^\infty$ norms of $g_x$ (i.e. $\|g_x\|_{L^\infty}\leq C$), then we can choose $g^{\max}_i = |g(x_{i-1})| + hC$. This method is useful for estimating the $L^\infty$ norms of $\bar{\om}$ and its derivatives. Since $\bar{\om}$ is piecewise cubic polynomial, $\bar{\om}_{xxx}$ is piecewise constant and thus $\bar{\om}_{xxx}^{\max}$ can be given by grid point values. With $\bar{\om}_{xxx}^{\max}$ in hand, we can further compute $\bar{\om}_{xx}^{\max}$, $\bar{\om}_x^{\max}$ and $\bar{\om}^{\max}$ recursively.

Or, if we know that $g$ is monotone, then we can choose $g^{\max}_i = \max\{|g(x_{i-1})|,|g(x_i)|\}$. This works for explicit functions such as $g(x) = x^{-1}$.

For example, let us explain how to estimate $K_5=\|g\|_{L^\infty}$ with $g(x) = \bar{\om}(x)/(L^2-x^2)$. We first choose a small number $\varepsilon$ and choose $g^{\max}_i=\bar{\om}^{\max}_i/(L^2-x_i^2)$ for each index $i$ such that $x_i\leq L-\varepsilon$. Note that $(L^2-x_i^2)^{-1}\geq (L^2-x^2)^{-1}$ for $x\in [x_{i-1},x_i]$. For $x> L-\varepsilon$, since $\bar{\om}(L)=0$, we have
\[g(x) = -\frac{1}{L+x}\cdot\frac{\bar{\om}(x)}{x-L} = - \frac{\bar{\om}_x(\xi(x))}{L+x}\quad \text{for some $\xi(x)\in[x,L]\subset[L-\varepsilon,L]$}.\]
Therefore, we can choose $g^{\max}_i = (L+x_{i-1})^{-1}\cdot\max_{x\in[L-\varepsilon,L]}|\bar{\om}_x(x)|$ for each index $i$ such that $x_i> L-\varepsilon$. Note that $\max_{x\in[L-\varepsilon,L]}|\bar{\om}_x(x)|$ can be obtained from $\bar{\om}_x^{\max}$. The parameter $\varepsilon$ needs to be chosen carefully. On the one hand, $\varepsilon$ should be small enough so that the bound $|\bar{w}(\xi(x))|\leq \max_{\tilde{x}\in[L-\varepsilon,L]}|\bar{\om}_x(\tilde{x})|$ is almost tight for $x\in[L-\varepsilon,L]$. On the other, the ratio $\varepsilon/h$ must be large enough so that the estimate $(L^2-x_i^2)^{-1}\geq (L^2-x^2)^{-1}$ is sharp for $x$ close to $L-\varepsilon$. In this example, we choose $\varepsilon=0.1$.

\subsubsection{Rigorous estimate of $K_2$ and $K_3$}
Specially, we explain how to obtain a sequence $\bar{u}_x^{\max}$ that helps bound $K_2 = \|\bar{u}_x\|_{L^\infty[0,L]}$. In particular, we compute that 
\begin{align*}
\max_{x\in[x_{i-1},x_i]} |\bar{u}_x(x)| &\leq |\bar{u}_x(x_{i-1})| + \int_{x_{i-1}}^{x_i}|\bar{u}_{xx}(y)|dy\\
&\leq |\bar{u}_x(x_{i-1})| + \sqrt{h}\cdot\left(\int_{x_{i-1}}^{x_i}|\bar{u}_{xx}(y)|^2dy\right)^{1/2}\leq |\bar{u}_x(x_{i-1})| + \sqrt{h}\cdot\|\bar{u}_{xx}\|_{L^2[0,L]}.
\end{align*}
Buy Lemma~\ref{lem:commute}, we know that $\|\bar{u}_{xx}\|_{L^2[0,L]} = \|\bar{\om}_x\|_{L^2[0,L]}$. Given a bound $C$ on $\|\bar{\om}_x\|_{L^2[0,L]}$, we can choose 
\[(\bar{u}_x^{\max})_i = |\bar{u}_x(x_{i-1})| + \sqrt{h}\|\bar{\om}_x\|_{L^2[0,L]}.\]
An estimate of $K_3$ can be obtained similarly.

\subsubsection{Rigorous estimate of $K_4$} It requires a different strategy to bound $K_4 = \|\bar{u}_{xx}\|_{L^{\infty}[0,M]}$. We first obtain a crude but rigorous bound on $\|\bar{u}_{xxx}\|_{L^{\infty}[0,M]}$. Using Lemma~\ref{lem:commute}, we can compute that 
\begin{align*}
\bar{u}_{xxx}(L^2-x^2) &= \frac{1}{\pi}\mathrm{P.V.}\int_{-L}^L\frac{\bar{\om}_{xx}(y)(L^2-y^2)}{x-y}dy\\
&= \frac{1}{\pi}\mathrm{P.V.}\int_{x-a}^{x+a}\frac{\bar{\om}_{xx}(y)(L^2-y^2)}{x-y}dy + \frac{1}{\pi}\int_{[-L,L]\backslash[x-a,x+a]}\frac{\bar{\om}_{xx}(y)(L^2-y^2)}{x-y}dy\\
&\teq I + II
\end{align*}
for some $a\in[0,L-M]$. We then estimate $I$ and $II$: 
\begin{align*}
|I| &= \frac{1}{\pi}\left|\mathrm{P.V.}\int_{x-a}^{x+a}\frac{\bar{\om}_{xx}(y)(L^2-y^2)}{x-y}dy\right| \\
&= \frac{1}{\pi}\left|\int_0^a\frac{\bar{\om}_{xx}(x+\tau)(L^2-(x+\tau)^2)-\bar{\om}_{xx}(x-\tau)(L^2-(x-\tau)^2)}{\tau}d\tau\right|\\
&\leq \frac{2a}{\pi}\|(\bar{\om}_{xx}(L^2-x^2))_x\|_{L^\infty[0,M+a]}.
\\
|II| &\leq \frac{1}{a\pi} \int_{-L,L} |\bar{\om}_{xx}(y)(L^2-y^2)|dy =  \frac{2}{a\pi}\|\bar{\om}_{xx}(L^2-x^2)\|_{L^1[0,L]}.
\end{align*}
Note that $\|(\bar{\om}_{xx}(L^2-x^2))_x\|_{L^\infty[0,M+a]}$ and $\|\bar{\om}_{xx}(L^2-x^2)\|_{L^1[0,L]}$ can both be rigorously bounded using $\bar{\om}_{xx}^{\max}$ and $\bar{\om}_{xxx}^{\max}$. It then follows that, for $x\in[0,M]$,
\begin{align*}
|\bar{u}_{xxx}(x)|&\leq \frac{1}{L^2-M^2}|\bar{u}_{xxx}(L^2-x^2)|\\
&\leq\frac{1}{L^2-M^2}\left(\frac{2a}{\pi}\|(\bar{\om}_{xx}(L^2-x^2))_x\|_{L^\infty[0,M+a]}+ \frac{2}{a\pi}\|\bar{\om}_{xx}(L^2-x^2)\|_{L^1[0,L]}\right).
\end{align*}
We hence obtain a bound $C$ on $\|\bar{u}_{xxx}\|_{L^{\infty}[0,M]}$. We can then estimate $\|\bar{u}_{xx}\|_{L^{\infty}[0,M]}$ using the preceding techniques. In particular, we will choose $a=4$ in our computation.

\subsubsection{Rigorous estimate of $L^2$ norms}\label{sec:RE_L_2} 
Once we obtain the sequence $g^{\max}$ for a function $g$, we can also use it provide a rigorous bound on $\|g\|_{L^2}$. In fact, we have
\[\|g\|_{L^2[0,L]^2} \leq h\cdot\sum_{i=1}^N(g^{\max}_i)^2.\]

\subsubsection{Rigorous estimate of integrals}\label{sec:RE_inner} 
More generally, we can also rigorously estimate the integral of a function $g$ on $[0,L]$, which is needed in the following. In particular, we want to obtain $c_1,c_2$ such that $c_1\leq \int_0^Lg(x)dx\leq c_2$. To do so, we first construct a pair of sequences $g^{up}=\{g^{up}_i\}_{i=1}^N,g^{low}=\{g^{low}_i\}_{i=1}^N$ such that 
\[g^{up}_i \geq \max_{x\in[x_{i-1},x_i]}g(x)\quad\text{and}\quad g^{low}_i \leq \min_{x\in[x_{i-1},x_i]}g(x).\] 
The rigorous construction of $g^{up}$ or $g^{low}$ is similar to the construction of $g^{\max}$. We omit the details here. Then we have the estimate
\[h\cdot \sum_{i=1}^Ng^{low}_i \leq \int_0^Lg(x)dx\leq h\cdot\sum_{i=1}^Ng^{up}_i.\]

The case where $g(x)=f_1(x)/f_2(x)$ may need a special treatment. For example, if $f_1$ and $f_2$ are both singular at $x=0$, we first need to find a weight function $v(x)$ such that $\tilde{f}_1= vf_1$ and $\tilde{f}_2=vf_2$ are both regular on $[0,L]$. We remark that such $v$ exists in all of our computations. Then we construct $\tilde{f}_1^{up},\tilde{f}_1^{low}$ and $\tilde{f}_2^{up},\tilde{f}_2^{low}$, and use standard interval arithmetic to find $g^{up},g^{low}$. 

For example, in Section~\ref{sec:cross} we will need to estimate the integral of a function like $g(x)=f(x)/\varphi(x)$ on $[0,L]$, where $\varphi$ is given by \eqref{eq:wg_a1}. Note that $\varphi(x)= O(x^{-4})$ near $x=0$ and $\varphi(x) =O((L-x)^{-2})$ near $x=L$. To avoid working with a singular function numerically, we introduce new functions $\tilde{\varphi}(x) = x^4(L-x)^2\varphi(x)$ and $\tilde{f}(x) =  x^4(L-x)^2 f(x)$. We compute $\tilde{\varphi}^{up},\tilde{\varphi}^{low}$ and $\tilde{f}^{up},\tilde{f}^{low}$, and then use interval arithmetic to find $g^{up},g^{low}$.

\section{The cross term in the weighted $H^1$ estimate}\label{sec:cross}

In \cite{chen2019finite}, we have derived the following weighted $H^1$ estimate:
\beq\label{eq:lin_a1_H1}
\bal
\f{1}{2} \f{d}{dt} \la \om_x^2, \psi \ra & =  \B\la   \f{1}{2\psi} (  ( \bar{c}_l x +  \bar{u} ) \psi )_x  ,\om_x^2 \psi \B\ra + \la \bar{u}_{xx} \om, \om_x \psi \ra -\la  (c_l x + u) \bar{\om}_{xx}, \om_x \psi \ra  \\
&+ \la N(\om)_x, \om_x \psi \ra +
\la F(\bar{\om})_x , \om_x  \psi \ra   \teq I + II_2  + N_2 + F_2 \;,
\eal
\eeq
where $I, II_2$ are defined below 
\beq\label{eq:I_II}
I =  \B\la   \f{1}{2\psi} (  ( \bar{c}_l x +  \bar{u} ) \psi )_x  ,\om_x^2 \psi \B\ra + \la \bar{u}_{xx} \om, \om_x \psi \ra , \quad II_2 = -\la  (c_l x + u) \bar{\om}_{xx}, \om_x \psi \ra.
\eeq
We further decompose $I$ into the a damping term and a cross term 
\beq\label{eq:lin_a1_I2}
I = \la D_2(\bar{\om}), \om_x^2 \psi \ra + I_2 , \quad D_2(\bar{\om}) \teq \f{1}{2\psi} (  ( \bar{c}_l x +  \bar{u} ) \psi )_x,   \quad I_2 \teq \la \bar{u}_{xx} \om, \om_x \psi \ra .
\eeq

In this Section, we estimate the cross terms $I_2, II_2$ in the weighted $H^1$ estimate and prove Lemma \ref{lem:cross}.
\begin{lem}\label{lem:cross}
The weighted $H^1$ estimate satisfies
\beq\label{eq:lem_cross}
\f{1}{2} \f{d}{dt}  \la \om_x^2, \psi \ra  = I + II_2 + N_2 + F_2
\leq - 0.25 \la \om_x^2 ,\psi \ra + 7.5 \la \om^2, \vp \ra +  N_2 + F_2,
\eeq
where $I, II_2$ are defined in \eqref{eq:I_II}.
\end{lem}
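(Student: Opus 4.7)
The plan is to bound $I + II_2$ by $-0.25\langle\omega_x^2,\psi\rangle + 7.5\langle\omega^2,\varphi\rangle$ term by term, using the decomposition \eqref{eq:lin_a1_I2} that splits $I$ into a damping contribution and the cross term $I_2$. First, I would obtain a rigorous pointwise upper bound for $D_2(\bar{\om}) = \frac{1}{2\psi}((\bar{c}_l x + \bar{u})\psi)_x$ on $[0,L]$: expanding the derivative and using the explicit form of $\psi$ from \eqref{eq:wg_a1_H1} together with the numerical estimates \eqref{eqt:numeric_estimates} on $\bar{u}, \bar{u}_x$ and $\bar{\om}$ and its derivatives (via the interval arithmetic framework of Section~6.3, and specifically the integral and quotient estimates of Section~\ref{sec:RE_inner}), I would produce a strict pointwise upper bound $D_2(\bar{\om}) \leq -c_0$ with $c_0$ comfortably larger than $0.25$ to leave a margin for the cross terms.

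Next, for $I_2 = \langle \bar{u}_{xx}\om, \om_x \psi\rangle$, I would apply Young's inequality in the form
\[
|I_2| \leq \frac{\alpha_1}{2}\langle \om_x^2,\psi\rangle + \frac{1}{2\alpha_1}\B\langle \bar{u}_{xx}^2 \frac{\psi}{\varphi},\om^2\varphi\B\rangle,
\]
so that a uniform pointwise bound on $\bar{u}_{xx}^2\psi/\varphi$ converts the second piece into $C_1\langle\om^2,\varphi\rangle$. On the interior region this uses $K_4$ directly; near $x=L$, where $\varphi$ behaves like $(L-x)^{-2}$, one exploits the ratio $(L^2-x^2)^{-1}\psi^{1/2}\varphi^{-1/2}$ already bounded in \eqref{eqt:numeric_estimates} together with a Lipschitz estimate of $\bar{u}_{xx}$ coming from the $L^\infty$ control of $\bar{u}_{xxx}(L^2-x^2)$ derived in Section~6.3.3.

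For the more delicate $II_2 = -\langle (c_l x + u)\bar{\om}_{xx}, \om_x \psi\rangle$, I would split $[0,L]$ into a bulk region and boundary layers near $x=0$ and $x=L$. On the bulk one controls $\bar{\om}_{xx}$ by $K_8$ directly. On the boundary layers one uses that $c_l x + u$ vanishes at both endpoints (since $u(0)=0$ and $c_l = -u(L)/L$ by \eqref{eq:DGnormal}), so $(c_l x + u)/(x(L-x))$ is bounded, while $\bar{\om}_{xx}(L^2-x^2)$ is controlled by $K_7$. This produces a product of bounded weighted factors, and one more Young's inequality with parameter $\alpha_2$ distributes the resulting estimate between $\langle\om_x^2,\psi\rangle$ and $\langle\om^2,\varphi\rangle$. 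The nonlocal dependence of $c_l x + u$ on $\om$ is handled by writing $c_l = \langle \om, g_{c_\om}\rangle$ from \eqref{eq:defg} and using Cauchy--Schwarz against the weight $\varphi$, i.e.\ $|c_l| \leq \|g_{c_\om}\varphi^{-1/2}\|_{L^2}\cdot\|\om\varphi^{1/2}\|_{L^2}$, and representing $u(x) = \int_0^x u_x$ or using the Hilbert transform structure (together with $\|\bar{u}_x\|_{L^2} = \|\bar{\om}\|_{L^2}$ from Lemma~\ref{lem:commute}) to bound $u$ in terms of $\om$ in the appropriate weighted norm.

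Finally, I would sum the three estimates, choose $\alpha_1,\alpha_2$ so that the combined coefficient of $\langle\om_x^2,\psi\rangle$ is at most $-0.25$ after accounting for the damping, and collect the remaining $\langle\om^2,\varphi\rangle$ contributions to check that the total does not exceed $7.5$. The main obstacle will be the joint tuning in this last step: the bounds $-0.25$ and $7.5$ must be achieved \emph{simultaneously}, so the Young's parameters $\alpha_1,\alpha_2$ and the region split points have little slack; getting there requires the sharp interval-arithmetic estimates for the ratios $\bar{u}_{xx}^2\psi/\varphi$, $\bar{\om}_{xx}^2\psi/((L-x)^2\varphi)$ and $\psi_x\psi^{-1/2}\varphi^{-1/2}$ on both the interior and the boundary layers, together with a careful treatment of the non-local contribution of $u$ in $II_2$ so that it can be absorbed without inflating the constant $7.5$.
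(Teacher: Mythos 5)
Your overall framework --- split $I$ into a damping part $\la D_2(\bar\om),\om_x^2\psi\ra$ plus the cross term $I_2$, then bound $I_2$ and $II_2$ separately --- matches the paper. But the specific mechanism you propose for controlling $I_2$ and $II_2$ is precisely the naive Cauchy--Schwarz/Young route that the paper explicitly rules out: the text after Lemma~\ref{lem:cross} states that a direct Cauchy--Schwarz split of the cross term produces a coefficient ``roughly of order $100\la\om^2,\vp\ra$,'' far exceeding the target $7.5$, and that such a constant would break the bootstrap because the residual would then need to be unachievably small on the $h=2.5\times 10^{-5}$ mesh. In other words, your Young's inequality for $I_2$,
\[
|I_2|\leq \f{\al_1}{2}\la\om_x^2,\psi\ra + \f{1}{2\al_1}\B\la \bar u_{xx}^2\f{\psi}{\vp},\om^2\vp\B\ra,
\]
cannot be tuned to give both $-0.25$ and $7.5$ simultaneously; the ratio $\bar u_{xx}^2\psi/\vp$ is just not small enough, and no choice of $\al_1$ helps.

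The paper's proof of Lemma~\ref{lem:cross} avoids this by exploiting sign structure rather than size. For $I_2=\la\bar u_{xx}\om,\om_x\psi\ra$, the essential observation is that $\bar u_{xx}\psi$ is (numerically) increasing, so a formal integration by parts $I_2=-\f12\la(\bar u_{xx}\psi)_x,\om^2\ra\leq 0$ should hold. Since $\bar u_{xxx}$ is discontinuous, this is made rigorous by introducing a piecewise-linear interpolant $S_{k,h}$ of $\bar u_{xx}\psi$ that is provably monotone on the grid; one then writes $I_2=\la(\bar u_{xx}\psi - S_{k,h})\vp^{-1/2}\psi^{-1/2},\vp^{1/2}\om\,\psi^{1/2}\om_x\ra - \f12\la\pa_x S_{k,h},\om^2\ra$, drops the nonpositive second term, and applies Cauchy--Schwarz only to the residue, whose weighted sup norm is bounded by $0.1$ in Lemma~\ref{lem:appro_err}. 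This is why the cross coefficient ends up tiny ($\f{0.01}{4d_0}$ in \eqref{eq:a1_H1_rem22}). Similarly, for $II_2$ your plan uses Cauchy--Schwarz on $c_l$, $u$, and $\bar\om_{xx}$ directly, but the paper first integrates by parts to move $\om_x$ onto $\om$, then mines cancellations: the Tricomi identity and the identity \eqref{lem:vel3} kill or sign-control the contributions from $u_x\om/x$ and $u_x\om\cdot x$, the Hardy inequality \eqref{eq:hd1} is used in boundary layers near $0$ and $L$, and the remaining ``projection'' contributions $\la g_i,\om\ra\la R_i,\om\ra$ are bounded by the \emph{optimal} constant $C^*$ of a $6\times 6$ eigenvalue problem \eqref{eq:opt_H1_C}, rather than by a crude product of weighted $L^2$ norms. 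Without these cancellation/sharpness mechanisms, the constants in your proposal cannot reach $-0.25$ and $7.5$.
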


In \cite{chen2019finite}, we have shown that $\la D_2(\bar{\om}),\om^2 \psi \ra$ in $I$ is a damping term. To control the cross term, one can use the Cauchy-Schwarz inequality and split it into weighted $L^2$ norm of $\om$ and $\om_x$. However, this estimate is not sharp and leads to a large constant in our estimate, which is roughly of order $100 \la \om^2, \vp \ra$, in Lemma \ref{lem:cross}. Then to close the bootstrap argument, it requires the error term to be extremely small, which increases the computational burden. To overcome this difficulty, we will use part of the damping term and some property of the profile to control the cross term, so that we can get a smaller constant in Lemma \ref{lem:cross}, which is about $7.5 \la \om^2 ,\vp \ra$. 

We first consider $I_2 = \la \bar{u}_{xx} \om, \om_x \psi \ra$ in $I$ \eqref{eq:lin_a1_I2}. Note that $\bar{u}_{xx} \psi$ is increasing on the grid points. Formally, using integration by parts, we get
\beq\label{eq:a1_H1_rem2}
I_2  =\la \bar{u}_{xx} \om, \om_x \psi \ra =  - \f{1}{2}\la (\bar{u}_{xx} \psi)_x , \om^2 \ra  \leq 0 .
\eeq
However, since $\bar{u}_{xxx}$ is not continuous, we cannot justify $(\bar{u}_{xx} \psi)_x \geq 0$. To address this problem, we consider the following piecewise linear approximation of $\bar{u}_{xx} \psi$
\[
S_{k, h}(x) \teq \begin{cases}
(\bar{u}_{xx} \psi ) ( 0.5) &  0 \leq x \leq 0.5, \\
 (\bar{u}_{xx} \psi)( x_i) + ( ( \bar{u}_{xx} \psi)( x_{i+1} )- (\bar{u}_{xx} \psi)( x_i) ) \f{ x- x_i}{h} &x \in [x_i , x_{i+1}] \subset[0.5, 9.5]  ,\\
 ( \bar{u}_{xx} \psi) (9.5) & x \geq 9.5,\\
\end{cases}
\]
where $\bar{u}_{xx}(x_i), x_i = ih$ is evaluated using the Hilbert transform (see \eqref{eq:commute4})
\beq\label{eq:cross_hil}
\bar{u}_{xx} = H \om_x , \quad \bar{u}_{xx}(x^2 - L^2) = H(\bar{\om}_x(x^2 - L^2)) .
\eeq
For $x$ close to $0$ or $L$, we construct $S_{k, h}(x)$ to be constant since $\bar{u}_{xx} \psi$ blows up at $x =0, L$. We can verify rigorously that $S_{k, h}(x)$ is monotonically increasing by checking its values on the grid points since $S_{k,h}$ is piecewisely linear. Using the fact that $S_{k, h}(x)$ is monotonically increasing, we can justify \eqref{eq:a1_H1_rem2} as follows
\beq\label{eq:aa_H1_rem22}
\bal
\la \bar{u}_{xx} \om, \om_x \psi \ra &=\la \bar{u}_{xx} \psi  - S_{k, h}(x), \om \om_x \ra  + \la S_{k, h}(x), \om \om_x  \ra   \\
& = \la ( \bar{u}_{xx} \psi  - S_{k, h}(x) ) \vp^{-1/2} \psi^{-1/2},  \vp^{1/2} \om \psi^{1/2} \om_x \ra  - \f{1}{2} \la \pa_x S_{k, h}(x), \om^2  \ra    \\
& \leq ||( \bar{u}_{xx} \psi  - S_{k, h}(x) )\vp^{-1/2} \psi^{-1/2}||_{L^{\infty}}  || \om \vp^{1/2} ||_{L^2}
|| \om_x \psi^{1/2}||_{L^2}- \f{1}{2} \la \pa_x S_{k, h}(x), \om^2  \ra  .
\eal
\eeq
Next we estimate the error of linear interpolation. We have plotted the numerical values of $S_{k,h}$ and $\D(x)$ defined below on the grid points in the first subfigure in Figure \ref{DG_cross} in the Appendix. 
\begin{lem}\label{lem:appro_err} The weighted approximation error satisfies
\beq\label{eq:appro_err}
 || ( \bar{u}_{xx} \psi  - S_{k, h}(x) )\vp^{-1/2} \psi^{-1/2} ||_{L^{\infty}}   \leq 0.1 .
\eeq
\end{lem}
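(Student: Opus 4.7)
The plan is to split $[0, L]$ into the three regions $[0, 0.5]$, $[0.5, 9.5]$, and $[9.5, L]$ matching the three pieces of $S_{k,h}$, and to bound the weighted error on each region separately. Because $S_{k,h}$ is the linear interpolant of $\bar u_{xx}\psi$ on the middle region and is held constant on the two boundary regions, the estimates there have different flavours.

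On the middle region $[0.5, 9.5]$, piecewise linear interpolation on each subinterval $[x_i, x_{i+1}]$ of length $h$ gives
\[
\bigl|\bar u_{xx}\psi(x) - S_{k,h}(x)\bigr| \leq \tfrac{h}{2}\,\|(\bar u_{xx}\psi)_x\|_{L^\infty[x_i, x_{i+1}]} \leq \tfrac{h}{2}\bigl(|\bar u_{xxx}|\psi + |\bar u_{xx}|\,|\psi_x|\bigr).
\]
Multiplying by $\vp^{-1/2}\psi^{-1/2}$ and rearranging yields two terms, which I control via the weighted bounds $\|(x^2-L^2)^{-1}\psi^{1/2}\vp^{-1/2}\|_{L^\infty[0.5, 9.5]}<0.1$ and $\|\psi_x\psi^{-1/2}\vp^{-1/2}\|_{L^\infty[0.5, 9.5]}<2$ recorded in \eqref{eqt:numeric_estimates}. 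For the first term I bound $|\bar u_{xxx}(x^2-L^2)|$ uniformly via the commutator identity $\bar u_{xxx}(x^2-L^2) = H(\bar\om_{xx}(x^2-L^2))$ analogous to \eqref{eq:cross_hil}, splitting the Hilbert transform into a singular and a nonsingular piece exactly as in the bound for $K_4$ in Section~\ref{sec:intro_GQ} and invoking $K_6, K_7$; for the second term I use $K_2$. Since $h = 2.5\times 10^{-5}$, this contribution is $O(h)$ and well below $0.1$.

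On the left boundary region $[0, 0.5]$, $S_{k,h}(x) = (\bar u_{xx}\psi)(0.5)$ is constant, so
\[
\bigl(\bar u_{xx}\psi - S_{k,h}\bigr)(x) = -\int_x^{0.5}(\bar u_{xx}\psi)_y\,dy.
\]
The key feature is that $\vp^{-1/2}\psi^{-1/2}$ vanishes like $x^3$ as $x\to 0$: from \eqref{eq:wg_a1} and \eqref{eq:wg_a1_H1}, $\vp\sim x^{-4}$ and $\psi\sim x^{-2}$ near the origin, so $\vp^{-1/2}\psi^{-1/2}\sim x^3$; in particular $|\bar u_{xx}|\psi^{1/2}\vp^{-1/2}$ is bounded on this region. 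I would split
\[
\bigl|(\bar u_{xx}\psi - S_{k,h})\vp^{-1/2}\psi^{-1/2}\bigr|(x) \leq |\bar u_{xx}(x)|\psi^{1/2}(x)\vp^{-1/2}(x) + |(\bar u_{xx}\psi)(0.5)|\cdot\vp^{-1/2}(x)\psi^{-1/2}(x),
\]
and bound each term using $|\bar u_{xx}|\leq K_4$ together with the explicit formulas for $\vp$ and $\psi$. A symmetric argument using $\bar\om(L)=0$ (so $\bar\om\sim \bar\om_x(L)(x-L)$ near $L$, giving $\vp^{-1/2}\psi^{-1/2}\sim (L-x)^{3/2}$) handles $[9.5, L]$, where $K_{1,r}$ controls $\bar\om_x$. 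The uniform bound by $0.1$ is then verified by constructing piecewise majorants of $|(\bar u_{xx}\psi - S_{k,h})\vp^{-1/2}\psi^{-1/2}|$ via the scheme of Section~\ref{sec:RE_L_infty} and checking the maximum using interval arithmetic.

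The main obstacle is the boundary regions: the middle-region contribution is safely $O(h)$, but near $x=0$ and $x=L$ the function $\bar u_{xx}\psi$ itself is singular, and only the vanishing of $\vp^{-1/2}\psi^{-1/2}$ keeps the weighted difference bounded. Ensuring the explicit constant on $[0, 0.5]\cup[9.5, L]$ fits below $0.1$ with adequate slack is the delicate step; the fact that the related quantity $(x^2-L^2)^{-1}\psi^{1/2}\vp^{-1/2}$ is itself bounded by exactly $0.1$ on $[0.5, 9.5]$ in \eqref{eqt:numeric_estimates} suggests that the constants here are tight and require careful rigorous verification rather than a slack analytic estimate.
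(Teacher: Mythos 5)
Your high-level plan---split into $[0,0.5]$, $[0.5,9.5]$, $[9.5,L]$; bound the interpolation error on the middle region by $h$ times a derivative bound for $\bar u_{xx}\psi$, controlling $\bar u_{xxx}(x^2-L^2)$ by the same Hilbert-transform split used for $K_4$; treat the boundary pieces via interval arithmetic and the vanishing of $\vp^{-1/2}\psi^{-1/2}$---is essentially the route the paper takes. Two technical slips are worth flagging, however. First, to bound the second term $|\bar u_{xx}|\,|\psi_x|$ on $[0.5,9.5]$ you invoke $K_2$, but $K_2 = \|\bar u_x\|_{L^\infty}$ controls $\bar u_x$, not $\bar u_{xx}$ (and $K_4$ is only an $L^\infty$ bound on $[0,M]=[0,5]$, which does not cover $[5,9.5]$). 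What is actually needed is the logarithmic bound \eqref{eq:appro_err4}, obtained by applying the same near/far split to $\bar u_{xx} = H\bar\om_x$ and invoking $K_8$ and $K_1$; this is the counterpart to what you correctly did with $K_6,K_7$ for the first term. Second, your asymptotic near $x=L$ is off: from $\vp \asymp (L-x)^{-2}$ and $\psi = O(1)$ near $L$ (not $\psi\sim(L-x)^{-1}$, since $\psi=-\bar\om^{-1}(x^{-1}-x/L^2)$ and both $\bar\om$ and $x^{-1}-x/L^2$ vanish linearly there), one gets $\vp^{-1/2}\psi^{-1/2}\sim (L-x)$, not $(L-x)^{3/2}$. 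The conclusion (that $\D$ vanishes at $L$ and is H\"older continuous there, so a grid-point check plus a modulus-of-continuity argument suffices) is unaffected, but the stated rate is wrong and, if used to derive an explicit boundary-region constant, would give a misleadingly optimistic margin. Finally, your triangle-inequality split on $[0,0.5]$ may be looser than necessary since it ignores cancellation between $\bar u_{xx}\psi$ and the constant $S_{k,h}$; the paper sidesteps this by verifying $|\D|\leq 0.09$ directly on grid points and then arguing qualitatively that $\D$ cannot overshoot between grid points, which is both simpler and more robust.
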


\begin{proof}
Define
\[
\D(x) \teq ( \bar{u}_{xx} \psi  - S_{k, h}(x) )\vp^{-1/2} \psi^{-1/2} .
\]
For $x \leq 0.5$ or $x \geq 9.5$, we use the interval arithmetic to show that the maximal value of $\D(x)$ on the grid points is bounded by 0.09. In \eqref{eq:appro_err3},\eqref{eq:appro_err4}, we show that $\bar{u}_{xx}, \bar{u}_{xxx}(x^2 -L^2)$ grow at most logarithmically near $x = L$.  Using the asymptotic property of the weight
\[
\psi \asymp x^{-2} + 1 ,  \vp \asymp x^{-4} + (x-L)^{-2} \;,
\]
we obtain that
\[ \psi^{1/2} \vp^{-1/2} ,\psi^{-1/2} \vp^{-1/2} \les (x^{-1} + (x-L)^{-1} )^{-1}  ,
\]
which vanishes at $x =0, L$. For $x$ sufficiently close to $x= L$, e.g. $|x-L| < \e$, we can use the fact that $\D(x)$ vanishes at $x=L$ and that $\D(x)$ is at least $C^{1/2}$
to verify $|\D(x)| \leq 0.1$. For $|x - L| > \e$, we can use
the smoothness of $\psi, \vp \in C^{1, 1}$ and $u_{xx} \in C^{1}$ to verify $|\D(x)| \leq 0.1$ for $x $ that is not on a grid point.

For $x \in [x_i, x_{i+1}] \teq I_i \subset [0.5, 9.5]$, we use the standard approximation theory to show that
\beq\label{eq:appro_err2}
\bal
 | \bar{u}_{xx} \psi - S_{k, h}(x) |  &\leq  h \max_{ I_i }
  | (\bar{u}_{xx} \psi)_x |  =h  \max_{I_i} ( |  \bar{u}_{xxx} (x^2 -L^2) \psi(x^2 - L^2)^{-1} |
  +   | \bar{u}_{xx} \psi_x| ) \\
  &\leq  h \max_{I_i} | \bar{u}_{xxx}(x^2 -L^2)| \max_{I_i} |\psi (x^2 - L^2)^{-1}|
  + h  \max_{I_i} | \bar{u}_{xx}  | \max_{I_i}  | \psi_x |.
  \eal
\eeq
Let $\d = 0.25$. From the relation \eqref{eq:cross_hil}, we have for $x \in [0.5, 9.5]$ that
\[
\bal
&\bar{u}_{xxx}(x)(x^2 -L^2) = P.V. \f{1}{\pi} \int_{-L}^L \f{ \bar{\om}_{xx}(y)(y^2 -L^2)}{x - y} dy \\
 = &\f{1}{\pi} \int_{|x-y| < \d} \f{ \bar{\om}_{xx}(y)(y^2 -L^2) -  \bar{\om}_{xx}(x)(x^2 - L^2) }{x-y} dy
 + \f{1}{\pi} \int_{ |x-y| > \d, |y| \leq L } \f{ \bar{\om}_{xx}(y)(y^2 -L^2)}{x - y} dy ,\\
 \eal
\]
which implies
\begin{align}
 & | \bar{u}(x)_{xxx}(x^2 -L^2)   |  \leq  \f{2\d}{\pi}|| ( \bar{\om}_{xx} (x^2 -L^2) )_x||_{L^{\infty}[0.5-\d,9.5 +\d]}
 +  \f{2}{\pi}   || \bar{\om}_{xx}(y^2 -L^2) ||_{\infty}  \log\f{2L}{\d} \notag \\
&\qquad \qquad  \qquad  \qquad =  \f{2\d}{\pi} K_6 + \f{2}{\pi}  K_7 \log\f{2L}{\d} , \label{eq:appro_err3}
 \end{align}
where $K_6,K_7$ are defined in \eqref{eq:const_a1}. Similarly, for $\bar{u}_{xx}= H \bar{\om}_x$, we have
\beq\label{eq:appro_err4}
| \bar{u}_{xx}(x) |  \leq  \f{2\d}{\pi}||  \bar{\om}_{xx} ||_{L^{\infty}[0.5 - \d, 9.5 + \d]}
 +  \f{2}{\pi} || \bar{\om}_x ||_{\infty}  \log\f{2L}{\d}=
 \f{2\d}{\pi} K_8 + \f{2}{\pi}  K_1  \log\f{2L}{\d} \; ,
\eeq
where $K_8, K_1$ are defined in \eqref{eq:const_a1}. Hence, for $x\in [0.5, 9.5]$, $\bar{u}_{xx}, \bar{u}_{xxx}$ are bounded. Taking $\d = L-x$ in \eqref{eq:appro_err3} and \eqref{eq:appro_err4}, we see that both $\bar{u}_{xxx}(x^2-L^2), \bar{u}_{xx}$ can grow at most logarithmically near $x = L$ with another constant in the upper bound. 
For $ x \in[x_i, x_{i+1}] \subset [0.5, 9.5]$, one can verify that $\psi$ and $\vp$ remain smooth over one grid cell. Specifically, we have the following estimate:
\beq\label{eq:appro_err5}
\bal
\max_{I_i} |\psi_x|  \leq 1.1 |\psi_x(x_i) |, \  \max_{I_i} |\psi (x^2 - L^2)^{-1}|  \leq 1.1 |\psi(x_i) (x_i^2 - L^2)^{-1}|.
\eal
\eeq
Substituting the bounds on $\bar{u}_{xx}, \bar{u}_{xxx}$ in \eqref{eq:appro_err3}, \eqref{eq:appro_err4} with $\d = 0.25$ and \eqref{eq:appro_err5} into \eqref{eq:appro_err2}, we obtain
\[
\bal
\max_{[0.5,9.5]}|\D(x)| &\leq 1.1 h
\f{2}{\pi} \lt(  \d K_6 +  K_7\log\f{2L}{\d} \rt) || (x^2 -L^2)^{-1} \psi^{1/2}\vp^{-1/2} ||_{L^{\infty}[0.5,9.5]} \\
&\quad +  1.1 h \f{2}{\pi}\lt(  \d K_8 +   K_1\log\f{2L}{\d}\rt)
||\psi_x  \psi^{-1/2} \vp^{-1/2} ||_{L^{\infty}[0.5,9.5]} \\
&\leq 84h || (x^2 -L^2)^{-1} \psi^{1/2}\vp^{-1/2} ||_{L^{\infty}[0.5,9.5]} + 3.2 h ||\psi_x  \psi^{-1/2} \vp^{-1/2} ||_{L^{\infty}[0.5,9.5]}  \\
& \leq (84 \cdot 0.1 + 3.2 \cdot 2) h < 0.1 , 
\eal
\]
where we have used \eqref{eqt:numeric_estimates} and $h=2.5\times 10^{-5}$.
Combining all the above estimate would give the desired upper bound $|\D(x)| \leq 0.1 , \ \forall x \in [0, L]$.
\end{proof}

Using Lemma \ref{lem:appro_err}, we can estimate \eqref{eq:aa_H1_rem22} below
\beq\label{eq:a1_H1_rem22}
I_2 = \la \bar{u}_{xx} \om, \om_x \psi \ra  \leq 0.1 || \om \vp^{1/2} ||_{L^2} || \om_x \psi^{1/2}||_{L^2}
\leq d_0  || \om \vp^{1/2} ||^2_{L^2} + \f{0.01}{4 d_0} || \om_x \psi^{1/2}||^2_{L^2}.
\eeq

Next, we consider the remaining  linear term $II_2 = \la  (c_l x + u) \bar{\om}_{xx}, \om_x \psi \ra$ in \eqref{eq:lin_a1_H1}. First of all, using integration by parts, we yield
\beq\label{eq:a1_H1_rem31}
II_2 =  - \la  (c_l x + u) \bar{\om}_{xx}, \om_x \psi \ra
= \la (c_l + u_x ) \bar{\om}_{xx} \psi , \om_x \ra + \la (c_l x + u) (\bar{\om}_{xx} \psi)_x, \om \ra
\teq T_1 + T_2.
\eeq
We use a strategy similar to the one in the weighted $L^2$ estimate for $a=1$ in \cite{chen2019finite}.
For $T_1$, we first find a combination of power $x, x^{-1}$ that approximates $\bar{\om}_{xx} \psi$
\[
\bal
T_1  & = \la u_x  \bar{\om}_{xx} \psi , \om \ra + c_l \la \bar{\om}_{xx} \psi, \om \ra
= \B\la u_x  \lt(\f{d_1}{x} - d_2 x \rt), \om \B\ra + \B\la u_x  (\bar{\om}_{xx} \psi - \f{d_1}{x} + d_2 x ), \om \B\ra \\
&+ c_l  \B\la \bar{\om}_{xx} \psi - \f{d_3}{x}, \om \B\ra + c_l d_3 \B\la \f{1}{x} , \om \B\ra,
\eal
\]
where $d_1, d_2, d_3 > 0$ are to be determined. By using the cancellation property \eqref{lem:vel3}, we get
\[
\bal
d_1 \la u_x \om ,x^{-1} \ra  &= d_1 \lt( u_x(0) \la \om ,x^{-1} \ra + \la (u_x - u_x(0)) \om, x^{-1} \ra \rt) \\
&= d_1\lt(  - u_x(0) \f{\pi}{2}  (H\om)(0) + \f{\pi}{4} u_x(0)^2    \rt) =- \f{\pi d_1}{4} 	u_x(0)^2,
\eal
\]
where we have a factor $1/2$ since the integral is from $0$ to $L$ rather than the real line. Using
$xu_x = H( x\om)$ in \eqref{eq:commute3} and \eqref{lem:tric}, we derive
\[
\bal
\la u_x \om , x \ra & = \la H( x \om) (x \om ), x^{-1} \ra = - \f{\pi}{2} H (  H( x \om) (x \om ) )
 \\
 &= -\f{\pi}{4} \lt(  (H(x \om) )^2(0) - (x \om)^2(0) \rt) = -\f{\pi}{4}(H(x \om) )^2(0) = -\f{\pi}{4} (xu_x)^2(0) =0.
 \eal
\]

Using an elementary inequality $2ab \leq a^2 + b^2$, we get
\[
c_l d_3 \la x^{-1} , \om \ra = - \f{\pi}{2} c_l d_3 u_x(0) \leq \f{\pi d_1}{4} u^2_x(0)
+  \f{\pi d_3^2}{4 d_1} c_l^2 .
\]
Using the above estimates, we can reduce $T_1$ to
\beq\label{eq:a1_H1_rem32}
\bal
T_1 &\leq  -\f{\pi d_1}{4} u_x^2(0) + \B\la u_x  (\bar{\om}_{xx} \psi - \f{d_1}{x} + d_2 x ), \om \B\ra + c_l  \B\la \bar{\om}_{xx} \psi - \f{d_3}{x}, \om \B\ra + \f{\pi d_1}{4} u^2_x(0)
+  \f{\pi d_3^2}{4 d_1} c_l^2 \\
& \leq   \B\la u_x  (\bar{\om}_{xx} \psi - \f{d_1}{x} + d_2 x ), \om \B\ra + c_l  \B\la \bar{\om}_{xx} \psi - \f{d_3}{x}, \om \B\ra +  \f{\pi d_3^2}{4 d_1} c_l^2 \\
& = \B\la u_x  (\bar{\om}_{xx} \psi - \f{d_1}{x} + d_2 x ), \om \B\ra + c_l  \B\la \bar{\om}_{xx} \psi - \f{d_3}{x} + \f{\pi d_3^2}{4 d_1} g_{c_{\om}}, \om \B\ra  \\
& \leq || u_x||_2 || (\bar{\om}_{xx} \psi - \f{d_1}{x} + d_2 x )  \om||_2 + c_l  \B\la \bar{\om}_{xx} \psi - \f{d_3}{x} + \f{\pi d_3^2}{4 d_1} g_{c_{\om}}, \om \B\ra  \\
& = || \om||_2 || (\bar{\om}_{xx} \psi - \f{d_1}{x} + d_2 x )  \om||_2 + c_l  \B\la \bar{\om}_{xx} \psi - \f{d_3}{x} + \f{\pi d_3^2}{4 d_1} g_{c_{\om}}, \om \B\ra , \\
\eal
\eeq
where $g_{c_{\om}}$ defined in \eqref{eq:defg} satisfies $c_l = c_{\om}= \la \om , g_{c_{\om}}\ra$ and we have used $|| u_x||_2 = || \om ||_2$.
The second term will be estimated later. 

To estimate $T_2$ in \eqref{eq:a1_H1_rem31}, 
we again use a strategy similar to the one in the weighted $L^2$ estimate for $a=1$ in \cite{chen2019finite} by subtracting the different linear part near $x =0$ and $x = L$.  Let $ M_2 \in (0, L)$ be some parameter to be chosen later.
For $x \in [0, M_2]$, we have
\beq\label{eq:a1_H1_rem33}
\bal
T_{2l} & \teq \la (c_l x + u) (\bar{\om}_{xx} \psi)_x \one_{x \leq M_2}, \om \ra
=  \la ( u - u_x(0)x )  (\bar{\om}_{xx} \psi)_x \one_{x \leq M_2}, \om \ra  \\
&+ (c_l + u_x(0)) \la x (\bar{\om}_{xx} \psi)_x \one_{x \leq M_2}, \om \ra
\teq T_{2l, 1} +T_{2l, 2}.
\eal
\eeq
We first use the Cauchy-Schwarz inequality to split the first term and then apply the Hardy inequality \eqref{eq:hd1} 
\beq\label{eq:a1_H1_rem34}
\bal
 T_{2l,1} &= \B\la (u-u_x(0)) \sqrt{  25 d_4 x^{-6} + 9 d_5 x^{-4} }
,\sqrt{  25 d_4 x^{-6} + 9 d_5 x^{-4}   }^{-1} (\bar{\om}_{xx} \psi)_x \one_{x \leq M_2} \om \B\ra \\
&\leq  \f{1}{4}\la (u-u_x(0))^2 , 25 d_4 x^{-6} + 9 d_5 x^{-4}  \ra
+  \B\la (  25 d_4 x^{-6} + 9 d_5 x^{-4}   )^{-1} ((\bar{\om}_{xx} \psi)_x)^2 \one_{x \leq M_2}, \ \om^2 \B\ra \\
&\leq d_4 \la \om^2, x^{-4}  \ra + d_5 \la \om^2, x^{-2} \ra  +  \B\la (  25 d_4 x^{-6} + 9 d_5 x^{-4}   )^{-1}( (\bar{\om}_{xx} \psi)_x)^2 \one_{x \leq M_2}, \ \om^2 \B\ra ,\\
\eal
\eeq
where $d_4, d_5 > 0$ are to be determined. The constants $25/4, 9/4$ come from \eqref{eq:hd1} with $p=4,2$.
$T_{2l, 2}$ will be estimated later. 

For $x \in [M_2, L]$, we use \eqref{eq:DGnormal} to rewrite $c_l x + u$ and  $T_{2r}$
\beq\label{eq:a1_H1_rem35}
\bal
c_l x + u &= u - u(L) + c_l(x-L)  = u- u(L) - u_x(L)(x-L) + (c_l + u_x(L))(x-L).  \\
T_{2r} &\teq \la (c_l x + u) (\bar{\om}_{xx} \psi)_x \one_{x > M_2}, \om \ra
= \la u- u(L) - u_x(L)(x-L) ,  (\bar{\om}_{xx} \psi)_x \one_{x > M_2} \om \ra   \\
&+ (c_l + u_x(L))  \la (x-L) (\bar{\om}_{xx} \psi)_x \one_{x > M_2} ,\om \ra  \teq T_{2r, 1} + T_{2r, 2}.
\eal
\eeq
Notice that $u- u(L) - u_x(L)(x-L) $ and its derivative vanishes at $x = L$. We apply the Cauchy-Schwarz inequality and then the Hardy inequality similar to \eqref{eq:hd1} with $p=2$ to yield
\[
\bal
&T_{2r,1} = \la ( u- u(L) - u_x(L)(x-L) ) (x-L)^{-2} ,  (x-L)^2 (\bar{\om}_{xx} \psi)_x \one_{x > M_2}, \om \ra  \\
 \leq &\f{9 d_6 }{4} \la ( u- u(L) - u_x(L)(x-L) )^2, (x-L)^{-4}  \ra + \f{1}{9 d_6}  \la ( \  (x-L)^2 (\bar{\om}_{xx} \psi)_x \one_{x > M_2})^2, \om^2 \ra \\
 \leq & d_6 \la (u_x - u_x(L))^2 ,(x-L)^{-2}\ra + \f{1}{9 d_6}  \la ( \  (x-L)^2 (\bar{\om}_{xx} \psi)_x \one_{x  > M_2})^2, \om^2 \ra .\\
 \eal
\]
Furthermore, we apply \eqref{eq:iden22} and the $L^2$ isometry of the Hilbert transform to estimate $\la (u_x - u_x(L))^2 ,(x-L)^{-2}\ra$ 
\[
\la (u_x - u_x(L))^2 ,(x-L)^{-2}\ra \leq \int_{\R}  (u_x - u_x(L))^2 (x-L)^{-2} dx
= \la \om^2 ,(x-L)^{-2} + (x+L)^{-2}  \ra,
\]
which implies 
\beq\label{eq:a1_H1_rem36}
T_{2r,1}  \leq  d_6 \la \om^2, (x-L)^{-2} + (x+L)^{-2} \ra + \f{1}{9 d_6}  \la ( \  (x-L)^2 (\bar{\om}_{xx} \psi)_x \one_{x > M_2})^2, \om^2 \ra .\\
\eeq
We use the damping of $\om_x$ to control part of the above terms. Applying the Hardy inequality 
similar to \eqref{eq:hd1} with $ u - u_x(0) x$ replaced by $\om$ and $p=2$, we obtain
 \beq\label{eq:a1_H1_rem37}
 \la \om^2 , x^{-4} \ra \leq \f{4}{9} \la \om_x^2 , x^{-2} \ra, \quad 0 \leq \f{4d_7}{9} \la \om_x^2 , x^{-2} \ra  - d_7    \la \om^2 , x^{-4} \ra   ,
\eeq
where $d_7 > 0$ is a parameter to be determined.
Therefore, using the relation $II_2 =T_1 + T_2 = T_{1} + T_{2l,1} + T_{2l,2} + T_{2r,1} + T_{2r,2}$ and  combining all the estimates of $I_2, II_2$ in the weighted $H^1$ estimate, 
\eqref{eq:lin_a1_H1}, \eqref{eq:lin_a1_I2}, \eqref{eq:a1_H1_rem22}, \eqref{eq:a1_H1_rem31}, \eqref{eq:a1_H1_rem32}, \eqref{eq:a1_H1_rem33}, \eqref{eq:a1_H1_rem34}, \eqref{eq:a1_H1_rem35}, \eqref{eq:a1_H1_rem36} and \eqref{eq:a1_H1_rem37}, we derive
\beq\label{eq:a1_H1_rem4}
\bal
& I + II_2  = 
 \la D_2(\bar{\om}) , \om_x^2 \psi \ra + I_2 + T_{1} + T_{2l,1} + T_{2l,2} + T_{2r,1} + T_{2r,2} \\
& \leq
\la D_2(\bar{\om}) , \om_x^2 \psi \ra + d_0  || \om \vp^{1/2} ||^2_{L^2} + \f{0.01}{4 d_0} || \om_x \psi^{1/2}||^2_{L^2} \\
&+  || \om||_2 || (\bar{\om}_{xx} \psi - \f{d_1}{x} + d_2 x )  \om||_2 + c_l  \B\la \bar{\om}_{xx} \psi - \f{d_3}{x} + \f{\pi d_3^2}{4 d_1} g_{c_{\om}}, \om \B\ra  \\
 &+ d_4 \la \om^2, x^{-4}  \ra + d_5 \la \om^2, x^{-2} \ra  +  \B\la (  25 d_4 x^{-6} + 9 d_5 x^{-4}   )^{-1}( (\bar{\om}_{xx} \psi)_x)^2 \one_{x \leq M_2}, \ \om^2 \B\ra  \\
 &+ (c_l + u_x(0)) \la x (\bar{\om}_{xx} \psi)_x \one_{x \leq M_2}, \om \ra
+ d_6 \la \om^2, (x-L)^{-2} + (x+L)^{-2} \ra \\
&+ \f{1}{9 d_6}  \la ( \  (x-L)^2 (\bar{\om}_{xx} \psi)_x \one_{x > M_2})^2, \om^2 \ra  + (c_l + u_x(L))  \la (x-L) (\bar{\om}_{xx} \psi)_x \one_{x > M_2} ,\om \ra  \\
&  + \lt(\f{4d_7}{9} \la \om_x^2 , x^{-2} \ra  - d_7    \la \om^2 , x^{-4} \ra  \rt).
\eal
\eeq
The quantities on the right hand side can be classified into three classes:
$(a) \ \la \om_x^2, f  \ra ; \  (b)  \ \la \om^2 , g  \ra $ (except $|| \om||_2 || (\bar{\om}_{xx} \psi - \f{d_1}{x} + d_2 x )  \om||_2 $); (c) Projections of $\om$, i.e. $\la \om, g \ra$. Using the estimate below
\[
|| \om||_2 || (\bar{\om}_{xx} \psi - \f{d_1}{x} + d_2 x )  \om||_2 \leq d_8 || \om||^2_2 + \f{1}{4d_8}
|| (\bar{\om}_{xx} \psi - \f{d_1}{x} + d_2 x )  \om||^2_2,
\]
we can rewrite \eqref{eq:a1_H1_rem4} as
\beq\label{eq:a1_H1_rem41}
\bal
I + II_2& \leq \la S_1, \om_x^2 \psi \ra  + \la S_2, \om^2 \vp \ra +  P(\om) ,\\
\text{where}\quad S_1 &  =   D_2(\bar{\om})+\f{0.01}{4d_0} + \lt( \f{4d_7}{9}  x^{-2}  \rt) \psi^{-1} , \\
S_2 \vp &= d_0 \vp +  d_8 + \f{1}{4d_8} (\bar{\om}_{xx} \psi - \f{d_1}{x} + d_2 x )^2
+ d_4 x^{-4} + d_5 x^{-2}  \\
&+ (  25 d_4 x^{-6} + 9 d_5 x^{-4}   )^{-1}( (\bar{\om}_{xx} \psi)_x)^2 \one_{x \leq M_2}  + d_6 ( (x-L)^{-2} + (x+L)^{-2} ) \\
& + \f{1}{9 d_6}  \lt( (x-L)^2 (\bar{\om}_{xx} \psi)_x \one_{x > M_2} \rt)^2  -d_7   x^{-4} \;,\\
P(\om) & \teq c_l  \la R_1,  \om \ra + (c_l + u_x(0)) \la R_2, \om \ra  +  (c_l + u_x(L))  \la R_3 , \om \ra ,\\
R_1 & = \bar{\om}_{xx} \psi - \f{d_3}{x} + \f{\pi d_3^2}{4 d_1} g_{c_{\om}}  , \  R_2 = x (\bar{\om}_{xx} \psi)_x \one_{x \leq M_2}, \  R_3 = (x-L) (\bar{\om}_{xx} \psi)_x \one_{x > M_2} .
\eal
\eeq
Finally, we estimate $P(\om)$. Using \eqref{eq:defg}, we can rewrite $P(\om)$ as follows
\[
P(\om) = \la g_{c_{\om}} ,\om \ra  \la R_1, \om \ra  + \la g_{c_{\om} } + g_{u_x(0)} , \om \ra \la R_2, \om \ra +  \la g_{c_{\om} } + g_{u_x(L)} , \om \ra \la R_3, \om \ra .
\]
For some function $S_3 \in C([0, L]) , S_3 >0$ to be determined, we want to bound $P(\om)$ by $\la S_3, \om^2 \vp \ra$ as sharp as possible. 
We define 
\beq\label{eq:opt6_H1}
\bal
&z \teq (S_3 \vp)^{1/2} \om , \ \eta_1 \teq  (S_3 \vp)^{-1/2}  g_{c_{\om}} , \ \eta_2 \teq (S_3 \vp)^{-1/2}R_1,  \ \eta_3 \teq (S_3 \vp)^{-1/2} (g_{c_{\om} } + g_{u_x(0)} ),\\
 & \eta_4 \teq (S_3 \vp)^{-1/2} R_2, \quad \eta_5 \teq 
(S_3 \vp)^{-1/2}( g_{c_{\om} } + g_{u_x(L)} ), \quad \eta_6 \teq (S_3 \vp)^{-1/2} R_3.
\eal
\eeq
We want to find the best constant of the following inequality for any $\om \in L^2(\vp)$ 
\beq\label{eq:opt6_H12}
\bal
 \la \eta_1 , z\ra \la \eta_2 , z \ra + \la \eta_3 , z\ra \la \eta_4 , z\ra  + \la \eta_5 , z\ra \la \eta_6 , z\ra & \leq C^* ||z||_2^2 ,\\
\eal
\eeq
which is equivalent to
\beq\label{eq:opt6_H13}
 P(\om)  = \la g_{c_{\om}} ,\om \ra  \la R_1, \om \ra  + \la g_{c_{\om} } + g_{u_x(0)} , \om \ra \la R_2, \om \ra +  \la g_{c_{\om} } + g_{u_x(L)} , \om \ra \la R_3, \om \ra  \leq C^*\la S_3, \om^2 \vp \ra.
\eeq
It is easy to see that $\eta_i$ are generically linearly independent. Suppose that $V \teq  \mathrm{span} \{\eta_1, \eta_2, .., \eta_6 \} $ and $\{ e_j \}_{j=1}^6$ is an orthonormal basis of $V$ under the standard $L^2$ inner product on $[0, L]$. Suppose that $\eta_i$ in terms of the orthonormal basis $\{ e_j \}_{j=1}^6$ has the coordinate $ \th_i = (\th_{i,1},\dots,\th_{i,6})^T \in \R^6 $ (a column vector). We can apply the same analysis as we analyzed the best constant of the projection term in the $L^2$ estimate for $a=1$ in \cite{chen2019finite} to obtain that
\beq\label{eq:opt_H1_C}
C^* =\lam_{\max}  \lt( \f{1}{2} \sum_{i=1}^3 (\th_{2i-1} \th_{2i}^T +  \th_{2i} \th_{2i-1}^T )\rt), \quad
\f{1}{2} \sum_{i=1}^3 (\th_{2i-1} \th_{2i}^T +  \th_{2i} \th_{2i-1}^T ) \in R^{6 \times 6}.
\eeq

\subsection{Rigorous estimate of $C^*$}
Define a matrix
\[A\triangleq \f{1}{2} \sum_{i=1}^3 (\th_{2i-1} \th_{2i}^T +  \th_{2i} \th_{2i-1}^T ) = \frac{1}{2}\Theta_1\Theta_2^T,\]
where $\Theta_1 \triangleq [\theta_1,\theta_2,\theta_3,\theta_4,\theta_5,\theta_6]\in \mathbb{R}^{6\times 6}$ and $\Theta_2 \triangleq [\theta_2,\theta_1,\theta_4,\theta_3,\theta_6,\theta_5]\in \mathbb{R}^{6\times 6}$. Note that $A$ is symmetric, but not necessarily positive semidefinite. $C^*$ is then the largest eigenvalue of $A$. To rigorously estimate $C^*$, we first bound it by the Schatten $p$-norm of $A$:
\begin{equation}\label{C*_bound}
C^* \leq \|A\|_{p} \triangleq \mathrm{Tr}[|A|^p]^{1/p}\quad \text{for all $p\geq 1$}.
\end{equation}
Here $|A| = \sqrt{A^TA} = \sqrt{A^2}$. In particular, if $p$ is an even number, we have $|A|^p = A^p$. Therefore,
\[\mathrm{Tr}[|A|^p] = 2^{-p}\cdot \mathrm{Tr}[(\Theta_1\Theta_2^T)^p] = 2^{-p}\cdot \mathrm{Tr}[(\Theta_2^T\Theta_1)^p] \triangleq 2^{-p}\cdot \mathrm{Tr}[X^p]\]
where $X = \Theta_2^T\Theta_1$. Note that each entry of $X$ is the inner between some $\th_i$ and $\th_j$, $i,j=1,\dots,6$. Moreover, we have
\[\langle \eta_i,\eta_j\rangle = \left\langle \sum_{k=1}^6\th_{i,k}e_k,\sum_{k=1}^6\th_{j,k}e_k\right\rangle = \sum_{k=1}^6 \th_{i,k}\th_{j,k} = \th_i^T\th_j.\]
Therefore, to compute the entries of $X$, we only need to compute the pairwise inner products between $\eta_1,\dots,\eta_6$ (we do not need to compute the coordinate vectors $\th_i$ explicitly). Again, this is done by interval arithmetic. See the discussion in Section~\ref{sec:RE_inner}. So then each entry $X_{ij}$ of $X$ is represented by a pair of numbers that bound it from above and from below. Once we have the estimate of $X$, we can compute an upper bound of $\mathrm{Tr}[X^p]$ stably and rigorously by interval arithmetic, which then gives a bound on $C^*$ via \eqref{C*_bound}. In particular, we choose $p=4$ in our computation.

\subsection{Optimizing the parameters}
To optimize the estimate, we choose 
\[
\bal
d_0 &= 0.15, \ d_1 = 0.11 , \ d_2 = 0.0013, \ d_3 = 0.07, \ d_4 = 4.5 ,\ d_5 = 0.05 ,  \\
d_6 & = 0.03, \ d_7 = 2.5 ,  \ d_8 = 0.0004, \ M_2 = 6.5 .
\eal
\]
After specifying these parameters, the remaining part of the damping term $S_1$ (see \eqref{eq:a1_H1_rem41}) and the estimate of the cross term $S_2$ are completely determined. We plot the numerical values of $S_1, S_2$ on the grid points in the second subfigure of Figure \ref{DG_cross} in the Appendix.

The numerical value of $\max(S_1)$ is less than $-0.3$ and we use a conservative estimate 
\beq\label{eq:a1_H1_rem42}
S_1 < -0.25 ,
\eeq
which can be verified using the smoothness of the profile. It is clear that $S_2 < 7$ and we choose
\beq\label{eq:opt_H13}
S_3 = 7.5 - S_2.
\eeq
The corresponding $\eta_i$ in \eqref{eq:opt6_H1} are determined. The optimal constant in \eqref{eq:opt6_H12} can be computed via \eqref{eq:opt_H1_C}. The numerical value of $C^*$ satisfies $C^* < 0.85$ and we use a conservative estimate $C^* < 1 $, which can be verified rigorously. Plugging this estimate into \eqref{eq:opt6_H13}, we obtain 
\[
P(\om) \leq C^* \la S_3, \om^2 \vp \ra < \la S_3 , \om^2 \ra = \la 7.5 - S_2, \om^2 \vp \ra \;.
\]
Plugging the above estimate and \eqref{eq:a1_H1_rem42} into \eqref{eq:a1_H1_rem41}, we prove 
\[
I + II_2 \leq \la S_1, \om_x^2 \psi \ra  + \la S_2, \om^2 \vp \ra +  P(\om) 
\leq -0.25 \la \om_x^2 ,\psi \ra + 7.5 \la \om^2, \vp \ra,
\]
which completes the proof of Lemma \ref{lem:cross}.

\section{The estimates of the error term}\label{sec:error}
The error terms in the weighted $L^2, H^1$ estimate are given below
\[
\bal
F_1 = \la F(\bar{\om}) , \om \vp \ra,  \quad F_2 = \la  ( F(\bar{\om})  )_x , \om_x \psi  \ra  , \quad F(\bar{\om})= ( \bar{c}_{\om} + \bar{u}_x ) \bar{\om}- (\bar{c}_l x +\bar{ u}) \bar{\om}_x,  \\
\eal
\]
where $\vp, \psi$ are defined in \eqref{eq:wg_a1} and \eqref{eq:wg_a1_H1}, respectively. For some functions 
$\rho_1,\; \rho_2$
to be determined, the Cauchy-Schwarz inequality implies
\[
| F_1| \leq  \la F(\bar{\om})^2, \rho_1 \ra^{1/2}  \la \om^2, \vp^2  \rho^{-1}_1 \ra^{1/2} , \quad
|F_2| \leq \la ( F(\bar{\om})  )^2_x ,\rho_2 \ra^{1/2} \la \om_x^2, \psi^2  \rho^{-1}_2 \ra^{1/2}.
\]
Our goal is to verify that $\la F(\bar{\om})^2, \rho_1 \ra^{1/2}$, $\la ( F(\bar{\om})  )^2_x ,\rho_2 \ra^{1/2} $ are small. Note that we can only evaluate $u, u_x, u_{xx}$ at finitely many points via the Hilbert transform. We will use the composite Trapezoidal rule to approximate the integral and have the following error estimate for the Trapezoidal rule.

\subsection{Error estimate of the Trapezoidal rule}

\begin{lem}[Error estimate for the Trapezoidal rule] \label{lem:Trap}
\begin{align}
 \int_0^M  \f{F^2}{x^4} dx  - T_h \lt(  \f{F^2}{x^4} ,0, M  \rt) & \leq  \f{h^2}{4} \lt( \int_0^M \f{F^2_{xx}}{x^2}dx \rt)^{1/2} \lt( \int_0^M \f{F^2}{x^6} dx\rt)^{1/2} \label{eq:Trap1} ,\\
 \int_M^L   \f{F^2}{ (x-L)^2} dx - T_h \lt(  \f{F^2}{(x-L)^2}  ,M, L  \rt) &\leq  \f{h^2}{4}
\lt( \int_M^L F^2_{xx} dx \rt)^{1/2} \lt( \int_M^L \f{F^2}{(x-L)^4} dx\rt)^{1/2} \label{eq:Trap2} ,\\
\int_0^L F^2  dx -T_h( F^2, 0 ,L)  & \leq \f{h^2}{4} \lt(\int_0^L F^2_{xx} dx \rt) ^{1/2} \lt( \int_0^L F^2 dx \rt)^{1/2}  \label{eq:Trap3}  ,\\
 \int_0^M \f{ F_x^2}{x^2} dx - T_h \lt(  \f{F_x^2}{x^2}, 0, M \rt) &\leq h \lt( \int_0^M \f{F^2_{xx}}{x^2} dx\rt)^{1/2} \lt(  \int_0^M \f{F_x^2}{x^2} \rt)^{1/2} \label{eq:Trap4} ,\\
\int_0^L F_x^2  dx -T_h( F_x^2, 0 ,L)  & \leq h \lt(\int_0^L F^2_{xx} dx \rt) ^{1/2} \lt( \int_0^L F_x^2 dx \rt)^{1/2}  \label{eq:Trap5} \; ,
\end{align}
where $M = 5, L = 10$, $F$ is short for $F(\bar{\om})$ and the Trapezoidal rule $T_h$ is given by
\[
T_h(f, a, b) \teq \sum_{ a \leq ih < b}  \f{ f(ih) + f( (i+1)h )}{2} h.
\]
\end{lem}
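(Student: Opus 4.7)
The plan is to prove all five inequalities via the Peano kernel representation for the trapezoidal rule, combined with Cauchy--Schwarz and (for (\ref{eq:Trap1})--(\ref{eq:Trap3})) a second derivative expansion of the integrand. On each subinterval $I_j = [jh, (j+1)h]$, I will use the exact identity
\[
\int_{I_j} g(x)\,dx - \frac{h}{2}\bigl(g(jh) + g((j+1)h)\bigr) = \int_{I_j} \frac{(x-jh)(x-(j+1)h)}{2}\, g''(x)\,dx,
\]
obtained from two integrations by parts. The kernel $(x-jh)(x-(j+1)h)/2$ is nonpositive with absolute value bounded by $h^2/8$. Summing over $j$ gives a global kernel formula $\int_0^M g - T_h(g) = \int_0^M K(x) g''(x)\,dx$ with $|K|\le h^2/8$, and integrating by parts once more yields the alternative first-order representation $\int_0^M g - T_h(g) = -\int_0^M \phi(x) g'(x)\,dx$, where $\phi = K'$ is a sawtooth with $|\phi|\le h/2$.

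For (\ref{eq:Trap1}), I take $g = F^2/x^4$ and expand
\[
g'' = \frac{2 F F_{xx}}{x^4} + \frac{2 F_x^2}{x^4} - \frac{16 F F_x}{x^5} + \frac{20 F^2}{x^6}.
\]
The principal contribution is the bilinear term $2 F F_{xx}/x^4 = 2(F/x^3)(F_{xx}/x)$, to which Cauchy--Schwarz with $|K|\le h^2/8$ immediately gives
\[
\left|\int_0^M K \cdot \frac{2 F F_{xx}}{x^4}\,dx\right| \le \frac{h^2}{4}\Bigl(\int_0^M \tfrac{F_{xx}^2}{x^2}\,dx\Bigr)^{\!1/2}\!\Bigl(\int_0^M \tfrac{F^2}{x^6}\,dx\Bigr)^{\!1/2}.
\]
The remaining three pieces of $g''$ involve only $F$ and $F_x$. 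I will handle these on each $I_j$ by integrating by parts; since $K$ vanishes at both endpoints of every $I_j$, boundary contributions drop, and the terms can be reorganized to cancel against one another or to be absorbed into the same Cauchy--Schwarz bound. The vanishing of $F$ to sufficient order at $x=0$ (which follows from $\bar{\om}(0)=\bar u(0)=0$, so that $F$ and $F_x$ both vanish at $0$) ensures integrability of $F^2/x^6$ and the vanishing of any boundary terms at $x=0$. Inequality (\ref{eq:Trap2}) follows by the same argument after reflecting $x \mapsto L-x$ and using $F(L)=0$; (\ref{eq:Trap3}) is the unweighted special case, in which the expansion $g''=2(F_x)^2 + 2 F F_{xx}$ is directly handled, using $\int K (F_x)^2 \le 0$ to isolate the $FF_{xx}$ contribution.

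For the $O(h)$ bounds (\ref{eq:Trap4}) and (\ref{eq:Trap5}), the first-order form is the natural tool. Writing the integrand as $G^2$ with $G = F_x/x$ or $G = F_x$ respectively, we have $g' = 2 G G'$, so
\[
|E| \le \frac{h}{2}\int |g'|\,dx = h\int |G G'|\,dx \le h\,\|G\|_{L^2}\,\|G'\|_{L^2}.
\]
For (\ref{eq:Trap5}), $G = F_x$ and $G' = F_{xx}$, which is exactly the stated bound. For (\ref{eq:Trap4}), $G = F_x/x$ and $G' = F_{xx}/x - F_x/x^2$; integration by parts on $\int F_x F_{xx}/x^3$ (using $F_x(0)=0$, which follows from the defining formula for $F$) yields $\|G'\|_{L^2}^2 \le \int F_{xx}^2/x^2\,dx$, delivering the stated bound.

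The main obstacle is the careful bookkeeping in (\ref{eq:Trap1}) and (\ref{eq:Trap2}): after expanding $g''$ into four terms, I must verify via integration by parts that the three auxiliary (non-$F F_{xx}$) contributions do not inflate the constant beyond $h^2/4$. A secondary concern is confirming that all the singular integrals on the right-hand sides actually converge, which requires tracking the order of vanishing of $F$ and $F_x$ at the singular endpoints $x = 0$ and $x = L$ using the structure of $F(\bar{\om})$ and the boundary behavior of $\bar{\om}, \bar u$.
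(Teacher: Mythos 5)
Your plan for \eqref{eq:Trap3}, \eqref{eq:Trap4}, \eqref{eq:Trap5} is essentially the paper's proof: the first-order Peano kernel with $|Q|\le h/2$, Cauchy--Schwarz, and the integration-by-parts identity $\|F_{xx}/x - F_x/x^2\|_{L^2}\le \|F_{xx}/x\|_{L^2}$ for \eqref{eq:Trap4}. However, for \eqref{eq:Trap1} and \eqref{eq:Trap2} the proposal has a real gap that would cause it to fail as written.

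After the expansion
\[
\Bigl(\tfrac{F^2}{x^4}\Bigr)_{xx} \;=\; \frac{2 F F_{xx}}{x^4} + \frac{2 F_x^2}{x^4} - \frac{16 F F_x}{x^5} + \frac{20 F^2}{x^6},
\]
you treat $2FF_{xx}/x^4$ as the principal term and declare that the remaining three pieces can be cancelled or absorbed. But Cauchy--Schwarz on $\int K\cdot 2FF_{xx}/x^4$ with $|K|\le h^2/8$ \emph{already} produces the full budget $\frac{h^2}{4}\|F_{xx}/x\|_{L^2}\|F/x^3\|_{L^2}$; there is nothing left to absorb anything into. Moreover, the tail $2F_x^2/x^4 - 16FF_x/x^5 + 20F^2/x^6$ is a quadratic form $2a^2 - 16ab + 20b^2$ in $a=F_x/x^2$, $b=F/x^3$ whose discriminant $256-160=96>0$ is positive; it is \emph{not} pointwise nonnegative. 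Since $K\le 0$, the product $K\cdot(\text{tail})$ therefore has no favorable sign, and integrating by parts against $K$ (which merely trades $K$ for the sawtooth $K'$ of size $h/2$) produces new terms involving $K'F^2/x^5$ and $KF^2/x^6$ that live at order $h^2\int F^2/x^6$ and do not fit the claimed bilinear bound. There is no way to ``reorganize to cancel.''

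The missing idea is a different split of the bilinear part. Move \emph{part} of the cross term $-16FF_x/x^5$ into the principal piece: write
\[
\Bigl(\tfrac{F^2}{x^4}\Bigr)_{xx} \;=\; 2\Bigl(\frac{F_{xx}}{x} - 3\frac{F_x}{x^2}\Bigr)\frac{F}{x^3} \;+\; \Bigl(\frac{2F_x^2}{x^4} - \frac{10 F F_x}{x^5} + \frac{20 F^2}{x^6}\Bigr).
\]
Now the second bracket is the quadratic form $2a^2-10ab+20b^2$, with discriminant $100-160<0$, hence pointwise nonnegative; because $P\le 0$, this bracket contributes a nonpositive amount to $\int P\,g''\,dx$ and may simply be dropped, giving
\[
\Delta_1 \;\le\; \frac{h^2}{4}\,\Bigl\|\frac{F_{xx}}{x} - \frac{3F_x}{x^2}\Bigr\|_{L^2[0,M]}\,\Bigl\|\frac{F}{x^3}\Bigr\|_{L^2[0,M]}.
\]
Finally, the first factor is reduced to $\|F_{xx}/x\|_{L^2}$ by the same integration-by-parts you already used for \eqref{eq:Trap4}:
\[
\int_0^M \Bigl(\frac{F_{xx}}{x} - \frac{3F_x}{x^2}\Bigr)^2 dx \;=\; \int_0^M \frac{F_{xx}^2}{x^2}\,dx - \frac{3F_x^2(M)}{M^3} \;\le\; \int_0^M \frac{F_{xx}^2}{x^2}\,dx,
\]
using $\lim_{x\to 0^+} F_x^2/x^3 = 0$. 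So you had both tools in hand (sign of $K$, integration by parts for a ``$F_{xx}/x - cF_x/x^2$'' norm), but applied them to the wrong decomposition; with the identified principal term $2FF_{xx}/x^4$ alone, the remaining terms are an obstruction rather than a nuisance. \eqref{eq:Trap2} is the same argument after $x\mapsto L-x$.
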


Note that the approximate profile $\bar{\om}$ is a piecewise cubic polynomial supported in $[ -L ,L]$ and $\bar \om_x$ is discontinuous at $x = \pm L$. $\bar{u}_{xx}$ grows logarithmically near $x = \pm L$ and the error term $F$ \eqref{eq:NF_a1} is not smooth near $x = \om L$. Moreover, the weight is singular. Hence, we cannot apply the standard error estimate for the Trapezoidal rule. 

\begin{proof}
We first recall the standard error estimate for the Trapezoidal rule:
\[
\int_a^b f(x) dx  - \f{b-a}{2} (f(a)+f(b)) =  \int_a^b f^{\prime \prime}(x) \f{ (x-a)(x-b) }{2} dx.
\]
Denote by $P$ a piecewise quadratic polynomial
\beq\label{eq:trap_P}
P(x) \teq \f{(x- ih)(x- (i+1)h)}{2} , \ \forall x \in [ih, (i+1)h] .
\eeq
Then we have
\[
\bal
\D_1 \teq &\int_0^M  \f{F^2}{x^4} dx  - T_h \lt(  \f{F^2}{x^4} ,0, M  \rt)  = \int_0^{M}
 \lt(\f{F^2}{x^4}\rt)_{xx}  P(x) dx ,\\
\lt(\f{F^2}{x^4}\rt)_{xx} =& \f{2F_{xx} F + 2F^2_x}{x^4} - 16 \f{F_x F}{x^5} + 20 \f{F^2}{x^6}  \\
=& 2 \lt(\f{F_{xx}}{x} - 3 \f{F_x}{x^2} \rt) \f{F}{x^3} +  2 \f{F_x^2}{x^4} - 10 \f{F_x F}{x^5} + 20 \f{F^2}{x^6} \geq 2 \lt(\f{F_{xx}}{x} - 3 \f{F_x}{x^2} \rt) \f{F}{x^3},
\eal
\]
where we have used $2 a^2 + 20b^2 \geq 2 \sqrt{40} |ab| \geq 10 ab  $. From \eqref{eq:trap_P}, we know for $x \in [ih, (i+1) h]$
\[
P(x) \leq 0  , \quad | P(x)| \leq  \f{1}{2} \lt(  \f{(i+1)h - x +  x - ih   }{2} \rt)^2 \leq \f{h^2}{8}.
\]
Combining the above estimates yields
\beq\label{eq:trap_lt1}
\bal
\D_1 &\leq \int_0^M  2 \lt(\f{F_{xx}}{x} - 3 \f{F_x}{x^2} \rt) \f{F}{x^3} P(x) dx
\leq  \f{h^2}{4} \int_0^M  \B| \lt(\f{F_{xx}}{x} - 3 \f{F_x}{x^2} \rt) \f{F}{x^3} \B|  dx  \\
& \leq \f{h^2}{4} \lt( \int_0^M \lt(\f{F_{xx}}{x} - 3 \f{F_x}{x^2} \rt)^2  dx \rt)^{1/2} \lt( \int_0^M \f{F^2}{x^6} dx \rt)^{1/2}.
\eal
\eeq
Using integration by parts yields
\beq\label{eq:trap_lt2}
\bal
&\int_0^M \lt(\f{F_{xx}}{x} - 3 \f{F_x}{x^2} \rt)^2  dx = \int_0^M \f{F^2_{xx}}{x^2} + 9 \f{F_x^2}{x^4} dx
 -\int_0^M \f{3}{x^3} d F^2_x \\
 & = \int_0^M \f{F^2_{xx}}{x^2} + 9 \f{F_x^2}{x^4} dx - 9 \int_0^M \f{F_x^2}{x^4} dx - \f{4F_x^2}{x^3} \B|_0^M
 = \int_0^M \f{F^2_{xx}}{x^2}  - \f{4F^2_x(M)}{M^3}   \leq \int_0^M \f{F^2_{xx}}{x^2},
\eal
\eeq
where we have used the regularity of the profile which satisfies
\[
\lim_{x \to 0+} \f{F_x^2}{x^3} = 0.
\]
Combining \eqref{eq:trap_lt1} and \eqref{eq:trap_lt2} completes the proof of \eqref{eq:Trap1}.

For \eqref{eq:Trap2}, we introduce $G(x) \teq F(L-x)$. After a change of variables, \eqref{eq:Trap2} is equivalent to
\[
 \int_0^M   \f{G^2}{ x^2} dx - T_h \lt(  \f{G^2}{x^2}  ,0, M  \rt) \leq  \f{h^2}{4}
\lt( \int_0^M G_{xx} dx \rt)^{1/2} \lt( \int_0^M \f{G^2}{x^4} dx\rt)^{1/2}.
\]
The proof is very similar to the proof of \eqref{eq:Trap1} and is omitted. The proof of \eqref{eq:Trap3} is similar to the previous proof and is omitted here. The proof of \eqref{eq:Trap4} is based on the following expression of the error term
\[
\int_a^b f(x) dx  = \f{ b-a }{2} ( f(a)+f(b)) - \int_a^b f^{\prime}(x) \lt( x - \f{a+b}{2}  \rt) dx.
\]
Denote by $Q(x)$ a piecewise linear function
\[
Q(x) =  \f{  ih + (i+1) h }{2} - x, \quad x \in [ih, (i+1) h ),\quad |Q(x)| \leq \f{h}{2}.
\]
It follows that the left hand side of \eqref{eq:Trap4} can be written as
\[
\bal
\D_3 & = \int_0^M  \lt(\f{ F^2_x }{x^2} \rt)_{x} Q(x) dx
= 2 \int_0^M  \lt( \f{F_{xx}}{x} - \f{F_x}{x^2} \rt)  \f{F_x}{x}  Q(x) dx  \\
&\leq 2\cdot \f{h}{2} \int_0^M \B| \lt( \f{F_{xx}}{x} - \f{F_x}{x^2} \rt)  \f{F_x}{x}  \B|  dx
\leq h  \lt( \int_0^M \lt( \f{F_{xx}}{x} - \f{F_x}{x^2} \rt)^2 dx   \rt)^{1/2}  \lt(  \int_0^M \f{F_x^2}{x^2}  \rt)^{1/2}  \\
& \leq h  \lt( \int_0^M  \lt(\f{F_{xx}}{x}  \rt)^2 dx   \rt)^{1/2}  \lt(  \int_0^M \f{F_x^2}{x^2}  \rt)^{1/2},
\eal
\]
where we have used integration by parts similar to \eqref{eq:trap_lt2} to get the last inequality.
The proof of \eqref{eq:Trap5} is similar to \eqref{eq:Trap4} and is omitted.
\end{proof}

A corollary of Lemma \ref{lem:Trap} is the following estimates of the error of the approximate profile.
\begin{cor}\label{cor:error_a1}
The error of approximate profile $F(\bar{\om})$ satisfies
\beq\label{eq:error_a1}
\bal
&\int_0^M  \f{F^2}{x^4} dx  \leq T_h \lt(  \f{F^2}{x^4} ,0, M  \rt)  + \f{B^2_1}{15} h^2, \
\int_M^L  \f{F^2}{ (x-L)^2} dx  \leq T_h \lt(  \f{F^2}{(x-L)^2} ,M, L  \rt)  + \f{B^2_2}{3} h^2 ,\\
&\int_0^L F^2 dx\leq  \f{1}{4}\lt( \f{h^2}{4} B_3 + \sqrt{  \f{h^4}{16} B_3^2 + 4 T_h( F^2, 0 ,L) }   \rt)^2,
  \\
  &\int_0^M \f{F_x^2}{x^2} dx\leq  \f{1}{4}\lt( h B_1 + \sqrt{ h^2 B_1^2 + 4 T_h\lt( \f{F_x^2}{x^2}, 0 ,M\rt) }   \rt)^2  ,\\
    &\int_0^L F_x^2 dx\leq  \f{1}{4}\lt( h B_3 + \sqrt{ h^2 B_3^2 + 4 T_h\lt( F_x^2, 0 ,L\rt) }   \rt)^2,
  \\
\eal
\eeq
where $B_i$ defined in \eqref{eq:trap_Fxx2} only depends on $K_i, J_i$ in \eqref{eq:const_a1}.
\end{cor}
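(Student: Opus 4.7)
The plan is to apply Lemma~\ref{lem:Trap} term by term to each of the five integrals in \eqref{eq:error_a1}, bound the weighted $L^2$-norms of $F_{xx}$ that appear on the right-hand sides by explicit constants $B_i$ built out of the $K_i, J_i$ in \eqref{eq:const_a1}, and (for the last three bounds) resolve the self-referential structure by solving a quadratic inequality.

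For the first inequality I would start from \eqref{eq:Trap1} and handle its two factors separately. The factor $(\int_0^M F_{xx}^2/x^2\,dx)^{1/2}$ is precisely the kind of quantity one can expand using $F = (\bar c_\omega + \bar u_x)\bar\omega - (\bar c_l x + \bar u)\bar\omega_x$ and bound by a constant $B_1 = B_1(K_i, J_i)$. The factor $(\int_0^M F^2/x^6\,dx)^{1/2}$ does not explicitly appear in the corollary's right-hand side, so I would convert it to $\int F_{xx}^2/x^2$ via two successive Hardy inequalities: first $\int_0^M F^2/x^6 \leq \tfrac{4}{25}\int_0^M F_x^2/x^4$, and then $\int_0^M F_x^2/x^4 \leq \tfrac{4}{9}\int_0^M F_{xx}^2/x^2$, yielding $\int_0^M F^2/x^6 \leq (16/225)\,B_1^2$. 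Combined with the $h^2/4$ prefactor this gives $(h^2/4)\cdot B_1\cdot (4/15) B_1 = (h^2/15)\,B_1^2$, which matches the stated bound. For the second inequality I would make the change of variables $y = L-x$ to cast \eqref{eq:Trap2} into the same shape, then use classical Hardy $\int G_y^2/y^2 \leq 4\int G_{yy}^2$ together with $\int G^2/y^4 \leq (4/9)\int G_y^2/y^2$ to produce an overall factor of $16/9$, giving $h^2/3$ after multiplying by $h^2/4$.

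For the remaining three bounds, the right-hand side of the corresponding Lemma~\ref{lem:Trap} estimate contains $\sqrt{\int F^2}$, $\sqrt{\int F_x^2/x^2}$, or $\sqrt{\int F_x^2}$, i.e.\ the square root of the very quantity being bounded. Denoting the integral of interest by $I$, the Trapezoidal sum by $T$, and the bound on the $F_{xx}$-factor by a constant of the form $C\in\{h^2 B_3/4,\ hB_1,\ hB_3\}$, the inequality becomes $I \leq T + C\sqrt{I}$. Treating this as a quadratic inequality in $y = \sqrt{I}$ gives $y^2 - Cy - T \leq 0$, whence $y \leq \tfrac12\bigl(C + \sqrt{C^2 + 4T}\bigr)$; squaring produces the stated closed-form bounds.

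The main technical point, and the only place requiring work beyond routine application of Lemma~\ref{lem:Trap} and Hardy, is packaging the constants $B_i$. Expanding $F(\bar\omega)$ and differentiating twice produces terms involving $\bar u_{xx}$ and $\bar u_{xxx}$, which grow logarithmically near $x = \pm L$. The weighted versions $\int_0^M F_{xx}^2/x^2\,dx$ and $\int_M^L F_{xx}^2\,dx$ are nevertheless finite and can be controlled by combinations of the pointwise constants $K_1,\ldots,K_8$ and the $L^2$-integrals $J_1,\ldots,J_{7r}$, using the $L^2$-isometry of the Hilbert transform and the commutator relations of Lemma~\ref{lem:commute} to trade $\bar u$-derivatives for $\bar\omega$-derivatives in $L^2$ rather than evaluating the logarithmic blowup pointwise. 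This step is largely bookkeeping, but it is the point at which all the numerical estimates from \eqref{eqt:numeric_estimates} enter the proof, so care is needed near the endpoints to ensure the constants $B_i$ are finite and explicitly computable.
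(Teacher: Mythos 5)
Your proposal is correct and follows essentially the same route as the paper: apply Lemma~\ref{lem:Trap}, use two successive Hardy inequalities to convert the lower-order weighted factor into $\int F_{xx}^2/x^2$ (respectively $\int F_{xx}^2$), package the resulting $F_{xx}$ quantities into the constants $B_1, B_2, B_3$ via the expansion of $F$, Lemma~\ref{lem:commute}, and the $L^2$-isometry of the Hilbert transform, and then solve the quadratic inequality $I \leq T + C\sqrt{I}$ for the three self-referential bounds. The only cosmetic difference is that you flag the change of variables $y = L-x$ for the second inequality separately, whereas the paper applies the Hardy inequality directly on $[M,L]$ (though the paper's proof of \eqref{eq:Trap2} itself introduces $G(x)=F(L-x)$, so even this matches).
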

Corollary \ref{cor:error_a1} shows that the weighted $L^2, H^1$ errors of the approximate profile can be bounded by approximating the integral using the Trapezoidal rule with an error of order $O(h^2)$.
\begin{proof}
Using the Hardy inequality (the first inequality in \eqref{eq:hd1} with $u-u_x(0) x$ replaced by $F$), we have
\beq\label{eq:trap_lt3}
\bal
 \int_0^{M}  \f{F^2}{x^6} dx& \leq \f{4}{25} \int_0^{M} \f{F_x^2}{x^4} dx \leq \f{4}{25} \cdot \f{4}{9} \int_0^{M} \f{F_{xx}^2}{x^2} dx = \lt(\f{4}{15} \rt)^2 \int_0^{M} \f{F_{xx}^2}{x^2} dx, \\
\int_M^L \f{F^2}{(x-L)^4} dx &\leq \f{4}{9} \int_M^L \f{F^2_x}{(x-L)^2} \leq \f{4}{9} \cdot 4  \int_M^L F^2_{xx} dx = \f{16}{9} \int_M^L F^2_{xx} dx.
\eal
\eeq
Next, we estimate the weighted $L^2$ integral of $F_{xx}$. Note that
\[
F_{xx}  = (F(\bar{\om}))_{xx} = \bar{u}_{xxx} \bar{\om} + \bar{u}_{xx} \bar{\om}_x
-(\bar{c}_l x + \bar{u}) \bar{\om}_{xxx} - (\bar{c}_l + \bar{u}_x) \bar{\om}_{xx}.
\]
We use different estimates for $F_{xx}$ : 
\beq\label{eq:trap_Fxx}
\bal
\f{F_{xx}}{x} & = \bar{u}_{xxx} (x^2 -L^2) \f{\bar{\om}} {x (x^2 -L^2) } + \f{\bar{u}_{xx}}{x} \bar{\om}_x - \f{\bar{c_l} x + \bar{u}}{x} \bar{\om}_{xxx} - (\bar{c}_l + \bar{u}_x) \f{\bar{\om}_{xx}}{x}
\ , x \in [0, M],
\\
 F_{xx}  &=  \bar{u}_{xxx}(x^2 - L^2) \f{\bar{\om}}{x^2 -L^2} + \bar{u}_{xx} \bar{\om}_x
-  \f{ \bar{c}_l x + \bar{u}}{ x \wedge (L-x)} (x \wedge (L-x)) \bar{\om}_{xxx} - (\bar{c}_l + \bar{u}_x) \bar{\om}_{xx}
\ , x \in [0, L].
\eal
\eeq
For the terms involving $\bar{u}_{xx},\bar{u}_{xxx}$, we use \eqref{eq:commute3} and \eqref{eq:commute4} in Lemma \ref{lem:commute} and the $L^2$ isometry of the Hilbert transform $H$
\[
 \bal
&|| \bar{u}_{xxx} (x^2 -L^2) ||_{L^2[0,M]} \leq || \bar{u}_{xxx} (x^2 -L^2) ||_{L^2[0,L]}
= || \bar{\om}_{xx}(x^2-L^2) ||_{L^2[0,L]} = J_4^{1/2} ,\\
&\B| \B| \f{\bar{u}_{xx}}{x} \B|\B|_{L^2[0,M]}   \leq \B| \B| \f{\bar{\om}_x - \bar{\om}_x(0) }{x} \B|\B|_{L^2[0,L]}  =J_3^{1/2} , \quad || \bar{u}_{xx} ||_{L^2[0,L]} = || \bar{\om_x}||_{L^2[0,L]} =J_6^{1/2} .\\
\eal
 \]
 Note that $\bar{c}_lx + u(x) = 0 $ at $x = 0, \pm L$. For the terms involving $\bar{u}, \bar{u}_x$, we use the $L^{\infty}$ estimate
\[
\max_{[0,L]}  \B|\f{\bar{c}_l x + \bar{u}}{x \wedge (L -x)}  \B| =\max\lt(  \max_{[0,M]} \B|\f{\bar{c}_l x + \bar{u}}{x}\B|, \  \max_{[M,L]} \B|\f{\bar{c}_l x + \bar{u}}{x-L}\B| \rt)
\leq \max_{[0,L]} |\bar{c}_l + \bar{u}_x| = K_3 .
\]
For the terms involving $\bar{\om}, \bar{\om}_x$, we also use the $L^{\infty}$ estimate
\[
\bal
 \B|\B| \f{\bar{\om}}{x (x^2 - L^2)} \B|\B|_{L^{\infty}[0,M]}  &= K_{5l}, \quad \B|\B| \f{\bar{\om}}{ (x^2 - L^2)} \B|\B|_{L^{\infty}[0,L]}  = K_5   ,\\ \\
 ||\bar{\om}_x||_{L^{\infty}[0,L]} &= K_1, \quad   ||\bar{\om}_x||_{L^{\infty}[M,L]} 
 =K_{1 r} .
 \eal
\]
For the terms involving $\bar{\om}_{xx}, \bar{\om}_{xxx}$, we use the $L^2$ estimate
\[
\bal
&|| \bar{\om}_{xxx}||_{L^2[0,M]} = J_1^{1/2}, \quad  \B| \B| \f{\bar{\om}_{xx}}{x} \B|\B|_{L^2[0,M]} = J_2^{1/2},
\quad || \bar{\om}_{xxx} (x \wedge (L-x) ) ||_{L^2[0,L]} = J_5^{1/2} ,\\
 &|| \bar{\om}_{xxx} (x \wedge (L-x) ) ||_{L^2[M,L]} = J_{5r}^{1/2}  , \quad || \bar{\om}_{xx} ||_{L^2[0,L]} = J_7^{1/2} , \quad || \bar{\om}_{xx} ||_{L^2[M,L]} = J_{7r}^{1/2}.
\eal
\]
We apply the above estimates and the triangle inequality to yield
\beq\label{eq:trap_Fxx2}
\bal
\B| \B|  \f{F_{xx}}{x } \B| \B|_{L^2[0, M]}
&\leq  K_{5l}J_4^{1/2} + K_1 J_3^{1/2} + K_3 (J_1^{1/2} + J_2^{1/2})  \teq B_1 , \\
|| F_{xx}||_{L^2[M , L]} & \leq  K_{5}  J_4^{1/2}+ K_{1r}  J_6^{1/2}+ K_3 ( J_{5r}^{1/2} + J_{7r}^{1/2})
\teq B_2 , \\ 
|| F_{xx}||_{L^2[0 , L]} & \leq  K_{5}  J_4^{1/2}+ K_{1}  J_6^{1/2}+ K_3 ( J_{5}^{1/2} + J_{7}^{1/2}) \teq B_3 .
\eal
\eeq
Combining \eqref{eq:Trap1}, \eqref{eq:Trap2} in Lemma \ref{lem:Trap}, the Hardy inequality \eqref{eq:trap_Fxx} and the above estimates \eqref{eq:trap_Fxx2}, we obtain the first two inequalities in \eqref{eq:error_a1}.
\[
\bal
\int_0^M  \f{F^2}{x^4} dx  - T_h \lt(  \f{F^2}{x^4} ,0, M  \rt)
&\leq \f{h^2}{15}\int_0^M \f{F^2_{xx}}{x^2 } dx \leq \f{B^2_1}{15} h^2  ,\\
 \int_M^L   \f{F^2}{ (x-L)^2} dx - T_h \lt(  \f{F^2}{(x-L)^2}  ,M, L  \rt) &\leq  \f{h^2}{3}
\int_M^L F^2_{xx} dx  \leq \f{B^2_2}{3} h^2 .
\eal
\]

To obtain the remaining inequalities in \eqref{eq:error_a1}, we note that \eqref{eq:Trap3}, \eqref{eq:Trap4} and \eqref{eq:Trap5} are quadratic inequalities with respect to 
\[
I_1 \teq || F||_{L^2 [0, L]} , \quad  I_2 \teq \B|\B| \f{F_x}{x} \B|\B|_{L^2[M,L]},
\quad I_3 \teq || F_x^2 ||_{L^2[0,L]} ,
\]
and the coefficients on the right hand side of \eqref{eq:Trap3}, \eqref{eq:Trap4} and \eqref{eq:Trap5} are bounded by $B_i$ defined in \eqref{eq:trap_Fxx2}. Using \eqref{eq:Trap3} and \eqref{eq:trap_Fxx2}, we get 
\[
I_1^2 \leq T_h( F^2, 0 ,L)  + \f{h^2}{4}B_3 I_1,
\]
from which we can solve $I_1$ and obtain the following estimate
\[
 I_1 \leq \f{1}{2}\lt( \f{h^2}{4} B_3 + \sqrt{  \f{h^4}{16} B_3^2 + 4 T_h( F^2, 0 ,L) }   \rt).
\]
Taking square on both sides, we prove the third inequality in \eqref{eq:error_a1}. The bound for $I_2, I_3$ can be obtained similarly and we omit their proofs.\\
\end{proof}

Using \eqref{eq:trap_Fxx2} and the rigorous bounds in \eqref{eqt:numeric_estimates}, we have that 
\[B_1< 2.221,\quad B_2< 6.727,\quad B_3< 8.064.\]
We will use the interval arithmetic to verify that 
\begin{align*}
& T_h(F^2,0,L) < 8.445\times 10^{-10},\quad T_h(\frac{F^2}{x^4},0,M) < 7.388 \times 10^{-10}.\\
& T_h(\frac{F^2}{(x-L)^2},M,L) < 6.248 \times 10^{-9},\\
& T_h(F_x^2,0,L) < 9.850 \times 10^{-9},\quad T_h(\frac{F_x^2}{x^2},0,M) < 2.197\times 10^{-9}.
\end{align*}
Then using \eqref{eq:error_a1}, we obtain the following rigorous upper bounds:
\beq\label{eq:concrete_error_a1}
 \bal
 &IF_1 \teq \int_0^M  \f{F^2}{x^4} dx  < 9.443 \times 10^{-10}<10^{-9}, \quad\\
& IF_2 \teq \int_M^L  \f{F^2}{ (x-L)^2} dx  < 1.568\times 10^{-8} < 4 \times 10^{-8},\\
& IF_3 \teq \int_0^L F^2 dx < 8.445\times 10^{-10} < 10^{-9},\\
& IF_4 \teq \int_0^M \f{F_x^2}{x^2} dx <  6.762\times 10^{-9}< 10^{-8}, \\
& IF_5 \teq \int_0^L F_x^2 dx < 5.869\times 10^{-8} < 2\times 10^{-7}.
\eal
\eeq

\subsection{Estimate of the error terms}
We choose the functions $\rho_1, \rho_2$ as follows
\beq\label{eq:err_wg}
\rho_1 =  1.25 x^{-4} \one_{x \leq M} + 0.1 + 0.01 (x-L)^{-2} \one_{x \geq M} , \quad \rho_2 =  x^{-2} \one_{x  \leq M} + 0.02 .
\eeq

Using these weights and the Cauchy-Schwarz inequality, we can estimate the error terms in the weighted $L^2, H^1$ errors \eqref{eq:NF_a1} as follows 
\[
\bal
| F_1|  =  | \la F(\bar{\om}) ,  \om  \vp \ra |
&\leq  \f{1}{4 \tau} \la F(\bar{\om})^2 , \rho_1 \ra   +  \tau \la \om^2 , \vp^2 \rho_1^{-1} \ra \; , \\ 
|F_2| = | \la ( F(\bar{\om}) )_x, \om  \vp \ra  |
&\leq  \f{1}{4 \tau} \la ( F(\bar{\om}) )^2_x ,  \rho_2 \ra + \tau \la \om_x^2 , \psi^2 \rho_2^{-1}  \ra ,
 \eal
\] 
where $\tau$ is to be determined. 
Choose $\mu = 0.02$ and define $F_0$ below 
\[
F_0  \teq \f{1}{4 \tau} \la F(\bar{\om})^2 , \rho_1 \ra  +  \f{\mu}{4\tau} \la ( F(\bar{\om}) )^2_x ,  \rho_2 \ra.
\]
We then obtain
\beq\label{eq:err_a1}
| F_1 + \mu F_2  | \leq F_0 + \tau \la \om^2 , \vp^2 \rho_1^{-1} \ra 
+ \mu \tau \la \om_x^2 , \psi^2 \rho_2^{-1}  \ra .
\eeq
Using the upper bounds of the integrals in \eqref{eq:concrete_error_a1} and the definition of $\rho_i$ in \eqref{eq:err_wg}, we get 
\beq\label{eq:err_tot}
F_0 \leq \f{1}{4 \tau} \lt( 1.25 \cdot IF_1 + 0.01 \cdot IF_2+ 0.1\cdot  IF_3 + 5 \mu \cdot  IF_4 + 0.1 \mu \cdot IF_5 \rt) \leq  \f{1}{4\tau} \cdot 1.760 \cdot 10^{-9}.
\eeq

\section{Estimate of the nonlinear term}\label{sec:non}

\subsection{Estimate of $u_x, \om,  u / x$}
To control the nonlinear term, it suffices to control $||u_x||_{L^{\infty}[0,L]}$ . First of all, we have the following comparison result.
\begin{lem}\label{lem:non_ux1}
The weights $\vp, \psi $ satisfy for $x \in [0, L]$
\beq\label{eq:non_wg}
\bal
\vp(x) &\geq 1.15 \lt( \f{1}{x^4} + \f{0.02}{x^2} \rt) \teq \vp_1(x),\  \vp(x) \geq 0.0085 \lt(  \f{1}{(x-L)^2} +  \f{ 1}{(x+L)^2}  \rt) \teq \vp_2(x) ,\\
\psi(x) & \geq 0.97 \lt( \f{1}{x^2} + 0.01 \rt) \teq \psi_1(x).
\eal
\eeq
 Lemma \ref{lem:commute} and the $L^2$ isometry of the Hilbert transform \eqref{eq:hd1} with $p=2,4$ then imply
\beq\label{eq:non_a11}
\bal
&\la \om^2 ,\vp \ra \geq \la \om^2, \vp_1 \ra = \int_0^L (u_x - u_x(0))^2 \vp_1 dx , \\
&\la \om^2, \vp \ra \geq \la \om^2, \vp_2 \ra = \int_0^L (u_x - u_x(L))^2 \vp_2 dx , \quad  \la \om_x^2 , \psi \ra \geq   \la \om_x^2 , \psi_1  \ra =  \int_0^L u^2_{xx} \psi_1 dx .
\eal
\eeq
\end{lem}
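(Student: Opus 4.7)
The plan is two-step: first verify the three pointwise weight inequalities $\vp \geq \vp_1$, $\vp \geq \vp_2$, $\psi \geq \psi_1$ on $[0,L]$, from which the integral bounds $\la\om^2,\vp\ra \geq \la\om^2,\vp_i\ra$ and $\la\om_x^2,\psi\ra \geq \la\om_x^2,\psi_1\ra$ are immediate since $\om^2,\om_x^2 \geq 0$; and then derive the claimed equalities on the right-hand side by combining the commutator relations of Lemma~\ref{lem:commute} with the $L^2$ isometry of the Hilbert transform.

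Away from the endpoints, each difference $\vp - \vp_i$ and $\psi - \psi_1$ is an explicit rational expression in $\bar{\om}, \bar{\om}_x, \bar{u}, \bar{u}_x$. Following the interval-arithmetic scheme of Sections~\ref{sec:RE_L_infty} and \ref{sec:RE_inner}, I would multiply each difference by the regularizing factor $x^4(L-x)^2$ so the product extends continuously to $[0,L]$, construct a cell-wise lower sequence $g^{\min}$ on the fine grid using the bounds in \eqref{eqt:numeric_estimates}, and check positivity cell by cell. Near $x=0$ and $x=L$ the grid cannot resolve the singularities, so I would use small buffer widths $\varepsilon>0$ and verify leading-coefficient dominance via Taylor expansion.

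By oddness of $\bar{\om}$ near $x=0$, the bracket $\bar{\om}-x\bar{\om}_x/5 = \tfrac{4}{5}\bar{\om}_x(0)\,x + O(x^3)$, so $\vp \sim \tfrac{5}{4|\bar{\om}_x(0)|}\,x^{-4}$, and the required dominance $\vp\geq 1.15\,x^{-4}$ reduces to the scalar inequality $|\bar{\om}_x(0)|\leq 5/4.6$, which is certified by $K_1 < 1.001$. An analogous expansion at $x=L$ using $\bar{\om}(L)=0$ handles the right endpoint, and $\psi \sim 1/(|\bar{\om}_x(0)|\,x^2)$ near $x=0$ gives the corresponding check for $\psi \geq \psi_1$. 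For the equalities on the right, extend $\om$ oddly to $\R$ so that $u_x = H\om$ is even. The $L^2$ isometry of $H$, combined with the weighted commutators such as $xu_x = H(x\om)$ from Lemma~\ref{lem:commute}, yields identities of the form $\int_\R (u_x-u_x(0))^2 x^{-p}\,dx = 2\int_0^L \om^2 x^{-p}\,dx$ for $p=2,4$; by the parity of $u_x$ the full-line integral folds to the half-line and recovers $\int_0^L (u_x-u_x(0))^2 \vp_1\,dx = \la\om^2,\vp_1\ra$. The analog at $L$ is exactly \eqref{eq:iden22}, and the $\psi_1$ identity uses $\bar{u}_{xx}=H\om_x$ from \eqref{eq:commute4} in the same manner.

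The main obstacle is the endpoint analysis of the pointwise inequalities: the cancellations inside $\bar{\om}-x\bar{\om}_x/5$ and $\bar{\om}-(x-L)\bar{\om}_x/3$ are subtle and cannot be captured directly by grid interval arithmetic, so one must combine Taylor expansion at the endpoints with rigorous bounds from \eqref{eqt:numeric_estimates} on $\bar{\om}_x(0)$, $\bar{\om}_x(L)$, $\bar{\om}_{xx}$, and the weight parameters $e,f$. Once those leading-coefficient comparisons are certified, the bulk verification on $[\varepsilon,L-\varepsilon]$ is routine interval arithmetic, and the identities in the second half are standard Hilbert-transform applications.
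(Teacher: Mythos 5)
Your overall plan --- interval arithmetic plus endpoint Taylor analysis for the pointwise bounds $\varphi\geq\varphi_1,\varphi_2$ and $\psi\geq\psi_1$, then commutator identities and the $L^2$ isometry of $H$ for the integral relations --- is the same outline the paper follows, and your leading-coefficient check at $x=0$ (and the analogous one at $x=L$) is a sensible way to make the endpoint cases rigorous; the paper itself simply invokes the verification scheme of Sections~\ref{sec:RE_L_infty} and \ref{sec:RE_inner} and displays the ratios $\varphi_1/\varphi$, $\varphi_2/\varphi$, $\psi_1/\psi$.

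There is, however, a genuine gap in the last step of your second part. Folding the full-line isometry $\int_{\R}(u_x-u_x(0))^2 x^{-p}\,dx=\int_{\R}\omega^2 x^{-p}\,dx$ by parity produces $2\int_0^\infty(u_x-u_x(0))^2 x^{-p}\,dx$, not $2\int_0^L$: although $\omega$ is supported in $[-L,L]$, the velocity $u_x=H\omega$ is not, so the tail $\int_L^\infty(u_x-u_x(0))^2\varphi_1\,dx$ is strictly positive and the asserted equality $\int_0^L(u_x-u_x(0))^2\varphi_1\,dx=\langle\omega^2,\varphi_1\rangle$ fails; only the inequality ``$\leq$'' holds, and likewise for the $\varphi_2$ and $\psi_1$ lines. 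This is harmless downstream (Lemmas~\ref{lem:non_ux2}--\ref{lem:non_u} use only the one-sided bound, and the ``$=$'' in the paper's display \eqref{eq:non_a11} should in fact be read as ``$\geq$''), but you should not claim equality. Separately, the commutator you cite, $xu_x=H(x\omega)$, is not the one that produces the $x^{-p}$ weights: you need $\frac{u_x-u_x(0)}{x}=H(\omega/x)$ from \eqref{lem:vel0} (iterated once for $p=4$), $\frac{u_x-u_x(L)}{x-L}=H\bigl(\frac{\omega}{x-L}\bigr)$ and $u_{xx}=H\omega_x$ from \eqref{eq:iden22}, and $\frac{u_{xx}}{x}=H\bigl(\frac{\omega_x-\omega_x(0)}{x}\bigr)$ from \eqref{eq:commute3}; the relation \eqref{eq:commute4} concerns $u_{xxx}$ and is not used here.
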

The validity of the inequalities in \eqref{eq:non_wg} can be rigorously verified using the numerical methods discussed in Section~\ref{sec:parameter_notation}. We plot the numerical values of $\vp_1 / \vp, \vp_2 / \vp, \psi_1 /\psi$ on the grid points in the first subfigure in Figure \ref{DG_nonlinear} in the Appendix.

We consider the following functions and energy
\beq\label{eq:non_xi}
\xi_1 \teq x^{-3} + 0.0125 x^{-1} , \quad  \xi_2 \teq (L - x)^{-1} + 0.022 (L-x), \quad E^2(t) \teq \la \om^2, \vp \ra
+ \mu \la \om_x^2, \psi \ra,
\eeq
where $\mu >0$ are to be determined. For $x < 0$, $\xi_2$ should be considered as $\xi_2  = (L - |x|)^{-1} + 0.029 (L-|x|)$. Due to the odd/even symmetry, we only focus on $x >0$ and drop the absolute sign to simplify the notations. 
We have the following estimate for $u_x$.
\begin{lem}\label{lem:non_ux2}
Suppose that $\al_1, \al_2$ satisfies
\beq\label{eq:non_al2}
\xi_1^2 \leq  \mu (\al_1 \vp_1 - \xi_{1, x} ) \al_1 \psi_1,  
\quad  \xi_2^2 \leq  \mu (\al_2 \vp_2 + \xi_{2, x} ) \al_2 \psi_1, 
\eeq
where $\psi_2 = \psi_1$ and $\vp_i$ are defined in Lemma \ref{lem:non_ux1}. Then we have
\beq\label{eq:non_al3}
| u_x(x) - u_x(0) | \leq  \lt( \f{\al_1}{\xi_1(x)} \rt)^{1/2} E(t), \quad 
| u_x(x) - u_x(L) | \leq  \lt( \f{\al_2}{\xi_2(x)} \rt)^{1/2} E(t) .
\eeq
\end{lem}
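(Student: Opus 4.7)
The plan is to transform the pointwise bounds into integral inequalities for the functions $f(x)\teq u_x(x)-u_x(0)$ and $g(x)\teq u_x(x)-u_x(L)$ weighted by $\xi_1$ and $\xi_2$ respectively. The key structural observation is that the hypothesis \eqref{eq:non_al2} is precisely the discriminant condition ensuring that a certain quadratic form in $(f,f')$ is nonnegative, which is what allows us to dominate $(f^2\xi_1)'$ by $\al_1$ times the energy density. Once this link is recognized, the lemma follows from the fundamental theorem of calculus together with Lemma \ref{lem:non_ux1}.

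Concretely, for the first estimate I would fix $x\in (0,L]$ and compute
\begin{equation*}
\f{d}{dy}\bigl[f^2\xi_1\bigr] = 2 f f' \xi_1 + f^2 \xi_{1,y}.
\end{equation*}
A weighted Cauchy--Schwarz with splitting parameter $\mu\psi_1\al_1$ gives
\begin{equation*}
2 f f' \xi_1 \leq \mu \psi_1 \al_1 (f')^2 + \f{\xi_1^2}{\mu \psi_1 \al_1} f^2,
\end{equation*}
so that, invoking \eqref{eq:non_al2} in the form $\xi_1^2/(\mu\psi_1\al_1)+\xi_{1,y}\leq \al_1\vp_1$,
\begin{equation*}
\f{d}{dy}\bigl[f^2 \xi_1\bigr] \leq \al_1\bigl[\mu \psi_1 (f')^2 + \vp_1 f^2\bigr].
\end{equation*}
Integrating from $0$ to $x$ and using the boundary condition $\lim_{y\to 0^+} f(y)^2 \xi_1(y)=0$ (discussed below) yields
\begin{equation*}
f(x)^2 \xi_1(x) \leq \al_1\int_0^L \bigl[\mu \psi_1 u_{xx}^2 + \vp_1 (u_x-u_x(0))^2\bigr]\,dy \leq \al_1 E(t)^2,
\end{equation*}
where the last step uses Lemma \ref{lem:non_ux1}. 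Dividing by $\xi_1(x)$ and taking square roots yields the first bound in \eqref{eq:non_al3}. The second bound is obtained in exactly the same way with $f\mapsto g$, $\xi_1\mapsto \xi_2$, $\vp_1\mapsto \vp_2$, and integration from $x$ to $L$ instead of from $0$ to $x$; the sign flip from the fundamental theorem of calculus is exactly compensated by the $+$ sign of $\xi_{2,x}$ in \eqref{eq:non_al2}, reducing the computation to the same quadratic-form calculation.

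The main technical point, rather than a true obstacle, is justifying the vanishing of the boundary terms. Since $\om$ is odd we have $f(0)=0$; a Hardy-type estimate gives $|f(x)|^2 \leq (x^3/3)\int_0^x (u_{xx}/y)^2\,dy$, which combined with $x^3\xi_1(x)\to 1$ as $x\to 0$ and the integrability of $u_{xx}^2/y^2$ near $0$ (since $\psi_1\geq 0.97\,y^{-2}$ and $\la \om_x^2,\psi\ra\leq\mu^{-1}E^2$) shows $f(y)^2\xi_1(y)\to 0$ as $y\to 0^+$. At $x=L$, $g(L)=0$ by definition, and since $\psi_1$ is bounded below on a neighborhood of $L$, $u_{xx}\in L^2$ there, so a similar Cauchy--Schwarz argument yields $g(y)^2\xi_2(y)\to 0$ as $y\to L^-$. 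With these boundary conditions in hand, the integrations carried out above are rigorously justified.
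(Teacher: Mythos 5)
Your proof is correct and follows essentially the same route as the paper: differentiating $f^2\xi_1$ (resp.\ $g^2\xi_2$), applying a weighted Cauchy--Schwarz to $2ff'\xi_1$, invoking hypothesis~\eqref{eq:non_al2} to recover $\al_1[\mu\psi_1(f')^2+\vp_1 f^2]$, integrating, and concluding via Lemma~\ref{lem:non_ux1}. The only cosmetic difference is the choice of splitting weight in Cauchy--Schwarz — you split off $\mu\al_1\psi_1$ against $(f')^2$, while the paper splits off $\al_1\vp_1-\xi_{1,x}$ against $f^2$ (which it must therefore first check is positive) — but both lead to the same integrand after using~\eqref{eq:non_al2}. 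Your treatment of the vanishing boundary terms is in fact more careful than the paper's, which simply asserts the boundary limit from $u_x-u_x(0)=O(x^2)$.
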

By definition, $\xi_1(x) >0$ and $\xi_2(x) >0$ for $x\in [0, L]$. We will choose $\al_1 = 5.6$ and  $\al_2 = 500$. For these parameters, we plot the numerical values of the ratio between the left and the right hand sides of \eqref{eq:non_al2} on the grid points in the second subfigure in Figure \ref{DG_nonlinear} in the Appendix.

\begin{proof}
Note that $(u_x- u_x(0))^2 \xi_1$ vanishes at $x = 0$ due to $u_x - u_x(0) =O(x^2)$.
For $x$ close to $0$, we differentiate it and use the Cauchy-Schwarz inequality to yield
\[
\bal
&( u_x - u_x(0) )^2 \xi_1 =  \int_0^x 2 u_{xx} (u_x -u_x(0)) \xi_1 dx + \int_0^x ( u_x - u_x(0) )^2 \xi_{1,x} dx   \\
 \leq &\int_0^x (u_x - u_x(0))^2 (\al_1 \vp_1 -  \xi_{1, x} ) +  u^2_{xx} \xi^2_1 (\al_1 \vp_1 - \xi_{1, x} )^{-1} dx + \int_0^x ( u_x - u_x(0) )^2 \xi_{1,x} dx   \\
  =& \int_0^x (u_x - u_x(0))^2 (\al_1 \vp_1 )  dx+ \int_0^x u^2_{xx} \xi^2_1 (\al_1 \vp_1 - \xi_{1, x} )^{-1} dx \; ,
\eal
\]
where we have used $\al_1 \vp_1 - \xi_{1, x}  > 0$  \eqref{eq:non_al2} when we applied the 
Cauchy-Schwarz inequality. Using the assumption \eqref{eq:non_al2}, we obtain 
\[
( u_x - u_x(0) )^2 \xi_1  \leq
\int_0^x (u_x - u_x(0))^2 (\al_1 \vp_1 )  dx+ \int_0^x u^2_{xx} \mu \al_1 \psi_1 dx .
\]
Combining the above estimates and \eqref{eq:non_a11}, we prove
\[
( u_x - u_x(0) )^2 \xi_1 \leq \al_1 \la \om^2, \vp \ra + \mu \al_1 \la \om_x^2, \psi \ra = \al_1 E^2(t).
 \]
 Taking the square root yields the first estimate in \eqref{eq:non_al3}. 
 For $x$ close to $L$, applying an argument similar to that in our estimate for $(u_x - u_x(0))^2 \xi_1$ yields
 \[
\bal
&(u_x(x) - u_x(L))^2 \xi_2 
= -\int_x^L 2 u_{xx} (u_x(x) - u_x(L)) \xi_2 dx - \int_x^L  (u_x(x) - u_x(L))^2  \xi_{2,x} dx  \\
 \leq & \int_x^L   (u_x(x) - u_x(L))^2 (\al_2 \vp_2 + \xi_{2,x})  + u^2_{xx}  \xi_2^2 (\al_2 \vp_2 + \xi_{2,x})^{-1} dx
  - \int_x^L  (u_x(x) - u_x(L))^2  \xi_{2,x} dx  \\
  = & \int_x^L  (u_x(x) - u_x(L))^2 \al_2 \vp_2  +  u^2_{xx}  \xi_2^2 (\al_2 \vp_2 + \xi_{2,x})^{-1}  dx 
  \leq \int_x^L  (u_x(x) - u_x(L))^2 \al_2 \vp_2  +  u^2_{xx}  \mu \al_2 \psi_1 dx ,
 \eal
 \]
where we have used \eqref{eq:non_al2} in the last inequality. Combining the above estimate and \eqref{eq:non_a11}, we prove 
\[
(u_x - u_x(L))^2 \xi_2 \leq \al_2 \la \om^2, \vp \ra + \mu \al_2 \la \om_x^2 , \psi \ra \leq \al_2 E(t)^2.
\]
 Taking the square root yields the second estimate in \eqref{eq:non_al3}. 
\end{proof}
For $\om$, we have a similar result.

\begin{lem}\label{lem:non_w}
Suppose that the assumptions in Lemma \ref{lem:non_ux2} holds and $x > 0$. We have 
\beq\label{eq:non_w}
| \om(x) | \leq   \lt(  \f{\xi_1(x) }{\al_1} + \f{\xi_2(x)}{\al_2}  \rt)^{-1/2}  E(t) .
\eeq
\end{lem}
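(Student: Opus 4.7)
The plan is to adapt the argument of Lemma~\ref{lem:non_ux2} directly to $\om$, exploiting the fact that $\om$ is odd and compactly supported in $[-L,L]$, so that $\om(0)=\om(L)=0$ without needing to subtract any boundary value. I will control $\om^2 \xi_1$ by integrating from the left endpoint (where $\xi_1$ is singular), control $\om^2\xi_2$ by integrating from the right endpoint (where $\xi_2$ is singular), and then combine the two bounds by dividing by $\al_1$ and $\al_2$ respectively and summing. The weight comparisons $\vp\geq\vp_1$, $\vp\geq \vp_2$ and $\psi\geq \psi_1$ from \eqref{eq:non_wg} will then bound the right-hand side by $E^2(t)$.

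For the left-sided estimate, since $\om(0)=0$, I write
\[
\om^2(x)\xi_1(x) = \int_0^x \bigl(2\om\om_y\xi_1 + \om^2\xi_{1,y}\bigr)\,dy,
\]
and apply the Cauchy--Schwarz inequality to $2\om\om_y\xi_1$ with the positive weight $\al_1\vp_1-\xi_{1,y}$ (which is positive since $\xi_{1,y}<0$ and $\al_1\vp_1>0$) to obtain
\[
2\om\om_y\xi_1 \leq \om^2(\al_1\vp_1-\xi_{1,y}) + \om_y^2\,\frac{\xi_1^2}{\al_1\vp_1-\xi_{1,y}}.
\]
The $\om^2\xi_{1,y}$ contributions cancel in the integral, and replacing $\xi_1^2/(\al_1\vp_1-\xi_{1,y})$ by $\mu\al_1\psi_1$ via \eqref{eq:non_al2} yields
\[
\om^2(x)\xi_1(x)\leq \al_1\int_0^x\bigl(\om^2\vp_1 + \mu\om_y^2\psi_1\bigr)\,dy.
\]
An entirely symmetric argument starting from $\om(L)=0$ and integrating backwards from $L$, this time using the positive weight $\al_2\vp_2+\xi_{2,y}$ (whose positivity is guaranteed by \eqref{eq:non_al2} for the chosen $\al_2$), produces
\[
\om^2(x)\xi_2(x)\leq \al_2\int_x^L\bigl(\om^2\vp_2 + \mu\om_y^2\psi_1\bigr)\,dy.
\]

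Dividing the two inequalities by $\al_1$ and $\al_2$ respectively and adding them gives
\[
\om^2(x)\B(\f{\xi_1(x)}{\al_1}+\f{\xi_2(x)}{\al_2}\B)\leq \int_0^x\om^2\vp_1\,dy + \int_x^L\om^2\vp_2\,dy + \mu\int_0^L\om_y^2\psi_1\,dy.
\]
Using $\vp\geq\vp_1$ on $[0,x]$, $\vp\geq\vp_2$ on $[x,L]$, and $\psi\geq\psi_1$ on $[0,L]$ from \eqref{eq:non_wg}, the right-hand side is dominated by $\la\om^2,\vp\ra + \mu\la\om_x^2,\psi\ra = E^2(t)$. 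Taking square roots gives \eqref{eq:non_w}.

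The principal subtlety is justifying that the boundary contributions vanish, namely $\om^2\xi_1\to 0$ as $y\to 0^+$ and $\om^2\xi_2\to 0$ as $y\to L^-$, despite the endpoint singularities of $\xi_1$ and $\xi_2$. The finiteness of $E(t)$, together with $\vp(x)\gtrsim x^{-4}$ near $0$ and $\vp(x)\gtrsim (L-x)^{-2}$ near $L$, forces $\om$ to vanish at the endpoints with sufficient order for the boundary traces to be zero; this can be made rigorous by approximating $\om$ with compactly supported smooth functions and passing to the limit, as in the corresponding step of the proof of Lemma~\ref{lem:non_ux2}.
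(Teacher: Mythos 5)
Your proof is correct and takes essentially the same route as the paper: you rerun the one-sided Cauchy--Schwarz/Hardy argument from the proof of Lemma~\ref{lem:non_ux2} with $\om$ in place of $u_x-u_x(0)$ (resp.\ $u_x-u_x(L)$), using $\om(0)=\om(L)=0$, then divide the two bounds by $\al_1,\al_2$, add, and apply the weight comparison \eqref{eq:non_wg}. The paper compresses this to ``using an estimate similar to that in the proof of Lemma~\ref{lem:non_ux2}'' and states the same two intermediate inequalities, so your write-up just makes explicit what the paper leaves implicit, including the vanishing of the boundary traces.
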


\begin{proof}
Using an estimate similar to that in the proof of Lemma \ref{lem:non_ux2}, we have
\[
\bal
\om^2(x) \xi_1(x)  &\leq \al_1 \lt(  \int_0^x \om^2 \vp_1 dx + \mu \int_0^x \om_x^2 \psi_1 dx \rt) , \ \om^2(x) \xi_2(x) \leq \al_2 \lt(  \int_x^L \om^2 \vp_2 dx +  \mu \int_x^L \om_x^2 \psi_1 dx \rt) .
\eal
\]
Using \eqref{eq:non_wg} and the above estimate, we derive
\[
\bal
\om^2(x) \lt(  \f{\xi_1(x)}{\al_1} + \f{\xi_2(x)}{\al_2} \rt) 
&\leq \lt(\int_0^x \om^2  \vp dx +  \mu \int_0^x \om_x^2 \psi dx \rt)
+ \lt(\int_x^L \om^2  \vp dx +  \mu \int_x^L \om_x^2 \psi dx \rt) \\
&=  \la \om^2 , \vp \ra + \mu \la \om_x^2 , \psi \ra  = E^2(t),
\eal
\]
which further implies \eqref{eq:non_w} after taking the square root. 
\end{proof}

A direct result of Lemma \ref{lem:non_ux2} is the following lemma.
\begin{lem}\label{lem:non_u}
Suppose that the assumption in Lemma \ref{lem:non_ux2} holds and $x > 0$. We have 
\beq\label{eq:non_al4}
\B| \f{ u - u_x(0) x  }{ x } \B| \leq  \f{2}{5}\al_1^{1/2} x^{3/2} E(t), \ 
\B| \f{ u - u(L) - u_x(L) (x-L )   }{ x - L} \B| \leq  \f{2}{3} \al_2^{1/2} (L-x)^{1/2} E(t).
\eeq
\end{lem}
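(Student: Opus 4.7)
The plan is to derive Lemma \ref{lem:non_u} directly from Lemma \ref{lem:non_ux2} by integrating the pointwise bounds on $u_x - u_x(0)$ and $u_x - u_x(L)$. The key observation is that the weights $\xi_1,\xi_2$ admit simple lower bounds that turn the estimates of Lemma \ref{lem:non_ux2} into power-law estimates easy to integrate. Indeed, from $\xi_1(y)=y^{-3}+0.0125\,y^{-1}\geq y^{-3}$ we get $(\alpha_1/\xi_1(y))^{1/2}\leq \alpha_1^{1/2} y^{3/2}$, and from $\xi_2(y)=(L-y)^{-1}+0.022(L-y)\geq (L-y)^{-1}$ we get $(\alpha_2/\xi_2(y))^{1/2}\leq \alpha_2^{1/2}(L-y)^{1/2}$.

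For the first inequality, I would write $u(x)-u_x(0)x=\int_0^x(u_x(y)-u_x(0))\,dy$ by the fundamental theorem of calculus (which is justified since $u_x$ is continuous at $0$), and then estimate
\[
|u(x)-u_x(0)x|\leq\int_0^x|u_x(y)-u_x(0)|\,dy\leq E(t)\int_0^x\alpha_1^{1/2} y^{3/2}\,dy = \tfrac{2}{5}\alpha_1^{1/2} x^{5/2} E(t).
\]
Dividing by $x$ yields the first bound in \eqref{eq:non_al4}.

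For the second inequality, the identity $u(x)-u(L)-u_x(L)(x-L)=-\int_x^L(u_x(y)-u_x(L))\,dy$ gives
\[
|u(x)-u(L)-u_x(L)(x-L)|\leq E(t)\int_x^L\alpha_2^{1/2}(L-y)^{1/2}\,dy = \tfrac{2}{3}\alpha_2^{1/2}(L-x)^{3/2} E(t),
\]
and dividing by $|x-L|=L-x$ yields the second bound.

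There is no real obstacle here: the lemma is essentially the antiderivative version of Lemma \ref{lem:non_ux2}. The only minor points to handle carefully are (a) the choice of the base point for integration (we use $x=0$ in the first case and $x=L$ in the second, where the linear correction makes the integrand vanish to the correct order), and (b) the sign bookkeeping when integrating from $x$ to $L$ in the second case.
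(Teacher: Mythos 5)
Your proof is correct and follows essentially the same route as the paper: both reduce the claim to Lemma~\ref{lem:non_ux2} via the fundamental theorem of calculus, use the elementary bounds $\xi_1(y)\geq y^{-3}$ and $\xi_2(y)\geq(L-y)^{-1}$ to convert $(\al_i/\xi_i)^{1/2}$ into power-law estimates, and then integrate $y^{3/2}$ and $(L-y)^{1/2}$ to produce the constants $2/5$ and $2/3$. The only cosmetic difference is that the paper writes $\al_1/\xi_1 = \al_1 x^3/(1+0.0125x^2)$ and drops the denominator, while you lower-bound $\xi_1$ directly; these are the same step.
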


\begin{proof}
From the definition of $\xi_1, \xi_2$ in \eqref{eq:non_xi}, we know 
\beq\label{eq:est_xi}
\lt(\f{\al_1}{\xi_1}  \rt)^{1/2}  =  \lt( \f{ \al_1 x^3 }{ 1 + 0.0125 x^2} \rt)^{1/2} \leq \al_1^{1/2} x^{3/2},\quad
\lt(\f{\al_2}{\xi_2}  \rt)^{1/2}  =  \lt( \f{ \al_2 (L-x) }{ 1 + 0.029 (L-x)^2} \rt)^{1/2} 
\leq \al_2^{1/2} (L-x)^{1/2} .
\eeq
Applying the above estimate and \eqref{eq:non_al3}, we yield
\[
\bal
\B|\f{u-u_x(0)x}{x} \B| &= \B| \f{1}{x}\int_0^x  u_x(y) - u_x( 0) dy    \B|
\leq  \f{1}{x} \int_0^x \lt(\f{\al_1}{\xi_1}  \rt)^{1/2}  dy \cdot  E(t)  \\
&\leq \f{\al^{1/2}}{x} E(t) \int_0^x y^{3/2} dy  = \f{2}{5}\al_1^{1/2} x^{3/2} E(t),
\eal
\]
which is the first inequality in \eqref{eq:non_al4}. Using \eqref{eq:est_xi} and \eqref{eq:non_al3}, one can derive the second inequality in \eqref{eq:non_al4} similarly. The factor $2/3 \cdot  (L-x)^{1/2}$ comes from 
\[
\f{1}{L-x}\int_x^L (L-y)^{1/2}  dy = \f{2}{3} (L-x)^{1/2}. 
\]
\end{proof}

For the end points $u_x(0), u_x(L), c_{\om} = -u(L) / L$, we use \eqref{eq:defg} and the Cauchy-Schwarz inequality to obtain the following estimate
\beq\label{eq:non_a15}
\bal
| c_{\om} + u_x(0) | & = | \la g_{c_{\om}} + g_{u_x(0)}, \om \ra |  
\leq \la \om^2 , \vp \ra^{1/2} \la (g_{c_{\om}} + g_{u_x(0)})^2 , \vp^{-1} \ra^{1/2} \leq \g_1 E(t)\;, \\
| c_{\om} + u_x(L) | & = | \la g_{c_{\om}} + g_{u_x(L)}, \om \ra |  
\leq \la \om^2 , \vp \ra^{1/2} \la (g_{c_{\om}} + g_{u_x(L)})^2 , \vp^{-1} \ra^{1/2} \leq \g_2 E(t) \; ,\\	
\eal
\eeq
where we have used $\la \om^2, \vp \ra \leq E(t)$ and the constants $\g_1, \g_2$ are given by 
\beq\label{eq:cons_gam}
\g_1  \teq  \la (g_{c_{\om}} + g_{u_x(0)})^2 , \vp^{-1} \ra^{1/2} , \quad 
\g_2 \teq   \la (g_{c_{\om}} + g_{u_x(L)})^2 , \vp^{-1} \ra^{1/2} .
\eeq

\subsection{Estimate of the nonlinear terms} 

Recall the nonlinear terms $N, N_1, N_2$ in \eqref{eq:NF_a1} and the normalization conditions of $c_l, c_{\om}$  \eqref{eq:DGnormal}. Using integration by part, we have
\beq\label{eq:non_N11}
\bal
N_1 &= \la ( c_{\om} + u_x ) \om - (c_l x + u) \om_x, \om \vp \ra = \B\la \f{ ((c_l x + u) \vp)_x}{2 \vp} + c_{\om} + u_x, \om^2 \vp \B\ra \\
& = \B\la  \f{3}{2} (c_{\om} + u_x )  +  \lt( c_l + \f{u}{x} \rt) \f{x \vp_x}{2 \vp}  ,\om^2 \vp    \B\ra
\teq \la  T , \om^2 \vp \ra \; .
\eal
\eeq
We use different estimates to handle $T$ for $x$ close to $0$ and $x$ close to $L$. For $x$ close to $0$, we have
\[
 T  =   (c_{\om} + u_x(0)) \lt( \f{3}{2} + \f{x\vp_x}{2 \vp} \rt)  + \f{3}{2} (u_x -u_x(0)) 
+ \lt( \f{u}{x} - u_x(0)  \rt) \f{x \vp_x}{2 \vp} .
\]
Using \eqref{eq:non_al3}, \eqref{eq:non_al4} and \eqref{eq:non_a15}, we obtain
\beq\label{eq:non_N12}
| T | \leq 
 \lt( \g_1  \B|  \f{3}{2} + \f{x\vp_x}{2 \vp} \B|+  \f{3}{2} \lt(\f{\al_1}{\xi_1} \rt)^{1/2} + \f{2}{5}\al_1^{1/2} x^{3/2} \B| \f{x \vp_x}{2 \vp} \B| \rt) E(t).
\eeq
For $x$ close to $L$, we use another decomposition to handle $T$
\[
T = ( c_{\om} + u_x(L) ) \lt(  \f{3}{2}  + \f{ (x-L) \vp_x }{ 2\vp} \rt) 
+  \f{3}{2}  ( u_x - u_x(L) ) + \f{  u(x)  - u(L) - u_x(L)(x-L) }{x-L} \f{ (x-L) \vp_x }{ 2\vp}.
\]
Using \eqref{eq:non_al3}, \eqref{eq:non_al4} and \eqref{eq:non_a15}, we obtain
\beq\label{eq:non_N13}
|T |
\leq \lt(  \g_2 \B| \f{3}{2}  + \f{ (x-L) \vp_x }{ 2\vp} \B|  
+ \f{3}{2} \lt(\f{\al_2}{\xi_2} \rt)^{1/2} +\f{2}{3} \al_2^{1/2} (L-x)^{1/2}\B| \f{ (x-L) \vp_x}{2 \vp} \B|
\rt) E(t) .
\eeq
Combining \eqref{eq:non_N11}, \eqref{eq:non_N12} and \eqref{eq:non_N13}, we obtain 
\beq\label{eq:non_N14}
|N_1| \leq  \la Z_1(x) , \om^2 \vp \ra E(t) \; ,
\eeq
where $Z_1(x)$ is given by 
\beq\label{eq:non_N15}
\bal
Z_1(x) \teq &\min\lt( \g_1  \B|  \f{3}{2} + \f{x\vp_x}{2 \vp} \B|+  \f{3}{2} \lt(\f{\al_1}{\xi_1} \rt)^{1/2} + \f{2}{5}\al_1^{1/2} x^{3/2} \B| \f{x \vp_x}{2 \vp} \B| , \rt. \\
 &\lt.      \g_2 \B| \f{3}{2}  + \f{ (x-L) \vp_x }{ 2\vp} \B|  
+ \f{3}{2} \lt(\f{\al_2}{\xi_2} \rt)^{1/2} +\f{2}{3} \al_2^{1/2} (L-x)^{1/2}\B| \f{ (x-L) \vp_x}{2 \vp} \B|       \rt) .
 \eal
\eeq
For $N_2$ in \eqref{eq:NF_a1}, we use integration by parts to obtain 
\beq\label{eq:non_N16}
\bal
N_2 &= \la (( c_{\om} + u_x ) \om - (c_l x + u) \om_x )_x, \ \om_x \psi \ra
= \la  u_{xx} \om - (c_l x + u) \om_{xx} , \om_x \psi \ra  \\
&= \B\la \f{  ( (c_lx+u) \psi)_x }{2 \psi} ,\om_x^2 \psi \B\ra + \la u_{xx} \om, \om_x \psi \ra
\teq N_{2,1} + N_{2,2}.
\eal
\eeq
For the first part, using \eqref{eq:non_N11}, \eqref{eq:non_N12}, \eqref{eq:non_N13} and an estimate similar to that in our estimate for $N_1$, we obtain 
\beq\label{eq:non_N171}
N_{2,1} \leq  \la Z_2(x) , \om_x^2 \psi \ra E(t),
\eeq
where $Z_2(x)$ is given by 
\beq\label{eq:non_N172}
\bal
Z_2(x)  \teq & \min\lt(  \g_1 \B| \f{1}{2} + \f{x\psi_x}{2\psi} \B|  
+ \f{1}{2} \lt( \f{\al_1}{\xi_1} \rt)^{1/2} + \f{2}{5}\al_1^{1/2} x^{3/2} \B| \f{x \psi_x}{2 \psi} \B| ,\rt.\\
& \lt.  \g_2  \B| \f{1}{2} + \f{ (x-L)\psi_x}{2\psi} \B| 
+\f{1}{2} \lt(\f{\al_2}{\xi_2} \rt)^{1/2} +\f{2}{3} \al_2^{1/2} (L-x)^{1/2}\B| \f{ (x-L) \psi_x}{2 \psi} \B|     \rt) .
\eal
\eeq
For $N_{2,2}$, we use \eqref{eq:non_wg}, \eqref{eq:non_a11} and the Cauchy-Schwarz inequality to yield 
\[
 | N_{2,2}  | \leq  \la u_{xx}^2 , \psi_1 \ra^{1/2} \la \om^2, \om_x^2 \psi^2 \psi_1^{-1} \ra^{1/2}
 \leq \la \om_x^2 , \psi_1 \ra^{1/2} \la \om^2, \om_x^2 \psi^2 \psi_1^{-1} \ra^{1/2}.
\]
For some constant $b_3 > 0$ to be determined, we use the above estimate and \eqref{eq:non_w} to derive 
\beq\label{eq:non_N181}
\bal
&|N_{2,2}|  \leq b_3  \la \om_x^2 , \psi_1 \ra E(t)
+ \f{1}{4 b_3}  \la \om^2, \om_x^2 \psi^2 \psi_1^{-1} \ra E(t)^{-1} \\
\leq & b_3  \la \om_x^2 \psi , \psi_1 \psi^{-1} \ra E(t)
+ \f{1}{4 b_3}  \B\la  \lt(  \f{\xi_1(x) }{\al_1} + \f{\xi_2(x)}{\al_2}  \rt)^{-1}, \om_x^2 \psi^2 \psi_1^{-1} \B\ra E(t)  \teq \la Z_3(x) ,\om_x^2  \psi \ra E(t),
\eal
\eeq
where $Z_3(x)$ is given by 
\beq\label{eq:non_N182}
Z_3(x) = b_3 \psi_1 \psi^{-1} + \f{1}{4 b_3} \lt(  \f{\xi_1(x) }{\al_1} + \f{\xi_2(x)}{\al_2}  \rt)^{-1} \psi \psi_1^{-1}.
\eeq
We choose $b_3 = 10$ in the final estimate.

\subsection{Summary of the estimates for the nonlinear terms} Combining the estimate \eqref{eq:non_N14}, \eqref{eq:non_N16}, \eqref{eq:non_N171} and \eqref{eq:non_N181}, we prove 
\beq\label{eq:non_a1}
| N_1 | + \mu|N_2| \leq | N_1| + \mu( |N_{2,1}| + |N_{2,2}| )    
\leq \la Z_1(x) , \om^2 \vp \ra E(t) +  \mu \la Z_2(x) +Z_3(x) , \om_x^2 \psi \ra  E(t)\;,
\eeq
where $Z_1, Z_2$ and $Z_3$ are defined in \eqref{eq:non_N15}, \eqref{eq:non_N172} and \eqref{eq:non_N182}, respectively.

\section{Nonlinear estimate}\label{sec:boot}
In this section, we combine all the estimates to obtain the nonlinear stability. 

\paragraph{\bf{Optimizing the parameters}} We choose the following parameter 
\[
\mu = 0.02, \quad \tau = 0.05 , \quad \al_1 = 5.6, \quad \al_2 = 500.
\]
We can verify that $\al_1, \al_2$ given above satisfy the assumption in Lemma \ref{lem:non_ux2}.

 Combining the weighted $L^2$ estimate 
\[
\f{1}{2}\f{d}{dt} \la \om^2, \vp \ra \leq -0.3 \la \om^2 , \vp \ra + N_1 + F_1,
\]
the weighted $H^1$ estimate \eqref{eq:lem_cross} in Lemma \ref{lem:cross}, the energy $E(t)$ in \eqref{eq:non_xi}, the estimate of error term \eqref{eq:err_a1} and the estimate of nonlinear term \eqref{eq:non_a1}, we have proved rigorously that
\beq\label{eq:boot_a11}
\bal
\f{1}{2} \f{d}{dt} E(t)^2 
&= \f{1}{2} \f{d}{dt} ( \la \om^2 ,\vp \ra + \mu  \la \om_x^2, \psi \ra ) 
\leq  ( -0.3 + 7.5 \mu ) \la \om^2 , \vp \ra 
 - 0.25 \mu \la \om_x^2 , \psi \ra  \\
&+ \lt( F_0  + \tau \la \om^2 , \vp^2 \rho_1^{-1} \ra  
+ \mu \tau \la \om_x^2 , \psi^2 \rho_2^{-1}  \ra  \rt) +
\la Z_1(x) , \om^2 \vp \ra E(t) +  \mu \la Z_2(x) +Z_3(x) , \om_x^2 \psi \ra  E(t). \\
 =& -0.15 \la \om^2 ,\vp \ra  - 0.25 \mu \la \om_x^2, \psi \ra
+ F_0 + \B\la \tau \vp\rho_1^{-1}  + Z_1(x) E(t) , \om^2 \vp\B\ra    \\
&+ \mu \B\la \tau\psi \rho_2^{-1}  + (Z_2(x) + Z_3(x))E(t) , \om_x^2 \psi     \B\ra,
\eal
\eeq
where we have used $\mu = 0.02$ to obtain the last equality. We divide the damping term into two parts as follows to control the error and the nonlinear term
\[
-0.15 \la \om^2 ,\vp \ra  - 0.25 \mu \la \om_x^2, \psi \ra = -0.05 E(t)^2 
 -  0.1 \la \om^2 ,\vp \ra  - 0.2 \mu \la \om_x^2, \psi \ra.
\]
With this decomposition, we can further rewrite \eqref{eq:boot_a11} as follows
\beq\label{eq:boot_a12}
\bal
\f{1}{2} \f{d}{dt} E(t)^2
&\leq -0.05 E(t)^2 + F_0 
+ \B\la -0.1 + \tau \vp\rho_1^{-1}  + Z_1(x) E(t) , \om^2 \vp\B\ra   \\
&+ \mu \B\la  -0.2 + \tau \psi \rho_2^{-1}  + (Z_2(x) + Z_3(x))E(t) , \om_x^2 \psi     \B\ra.
\eal
\eeq
We use the bootstrap argument to complete the proof. We choose the threshold below 
\[
E^* \teq 5 \cdot 10^{-4}.
\]
Suppose that $E(0) < E^*$. To complete the bootstrap argument, it suffices to show that the right hand side of \eqref{eq:boot_a12} is negative at $E(t) = E^*$. In particular, it can be verified rigorously that 
\beq\label{eq:boot_ineq}
\begin{cases}
 F_0  - 0.05 E^{*2}   &< 0  \\
-0.1 + \tau \vp\rho_1^{-1}  + Z_1(x) E^* &< 0 \\
-0.2 + \tau \psi \rho_2^{-1}  + (Z_2(x) + Z_3(x))E^*  & < 0 ,
\end{cases}
\iff 
\begin{cases}
 \f{1}{ 0.2} \cdot 1.389 \cdot 10^{-9} - 0.05 E^{*2}   &< 0  \\
-0.1 + 0.05 \vp\rho_1^{-1}  + Z_1(x) E^* &< 0 \\
-0.2 + 0.05 \psi \rho_2^{-1}  + (Z_2(x) + Z_3(x))E^*  & < 0 ,
\end{cases}
\eeq
where we have used \eqref{eq:err_tot} and substituted $\tau = 0.05$. The first inequality 
comes from a direct calculation. We plot the numerical values of the left hand side of the second and the third quantity on the grid points in Figure \ref{DG_boot} in the Appendix.
Therefore, we prove that the bootstrap argument can be continued. Hence,  $E(t)< E^*$ for all $t >0$ and the nonlinear stability follows.

\section{Convergence to the self-similar solution}\label{sec:convg}

In the previous section, we establish that for odd initial perturbation $\om(0)$ around the approximate steady state $\bar{\om}$ with $E(0)< E^* = 5 \cdot 10^{-4}$, the perturbation $\om(t)$ satisfies the {\it a-priori} estimate $E(t) < E^*$ for all $t > 0$. In this section, we will further establish the following estimate
\beq\label{eq:time}
\f{1}{2}\f{d}{dt} \la  \om_t^2 ,  \vp \ra \leq - 0.15 \la \om_t^2 , \vp \ra.
\eeq

Once this estimate is established, the convergence to the self-similar solution can be proved by an argument similar to that in the case of small $|a|$ we present in Section 4 in \cite{chen2019finite}.

Recall that the linearized equation around the approximate steady states $\bar{\om}$ reads
\[
\om_t +  ( \bar{c}_l x + \bar{u} ) \om_x = ( \bar{c}_{\om} + \bar{u}_x ) \om  + ( u_x + c_{\om} ) \bar{\om} - ( u + c_l x ) \bar{\om}_x +N(\om) + F(\bar{\om}) \;,
\]
where $N(\om)$ and $F(\bar{\om})$ are the nonlinear terms and the error given in \eqref{eq:NF_a1}. We introduce the linearized operator $\CL$
\[
 \CL \om = -  ( \bar{c}_l x + \bar{u} ) \om_x + ( \bar{c}_{\om} + \bar{u}_x ) \om  + ( u_x + c_{\om} ) \bar{\om} - ( u + c_l x ) \bar{\om}_x. 
\]

Then the linearized equation can be simplified as 
\[
\om_t = \CL \om + N(\om) + F(\bar{\om}).
\]

Using the regularity of the profile $\bar{\om}$ and that $\om \in L^2(\vp), \om_x \in L^2(\psi)$, we can obtain $\CL \om , N(\om), F(\bar{\om}) \in L^2(\vp)$, which implies that $\om_t \in L^2(\vp)$. Since $F(\bar{\om})$ is time independent, taking time differentiation, we get 
\[
\pa_t \om_t = \CL \om_t +  \pa_t N(\om) .
\]

Performing weighted $L^2$ estimates, we get 
\beq\label{eq:time_non_L21}
\f{1}{2} \f{d}{dt} \la \om_t^2, \vp \ra = \la \CL \om_t, \om_t \ra + \la  \pa_t N(\om) , \om_t \vp \ra.
\eeq

In Section 4.2 in \cite{chen2019finite}, we have established the a-priori weighted $L^2$ estimate with weight $\vp$ for $\CL$. Since $\om_t \in L^2(\vp)$, we can use this estimate to obtain 
\beq\label{eq:time_non_L22}
\la \CL \om_t , \om_t \ra \leq -0.3 \la \om_t^2 , \vp \ra.
\eeq

It remains to control the nonlinear part. Using the formula \eqref{eq:NF_a1} of $N(\om)$, we have
\[
\pa_t N(\om) 
=\lt(  (c_{\om} + u_x) \om_t - (c_l x + u ) \om_{x, t} \rt) +  \lt( (c_{\om, t} + u_{x, t}) \om -  ( c_{l, t} + u_{t}) \om_x \rt) \teq NT_1 + NT_2.
\]

The estimate of $NT_1$ is the same as that of $N_1$ in \eqref{eq:non_N11}. Using integrations by part, we get 
\[
NT_1 = \la ( c_{\om} + u_x ) \om_t - (c_l x + u) \om_{x,t}, \om_t \vp \ra = \B\la \f{ ((c_l x + u) \vp)_x}{2 \vp} + c_{\om} + u_x, \om_t^2 \vp \B\ra = \la  T , \om_t^2 \vp \ra \; ,
\]
where $T$ is defined in \eqref{eq:non_N11}. Using the estimates \eqref{eq:non_N12}-\eqref{eq:non_N15} and the {\it a-priori} estimate $E(t) < E^*$, we have 
\beq\label{eq:time_non_NT1}
|NT_1 | \leq \la Z_1(x), \om_t^2 \vp \ra  E(t) \leq E^*\la Z_1(x), \om_t^2 \vp \ra .
\eeq

\subsection{Pointwise estimates of $u_t$}

Before estimating $NT_2$, we first establish pointwise estimates 
 of $u_t$ similar to Lemma \ref{lem:non_u}. 
For $x \in [0, L]$, using the Cauchy Schwarz inequality,  we have 
\[
\bal
\B|\f{ u_t(x) - u_{x,t}(0) x }{x} \B| &= \f{1}{x}  \B|\int_0^x  u_{x,t}(y) - u_{x,t}(0)  dy \B|
\leq \f{1}{x}  \lt( \int_0^x \f{  ( u_{x,t}(y) - u_{x,t}(0) )^2 }{y^4} dy \rt)^{1/2}
(\int_0^x y^4 dy)^{1/2} \\
& \leq \f{1}{x}  ( 1.15^{-1} \cdot \la  (u_{x,t}(y) - u_{x,t}(0) )^2, \vp_1 \ra )^{1/2} (\f{ 1}{5} x^5)^{1/2}
\leq (5 \times 1.15)^{-1/2} x^{3/2} \la \om_t^2,  \vp \ra^{1/2} ,
\eal
\]
where we have used $ \vp_1 \geq 1.15 \cdot x^{-4}$ from \eqref{eq:non_wg} in the second inequality and  \eqref{eq:non_a11} in the third inequality. Similarly, for $x \in [0, L]$, we can estimate 
\[
\bal
&\B|\f{ u_t(x) - u_t(L) - u_{x,t}(L) (x-L) }{x-L} \B|
= \f{1}{L-x} \B| \int_x^L u_{x,t}(y) - u_{x, t}(L) dy \B|  \\
\leq &  \f{1}{L-x} \lt(  \int_x^L \f{(u_{x,t}(y) - u_{x, t} (L) )^2}{ (L-x)^2}\rt)^{1/2}
( \int_x^L (L-y)^2 dy)^{1/2} \\
\leq & \f{1}{L-x} \lt( 0.0085^{-1} \cdot \la (u_{x,t}(y) - u_{x, t} (L) )^2, \vp_2 \ra \rt)^{1/2} ( \f{1}{3} (L-x)^3)^{1/2} 
\leq ( 3 \times 0.0085 )^{-1/2} (L-x)^{1/2} \la \om_t^2, \vp \ra^{1/2},
\eal
\]
where we have used  $\vp_2 \geq 0.0085 (L-x)^{-2}$ \eqref{eq:non_wg}  in the second inequality and \eqref{eq:non_a11} in the third inequality. Denote 
\beq\label{eq:time_wg}
\bal
&\kp_1(x) \teq (5 \times 1.15)^{-1/2} x^{3/2}, \quad \kp_2 \teq ( 3 \times 0.0085 )^{-1/2} (L-x)^{1/2},   \\
& \kp_3(x) \teq \lt(  \f{\xi_1(x) }{\al_1} + \f{\xi_2(x)}{\al_2}  \rt)^{-1/2} , \quad \Ups(t) \teq \la \om_t^2, \vp \ra^{1/2}.
\eal
\eeq

We remark that the weight $\kp_3$ comes from \eqref{eq:non_w} in Lemma \ref{lem:non_w}. In particular, we have 
\beq\label{eq:non_w2}
|\om(x) | \leq \kp_3(x) E(t).
\eeq

We can simplify the above estimates on $u_t$ as follows 
\beq\label{eq:non_u2}
 |u_t(x) - u_{x,t}(0) x| \leq \kp_1(x) x \Ups(t), \quad 
 |u_t(x) - u_t(L) - u_{x,t}(L) (x-L) | \leq \kp_2(x) (L-x) \Ups(t).
\eeq

Note that $c_{\om}, u_x(0), u_x(L)$ are 
projections of $\om$ onto some functions. We can apply the estimates in \eqref{eq:non_a15} to $\om_t, c_{\om,t}, u_{x,t}(0), u_{x,t}(L)$ and obtain 
\beq\label{eq:non_u3}
| c_{\om, t} + u_{x,t}(0) | \leq \g_1 \la \om_t^2, \vp \ra^{1/2} = \g_1 \Ups(t), 
\quad 
| c_{\om, t} + u_{x, t}(L) |  \leq \g_2 \la \om_t^2 , \vp \ra^{1/2} = \g_2 \Ups(t), 
\eeq
where $\g_1, \g_2$ are the constants given in \eqref{eq:cons_gam}.

\subsection{Estimates of $NT_2$}

Now, we are in a position to estimate $NT_2$
\[
 NT_2 = (c_{\om, t} + u_{x, t}) \om -  ( c_{l, t} + u_{t}) \om_x . 
\]

The idea of the estimate is quite simple. We use $\la \om_t^2, \vp \ra$ to control $u_t, c_{l, t}$ and the weighted $L^2$ norm of $u_{x,t}$, and use $E(t)$ to control $\om$ pointwisely and the weighted $L^2$ norms of $\om, \om_x$. Since the weight $\vp$ is singular near $x=0$ (of order $x^{-4}$) and $x=L$ (of order $(L-x)^{-2}$), we need different weighted estimates near these two points. Recall the weighted functions $\vp_1, \vp_2$ in Lemma \ref{lem:non_ux1}. Let $\lam > 0$ to be determined. We consider the following partition of unity on $[0,L]$
\beq\label{eq:time_unity}
\eta_1 + \eta_2 = 1, \quad \eta_1 \teq \f{\vp_1}{\vp_1 + \lam \vp_2} , \quad \eta_2 \teq \f{ \lam \vp_2}{\vp_1 + \lam \vp_2} .
\eeq

Observe that $\vp_1$ is singular near $x =0$ and $\vp_2$ is singular near $x=L$. Intuitively, $\eta_1$ mainly supports near $x=0$ and $\eta_2$ supports near $x=L$.

Recall $c_{\om}(t) = c_l(t)$ \eqref{eq:DGnormal}. Clearly, we have $c_{\om, t} = c_{l, t} = -\f{u_t(L)}{L}$. For $x$ near $0$, we have 
\[
NT_2 = (u_{x,t} - u_{x,t}(0)) \om - (u_t - u_{x, t}(0) x) \om_x
+ ( u_{x,t}(0) + c_{\om, t}) (\om - x \om_x).
\]

Applying \eqref{eq:non_w2}, \eqref{eq:non_u2} and \eqref{eq:non_u3}, we obtain 
\beq\label{eq:time_non_NT21}
\bal
 | NT_2 | &\leq  | u_{x,t} - u_{x,t}(0) | \kp_3(x) E(t)
+ \kp_1(x) x |\om_x| \Ups(t) + \g_1 | \om - x \om_x| \Ups(t) \\
& \leq  | u_{x,t} - u_{x,t}(0) | \kp_3(x) E(t) + \g_1 |\om| \Ups(t)+ (\g_1 + \kp_1(x)) x | \om_x| \Ups(t).
\eal
\eeq

For $x$ near $L$, we rewrite $NT_2$ as follows 
\[
NT_2 = (u_{x,t} - u_{x,t}(L) ) \om  + (u_{x,t}(L) + c_{\om, t}) \om
 - (c_{l, t} x + u_t ) \om_x .
\]

We can estimate $c_{l, t} x + u_t$ as follows 
\[
\bal
|c_{l, t} x + u_t |
&= |u_t - u_t(L)  - u_{x,t}(L)(x-L)  + u_t(L) + c_{l, t} x +  u_{x,t}(L)(x-L) | \\
&= | u_t - u_t(L)  - u_{x,t}(L)(x-L)   + (c_{l,t} +u_{x,t}(L) )(x-L) |
\leq ( \kp_2(x) + \g_2) (L-x) \Ups(t),
\eal
\]
where we have applied $c_{l, t} = - \f{u_t(L)}{L}$ in the second equality and \eqref{eq:non_u2}, \eqref{eq:non_u3} in the last inequality.

Applying the above estimate, \eqref{eq:non_w2} and \eqref{eq:non_u3}, we derive 
\beq\label{eq:time_non_NT22}
|NT_2 | \leq |u_{x,t} - u_{x,t}(L) | \kp_3(x) E(t)  + \g_2 |\om| \Ups(t)
+ ( \kp_2(x) + \g_2) (L-x) |\om_x|  \Ups(t).
\eeq

Using a linear combination of the estimates \eqref{eq:time_non_NT21} and \eqref{eq:time_non_NT22} with weights $\eta_1, \eta_2$, we prove 
\beq\label{eq:time_non_NT23}
\bal
|NT_2|  & = \eta_1(x) |NT_2| + \eta_2(x) |NT_2|
\leq 
 | u_{x,t} - u_{x,t}(0) |   \eta_1(x)  \kp_3(x) E(t)  \\
  &+  |u_{x,t} - u_{x,t}(L) | \eta_2(x)  \kp_3(x) E(t) +(  \g_1  \eta_1(x) + \g_2 \eta_2(x) ) |\om| \Ups(t) \\ 
&+ (  (\g_1 + \kp_1(x)) x \eta_1(x)  +  ( \kp_2(x) + \g_2) (L-x) \eta_2(x) ) |\om_x|  \Ups(t)
\teq I_1 + I_2 + I_3 + I_4.
\eal 
\eeq

Using the Cauchy-Schwarz inequality, the {\it a-priori} estimate $E(t) < E^*$ obtained in Section \ref{sec:boot} and \eqref{eq:non_a11} which also holds true for $\om_t, u_{x,t} = H \om_t$
\[
\la (u_{x,t} - u_{x,t}(0))^2 ,\vp_1\ra
\leq \la \om_t^2 ,\ \vp_1 \ra , \quad  \la (u_{x,t} - u_{x,t}(L))^2 ,\vp_2\ra \leq \la \om_t^2 ,\ \vp_2 \ra ,
\]
we obtain 
\[
\bal
\la I_1,  |\om_t| \vp \ra
&\leq E(t) \la (u_{x,t} - u_{x,t}(0))^2 ,\vp_1\ra^{1/2}
\la \eta_1^2 \kp_3(x)^2 \vp_1^{-1}, \om_t^2 \vp^2 \ra^{1/2}
\leq E^* \la \om_t^2, \vp_1 \ra^{1/2} \la \eta_1^2 \kp_3(x)^2 \vp_1^{-1}, \om_t^2 \vp^2 \ra^{1/2} \\
& \leq  E^*
 \tau_1 \la\om_t^2, \vp_1 \ra  + E^* \f{1}{4\tau_1}  \la \eta_1^2 \kp_3(x)^2 \vp_1^{-1}, \om_t^2 \vp^2 \ra , \\
 \la I_2, |\om_t| \vp \ra 
 &\leq E(t) \la ( u_{x,t} - u_{x, t}(L) )^2, \vp_2 \ra^{1/2} 
 \la   \eta_2^2 \kp_3(x)^2 \vp_2^{-1}, \om_t^2 \vp^2 \ra^{1/2} 
\leq E^* \la \om_t^2, \vp_2 \ra^{1/2}\la   \eta_2^2 \kp_3(x)^2 \vp_2^{-1}, \om_t^2 \vp^2 \ra^{1/2}  \\
&\leq  E^* \tau_2 \la \om_t^2, \vp_2 \ra + E^* \f{1}{4 \tau_2} \la   \eta_2^2 \kp_3(x)^2 \vp_2^{-1}, \om_t^2 \vp^2 \ra,
\eal
\]
where $\tau_1, \tau_2 >0$ are some parameters to be determined and we have used the elementary inequality $|ab| \leq c a^2 + \f{1}{4c} b^2$ for any $a, b \in \R$ and $c > 0$ in the third and the sixth inequality. 
Plugging the formulas of $\eta_1, \eta_2$ \eqref{eq:time_unity} in the above estimates and combining them, we have
\beq\label{eq:time_non_I12}
 \la I_1 + I_2, |\om_t| \vp \ra 
 \leq \la Q_1(x) , \om_t^2 \vp \ra, 
\eeq
where 
\beq\label{eq:time_non_Q1}
Q_1(x) = E^* \lt( \tau_1 \f{\vp_1}{\vp} + \tau_2 \f{\vp_2}{\vp} + (\f{1}{4\tau_1} \vp_1 
+ \f{1}{4\tau_2}  \lam^2 \vp_2 )  \f{  \kp_3(x)^2 \vp}{ (\vp_1 + \lam \vp_2)^2}  \rt)
\eeq
and we have  written $\om_t^2 \vp^2$ as $\vp \cdot (\om_t^2 \vp)$ and used $\vp_i = (\vp_i/ \vp) \cdot \vp$ for $i=1,2$. Note that the denominators in $\eta_i$ cancel $\vp_i^{-1}$ for $i=1,2$.
Denote by $Q_2, Q_3$ the coefficients of $I_3, I_4$ in \eqref{eq:time_non_NT23}
\beq\label{eq:time_non_Q23}
\bal
Q_2(x)  &= \g_1 \eta_1(x) + \g_2 \eta_2(x) = \f{\g_1 \vp_1 + \g_2 \lam \vp_2}{ \vp_1 + \lam \vp_2} , \\
Q_3(x) & = (\g_1 + \kp_1(x)) x \eta_1(x)  +  ( \kp_2(x) + \g_2) (L-x) \eta_2(x) \\
&=  \f{ (\g_1 + \kp_1(x)) x \vp_1 +( \kp_2(x) + \g_2) (L-x) \lam \vp_2      }{  \vp_1 + \lam \vp_2}.
\eal
\eeq

To estimate $I_3, I_4$ in \eqref{eq:time_non_NT23}, we use $E^2(t) = \la \om^2, \vp \ra + \mu \la \om_x^2, \psi \ra$ and the {\it a-priori} estimate $E(t) < E^*$ to control the weighted $L^2$ norm. Using the Cauchy-Schwarz inequality, we obtain
\beq\label{eq:time_non_I34}
\bal
\la I_3 , |\om_t| \vp \ra  &= \la Q_2(x) |\om|, |\om_t| \vp \ra \Ups(t)
\leq \la \om^2, \vp \ra^{1/2} \la Q_2(x)^2, \om_t^2 \vp \ra^{1/2} \Ups(t)
\leq E^* (\tau_3 \Ups(t)^2 + \f{1}{4 \tau_3} \la  Q_2(x)^2, \om_t^2 \vp  \ra ) \\
\la I_4 , |\om_t| \vp \ra  &= \la Q_3(x) |\om_x|, |\om_t| \vp \ra \Ups(t) 
\leq \la \om_x^2, \mu \psi \ra^{1/2} \la Q_4(x)^2 , \om^2_t \vp^2 \mu^{-1} \psi^{-1} \ra^{1/2} \Ups(t)  \\
& \leq E^* \la Q_4(x)^2 , \om^2_t \vp^2 \mu^{-1} \psi^{-1} \ra^{1/2} \Ups(t) 
\leq E^* ( \tau_4 \Ups(t)^2 + \f{1}{4 \tau_4}  \la Q_4(x)^2 \vp \mu^{-1} \psi^{-1} , \om^2_t \vp  \ra   ),
\eal
\eeq
where $\tau_3, \tau_4 > 0$ are some parameters to be determined.

\subsection{Summarize the estimates}
Combining the estimates \eqref{eq:time_non_NT1} on $NT_1$ and \eqref{eq:time_non_I12}, \eqref{eq:time_non_I34} on $NT_2$, we prove 
\[
|\la \pa_t N(\om), \om_t \vp \ra | \leq  \la Q(x), \om_t^2 \vp \ra,
\]
where 
\[
Q(x) = E^* Z_1(x) + Q_1(x) + (\tau_3 + \tau_4) E^* + \f{1}{4\tau_3} Q_2^2(x) E^* 
+ \f{1}{4\tau_4} Q_3(x)^2 \vp \mu^{-1} \psi^{-1} E^*,
\]
we have used $\Ups(t)^2 = \la \om_t^2, \vp \ra$ (see \eqref{eq:time_wg}) and $Z_1(x), Q_1, Q_2, Q_3$ are defined in \eqref{eq:non_N15}, \eqref{eq:time_non_Q1} and \eqref{eq:time_non_Q23}. To optimize the estimates, we choose the following parameters 
\[
\lam = 2, \quad \tau_1 = 12, \quad \tau_2 = 32, \quad \tau_3 = 3.3, \quad \tau_4 = 65.
\]

With these parameters, we can show that the explicit function $Q(x)$ satisfies 
\[Q(x) < 0.15 , \]
which along with \eqref{eq:time_non_L21} and \eqref{eq:time_non_L22} imply 
\[
\f{1}{2} \f{d}{dt} \la \om_t^2, \vp \ra \leq -0.15 \la \om_t^2, \vp \ra .
\]

We conclude the proof of \eqref{eq:time}.

\section{Error estimates of the Gaussian quadrature}\label{sec:GQ}
In Section \ref{sec:intro_GQ}, we discussed how to compute the quantities $\bar{u}, \bar{u}_x, \bar{u}_{xx}$ and different types of errors in the computation. 
In this section, we estimate the error in the computation of $I_2, I_3$ in \eqref{eq:GQ_error0} using the numerical Gaussian quadrature.







We use the same notations as those in Section \ref{sec:intro_GQ}. Recall the notations $J_i$ \eqref{eq:Jset}, $GQ_K, NGQ_K$ \eqref{def:GQ}. We fix $i \in {1,2,.., n}$ and $x \in [ (i-1)h_0, ih_0 ], h_0 = L / n$. Firstly, we focus on $I_2$ in \eqref{eq:GQ_error0}. Denote $n_x = \f{x}{h_0}$. 
By definition, we get $n_x \in [i-1 , i]$. Clearly, we have
\beq\label{eq:GQ_error02}
I_2 = \f{1}{\pi} \sum_{j \in J_2} \int_{jh_0}^{ (j+1) h_0}  \f{\bar{\om}(y)}{x-y} dy 
= \f{1}{\pi} \sum_{ j\in J_2} \int_0^1 \f{ \bar{\om}( j h_0 + t h_0)}{ n_x h_0 - jh_0 - th_0  } h_0 dt 
= \f{1}{\pi} \sum_{ j\in J_2} \int_0^1 \f{ \bar{\om}( j h_0 + t h_0)}{ n_x  - j - t  }  dt , 
\eeq
where we have used a change of variable $y = j h_0 + t h_0, t \in [0, 1]$ in the second equality. 
Denote 
\beq\label{eq:GQ_error03}
F_j(t) = \f{ \bar{\om}( j h_0 + t h_0)}{ n_x  - j - t  } , \quad  G_j= \f{ \bar{\om}( j h_0 + t h_0)}{ n_x  + j + t  } , \quad I_{2, j} = \int_0^1 F_j(t) dt.
\eeq

In practice, we use $NGQ_K$ to compute each integral, which leads to the following error
\beq\label{eq:GQ_error1}
\bal
E_j \teq  \int_0^1 F_j(t) dt - NGQ_K( F_j, 0, 1)  = I_{2, j} - NGQ_K( F_j, 0, 1) 
\eal
\eeq
for each $j\in J_2$. We have a similar error in the computation of $I_3$. One approach to estimate the above error is decomposing it into two parts and then estimating each part. The first part is the standard error of Gaussian quadrature $  \int_0^1 F_j(t) d - GQ_K(F_j,0,1)$ that can be estimated using the formula of the error term. The second part is the difference between the Gaussian quadrature and its numerical realization, i.e. $GQ_K(F_j,0,1) - NGQ_K(F_j, 0, 1) $. From the definition \eqref{def:GQ}, this difference can be estimated once we have control on $ A_j - \bar{A}_j, z_j - \bar{z}_j$. We appeal to another approach which avoids estimating $A_j, z_j$.

\subsection{ Simplification of the error}
Though $\bar{A}_j, \bar{z}_j$ are floating-point values, they enjoy some exact identities. The weights $\bar{A}_j$ satisfies $\sum_{j=1}^{K} \bar{A}_j = 2$ without roundoff error and that $\bar{z}_j, \bar A_j$ can be grouped into $K/2$ pairs such that $\bar{z}_j + \bar{z}_{j+K/2} =0, \bar A_j = \bar A_{j+ K/2}$ for $ 1\leq j \leq K/2$ without roundoff error.
We introduce the following quantity that can measure the difference between $(\bar{A}_j, \bar{z}_j)$ and $(A_j, z_j)$
\beq\label{eq:GQ_ek}
  \e_k = \B| \sum_{j=1}^K \bar{A}_j \bar{z}^{2k}_j - \f{2}{2 k+1} \B|, \quad k = 0, 1,2, ..., K-1,  \quad 
  c_K  = \sum_{j=1}^K \bar{A}_j \bar{z}^{2 K}_j .
\eeq

Since $\sum_j \bar{A}_j = 2$, we have $\e_0 = 0$. The factor $\f{2}{2k+1}$ comes from $\sum_{j=1}^K A_j z^{2k}_j  = \int_{-1}^1 t^{2k} dt = \f{2}{ 2k+1}$
for $k\leq K-1$, where we have applied the Gaussian quadrature to the integral of $f = t^{2k}$ and the equality holds since the degree of $f$ is less than $2K-1$. We will show that $\e_k$ is very small, i.e. $< 10^{-15}$.
Applying the Taylor expansion to $F_j(t)$ around $t_0 = \f{1}{2}$ with the Lagrange form of reminder, we have
\[
F_j(t) = \sum_{ 0\leq k \leq 2 K -1} \f{\pa_t^k F_j( t_0) }{k!}\cdot  (t - t_0)^k 
+ \f{\pa_t^{2K} F_j( \xi(t))  }{ (2K)!} (t- t_0)^{2K},
\]
for any $t \in [0,1]$, where $\xi(t) \in (0, 1)$. 
Plugging the above expansion into $I_{2,j} = \int_0^1 F_j(t) dt$, we obtain 
\[
\bal
I_{2,j}  &= \sum_{0\leq k \leq 2K-1} \f{\pa_t^k F_j( t_0) }{k!}  \int_0^1 (t-t_0)^{k}  dt
+\int_0^1 \f{\pa_t^{2K} F_j( \xi(t))  }{ (2K)!} (t- t_0)^{2K} dt \\
&= \sum_{ 0 \leq k \leq K-1} \f{\pa_t^ {2k} F_j( t_0) }{ (2k)!} \f{2}{2 k+1} \cdot 2^{-2 k-1}
+ \int_0^1 \f{\pa_t^{2K} F_j( \xi(t))  }{ (2K)!} (t- t_0)^{2K} dt 
\teq r_{2, j} + s_{2, j},
\eal
\]
where we have used $t_0 = \f{1}{2}, (1-t_0)^{2k+1} =t_0^{2k+1} = 2^{-2k-1}$ and that the integral vanishes when $k$ is odd due to symmetry. Similarly, for $NGQ_K(F_j,0, 1)$ (see \eqref{def:GQ}), we have 
\[
\bal
&NGQ_K(F_j,0, 1)  = \f{1}{2} \sum_{1\leq j \leq K} \bar{A}_j F_j( \f{1}{2} \bar{z}_j + \f{1}{2}) \\
= & \sum_{0\leq k \leq 2K-1} \f{\pa_t^k F_j( t_0) }{k!} \sum_{1\leq j \leq K} \bar{A}_j \f{ \bar{z}^k_j}{2^{k+1}}  
+ \sum_{ 1\leq j \leq K}  
\f{\pa_t^{2K} F_j( \xi( \bar{z}_j / 2 + 1/2 ))  }{ (2K)!}  \f{ \bar{A}_j \bar{z}_j^{2K}}{2^{2K + 1}}  \\
=& \sum_{0\leq k \leq K-1} \f{\pa_t^{2k} F_j( t_0) }{ (2k)!} \sum_{1\leq j \leq K} \bar{A}_j \f{ \bar{z}^{2k}_j}{2^{2k+1}} + \sum_{ 1\leq j \leq K}  \f{\pa_t^{2K} F_j( \xi( \bar{z}_j / 2 + 1/2 ))  }{ (2K)!}  \f{ \bar{A}_j \bar{z}_j^{2K}}{2^{2K + 1}}  \teq \bar{r}_{2,j} + \bar{s}_{2,j},
\eal
\]
where we have used the fact that $\bar{z}_j, \bar A_j$ can be grouped into $K/2$ pairs such that $\bar{z}_j = - \bar z_{j+ K/2}$ and $\bar A_j = \bar A_{j+ K/2}$ in the third equality.
Using the notation $\e_k$, we obtain 
\beq\label{eq:GQ_error12}
| r_{2, j } - \bar{r}_{2, j} |
\leq  \sum_{0\leq k \leq K-1 }\f{ |\pa_t^{2k} F_j( t_0) | }{ (2k)! \cdot 2^{2k+1}} \e_k 
= \sum_{1\leq k \leq K-1 }\f{ |\pa_t^{2k} F_j( t_0) | }{ (2k)! \cdot 2^{2k+1}} \e_k ,
\eeq
where the last equality holds true since $\e_0 = 0$. Note that $\int_0^1 (t-t_0)^{2K} dt = \f{1}{2K+1} 2^{-2K}$. We have a simple estimate for $s_{2,j}$ and $\bar{s}_{2,h}$ 
\beq\label{eq:GQ_error13}
|s_{2,j} | \leq \max_{ t\in [0,1]} \f{ |\pa_t^{2K} F_j( t ) | }  { (2K) !} \cdot \f{1}{(2K+1) 2^{2K}} ,
\quad | \bar{s}_{2,j}| \leq 
\max_{ t\in [0,1] } \f{ |\pa_t^{2K} F_j( t ) | }  { (2K) !}  \cdot \f{c_K}{2^{2K+1}}.
\eeq

It remains to estimate the derivatives $\pa_t^{2k} F_j, k =1,2,..,K$.

\subsection{Estimates of $\pa_t^{2k} F_j$}\label{sec:dj}

Since $\bar{\om}(\cdot)$ is a cubic polynomial on $[jh_0, (j+1) h_0]$, we obtain that $\bar{\om}( j h_0 + \cdot h_0)$ is a cubic polynomial on $[0,1]$. Denote 
\[
d(t, j )\teq n_x - j - t, \quad  d(j) \teq \min_{t \in [0,1 ] } |d(t, j)|.
\]

For any $k \in Z_+$, a directly calculation yields
\[
\pa_t^k  d(t, j)^{-1} = k! \cdot  (n_x - j - t)^{-k-1} = k! \cdot d(t, j)^{-k-1}, \quad 
| \pa_t^k  d(t, j)^{-1}| \leq k! \cdot d(j)^{-k-1},
\] 

Recall the definition of $F_j$ from \eqref{eq:GQ_error03}. Using the fact that $\bar{\om}$ is a cubic polynomial and the Leibniz rule, we derive 
\[
\bal
|\pa_t^{2 k} F_j(t)| &= \B| \sum_{0\leq i \leq 3} { 2 k \choose i } \pa_t^i ( \bar \om(jh_0 + t h_0))
\cdot (2 k- i)!  \cdot d(t,j)^{- (2k-i + 1)} \B|
\eal
\]
for $t \in [0,1]$ and $k \geq 1$, where we have used the convention ${ 2k \choose i } =0$ if $i > 2k$. Notice that for $i=0,1,2,3$, we have $ { 2 k  \choose i } (2 k -i)! = \f{ (2k) !} {i!}$ and
\[
|\pa_t^i (\bar \om(j h_0 + t h_0)) \cdot d(t,j)^{- (2k -i + 1)}|
\leq h_0^i || \pa^i_x \bar \om ||_{L^{\infty}} d(j)^{- (2 k-i+1)} .
\]

Therefore, we can simplify the estimate $\pa_t^{2 k} F_j$ as follows 
\beq\label{eq:GQ_error2}
|\pa_t^{2 k} F_j(t)|
\leq (2 k )! \cdot d(j)^{-2 k +2} ( \sum_{ 0\leq i \leq 3} || \pa_x^i  \bar{\om} ||_{L^{\infty}} h_0^i  d(j)^{i-3} (i!)^{-1}  ). 
\eeq
for $ k \geq 2$. When $k=1$, we do not have the term $\pa_t^i ( \bar \om(jh + th))$ with $i=3$. Using the above estimate, we get 
\beq\label{eq:GQ_error22}
|\pa_t^2 F_j(t) | \leq  \sum_{0\leq i \leq 2} 2! \cdot ( i!)^{-1} h_0^i || \pa^i_x \bar \om ||_{L^{\infty}} d(j)^{- (2 \cdot 1-i+1)} .
\eeq

Next, we further estimate $d(j)$ for each $ j \in J_2$. By definition of $J_2$ \eqref{eq:Jset}, we have 
\[  |j - (i-1)| \geq m+1 ,  \quad  0 \leq j \leq n-1\]
 for any $j \in J_2$, which implies $ 0 \leq j \leq i - m- 2 $ or $j \geq m+i$. We further partition $J_2$ as follows 
\[ J_2^- \teq J_2 \cap [0, i-m-2], \quad J_2^+ \teq J_2 \cap [m+i, n-1 ] .\]

Recall $n_x = \f{x}{h} \in [i-1, i)$ and $i \in \{1,2, .., n\}$. For $j\in J_2^-$, we have $ j \leq i-m-2$ and 
\[
d(j) 
= \min_{t\in [0,1]}  |n_x - j - t|
\geq n_x - j - 1 \geq i-1 - j - 1  = (i-2) - j  \geq m .
\]
Moreover, we have $ (i-2) - j \leq  n - 2 < n$. For $j \in J_2^+$, we have $j \geq m+i$ and 
\[
d(j) 
= \min_{t\in [0,1]}  |n_x - j - t|
\geq j + 0 - n_x \geq  j -i \geq m.
\]
Moreover, we have $j - i \leq  n-1 - i  < n$. Combining two cases, we prove 
\beq\label{eq:GQ_error23}
d(j)^{-1}  \leq \one_{ -n < j -(i-2) \leq -m} | j - (i-2)|^{-1}+  \one_{ n > j - i \geq m } |j-i|^{-1}
\leq m^{-1}.
\eeq
for each $j \in J_2$. 
Hence, we get 
\beq\label{eq:const_Cm}
\sum_{ 0\leq i \leq 3} || \pa_x^i  \bar{\om} ||_{L^{\infty}} h_0^i  d(j)^{i-3} (i!)^{-1}  
\leq \sum_{ 0\leq i \leq 3} || \pa_x^i  \bar{\om} ||_{L^{\infty}} h_0^i  m^{i-3} (i!)^{-1} 
\teq C_m.
\eeq

For any power $l \geq 1$, using \eqref{eq:GQ_error23}, we have 
\[
\sum_{ j \in J_2} d(j)^{-l}
\leq 
 \sum_{- n < j -(i-2) \leq -m} | j - (i-2)|^{- l }
+ \sum_{n > j - i \geq m } |j-i|^{- l}   = 2    \sum_{  m \leq \nu < n } \nu^{- l},
\]
where we have changed the index $j - (i-2)$ to $-\nu$ and $j-i$ to $\nu$ in each summation to obtain the last equality. The estimate of the last summation is straightforward. For $l \geq 2$, we get 
\beq\label{eq:GQ_dj1}
\sum_{ j \in J_2} d(j)^{-l} \leq 2 \sum_{  m \leq \nu < n } \nu^{- l }
\leq 2 \int_{m-1}^{ n} y^{-l} dy = \f{2}{ l -1 } (m-1)^{- l +1} \teq dj_l.
\eeq

For $ l = 1$, we get
\beq\label{eq:GQ_dj2}
\sum_{ j \in J_2} d(j)^{-1} \leq 2 \sum_{  m \leq \nu < n } \nu^{- 1 }
\leq  2 \int_{m-1}^{ n} y^{-l} dy  =  2 \log( \f{n}{m-1}) \teq dj_1.
\eeq

Now, we are in a position to estimate the sum of $\pa_t^{2k} F_j$. 

For $k \geq 2$, plugging \eqref{eq:const_Cm} and \eqref{eq:GQ_dj1} with $l = 2k-2 \geq 2$ into \eqref{eq:GQ_error2}, we obtain 
\beq\label{eq:GQ_error31}
\sum_{j \in J_2} \max_{t\in [0,1]} |\pa_t^{2 k} F_j(t)|
 \leq C_m (2k)! \sum_{j \in J_2} d(j)^{-2k+2}  \leq  C_m (2k)!\cdot  dj_{2k-2}
 \teq  C_{F,k}  .
\eeq

For $k =1 $, plugging \eqref{eq:GQ_dj1}  and \eqref{eq:GQ_dj1} with $l = 2,3$ into \eqref{eq:GQ_error23}, we yield
\beq\label{eq:GQ_error32}
\bal
&\sum_{j \in J_2} \max_{t\in [0,1]} |\pa_t^{2 } F_j(t)|
\leq  \sum_{0\leq i \leq 2} 2! \cdot ( i!)^{-1} h_0^i || \pa^i_x \bar \om ||_{L^{\infty}} 
\sum_{j \in J_2}
d(j)^{- (2 \cdot 1-i+1)} \\
 \leq & 2 || \bar{\om}||_{L^{\infty}} dj_3+ 2 h_0  || \bar \om_x ||_{L^{\infty}} dj_2+ h_0^2  || \bar \om_{xx} ||_{L^{\infty}} dj_1 
\teq C_{F,1} . 
\eal
\eeq

Summing \eqref{eq:GQ_error12} and \eqref{eq:GQ_error13} over $j \in J_2$ and applying the above estimates to \\
$\sum_{j \in J_2} \max_{t \in [0,1]}| \pa_t^{2k} F_j(t)|$,
we prove 
\beq\label{eq:GQ_error4}
\bal
&\sum_{ j \in J_2} | I_{2,j } - NGQ(F_j, 0 , 1)|
\leq \sum_{ j \in J_2} ( | r_{2, j} - \bar r_{2, j}| + | s_{2, j} | + | \bar s_{2, j} | ) \\
\leq &  \sum_{ 1 \leq k \leq K-1} \f{ C_{F,k} \cdot \e_k}{ (2k) ! \cdot 2^{2k+1}}
+  \f{C_{F,K} }{ (2K)! 2^{2K}} ( \f{1}{2K+1} + \f{c_K}{2} ) \teq C_F.
\eal
\eeq


By rewriting $I_3$ from \eqref{eq:GQ_error0} in a form similar to \eqref{eq:GQ_error02} and applying the above argument to the integral of $G_j$ (see \eqref{eq:GQ_error03}), we can estimate the error in the computation of $I_3$ and obtain exactly the same bound as \eqref{eq:GQ_error4}. 
Recall that $I_2 = \f{1}{\pi} \sum_{j \in J_2} I_{2,j}$. (A similar relation can be used in the estimate of error of $I_3$.) Hence, we establish the following error estimate
\[
\B|  I_2 + I_3 - ( \f{1}{\pi} \sum_{ j \in J_2} NGQ_K( F_j, 0, 1) -  \f{1}{\pi} \sum_{j\in J_4} NGQ_K(G_j, 0, 1) )\B|
\leq 
\f{2}{\pi} C_F . 
\]
Compared to \eqref{eq:GQ_error4}, the above upper bound has an extra factor $\f{2}{\pi}$. $\pi^{-1}$ comes from $\pi^{-1}$ in the above summation and $2$ comes from the fact that we have the same error estimate for $I_2, I_3$. Since $C_F$ only depends on some explicit constants, e.g. $\e_k$ \eqref{eq:GQ_ek}, $dj_l, h_0$, and  $ || \pa_x^i \bar \om ||_{L^{\infty}}, i =0, 1,2,3$ which have been estimated in \eqref{eq:const_a1}, we can use Interval arithmetic to obtain rigorous bound on the computation error of $u_x$ 
\beq\label{error_GQ_ux}
Error_{GQ}(u_x) \leq \f{2}{\pi} C_F  < 2 \cdot 10^{-17}.
\eeq

We refer the reader to the MatLab script for the detailed computation.



\subsection{Error estimates in the computation of $\bar u, \bar u_{xx}$}

The error estimates in the computation of $\bar u, \bar u_{xx}$ are similar. In the case of $\bar u$, using a transform similar to that in \eqref{eq:GQ_error02}, we have
\beq\label{eq:GQ_error50}
 \bar u(x) = \f{1}{\pi} \int_0^L \log| \f{x-y}{x+y} | \bar \om(y) dy
 = \f{h_0}{\pi} \sum_{0\leq j\leq n-1} \int_0^{1} \bar \om(jh_0 + th_0) ( \log|n_x - j - t| - \log|n_x + j + t|) dt.
\eeq

We remark that we gain a small factor $h_0$ after the transform. Hence, we need to study 
\[
F^a_j(t) \teq \bar \om(j h_0 + t h_0) \log| n_x - j - t | ,  \quad 
G^a_j(t) \teq \bar \om(j h_0 + t h_0) \log|n_x + j + t|,
\]
which are similar to $F_j, G_j$ in \eqref{eq:GQ_error03}. In the case of $\bar u_{xx}$, we analyze 
\[
F^b_j(t) \teq    \f{ \bar \om_x(j h_0 + t h_0) }{ n_x - j - t}, \quad 
G^b_j(t) \teq   \f{\bar \om_x(j h_0 + t h_0) }{ n_x + j + t},
\]
since $\bar u_{xx}(x) = \f{1}{\pi} \int_0^L \bar \om(y) ( \f{1}{x-y} + \f{1}{x+y}) dy$. Then we apply the previous arguments in the error estimates. We have the following estimates similar to \eqref{eq:GQ_error31}
\beq\label{eq:GQ_error51}
\bal
\sum_{j \in J_2} \max_{t\in [0,1]} |\pa_t^{2k} F^a_j(t)|
&\leq C_m \f{ (2k)!}{2k-3} \sum_{j \in J_2} d(j)^{- 2k +3}
\leq C_m \f{ (2k)!}{2k-3} dj_{2k-3} \teq C_{a,k}  ,  \\
\sum_{j \in J_2} \max_{t\in [0,1]} |\pa_t^{2k} F^b_j(t)|
&\leq C_{m,2}  (2k)!  \sum_{j \in J_2} d(j)^{- 2k +1}
\leq C_{m,2}  (2k)!  dj_{2k-1} \teq C_{b,k}  , 
\eal
\eeq
for $k \geq 2$, where 
\[
C_{m,2} = \sum_{ 0\leq i \leq 2} h_0^i (i!)^{-1} m^{i-2}  || \pa_x^{i+1} \bar \om ||_{L^{\infty}} .
\]

For $k = 1$, we have estimates similar to \eqref{eq:GQ_error32}
\beq\label{eq:GQ_error52}
\bal
\sum_{j \in J_2} \max_{t\in [0,1]} |\pa_t^{2} F^a_j(t)|
& \leq h_0^2 || \bar \om_{xx} ||_{L^{\infty}} dj_0
+ 2 h_0 || \bar \om_x ||_{L^{\infty}} dj_1 
+ || \bar \om||_{L^{\infty}} dj_2 \teq C_{a,1} ,\\
\sum_{j \in J_2} \max_{t\in [0,1]} |\pa_t^{2} F^b_j(t)|
&\leq 2 ( || \bar \om_x ||_{L^{\infty}} dj_3 +  || \bar \om_{xx} ||_{L^{\infty}} h_0 dj_2 
+ \f{1}{2} || \bar \om_{xxx} ||_{L^{\infty}} h_0^2 dj_1) \teq C_{b,1},
\eal
\eeq
where we have used that $ dj_0 = 2 (  (n+1) \log( n+1)  - (n+1) ) $ is the upper bound of $\sum_{ j \in J_2} \log| n_x - j - t| $ for any $t \in [0, 1]$. It can be derived using an argument similar to that in Section \ref{sec:dj}.

Using an argument similar to that in \eqref{eq:GQ_error4} and its following discussion, we obtain the following error bound in the computation of $\bar u$
\[
 Error_{GQ}(u) \leq \f{2 h_0}{\pi}  \cdot ( \sum_{ 1 \leq k \leq K-1} \f{ C_{a,k} \cdot \e_k}{ (2k) ! \cdot 2^{2k+1}}
+  \f{C_{a,K} }{ (2K)! 2^{2K}} ( \f{1}{2K+1} + \f{c_K}{2} ) ) < 2 \cdot 10^{-19},
\] 
where $h_0$ comes from the extra factor in \eqref{eq:GQ_error50} due to the transform. The error bound in the computation of $\bar u_{xx}$ is 
\[
 Error_{GQ}(u_{xx}) \leq \f{2 }{\pi}  \cdot ( \sum_{ 1 \leq k \leq K-1} \f{ C_{b,k} \cdot \e_k}{ (2k) ! \cdot 2^{2k+1}}
+  \f{C_{b,K} }{ (2K)! 2^{2K}} ( \f{1}{2K+1} + \f{c_K}{2} ) ) < 5 \cdot 10^{-18}.
\]

The above two bounds are obtained by an argument similar to that in the derivation of \eqref{error_GQ_ux}.

\appendix
\setcounter{section}{1}
\section{Some useful Lemmas}
The following Lemmas are proved in the Appendix of \cite{chen2019finite} and we have used them in this supplementary material.
\begin{lem}[The Tricomi identity]\label{lem:tric}
 We have
\beq\label{eq:tric}
H( \om H\om)  =  \f{1}{2} (  (H \om)^2 - \om^2  ).
\eeq
\end{lem}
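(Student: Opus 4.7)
The plan is to prove the identity via the boundary-value realization of $H$ on the upper half plane, which packages both $H^2 = -I$ and the standard real/imaginary correspondence for Hardy-space functions. For a sufficiently regular real-valued $\om$ (Schwartz, or $C_c^\infty$ as in our setting, where $\om$ is supported in $[-L,L]$), the Cauchy integral
\[
F(z) \teq \frac{1}{\pi i}\int_{\R}\frac{\om(y)}{y-z}\,dy, \qquad \mathrm{Im}(z)>0,
\]
defines an analytic function in the upper half plane whose non-tangential boundary value on $\R$ is $\om + i H\om$. In particular, the real and imaginary parts of any such Hardy-space boundary value are related by $\mathrm{Im}(F|_{\R}) = H(\mathrm{Re}(F|_{\R}))$.

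Since $F$ is analytic with adequate decay, $F^2$ is also analytic and lies in the Hardy space $H^1$ of the upper half plane, so its boundary value $U + iV$ satisfies the same relation $V = HU$. A direct algebraic expansion of $(F|_{\R})^2$ gives
\[
(\om + i H\om)^2 \;=\; \bigl(\om^2 - (H\om)^2\bigr) + 2i\,\om\, H\om,
\]
and reading off real and imaginary parts yields
\[
2\,\om\, H\om \;=\; H\bigl(\om^2 - (H\om)^2\bigr).
\]
Applying $H$ to both sides and using $H^2 = -I$ gives
\[
2H(\om\, H\om) \;=\; -\bigl(\om^2 - (H\om)^2\bigr) \;=\; (H\om)^2 - \om^2,
\]
which is \eqref{eq:tric} after dividing by $2$.

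The only technical obstacle is justifying that $F$ and $F^2$ are genuine elements of a Hardy space so that the real/imaginary correspondence applies on the boundary. For the setting of \cite{chen2019finite}, $\om$ is smooth and compactly supported, hence $H\om \in C^\infty(\R)$ with $O(|x|^{-1})$ decay at infinity, so $F \in H^2$ of the upper half plane and all of $\om^2$, $(H\om)^2$, and $\om\,H\om$ lie in $L^1 \cap L^\infty$; the Hardy-space argument then applies directly. A density argument extends the identity to broader classes (e.g., $\om \in L^p \cap L^{p'}$ with $\om H\om \in L^p$) if desired, which is more than sufficient for all uses in this paper.
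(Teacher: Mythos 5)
Your proof is correct, and it is essentially the standard argument for the Tricomi identity (the one given in the appendix of \cite{chen2019finite} to which this supplement defers): realize $\om + iH\om$ as the boundary value of a Hardy-space function $F$ in the upper half plane, note that $F^2$ is again of Hardy type so its imaginary part is the Hilbert transform of its real part, expand $(\om+iH\om)^2$, and finish with $H^2=-I$. The sign bookkeeping checks out with the paper's convention $H\om(x)=\f{1}{\pi}\,P.V.\int\f{\om(y)}{x-y}\,dy$, and the integrability justifications are adequate since $\om$ here is smooth with compact support.
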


\begin{lem}\label{lem:commute} Suppose that $u_x = H\om$. Then we have
\beq\label{lem:vel0}
\f{u_x - u_x(0)}{x} = H\lt( \f{\om}{x}\rt)  , \textrm{ or equivalently } \ (H\om)(x) = (H\om)(0) + x H\lt( \f{\om}{x}\rt) .
\eeq
Similarly, we have
\beq\label{eq:iden22}
\f{  u_x - u_x(L)}{  x-L } = H\lt( \f{ \om}{  x-L} \rt) , \ u_{xx} = H\om_x, \ x u_{xx} = H(x \om_x).
\eeq
Suppose that in addition $\om$ is odd. Then we have
\beq\label{eq:commute3}
  x^2 u_{xx}  =  H (x^2 \om_x),  \quad x u_x = H(x\om), \quad \f{u_{xx}}{x} = H \lt(  \f{\om_x - \om_x(0)}{x} \rt) .
\eeq
If $\om$ is odd and a piecewise cubic polynomial supported on $[-L,L]$ with $\om(L) = \om(-L) = 0$
($\om^{\prime}, \om^{\prime \prime}$ may not be continuous at $x = \pm L$), then we have
\beq\label{eq:commute4}
 u_{xxx} (x^2 - L^2) = H(  \om_{xx}(x^2 -L^2) )  .
\eeq
\end{lem}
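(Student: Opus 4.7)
The plan is to reduce every identity to two ingredients: the algebraic identity
\[
\frac{1}{x-y}-\frac{1}{a-y}=\frac{x-a}{(x-y)(a-y)},
\]
and the polynomial commutator
\[
x^{k}H(f)(x)-H(y^{k}f)(x)=\frac{1}{\pi}\int\frac{x^{k}-y^{k}}{x-y}\,f(y)\,dy=\frac{1}{\pi}\sum_{j=0}^{k-1}x^{k-1-j}\int y^{j}f(y)\,dy,
\]
together with moment-vanishing coming from the compact support and/or oddness of $\bar\omega$ (or $\omega$). All stated formulas will drop out by choosing $a$, $k$, and $f$ appropriately and noting which moments vanish.

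First I would dispatch \eqref{lem:vel0}. Taking $a=0$ in the algebraic identity and integrating against $\omega(y)$ yields $(H\omega)(x)-(H\omega)(0)=x\,H(\omega/y)(x)$, which is exactly the stated relation. Replacing $x$ by $x-L$ (equivalently, taking $a=L$) gives the first identity in \eqref{eq:iden22}. The formula $u_{xx}=H\omega_{x}$ follows from differentiating $u_{x}=H\omega$ under the integral (justified by the compact support of $\omega$), and $xu_{xx}=H(x\omega_{x})$ is the $k=1$ commutator with $f=\omega_{x}$, using $\int\omega_{x}\,dy=0$.

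For the odd-$\omega$ identities in \eqref{eq:commute3}, the $k=1$ commutator with $f=\omega$ gives $xu_{x}-H(y\omega)=\tfrac{1}{\pi}\int\omega\,dy=0$ by oddness, and the $k=2$ commutator with $f=\omega_{x}$ gives
\[
x^{2}H\omega_{x}-H(y^{2}\omega_{x})=\frac{x}{\pi}\int\omega_{x}\,dy+\frac{1}{\pi}\int y\omega_{x}\,dy=0-\frac{1}{\pi}\int\omega\,dy=0,
\]
the two integrals vanishing by compact support and by oddness (after integration by parts). Finally, $u_{xx}/x=H\bigl((\omega_{x}-\omega_{x}(0))/y\bigr)$ is \eqref{lem:vel0} applied with $\omega$ replaced by $\omega_{x}$, combined with the observation that $(H\omega_{x})(0)=0$ because the integrand $\omega_{x}(y)/y$ is odd whenever $\omega_{x}$ is even.

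The main obstacle is \eqref{eq:commute4}, because the piecewise cubic profile has $\omega_{x}$ discontinuous at $\pm L$, so the distributional derivative $\omega''$ contains Dirac masses there; by evenness of $\omega_{x}$ they take the form $\alpha(\delta_{-L}-\delta_{L})$ with $\alpha=\omega_{x}(L^{-})$. I would exploit the fact that the weight $y^{2}-L^{2}$ vanishes precisely at $\pm L$, so $(y^{2}-L^{2})\omega''(y)$ coincides with the regular function $(y^{2}-L^{2})\omega_{xx}(y)\mathbf{1}_{[-L,L]}(y)$ and the right-hand side of \eqref{eq:commute4} is literally the Hilbert transform of that regular function. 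Applying the polynomial commutator to the distribution $\omega''$ with the polynomial $p(y)=y^{2}-L^{2}$ (which behaves exactly like $y^{2}$ under the commutator) yields
\[
(x^{2}-L^{2})H\omega''(x)-H\bigl((y^{2}-L^{2})\omega''\bigr)(x)=\frac{1}{\pi}\int(x+y)\omega''(y)\,dy,
\]
and the remaining distributional integral vanishes: $\int\omega''\,dy=[\omega']_{-\infty}^{\infty}=0$ and $\int y\omega''\,dy=-\int\omega'\,dy=-[\omega]_{-\infty}^{\infty}=0$, both by compact support together with the continuity of $\omega$ at $\pm L$. Since $u_{xxx}=H\omega''$ distributionally, this is \eqref{eq:commute4}; the two Dirac contributions to $u_{xxx}$, once multiplied by $(x^{2}-L^{2})$, are annihilated and match exactly the boundary terms produced by integration by parts on the right. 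The only subtlety is carefully tracking this cancellation between the singular part of $u_{xxx}$ and the boundary corrections in the commutator.
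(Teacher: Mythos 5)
Your proof is correct, and since this supplementary paper does not reprove the lemma (it defers to the appendix of \cite{chen2019finite}), I can only say that your route --- the kernel difference $\frac{1}{x-y}-\frac{1}{a-y}$, the polynomial commutator $x^kH(f)-H(y^kf)=\frac{1}{\pi}\sum_j x^{k-1-j}\int y^jf\,dy$ with moment cancellations from oddness and compact support, and the distributional treatment of $\omega''$ with the weight $y^2-L^2$ killing the Dirac masses at $\pm L$ for \eqref{eq:commute4} --- is the standard one and matches the cited proof in substance; I verified in particular that the constant $-2L\,\omega_x(L^-)/\pi$ produced by the singular part of $(x^2-L^2)u_{xxx}$ equals the boundary term from $\int y\,\omega_{xx}\,dy$ on the right, as you claim. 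Two cosmetic points: your displayed kernel identity should read $\frac{a-x}{(x-y)(a-y)}$ (the sign slip does not propagate to any conclusion you draw), and for the last identity in \eqref{eq:commute3} you should also record that $\mathrm{P.V.}\int \frac{dy}{y(x-y)}=0$, so that $H(\omega_x/y)$ and $H\bigl((\omega_x-\omega_x(0))/y\bigr)$ coincide.
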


\begin{lem}\label{lem:vel_a1} Suppose $u_x = H\om$.
(a)  We have
\begin{align}
\int_{\R} \f{  ( u_x - u_x(0) ) \om    }{  x  } & = \f{\pi}{2} ( u_x^2(0) + \om^2(0)) \geq 0 .\label{lem:vel3}
\end{align}
In particular,  \eqref{lem:vel3} vanishes if $ u_x(0) = \om(0) =0$.


(b) The Hardy inequality: Suppose that $\om$ is odd and $\om_x(0)=0$. For $p = 2, 4$, we have
\beq\label{eq:hd1}
\int \f{  (u  - u_x(0) x)^2} { |x|^{p+2} }  \leq  \lt(\f{2}{p +1 } \rt)^2   \int \f{  (u_x - u_x(0))^2}{ |x|^{ p } } = \lt(\f{2}{p +1 } \rt)^2  \int   \f{  \om^2}{ |x|^{ p } }.
\eeq
\end{lem}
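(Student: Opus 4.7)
Both assertions reduce to elementary manipulations of the Hilbert transform on top of the two ingredients already in hand: the Tricomi identity \eqref{eq:tric} of Lemma~\ref{lem:tric} for part (a), and the commutator identities of Lemma~\ref{lem:commute} combined with the $L^2$-isometry of $H$ for part (b). The main idea in (a) is to move the whole computation to the single point $x=0$, where the principal value integrals become values of $H$; the main idea in (b) is to reduce the first inequality to the classical weighted Hardy inequality and the second equality to an isometry statement.

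For part (a), the plan is to split
\beno
\int_\R \f{(u_x - u_x(0))\om}{x}\, dx \;=\; \int_\R \f{\om u_x}{x}\, dx \;-\; u_x(0)\int_\R \f{\om}{x}\, dx
\eeno
and evaluate each of the two resulting principal-value integrals as a Hilbert transform at $0$. Using $(Hf)(0) = -\f{1}{\pi}\,\mathrm{P.V.}\!\int \f{f(y)}{y}\,dy$, the second integral gives $\int \om/y\,dy = -\pi u_x(0)$. For the first, Tricomi (\ref{eq:tric}) applied to $\om u_x = \om\cdot H\om$ and evaluated at $0$ gives $H(\om H\om)(0) = \tfrac{1}{2}(u_x^2(0)-\om^2(0))$, which rearranges to $\int \om u_x/y\,dy = -\tfrac{\pi}{2}(u_x^2(0)-\om^2(0))$. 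Adding the two contributions produces exactly $\tfrac{\pi}{2}(u_x^2(0)+\om^2(0))\geq 0$.

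For part (b), set $f(x):=u(x)-u_x(0)\,x$, so $f'=u_x - u_x(0)$. Since $\om$ is odd, $u$ is odd, hence $u(0)=u_{xx}(0)=0$ and $f(x)=O(x^3)$ at the origin; combined with the decay of $f$ at infinity (inherited from the compact support and oddness of $\om$), this makes the boundary terms in the standard integration by parts vanish and yields the classical weighted Hardy inequality
\beno
\int \f{f^2}{|x|^{p+2}}\,dx \;\leq\; \lt(\f{2}{p+1}\rt)^2 \int \f{(f')^2}{|x|^p}\,dx,
\eeno
which is exactly the first inequality of (b). For the second equality, I would use identity \eqref{lem:vel0}: $(u_x-u_x(0))/x = H(\om/x)$, and conclude the $p=2$ case by the $L^2$ isometry of $H$. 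For $p=4$ the same idea needs one iteration: since $\om$ is odd, $\om/x$ is even and $H(\om/x)(0)=0$, so reapplying \eqref{lem:vel0} to $\om/x$ in place of $\om$ gives $(u_x-u_x(0))/x^2 = H(\om/x^2)$; the $L^2$ isometry then finishes the proof.

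\textbf{Main obstacle.} The subtle point in (a) is that one cannot naively invoke the skew-adjointness of $H$ on the function $\om/x$ when $\om(0)\neq 0$, because $\om/x$ need not lie in $L^2$ near the origin; this is why I route the computation through pointwise evaluation at $x=0$, where the principal-value integrals are the cleanest objects. In part (b) the non-routine piece is the $p=4$ case, whose key step is the iterated identity $(u_x-u_x(0))/x^2=H(\om/x^2)$; this identity in turn relies on $H(\om/x)(0)=0$, which is exactly where the oddness hypothesis $\om$ odd (equivalently, $\om/x$ even) enters. Once this iteration is in place the remainder is routine.
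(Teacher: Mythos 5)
Your argument is correct and follows exactly the intended route: the paper defers the proof to the appendix of \cite{chen2019finite}, but the tools it lists alongside this lemma (the Tricomi identity, the commutator identities \eqref{lem:vel0}, and the $L^2$ isometry of $H$) are precisely what you use, and your sign bookkeeping is consistent with the convention $Hf(0)=-\tfrac{1}{\pi}\,\mathrm{P.V.}\!\int f(y)/y\,dy$ used throughout the paper (e.g.\ in the $T_1$ estimate of Section 2). The evaluation-at-zero treatment of (a) and the iterated identity $(u_x-u_x(0))/x^2=H(\om/x^2)$ for the $p=4$ case of (b) are exactly the standard argument; the only cosmetic point is that the hypothesis $\om_x(0)=0$ enters through the finiteness of $\int \om^2/|x|^4$ rather than through $H(\om/x)(0)=0$, which already follows from oddness alone.
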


\section{Numerical values of some functions on the grid points}
We plot the numerical values of some functions on the grid points in this Section. We remark that all the estimates can be verified rigorously using the strategy we outlined in Section 4.3 ``entitled Rigorous verification of the numerical values'' in \cite{chen2019finite} (see page 23). The following figures are used to visualize several estimates.

\subsection{Figure related to Section \ref{sec:cross}}
 \begin{figure}[H]
   \centering
   \includegraphics[width =\textwidth ]{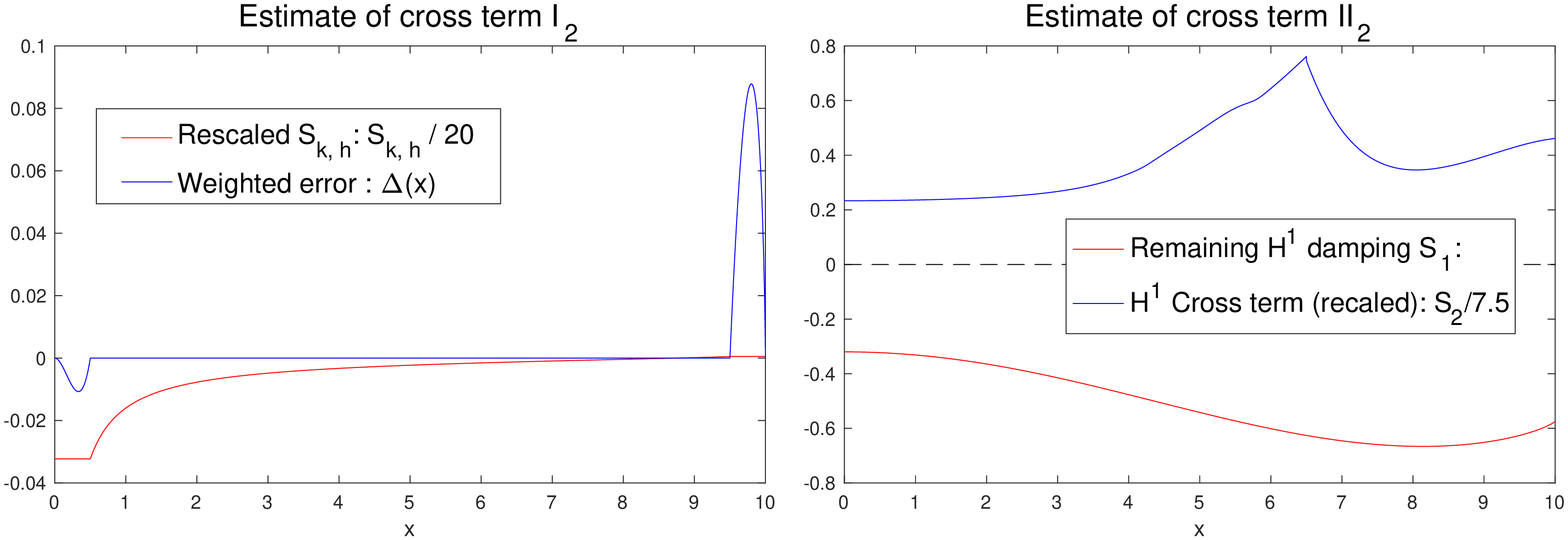}
   \caption{ Left: Numerical values of $S_{k, h}$ and $\D(x)$ on the grid points.
   Right:  Numerical values of $S_1$ and $S_2$ on the grid points.
   We plot the rescaled $S_{k,h}$, i.e. $S_{k, h} / 20$, and the rescaled $S_2$, i.e. $S_2 / 7.5$.
   }
   \label{DG_cross}
 \end{figure}
On the left subfigure of Figure \ref{DG_cross},  the numerical values of $S_{k,h}$ are monotonically increasing and $ |\D(x)| \leq 0.1 $ on the grid points. On the right subfigure, the numerical values of $S_1, S_2$ on the grid points are less than $ -0.3, 7.5$, respectively.

\subsection{Figure related to Section \ref{sec:non}}
 \begin{figure}[H]
   \centering
   \includegraphics[width =\textwidth ]{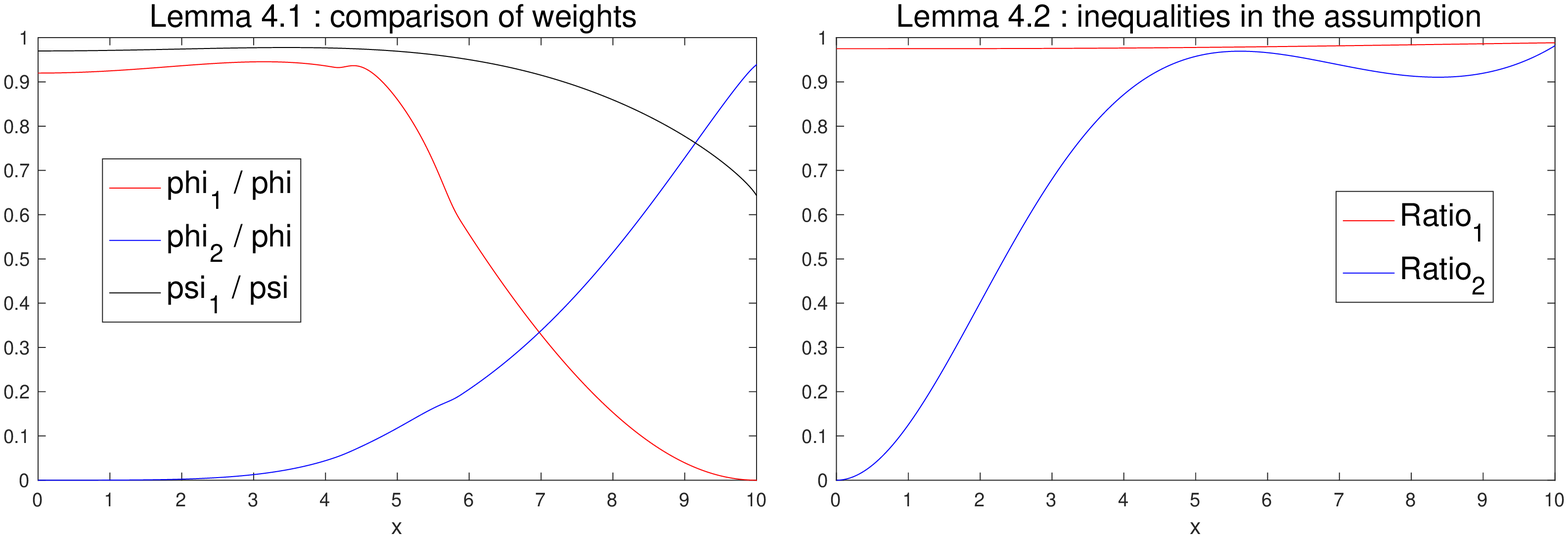}
   \caption{ Comparison of several functions. Left: Numerical values of $\vp_1 / \vp, \vp_2 / \vp, \psi_1 /\psi$ on the grid points. Right: Numerical values of two ratios in \eqref{eq:non_al2} on the grid points.
   Here, $\mathrm{Ratio}_1 =  \xi_1^2 \lt(  \mu (\al_1 \vp_1 - \xi_{1, x} ) \al_1 \psi_1 \rt)^{-1}$ and $\mathrm{Ratio}_2 =  \xi_2^2 \lt(  \mu (\al_2 \vp_2 + \xi_{2, x} ) \al_2 \psi_2 \rt)^{-1}$.
   }
   \label{DG_nonlinear}
 \end{figure}
Numerical values of $\vp_1 / \vp, \vp_2 / \vp, \psi_1 /\psi$ (see Lemma \ref{lem:non_ux1}) and 
$\mathrm{Ratio}_1 =  \xi_1^2 \lt(  \mu (\al_1 \vp_1 - \xi_{1, x} ) \al_1 \psi_1 \rt)^{-1}$, $\mathrm{Ratio}_2 =  \xi_2^2 \lt(  \mu (\al_2 \vp_2 + \xi_{2, x} ) \al_2 \psi_2 \rt)^{-1}$ (see Lemma \ref{lem:non_ux2}) on the grid points are strictly less than $1$.

\subsection{Figure related to Section \ref{sec:boot}}

 \begin{figure}[H]
   \centering
   \includegraphics[width = 0.7\textwidth ]{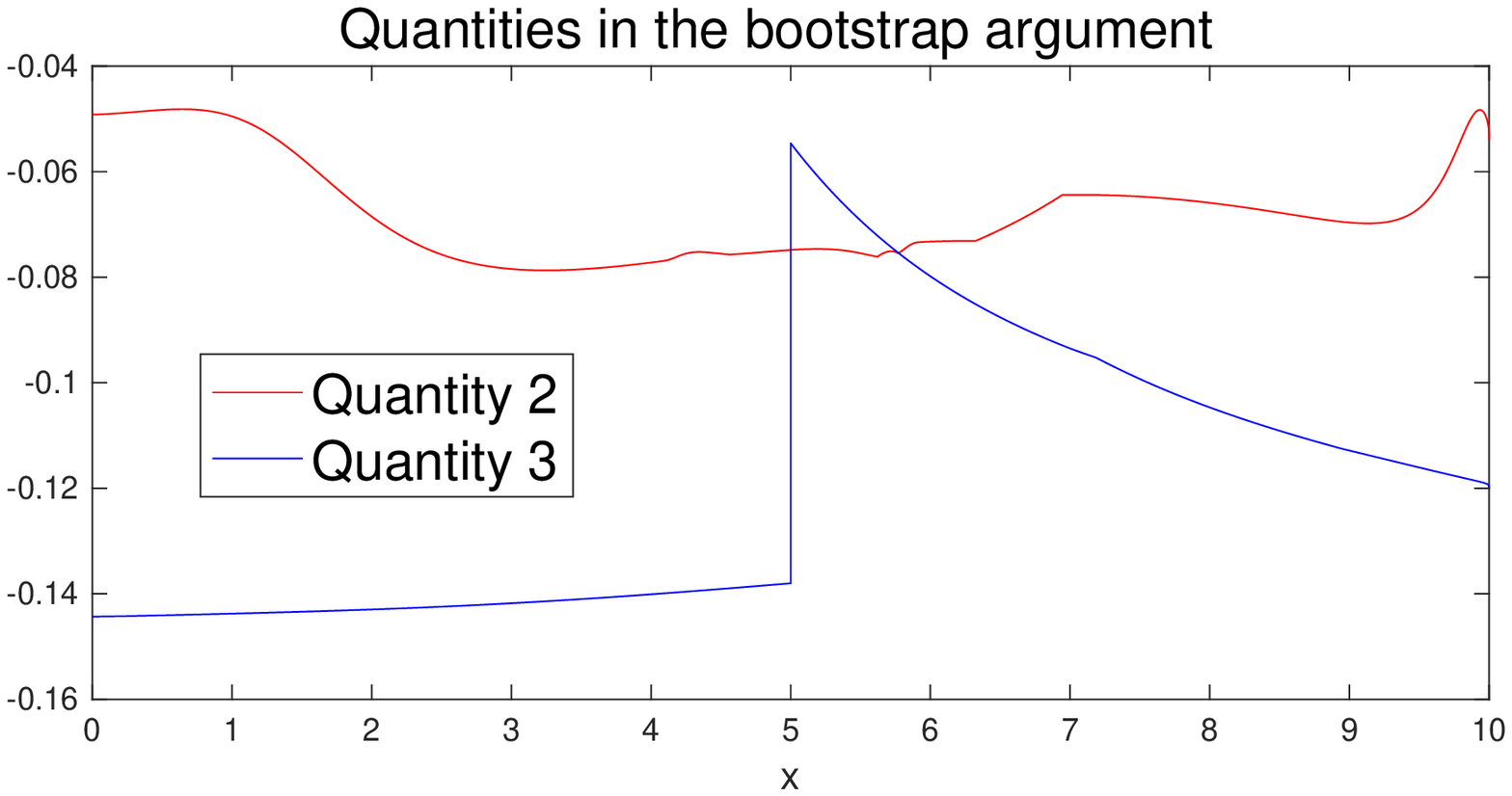} 
   \caption{ Numerical values of the left hand side of the second and the third inequality in \eqref{eq:boot_ineq} on the grid points. 
   }
   \label{DG_boot}
 \end{figure}
Quantity $2, 3$ in Figure \ref{DG_boot} represents the left hand side of the second and the third inequality
in \eqref{eq:boot_ineq}, i.e. 
\[
\bal
\mathrm{Quantity  \ 2}  &\teq -0.1 + 0.05 \vp\rho_1^{-1}  + Z_1(x) E^*   , \\
  \mathrm{Quantity \  3}  & \teq -0.2 + 0.05 \psi \rho_2^{-1}  + (Z_2(x) + Z_3(x))E^* .
\eal
\]
Numerical values of Quantity $2,3 $ on the grid points are less than $-0.04<0$.

\bibliographystyle{plain}
\bibliography{selfsimilar}


